\documentclass[10pt]{amsart}


\usepackage{amsmath}
\usepackage{amsthm}
\usepackage{amsfonts}
\usepackage{amsxtra}
\usepackage[all]{xy}
\usepackage{verbatim}
\usepackage{color}
\usepackage{amssymb}
\usepackage{mathrsfs}
\usepackage{hyperref}
\usepackage{tikz-cd}
\usepackage{mathrsfs}
\usepackage[shortlabels]{enumitem}
\usepackage{appendix}


\theoremstyle{plain}
\newtheorem{thm}{Theorem}[section]
\newtheorem{Cor}[thm]{Corollary}
\newtheorem{Lem}[thm]{Lemma}
\newtheorem{Prop}[thm]{Proposition}

\newtheorem{Thm}[thm]{Theorem}
\newtheorem*{Thm*}{Theorem}
\newtheorem*{Cor*}{Corollary}
\newtheorem*{Lem*}{Lemma}
\newtheorem*{Prop*}{Proposition}
\newtheorem*{Claim*}{Claim}

\theoremstyle{definition}
\newtheorem{Def}[thm]{Definition}
\newtheorem{Constr}[thm]{Construction}

\theoremstyle{remark}
\newtheorem{Rem}[thm]{Remark}
\newtheorem{Ex}[thm]{Example}
\newtheorem{Obs}[thm]{Observation}
\newtheorem*{Rem*}{Remark}
\newtheorem*{Ex*}{Example}
\newtheorem*{Def*}{Defintion}


\newcommand{\Set}{\textsf{\textup{Set}}}
\newcommand{\Fin}{\textsf{\textup{Fin}}}
\newcommand{\Top}{\textsf{\textup{Top}}}
\newcommand{\Topu}{\textsf{\textup{\underline{\smash{Top}}}}}
\newcommand{\Cat}{\textsf{\textup{Cat}}}
\newcommand{\Sp}{\textsf{\textup{Sp}}}
\newcommand{\CW}{\textsf{\textup{CW}}}
\newcommand{\Sub}{\textsf{\textup{Sub}}}

\newcommand{\Fun}{\textup{Fun}}
\newcommand{\Map}{\textup{Map}}
\newcommand{\Sing}{\textup{Sing}}
\newcommand{\id}{\textup{id}}
\newcommand{\Id}{\textup{Id}}

\newcommand{\res}{\textup{res}}
\newcommand{\ind}{\textup{ind}}
\newcommand{\coind}{\textup{coind}}
\newcommand{\ob}{\textup{ob}}
\newcommand{\mor}{\textup{mor}}
\newcommand{\im}{\textup{im}}
\newcommand{\pr}{\textup{pr}}
\newcommand{\Uni}{\textsf{\textup{Uni}}}
\newcommand{\Tr}{\textsf{\textup{Tr}}}
\newcommand{\Ind}{\textsf{\textup{Ind}}}
\newcommand{\Fam}{\textsf{\textup{Fam}}}
\newcommand{\Aa}{\mathcal{A}}
\newcommand{\CC}{\mathcal{C}}
\newcommand{\EE}{\mathcal{E}}
\newcommand{\FF}{\mathcal{F}}
\newcommand{\II}{\mathcal{I}}
\newcommand{\JJ}{\mathcal{J}}
\newcommand{\KK}{\mathcal{K}}

\newcommand{\OO}{\mathcal{O}}
\newcommand{\PP}{\mathcal{P}}
\newcommand{\TT}{\mathcal{T}}
\newcommand{\UU}{\mathcal{U}}
\newcommand{\VV}{\mathcal{V}}
\newcommand{\A}{\mathscr{A}}
\newcommand{\B}{\mathscr{B}}
\newcommand{\C}{\mathscr{C}}
\newcommand{\D}{\mathscr{D}}
\newcommand{\E}{\mathscr{E}}
\newcommand{\F}{\mathscr{F}}
\newcommand{\I}{\mathscr{I}}
\newcommand{\V}{\mathscr{V}}

\newcommand{\ff}{\mathbb{F}}
\newcommand{\lL}{\mathbb{L}}
\newcommand{\pp}{\mathbb{P}}
\newcommand{\rr}{\mathbb{R}}
\newcommand{\sS}{\mathbb{S}}
\newcommand{\uu}{\mathbb{U}}
\newcommand{\zz}{\mathbb{Z}}


\setlength{\textheight }{8.7 in}
\setlength{\textwidth }{6.00 in}
\hoffset=-0.60 in
\voffset=-0.5 in

\setlength{\marginparwidth}{2cm}
\let\oldmarginpar\marginpar
\renewcommand\marginpar[1]{\-\oldmarginpar[\raggedleft\footnotesize #1]%
{\raggedright\footnotesize #1}}

\hypersetup{
    colorlinks=true,
    linkcolor=blue,
    filecolor=blue,      
    urlcolor=blue,
    citecolor=blue,
    }

\setcounter{tocdepth}{2}

\pagestyle{plain}

\setlength{\parindent}{20pt}


\title{Segal models for equivariant incomplete infinite loop spaces}

\author{Tjark Bantelmann}


\begin{document}

\begin{abstract}
We model equivariant infinite loop spaces indexed on incomplete universes via suitable equivariant analogs of $\Gamma$-spaces. The choice of universe dictates a transfer system which in turn dictates the Segal condition on equivariant $\Gamma$-spaces. Equivariant $\Gamma$-spaces themselves come in different but equivalent guises interpolating between categories $\Gamma$ as defined by Segal and $\Gamma_G$ as defined by Shimakawa. The main application is the construction of Segal $K$-theory of normed permutative categories.
\end{abstract}

\maketitle

\tableofcontents


\section{Introduction}

An infinite loop space machine is a functor taking some input object to a connective $\Omega$-spectrum. The input of the Segal machine as originally defined by Segal \cite{Seg74} is a functor from the category of finite based sets to the category of based spaces also called a \textit{$\Gamma$-space} which fulfils a \textit{specialness condition}, sometimes also called \textit{Segal condition}. Throughout this paper, $G$ will denote a finite group. In $G$-equivariant homotopy theory there are many different choices of output categories of infinite loop space machines. Naturally one must modify the input categories and the specialness condition to obtain the correct type of output. Shimakawa \cite{Shi89} defines an infinite loop space machine where the input objects are \textit{$\Gamma_G$-$G$-spaces}. These are functors from the category of finite based $G$-sets, denoted by $\Gamma_G$, to the category of based $G$-spaces which are enriched in the category of based $G$-spaces together with a modified Segal condition. The output spectra of the machine defined by Shimakawa are positive connective genuine $\Omega$-$G$-spectra. There is also a naive variant taking \textit{$\Gamma$-$G$-spaces} to naive $\Omega$-$G$-spectra. Besides naive and genuine $G$-spectra there are incomplete variants of $G$-spectra sitting in between. These incomplete variants are indexed on incomplete $G$-universes. We aim to develop equivariant incomplete infinite loop space machines which map variants of equivariant incomplete $\Gamma$-spaces to connective positive $\Omega$-$G$-spectra indexed on incomplete $G$-universes. We will mainly follow the detailed modernisation of the naive and genuine equivariant infinite loop space machines described by May, Merling and Osorno in \cite{MMO25}.

\textbf{Outline.} To understand these incomplete infinite loop space machines we need to understand the relationship between finite $G$-sets appearing in the input and representations appearing in the output. After discussing some preliminaries on $G$-spaces and $G$-spectra in Section \ref{sec:preliminaries} this relationship will be studied in Section \ref{sec:isotropy}.

Each $G$-universe $U$ determines a full subcategory $\II_U(G)$ of the category of finite based $G$-sets consisting of those finite $G$-sets $T$ such that there is a $G$-embedding $T\hookrightarrow U$. These are called \textit{admissible} $G$-sets. Actually, this defines a \textit{$G$-indexing system} $\II_U$ in the sense of \cite{BH15}. Indexing systems are collections of full subcategories $\II(H)$ of the category of finite $H$-sets for all $H\subset G$ which fulfil a certain list of axioms. For example, the subcategories $\II(H)$ and $\II(G)$ relate via restriction and induction. As shown by Blumberg and Hill \cite{BH15} the collections $\II_U(H)$ consisting of those finite $H$-sets $S$ such that there is an $H$-embedding $S\hookrightarrow \res_H^G U$ form an indexing system. Universes also define isotropy families $\FF_U=\{G_u\mid u\in U\}$ where $G_u$ is the stabilizer subgroup. Isotropy families are a space level concept, whereas indexing systems arise in stable homotopy theory. Note that not all indexing systems arise this way, there are indexing systems which do not arise as admissible sets of universes (\cite{Rub21a}, \S 3 and Example 4.9). There are also families which do not arise as admissible sets of universes as we will show in Section \ref{sec:isotropy-families}. The problem is even more nuanced. All indexing systems which arise through admissible sets of $G$-universes are \textit{disk-like} (\cite{Rub21a}, Lem.A.7). A disk-like indexing system is an indexing system which is determined by the $G$-level. This means that $\II(H)$ is obtained from $\II(G)$ by restriction. Not all indexing systems are disk-like and not every disk-like indexing system arises as the admissible sets of a universe (see \cite{Rub21a}, Theorem 4.15). Every disk-like indexing system dictates a family, but not every family arises from a disk-like indexing system. But this is not really a problem. We will call an indexing system \textit{compatible} with a $G$-universe $U$ if the isotropy family of $U$ agrees with the family $\FF_\II=\{H\mid G/H\in \II(G)\}$. All of our results will work for such compatible pairs of indexing systems and universes.

With this in mind we can define the input and output of our infinite loop space machines. For each disk-like indexing system $\II$ we define a full subcategory $\Gamma_\II\subset \Gamma_G$ consisting of those finite based $G$-sets $T\cong S_+$ such that $S\in\II(G)$. This definition only depends on the $G$-level of the indexing system hence the disk-like assumption. For $\II=\TT$ the trivial indexing system $\Gamma_\TT$ recovers Segal's \cite{Seg74} original category $\Gamma$ with trivial $G$-action\footnote{Noting that Segal's $\Gamma^\textup{op}$ is what we call $\Gamma$.} and setting $\II=\CC$ the complete indexing system we recover Shimakawa's \cite{Shi89} $\Gamma_G$. A $\Gamma_\II$-$G$-space is defined to be a functor from $\Gamma_\II$ to the category of based $G$-spaces which is enriched in based $G$-spaces. We will discuss these in Section \ref{sec:equivariant-Gamma}. The output will be orthogonal $G$-spectra indexed on universes as introduced by Mandell and May \cite{MM02}. An orthogonal $G$-spectrum $E$ is a \textit{positive $\Omega$-$G$-spectrum} if its adjoint structure maps $E(V)\to \Omega^W E(V\oplus W)$ are weak $G$-equivalences for all finite dimensional $G$-representations such that $V^G\ne 0$ and it is called \textit{connective} if all its underlying fixed point spectra are connective. Denote the category of connective positive orthogonal $\Omega$-$G$-spectra by $\Omega\Sp_{\textup{pc}}^{G,U}$. For the output spectra to be positive $\Omega$-$G$-spectra we need an additional condition on our input $\Gamma_\II$-$G$-spaces, namely that they are \textit{$\II$-special}. The idea is to homotopically invert a selection of so called \textit{Segal maps}. This specialness notion generalises the notion of special $\Gamma_G$-$G$-spaces as defined by Shimakawa \cite{Shi89} when setting $\II=\CC$ and will be discussed in detail in Sections \ref{sec:specialness}, \ref{sec:transfer-maps-Gamma} and \ref{sec:comparison}. Denote the category of $\II$-special $\Gamma_\II$-$G$-spaces by $\Gamma_\II[G\Topu_{\textup{wp}}]^{\II\textup{-spc}}$.

\begin{Def}
	Let $\II$ be a disk-like indexing system and let $U$ be a compatible $G$-universe.
	An \textit{equivariant incomplete Segal infinite loop $G$-space machine} is a functor
	\[\sS_\II^{G,U}\colon \Gamma_\II[G\Topu_{\textup{wp}}]^{\II\textup{-spc}}\to \Omega\Sp_{\textup{pc}}^{G,U}\]
	together with a natural map of $G$-spaces $\nu \colon X_1 \to \sS_\II^{G,U}(S^0)$ such that the composite with the adjoint structure map of the positive $\Omega$-$G$ spectrum 
	\[\sS_\II^{G,U}X(S^0)\to \Omega^V\sS_\II^{G,U}X(S^V)\]
	is a group completion for all $V$ in $U$ such that $V^G\ne 0$.
\end{Def}

In the complete and genuine case this definition of infinite loop $G$-space machine agrees with \cite{MMO25}, Definition 0.6, up to changing the input category to an equivalent category which allows for the comparison to the operadic machine. We will omit a comparison between the incomplete operadic and Segal machine in this paper. One has to deal with the fact that the inputs of the operadic machines depend on universe data and of our Segal machines depend on indexing data. We intend to return to this issue in a subsequent paper.

Having established these definitions we can state our first main theorem.

\begin{Thm}[Theorem A]
	Let $\II$ be a disk-like indexing system and let $U$ be a compatible $G$-universe. Then there exists an equivariant incomplete Segal infinite loop $G$-space machine
	\[\sS^{G,U}_\II\colon \Gamma_\II[G\Top_*]^{\II\textup{-spc}} \to \Omega\Sp_{\textup{pc}}^{G,U}.\]
\end{Thm}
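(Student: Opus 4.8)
The plan is to build the machine $\sS^{G,U}_\II$ as a composite of three functors, following the template of the Segal machine as modernised in \cite{MMO25} but replacing the genuine category $\Gamma_G$ with $\Gamma_\II$ throughout and controlling which Segal maps get inverted. First I would prolong an $\II$-special $\Gamma_\II$-$G$-space $X$ to a functor on based $G$-spaces by left Kan extension along the inclusion $\Gamma_\II \hookrightarrow G\Topu_*$, producing an enriched functor $\overline{X}\colon G\Topu_* \to G\Topu_*$; applying this to spheres $S^V$ for $V$ ranging over the finite-dimensional sub-$G$-representations of $U$ gives a prespectrum-like object $V \mapsto \overline{X}(S^V)$ with adjoint structure maps $\overline{X}(S^V) \to \Omega^W \overline{X}(S^{V\oplus W})$. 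Because $X$ is defined on $\Gamma_\II$ rather than all of $\Gamma_G$, the only representation spheres that are "seen" are those built from admissible $G$-sets, which is exactly what matches the universe $U$ under the compatibility hypothesis; this is the conceptual heart of why the output lands in the $U$-indexed category. I would then assemble these spaces into an orthogonal $G$-spectrum indexed on $U$ — here one uses the standard device of taking the enriched coend / bar construction over the topological category of finite-dimensional inner-product $G$-representations in $U$, so that orthogonal group actions are built in automatically, exactly as in \cite{MM02} and \cite{MMO25}.

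The bulk of the verification then splits into two claims. The first is that the resulting orthogonal $G$-spectrum $\sS^{G,U}_\II X$ is a \emph{positive} $\Omega$-$G$-spectrum, i.e.\ that the adjoint structure maps $\sS^{G,U}_\II X(V) \to \Omega^W \sS^{G,U}_\II X(V \oplus W)$ are weak $G$-equivalences whenever $V^G \neq 0$. This is precisely where $\II$-specialness of $X$ is used: the Segal maps that $\II$-specialness inverts are indexed by the admissible $G$-sets, and one shows that the structure map above factors through (a filtered homotopy colimit of) these Segal maps; positivity of $V$ guarantees there is enough room ($S^0$ embeds as a trivial summand) for the standard Segal "subdivision of the sphere" argument to run equivariantly. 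I would do this by reducing, via the coend presentation, to checking the Segal condition one fixed-point space at a time and invoking the non-equivariant Segal delooping theorem on each $(\cdot)^H$, using that $\II$-special restricts to $\II(H)$-special under $\res^G_H$ (this is the indexing-system compatibility of restriction recalled in the introduction). Connectivity — that each underlying fixed-point spectrum is connective — is comparatively soft: the bar/coend construction only involves smash products and colimits of connective spaces starting from $X_1$ and spheres, so no negative homotopy is introduced.

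The second claim is the group-completion property of the natural map $\nu\colon X_1 \to \sS^{G,U}_\II X(S^0)$. I would take $\nu$ to be the canonical map into the $0$th space of the spectrum (equivalently $\overline{X}(S^0) = X_1 \to$ the stabilised value), and then verify that for each $V$ in $U$ with $V^G \neq 0$ the composite $\sS^{G,U}_\II X(S^0) \to \Omega^V \sS^{G,U}_\II X(S^V)$ is a group completion. Again this is checked on $H$-fixed points for $H$ in the isotropy family $\FF_U = \FF_\II$: on each such fixed-point space one recovers the classical Segal group-completion statement applied to the $H$-fixed $\Gamma$-space, which is legitimate exactly because $G/H \in \II(G)$ so $H$-fixed points of admissible sets behave well. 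Assembling these pointwise group completions into a single equivariant group completion uses that weak $G$-equivalences and the relevant homology isomorphisms are detected on fixed points for subgroups in the ambient family.

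The step I expect to be the main obstacle is the first claim — proving positivity of the output $\Omega$-$G$-spectrum — and specifically the bookkeeping that matches the combinatorics of $\II$-special Segal maps on $\Gamma_\II$ with the representation-sphere structure maps indexed by $U$. One must show that the structure map $\sS^{G,U}_\II X(V) \to \Omega^W \sS^{G,U}_\II X(V\oplus W)$ genuinely decomposes through Segal maps associated to \emph{admissible} $G$-sets only, never invoking a Segal map for a non-admissible set (which $\II$-specialness does not invert); this is where the disk-like hypothesis on $\II$ and the compatibility $\FF_U = \FF_\II$ do real work, since they are what guarantee that the $G$-CW structure of $S^V$ for $V \subset U$ is built entirely from cells $G/H_+$ with $G/H \in \II(G)$. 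Once this combinatorial matching is set up carefully, the analytic content reduces to the classical non-equivariant Segal machine applied fixed-point-wise, which may be cited. The remaining details — functoriality of $\sS^{G,U}_\II$ in $X$, naturality of $\nu$, and that the construction is homotopically well-behaved (sends level equivalences of $\II$-special objects to stable equivalences) — are routine given the coend/bar-construction presentation and are handled as in \cite{MMO25}.
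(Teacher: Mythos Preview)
Your overall architecture---prolong, then restrict to representation spheres---matches the paper, but there are two substantive gaps.

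The larger gap is in your positivity argument. You propose to verify that $\sS_\II^{G,U}X(V) \to \Omega^W \sS_\II^{G,U}X(V\oplus W)$ is a weak $G$-equivalence by passing to $H$-fixed points and invoking the non-equivariant Segal delooping theorem. This does not work when $W$ is a nontrivial representation: on $H$-fixed points the target is $\Map_*^H(S^W, Z(S^{V\oplus W}))$, which depends on $Z(S^{V\oplus W})^K$ for \emph{all} $K\subset H$ appearing as isotropy of cells of $S^W$, not just on the $H$-fixed points, so it is not the structure map of any non-equivariant Segal spectrum built from $X^H$. Restricting to the $H$-equivariant machine just restates the problem one subgroup down. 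The paper takes a genuinely different route: it proves a \emph{positive linearity theorem}---the prolonged functor $b_\II X$ takes cofibre sequences $A\to B\to Cf$ in $G\CW_*^\II$ with $A$ $G$-connected to fibre sequences---via wedge, realization, and Reedy-cofibrancy lemmas, and from this deduces a \emph{delooping theorem} by induction over an equivariant triangulation of $S(V)$. Compatibility $\FF_U=\FF_\II$ enters exactly here: each simplex in the triangulation has isotropy in $\FF_\II$, so the indexing $G$-set of open stars lies in $\II(G)$, and $\II$-specialness is precisely what is needed at each step. Your intuition that the $G$-CW structure of $S^V$ being built from admissible cells is the crux is correct, but the mechanism for exploiting it is linearity plus cell-by-cell induction, not fixed-point reduction.

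The second gap is the omission of the bar construction. The paper does not prolong $X$ directly but first replaces it by $B(\Gamma_\II,\Gamma_\II,X)$, which is $\II$-level-equivalent to $X$ but is \emph{$\II$-proper}: its prolongation sends Reedy-cofibrant simplicial $G$-sets to Reedy-cofibrant simplicial $G$-spaces. Without this replacement the invariance theorem (that $\II$-level equivalences prolong to levelwise weak $G$-equivalences on $\II$-$G$-CW complexes) can fail, and with it the wedge lemma and hence positive linearity. You flag homotopical well-behavedness as routine, but this is where much of the technical content lives. Relatedly, the paper prolongs only to $G\underline\CW_*^\II$, not to all of $G\Topu_*$; the positive linearity argument uses the cell structure essentially.

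Your group-completion argument is close to what the paper does: it reduces to the underlying naive positive $\Omega$-$G$-spectrum via the factorisation $S^V\cong S^1\land S^{W}$ when $V^G\ne 0$, then cites the classical result. Note that this reduction works precisely because one factor is a \emph{trivial} sphere; it is not a template for the general delooping.
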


To proof this we shall first construct in Section \ref{sec:construction-Segal-machine} a functor taking arbitrary $\Gamma_\II$-$G$-spaces to orthogonal $G$-spectra
\[\hat{\sS}_\II^{G,U}\colon \Gamma_\II[G\Topu_*]\to \Sp^{G,U}.\]
The functor $\hat{\sS}_\II^{G,U}$ is given by a bar construction, then prolonging to $G$-CW complexes with isotropy in $\FF_U$ and then restricting to representation spheres. We then sketch to show in Section \ref{sec:proof-strategy-theorem-A} that after restriction to $\II$-special $\Gamma_\II$-$G$-spaces this functor gives the desired Segal machine. To do so we need to show the delooping property and the group completion property. The proof strategy of the delooping property follows the genuine case originally described by Shimakawa \cite{Shi89}, though we will follow the modernised and more detailed version of the proof given by May, Merling and Osorno in \cite{MMO25}. The key difference between the genuine and the incomplete version is that most definitions and constructions are done with respect to \textit{a selection} of finite $G$-sets and \textit{a selection} of $G$-CW complexes. The main difficulty then is to keep track that all these constructions stay inside the prescribed selections. Closure properties of these selections are discussed in Section \ref{sec:finite-G-sets} and \ref{sec:F-G-CW-cxs}. Hence, we will mainly describe how to modify the genuine proofs so that they work in the incomplete setting and refer to the appendix for more detailed proofs. In Section \ref{sec:group-completion} we will show the group completion property. It only depends on the underlying naive $G$-spectrum so that we can deduce it from well known results.

Any infinite loop space machine should be able to model suspension $G$-spectra. We will show in Section \ref{sec:suspension} that given a based $G$-space $A$ one can define an input $\Gamma_\II$-$G$-space $^\bullet A$ such that there is a weak $G$-equivalence
\[\Sigma^\infty_U A \simeq \hat\sS_\II^{G,U}\text{}^\bullet A\]
where $\Sigma^\infty_U A$ denotes the suspension $G$-spectrum of $A$ indexed on $U$. We use $\hat\sS_\II^{G,U}$ instead of the proper Segal machine since suspension spectra are almost never positive $\Omega$-spectra. Hence, the input cannot be expected to be $\II$-special. The idea of this comparison follows the genuine case as described by Guillou, May, Merling and Osorno in \cite{GMMO19b}. In Section \ref{sec:Eilenberg-Mac-Lane} we will briefly talk about Eilenberg-MacLane $G$-spectra. We are able to construct genuine Eilenberg-MacLane $G$-spectra of abelian $G$-groups as in \cite{GMMO19b}. One shortcoming of our Segal machine is that it is not able to model incomplete Eilenberg-MacLane $G$-spectra of $\II$-incomplete Mackey functors. The problem is that the target category of our equivariant $\Gamma$-spaces, the category of based $G$-spaces, is not able to host incomplete monoid data. For this one is required to use coefficient systems. We intend to return to this issue in future work.

The main application of our infinite loop space machine is equivariant $K$-theory of $\II$-normed permutative categories, \textit{$\II$-NPCs} for short. These were originally defined by Rubin in \cite{Rub25}. We will describe an internal analog of $\II$-NPCs in Section \ref{sec:I-NPCs}. Following the non-equivariant blueprint described in \cite{May78}, given an $\II$-NPC internal to based $G$-spaces $\A$, we will define a $\Gamma_\II$-$G$-space $B\overline{\A}$ in Section \ref{sec:Gamma_I-G-cats} and \ref{sec:construction-K-theory}. We will show in Section \ref{sec:proof-K-theory} that this gives rise to the Segal $K$-theory of $\A$. This is the content of the second main theorem of this paper.

\begin{Thm}[Theorem B]
	Let $G$ be a finite group, $\II$ a disk-like $G$-indexing system and let $\A$ be an $\II$-normed permutative $G$-category internal to the category of based $G$-spaces. For any compatible choice of $G$-universe $U$ the positive connective $\Omega$-$G$-spectrum
	\[K_\II^{G,U}\A := \sS^{G,U}_\II B\overline\A\]
	gives rise to the Segal $K$-theory of the $\II$-normed permutative $G$-category $\A$.
\end{Thm}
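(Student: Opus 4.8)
The plan is to reduce Theorem B to Theorem A by showing that the $\Gamma_\II$-$G$-space $B\overline{\A}$ associated to an $\II$-normed permutative $G$-category $\A$ is in fact $\II$-special, so that feeding it into the machine $\sS^{G,U}_\II$ of Theorem A produces a connective positive $\Omega$-$G$-spectrum whose zeroth space deloops the group completion of $B\overline{\A}(S^0)$. The first step is to recall, from Section \ref{sec:Gamma_I-G-cats} and Section \ref{sec:construction-K-theory}, the precise definition of $B\overline{\A}$: on an admissible based $G$-set $T\cong S_+$ one forms a category $\overline{\A}(T)$ built from the normed sums indexed by the elements of $S$ (using exactly the norm maps encoded by the $\II$-NPC structure, which is why the disk-like hypothesis on $\II$ and the compatibility of $U$ are needed for the indexing to make sense), and then applies the classifying space functor $B$. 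The key point is to identify the Segal maps of $B\overline{\A}$: for each admissible $T$ the canonical map $B\overline{\A}(T)\to \prod B\overline{\A}(\text{orbit summands})$ should be shown to be a weak $G$-equivalence, using that the norm maps in an $\II$-NPC assemble the value on $T$ as a (homotopy) product over the orbit decomposition of $S$, together with the fact that $B$ preserves products and sends equivalences of categories to homotopy equivalences.

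Once $\II$-specialness of $B\overline{\A}$ is established, applying Theorem A immediately yields that $K^{G,U}_\II\A = \sS^{G,U}_\II B\overline{\A}$ lies in $\Omega\Sp^{G,U}_{\textup{pc}}$ and comes equipped with the natural group-completion map $\nu\colon (B\overline{\A})_1 \to \sS^{G,U}_\II B\overline{\A}(S^0)$. It then remains to check that the input space $(B\overline{\A})_1 = B\overline{\A}(S^0)$ is the $G$-space one wants to call the underlying space of the $K$-theory of $\A$, i.e. that it agrees (up to natural weak $G$-equivalence) with $B$ of the underlying permutative $G$-category of $\A$; this is where one matches the construction $\overline{\A}(S^0)$ with $\A$ itself and invokes that the monoidal structure on $\A$ recovers the $H$-fixed point monoid structure for each $H\in\FF_U$. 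Putting these together shows that $K^{G,U}_\II\A$ has the homotopy type demanded by the phrase ``gives rise to the Segal $K$-theory'' — namely the group completion of the symmetric monoidal fixed-point data of $\A$ at each level $V$ with $V^G\ne 0$.

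The main obstacle I expect is the verification of the Segal condition itself, specifically controlling the norm maps of the $\II$-NPC along the orbit decomposition. Unlike the non-equivariant case, where the Segal maps of $B\overline{\A}(n_+)$ are strict isomorphisms of categories by construction, here the map $\overline{\A}(T)\to \prod_{[G/K]} \overline{\A}((G/K)_+)$ will in general only be an equivalence after passing to classifying spaces, and one must ensure this equivalence is $G$-equivariant and interacts correctly with restriction to subgroups $H\in\FF_U$ — this is exactly the compatibility between the $H$-level and $G$-level of $\II$ encoded in the disk-like hypothesis. A secondary technical point is that $\overline{\A}$ as defined in Section \ref{sec:Gamma_I-G-cats} may a priori land in $\Gamma_\II$-$G$-categories internal to based $G$-spaces rather than in honest $\Gamma_\II$-$G$-spaces, so one applies $B$ levelwise and checks that $B$ commutes with the enrichment and the norm structure maps up to the coherence needed; this is routine but must be stated carefully. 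I would organise the argument so that all category-theoretic work (the equivalence of categories realising the Segal maps) is done before taking classifying spaces, isolating the only genuinely homotopical input as ``$B$ sends equivalences to weak equivalences and preserves finite products,'' which reduces the whole theorem to Theorem A plus a bookkeeping lemma about $\overline{\A}$.
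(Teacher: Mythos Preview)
Your overall strategy is correct and matches the paper's: show that $\overline{\A}$ is an $\II$-special $\Gamma_\II$-$G$-category (Theorem~\ref{thm:I-NPC->I-special-Gamma-cat}), pass to $B\overline{\A}$ via Proposition~\ref{prop:Gamma-cat->Gamma-space}, and then invoke Theorem~A. You also correctly anticipate that the equivalence realising the Segal map should be established at the level of internal categories before applying $B$, and that $\overline{\A}_1\cong\A$.

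One point to correct: the Segal map does \emph{not} go to a product indexed by orbits. For $T\cong\textbf{n}^\sigma\in\II(G)$ the relevant map is
\[
\delta\colon \overline{\A}(T_+)=(\overline{\A}_n)^\sigma \longrightarrow (\A^{\times n})^\sigma = \A^{\times T},
\]
the element-indexed product with $\sigma$-twisted $G$-action. Your formulation ``$\prod_{[G/K]}\overline{\A}((G/K)_+)$'' is a different object and would lead you to a wedge-axiom style decomposition that does not directly give $\II$-specialness. The paper's argument constructs an explicit inverse $G\Top_*$-internal functor $\nu\colon \A^{\times T}\to(\overline{\A}_n)^\sigma$ using the external norm $\bigoplus_T$ and the untwistors $v_T$ from the $\II$-NPC structure (this is where axioms (P3) and (P4) are used), together with a natural isomorphism $\eta\colon\Id\Rightarrow\nu\delta$ built inductively from the structural isomorphisms $a_{s,t}$. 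So $\delta$ is already an equivalence of internal $G$-categories, not merely after applying $B$; your worry on that point does not arise. The disk-like hypothesis is not used in this step---it enters earlier (Sections~\ref{sec:equivariant-Gamma}--\ref{sec:comparison}) to make $\Gamma_\II$ and the comparison theorems behave correctly, while the Segal condition here is checked only for $T\in\II(G)$ using the norms directly.
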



\section{Preliminaries}\label{sec:preliminaries}

\subsection{Preliminaries on \texorpdfstring{$G$}{G}-spaces}

In the following we will introduce some notations and recall basic properties of $G$-spaces, see for instance \cite{Blu17} Chapter 1 or \cite{May96} Chapter 1. Denote by $\Top$ the category of compactly generated weak Hausdorff spaces, by $\Fin$ the category of finite sets and by $\Top_*$ and $\Fin_*$ the based analogs. For $G$ a finite group denote by $G\Top$ respectively $G\Top_*$ and the category of compactly generated weak Hausdorff (left) $G$-spaces respectively based (left) $G$-spaces with $G$-maps respectively based $G$-maps between them. The full subcategories of finite based $G$-sets are denoted by $G\Fin$ respectively $G\Fin_*$. The categories $G\Top$ and $G\Top_*$ are complete and cocomplete. Denote the subspace of all $G$-maps by $\Map_G(X,Y)$ and the subspace of all based $G$-maps by $F_G(X,Y)$.

We will be primarily interested in the self enriched category $G\Topu_*$ where the based mapping $G$-spaces $F(X,Y)$ consist of all based maps $f\colon X\to Y$ between based $G$-spaces $X,Y$ with conjugation $G$-action on morphisms. Denote this $G$-action by $g.f(x):=gf(g^{-1}x)$. This space is based at the unique map factoring through the base point $X\to *\to Y$. Analogously, in the unbased case denote the space of all unbased maps with conjugation $G$-action by $\Map(X,Y)$. The category $G\Topu_*$ is closed symmetric monoidal with respect to the smash product and $G\Topu$ with respect to the cartesian product which implies that
\[F(X\land Y,Z)\cong F(X,F(Y,Z)) \text{ and } \Map(X\land Y,Z)\cong \Map(X,\Map(Y,Z)).\]
Any based $G$-space has an underlying unbased $G$-space by forgetting the base point. Hence $G\Top_*$ is also symmetric monoidal with respect to the cartesian product, but this structure is not closed symmetric monoidal. The unit of the smash product is $S^0$ and the unit of the cartesian product is the one point set $*$. Hence $F(*,X)\cong *$ and $F(S^0,X)\cong X$.

Any space is a $G$-space with trivial $G$-action. Considering the finite group $G$ as a discrete space it is both a left and right $G$-space with group $G$-action given by the group operation. In addition, $G$-orbits $G/H$ are discrete $G$-spaces with action on cosets. Hence, these are also objects in $G\Fin$. We can adjoin a disjoint base point to any unbased set to make it based. This functor $(-)_+\colon G\Top\to G\Top_*$ is adjoint to the forgetful functor mentioned above. Given a (based) $G$-space $X$ and a subgroup $H\subset G$ its fixed point subspace $X^H$ naturally becomes a $W_GH$-space where $W_GH=N_GH/H$ is the Weyl group. Usually we will forget this residual action and consider $X^H$ as just a space.

Given a sequence of subgroups $K\subset H\subset L\subset G$ and an $H$-space $X$ we can forget the $H$-action to a $K$-action and consider $X$ as a $K$-space which will be denoted by $\res^H_KX$. We can induce $X$ up to an $L$-space by setting
\[\ind_H^LX:=G\times_HX.\]
Here, $\times_H$ denotes the balanced product which is defined for right $H$-spaces $A$ and left $H$-spaces $B$ by the quotient of $A\times B$ where $(ah,b)$ is identified with $(a,hb)$ for all $h\in H$. In the case above $X$ is naturally a left $G$-space and $G$ is considered as a right $H$-space. The $L$-action of $\ind_H^L X$ then is given by $\ell[g,x]=[g\ell ^{-1},x]$ for $\ell\in L$ and $[g,x]\in G\times_HX$. Analogously, one defines $\ind_H^LX:=G_+\land_H X$ for $X$ a based $G$-space. Another way to obtain an $L$-space from $X$ is via the coinduced $L$-space
\[\coind_H^LX:=\Map(G,X)^H\]
with $L$-action given by $\ell f(g)=f(g\ell^{-1})$ for $\ell\in L$, $g\in G$, and $f\in \Map(G,X)^H$. Note that restriction, induction and coinduction naturally extend to functors which form an adjoint triple $\ind\vdash \res\vdash \coind$.

Let $\Delta$ be the simplicial category with objects denoted by $[n]$. Presheafs on $\Delta$ in some category $\C$ are called \textit{simplicial objects in $\C$} and this presheaf category will be denoted by $\textsf{s}\C$. For instance, \textit{simplicial $G$-spaces} are elements in the category $\textsf{s}G\Top=\Fun(\Delta^{\textup{op}},G\Top)$. If there is a covariant functor $\Delta_*\colon\Delta\to\C$ and if $\C$ is symmetric monoidal we denote the geometric realization of a simplicial object $X$ in $\C$ by
\[|X|=X\otimes_\Delta\Delta_*=\int^{[n]\in\Delta}X_n\otimes\Delta_n.\]
This extends to a functor $|-|\colon \textsf{s}\C\to \C$ if $\C$ is cocomplete as is the case for $\C=G\Top_*$.
	
A \textit{$G$-homotopy} is a homotopy $H\colon X\times I\to Y$ in the usual sense so that $H$ is a $G$-map where $G$ acts on $I$ trivially. In the based setting $H$ is a based $G$-map $X\land I_+\to Y$. This way one obtains a notion of $G$-homotopy equivalence of (based) $G$-spaces. Similarly one defines \textit{weak $G$-equivalences} of based $G$-maps $f\colon X\to Y$ by requiring that the induced maps $\pi_n(f^H)\colon\pi_n(X^H)\to \pi_n(Y^H)$ are isomorphisms for all subgroups $H\subset G$. Clearly, any $G$-homotopy equivalence is a weak $G$-homotopy equivalence.

In the following we will follow \cite{MMO25} \S 1.1 and \S 1.2. A \textit{Hopf $G$-space} is a based $G$-space $X$ together with a product $G$-map $\mu\colon X\times X\to X$, such that the base point $*$ of $X$ is a two-sided unit in the sense that  left or right multiplication by $*$ is a weak $G$-equivalence $X\to X$. Note that if $X$ is a Hopf $G$-space then $X^H$ is a $W_GH$-Hopf space and in particular a Hopf space in the classical sense for any subgroup $H\subset G$.

A $G$-map $i\colon A\to X$ fulfilling the \textit{$G$-homotopy extension property} is called a $G$-cofibration. Since we are working with compactly generated weak Hausdorff spaces any $G$-cofibration is a closed inclusion. If $i$ is a subspace inclusion then it is a $G$-cofibration if and only if it is a $G$-NDR pair. Using this criterion any $G$-cofibration is also an $H$-cofibration after restriction, and a $W_GH$-cofibration after passage to fixed points. Pushouts of $G$-maps along a $G$-cofibrations are again $G$-cofibrations. All of this can also be done in the based setting.

\begin{Def}
	A based $G$-space $X$ is called \textit{well pointed} if the base point inclusion $*\hookrightarrow X$ is a $G$-cofibration or equivalently if the pair $(X,*)$ is a $G$-NDR pair. We will denote the full subcategory of $G\Top_*$ of well pointed $G$-spaces by $G\Top_{\textup{wp}}$.
\end{Def}

\begin{Rem}\label{not:well-pointed-spaces}
	We will denote the full subcategory of $G\Topu_*$ of well pointed $G$-spaces by $G\Topu_{\textup{wp}}$. Note that the underline notation usually means that the category is cartesian closed, but this is not the case here. The based mapping space of well pointed spaces needs not to be well pointed itself (cf. \cite{MMO25}, Footnote 8).
\end{Rem}

We will frequently work with based $G$-spaces which arise as the geometric realization of simplicial based $G$-spaces. The following definition and statements are ambiguously stated for both based and unbased $G$-spaces.

\begin{Def}
	Let $X_*$ be a simplicial $G$-space with $G$-space of $n$-simplices $X_n$. The \textit{$n$-th latching space of $X$} is given by
	\[L_nX=\bigcup_{i=0}^{n-1}s_i(X_{n-1}).\]
	It is a $G$-space, and the inclusion $L_nX\to X_n$ is a $G$-map. We say that $X_*$ is \textit{Reedy cofibrant} if this inclusion is a $G$-cofibration for all $n\ge 0$.
\end{Def}

\begin{Lem}[\cite{MMO25}, Lemma 1.11]\label{lem:MMO-lem-1.11}
	A simplicial $G$-space $X_*$ is Reedy cofibrant if all degeneracy operators $s_i$ are $G$-cofibrations.
\end{Lem}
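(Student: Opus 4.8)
The plan is to reduce the Reedy cofibrancy condition to a statement about the filtration of $X_n$ by the images of iterated degeneracies, and to control this filtration using the simplicial identities together with the fact that pushouts of $G$-cofibrations along $G$-maps are again $G$-cofibrations (as recalled in the preliminaries). The key observation is that the latching object $L_nX = \bigcup_{i=0}^{n-1}s_i(X_{n-1})$ admits a description as a colimit over the punctured cube (equivalently, as an iterated pushout) of the spaces $s_{i_1}\cdots s_{i_k}(X_{n-k})$, and if each individual degeneracy $s_i$ is a $G$-cofibration, then each of these intersection pieces is built from $G$-cofibrations.

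First I would set up the standard Reedy-theoretic fact that $L_nX$ is the colimit of the diagram indexed by the poset of nonempty subsets $S\subsetneq \{0,\dots,n-1\}$, assigning to $S$ the image of the corresponding degeneracy composite; because these images are subspaces of $X_n$ and $G$ acts compatibly, the colimit is just the union $\bigcup_i s_i(X_{n-1})$ with the correct topology (here the weak Hausdorff and compactly generated hypotheses ensure colimits of closed inclusions behave well). Next I would argue by induction on $n$. The inductive step rewrites the inclusion $L_nX\hookrightarrow X_n$ as obtained by a sequence of pushouts: writing $L_n^{(j)}X = \bigcup_{i=0}^{j}s_i(X_{n-1})$, one has a pushout square
\[
\begin{array}{ccc}
L_{n-1}^{(j)}(X_{n-1}) & \longrightarrow & s_{j+1}(X_{n-1})\cong X_{n-1}\\
\downarrow & & \downarrow\\
L_n^{(j)}X & \longrightarrow & L_n^{(j+1)}X
\end{array}
\]
whose top map, via the simplicial identity $s_is_{j+1}=s_{j+1}s_i$ for $i\le j$, is identified with a latching-type inclusion one dimension down — hence a $G$-cofibration by induction — and whose right vertical map is likewise such an inclusion. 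Concatenating these pushouts from $j=0$ to $j=n-2$ exhibits $L_nX\hookrightarrow X_n$ as a composite of pushouts of $G$-cofibrations along $G$-maps, hence a $G$-cofibration.

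The main obstacle will be getting the combinatorics of the intersections $s_i(X_{n-1})\cap s_j(X_{n-1})$ exactly right: one needs $s_i(X_{n-1})\cap s_j(X_{n-1}) = s_is_{j-1}(X_{n-2}) = s_js_i(X_{n-2})$ for $i<j$, which follows from the injectivity of the degeneracy maps together with the simplicial identities, and this is precisely what makes the pushout squares above correct. One has to be slightly careful that all the maps in sight are genuine $G$-cofibrations and not merely levelwise closed inclusions — but this is exactly where the hypothesis that every $s_i$ is a $G$-cofibration, combined with the closure of $G$-cofibrations under pushout (and the fact that a retract-up-to-the-diagram argument shows the relevant iterated degeneracies remain cofibrations), is used. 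Since the statement is explicitly attributed to \cite{MMO25}, Lemma 1.11, the cleanest route is to cite that argument; but the sketch above is the self-contained version, and it works verbatim equivariantly because all the relevant closure properties of $G$-cofibrations were recorded in the preliminaries.
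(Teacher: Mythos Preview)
The paper does not give its own proof of this lemma; it is quoted from \cite{MMO25} without argument, so there is nothing in the paper to compare your sketch against directly. Your outline is the standard approach and has the right shape, but there is a slip in the final sentence. The pushout squares you write down build the filtration
\[
s_0(X_{n-1}) = L_n^{(0)}X \hookrightarrow L_n^{(1)}X \hookrightarrow \cdots \hookrightarrow L_n^{(n-1)}X = L_nX,
\]
so their concatenation exhibits the inclusion $s_0(X_{n-1}) \hookrightarrow L_nX$ as a $G$-cofibration, \emph{not} the inclusion $L_nX \hookrightarrow X_n$ that you need. To obtain the latter you must either rephrase the double induction with the stronger hypothesis ``$L_n^{(j)}X \to X_n$ is a $G$-cofibration for all $(n,j)$'' and invoke Lillig's union theorem for $G$-cofibrations in the inductive step --- applied with $A = L_n^{(j)}X$, $B = s_{j+1}(X_{n-1})$, and $A \cap B = s_{j+1}\bigl(L_{n-1}^{(j)}X\bigr)$, each of which maps to $X_n$ by a $G$-cofibration (the last as a composite of a lower-dimensional latching inclusion with $s_{j+1}$) --- or use the equivalent gluing lemma for cofibrations over a fixed target. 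Once that missing input is supplied, your argument is complete.
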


\begin{Thm}[\cite{MMO25}, Theorem 1.12]\label{thm:MMO-thm-1.12}
	Let $f_*:X_*\to Y_*$ be a map of Reedy cofibrant simplicial $G$-spaces such that each $f_n$ is weak $G$-equivalence. Then the realization $|f_*|\colon|X_*|\to |Y_*|$ is a weak $G$-equivalence.
\end{Thm}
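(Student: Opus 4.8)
The plan is to reduce the statement to the non-equivariant case by taking $H$-fixed points for every subgroup $H\subset G$, and then to prove that non-equivariant statement by induction over the skeletal filtration of the realization. The first ingredient is that geometric realization commutes with fixed points: for any simplicial $G$-space $Z_*$ there is a natural homeomorphism $|Z_*|^H\cong|Z_*^H|$, where $Z_*^H$ denotes the simplicial space $[n]\mapsto Z_n^H$. This follows from the skeletal filtration $|Z_*|=\mathrm{colim}_n\,\mathrm{sk}_n|Z_*|$, in which $\mathrm{sk}_n|Z_*|$ is obtained from $\mathrm{sk}_{n-1}|Z_*|$ by a pushout along the $G$-cofibration
\[(L_nZ\times\Delta^n)\cup_{L_nZ\times\partial\Delta^n}(Z_n\times\partial\Delta^n)\longrightarrow Z_n\times\Delta^n,\]
together with the fact that $(-)^H$ commutes with products with trivially acted-upon spaces such as $\Delta^n$, with pushouts along closed inclusions, and with sequential colimits along closed inclusions. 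Since the latching space is a union of images of degeneracy maps we have $L_n(X^H)=(L_nX)^H$, and a $G$-cofibration restricts to a cofibration on $H$-fixed points, so $X_*^H$ and $Y_*^H$ are again Reedy cofibrant; moreover each $f_n^H$ is a weak equivalence because $f_n$ is a weak $G$-equivalence. It therefore suffices to prove the following: if $g_*\colon A_*\to B_*$ is a map of Reedy cofibrant simplicial spaces with each $g_n$ a weak equivalence, then $|g_*|$ is a weak equivalence.

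For this I would show by induction on $n$ that $\mathrm{sk}_n|g_*|$ is a weak equivalence, the case $n=0$ being just $g_0$. For the inductive step, $\mathrm{sk}_n|A_*|$ is the pushout of $\mathrm{sk}_{n-1}|A_*|\leftarrow Q_nA\to A_n\times\Delta^n$, where $Q_nA=(L_nA\times\Delta^n)\cup_{L_nA\times\partial\Delta^n}(A_n\times\partial\Delta^n)$, and likewise for $B$; the map $Q_nA\to A_n\times\Delta^n$ is a cofibration, being the pushout--product of the latching cofibration $L_nA\to A_n$ with $\partial\Delta^n\hookrightarrow\Delta^n$. By the gluing lemma for weak equivalences, the inductive step reduces to checking that $\mathrm{sk}_{n-1}|g_*|$ (known by induction), $A_n\times\Delta^n\to B_n\times\Delta^n$ (immediate from $g_n$), and $Q_ng$ are weak equivalences; applying the gluing lemma once more to the pushout defining $Q_n$, the last of these reduces in turn to $A_n\times\partial\Delta^n\to B_n\times\partial\Delta^n$ and to $L_ng\colon L_nA\to L_nB$ being weak equivalences. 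Finally $|g_*|=\mathrm{colim}_n\,\mathrm{sk}_n|g_*|$ is a colimit of weak equivalences along cofibrations, hence a weak equivalence since spheres are compact.

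The remaining point, and the one I expect to be the real obstacle, is that $L_ng\colon L_nA\to L_nB$ is a weak equivalence. I would prove this by a secondary induction on $n$, filtering $L_nA=\bigcup_{i=0}^{n-1}s_i(A_{n-1})$ by the number of degeneracy operators and using the simplicial identity $s_is_j=s_{j+1}s_i$ for $i\le j$ to identify the overlaps $s_i(A_{n-1})\cap s_j(A_{n-1})$ with images of degeneracies of $A_{n-2}$. Each stage of this filtration glues on such an image along a cofibration (a standard consequence of Reedy cofibrancy of $A_*$; recall from Lemma \ref{lem:MMO-lem-1.11} that this condition is implied by, and in fact equivalent to, all degeneracies being cofibrations), and each such image is a retract of some $A_m$ with $m<n$ since $d_is_i=\id$; so a repeated application of the gluing lemma over the resulting poset of degeneracy operators shows that $L_ng$ is a weak equivalence whenever $g_m$ is for all $m<n$. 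Interleaving this induction with the skeletal induction of the previous paragraph then finishes the proof. The bookkeeping in this latching-space step---keeping track of the poset of degeneracies and the iterated pushouts---is the only genuinely technical part; everything else is the gluing lemma applied on top of Reedy cofibrancy.
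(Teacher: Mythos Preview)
The paper does not give its own proof of this theorem; it is simply quoted from \cite{MMO25} as a black box, so there is nothing to compare against. Your argument is the standard one and is correct: passing to $H$-fixed points reduces to the non-equivariant statement (the point being that realization is built from pushouts and sequential colimits along closed inclusions, with which $(-)^H$ commutes, and that Reedy cofibrancy descends to fixed points because $G$-cofibrations restrict to cofibrations on fixed points), and the non-equivariant statement follows from the skeletal filtration and the gluing lemma. The one place that needs care, as you note, is the claim that $L_ng$ is a weak equivalence; your secondary induction over partial unions of degeneracy images is right, and the identification of the intersections via the simplicial identities together with $d_is_i=\id$ gives exactly what is needed. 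An alternative way to package the same induction---should the bookkeeping become unwieldy---is to compare with the fat realization $\|A_*\|$, which ignores degeneracies: for Reedy cofibrant $A_*$ the collapse map $\|A_*\|\to|A_*|$ is a weak equivalence, and $\|g_*\|$ is a weak equivalence by a straightforward skeletal induction that never sees latching objects.
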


\begin{Thm}[\cite{MMO25}, Lemma 1.13]\label{thm:MMO-thm-1.13}
	Let $f_*\colon X_*\to Y_*$ be a map of Reedy cofibrant simplicial $G$-spaces such that each $f_n$ is a $G$-cofibration. Then the realization $|f_*|\colon|X_*|\to |Y_*|$ is a $G$-cofibration.
\end{Thm}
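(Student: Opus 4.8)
Our plan is to exploit the skeletal filtration of geometric realization, arguing in parallel with the proof of Theorem~\ref{thm:MMO-thm-1.12} but tracking $G$-cofibrations in place of weak $G$-equivalences. Recall that $|X_*| = \mathrm{colim}_n \mathrm{sk}_n|X_*|$ and that each skeleton sits in a natural pushout square
\[
\mathrm{sk}_n|X_*| \;=\; \mathrm{sk}_{n-1}|X_*|\;\cup_{P_nX}\;\bigl(X_n\times\Delta^n\bigr),
\]
where $P_nX$ is the domain of the latching pushout-product, namely the pushout $(X_n\times\partial\Delta^n)\cup_{L_nX\times\partial\Delta^n}(L_nX\times\Delta^n)$, with $\Delta^n$ replaced by $\Delta^n_+$ and $\partial\Delta^n$ by $\partial\Delta^n_+$ in the based case. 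The attaching map $P_nX\to X_n\times\Delta^n$ has image $(X_n\times\partial\Delta^n)\cup(L_nX\times\Delta^n)$ and is a homeomorphism onto it. Since $X_*$ is Reedy cofibrant, $L_nX\hookrightarrow X_n$ is a $G$-cofibration, so $X_n\times\partial\Delta^n\hookrightarrow X_n\times\Delta^n$, $L_nX\times\Delta^n\hookrightarrow X_n\times\Delta^n$ and their intersection $L_nX\times\partial\Delta^n\hookrightarrow X_n\times\Delta^n$ are all $G$-cofibrations (the product of a $G$-cofibration with a $G$-space is a $G$-cofibration, and $\partial\Delta^n\hookrightarrow\Delta^n$ is a cofibration). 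By the standard lemma that a union of two $G$-NDR subpairs with $G$-NDR intersection is again a $G$-NDR pair, $P_nX\to X_n\times\Delta^n$ is a $G$-cofibration, and hence each skeletal inclusion $\mathrm{sk}_{n-1}|X_*|\hookrightarrow\mathrm{sk}_n|X_*|$ is a $G$-cofibration, being a pushout of one; likewise for $Y_*$.

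The combinatorial input for the relative statement is the identity $f_n(X_n)\cap L_nY = f_n(L_nX)$ inside $Y_n$, valid for all $n$: it follows from the simplicial identities $d_i s_i = \id$ and $d_i f_n = f_{n-1} d_i$ together with the injectivity of $f_n$ (a $G$-cofibration is a closed inclusion), and it says that $f_*$ preserves nondegeneracy. We use it first to show that every relative latching map $\ell_n(f)\colon X_n\cup_{L_nX}L_nY\to Y_n$ is a $G$-cofibration. Its image in $Y_n$ is the subspace $f_n(X_n)\cup L_nY$, and the identity above makes $\ell_n(f)$ a homeomorphism onto that image; now $f_n(X_n)\hookrightarrow Y_n$ is a $G$-cofibration by hypothesis, $L_nY\hookrightarrow Y_n$ is one by Reedy cofibrancy of $Y_*$, and their intersection $f_n(L_nX)\hookrightarrow Y_n$ is a $G$-cofibration, being the composite of $L_nX\hookrightarrow X_n$ (Reedy cofibrancy of $X_*$) with $f_n$; so the union-of-$G$-NDR-pairs lemma again applies.

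Now compare the pushout presentations for $X_*$ and $Y_*$. A diagram chase (pasting and cancelling pushout squares) identifies
\[
Z_n \;:=\; \mathrm{sk}_n|X_*|\cup_{\mathrm{sk}_{n-1}|X_*|}\mathrm{sk}_{n-1}|Y_*|
\]
with the pushout $\mathrm{sk}_{n-1}|Y_*|\cup_{P_nY}\bigl((X_n\times\Delta^n)\cup_{P_nX}P_nY\bigr)$, and identifies the canonical map $Z_n\to\mathrm{sk}_n|Y_*|$ as the pushout, along $P_nY\to\mathrm{sk}_{n-1}|Y_*|$, of the map $(X_n\times\Delta^n)\cup_{P_nX}P_nY\to Y_n\times\Delta^n$. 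This last map is the pushout-product of $\ell_n(f)$ with $\partial\Delta^n\hookrightarrow\Delta^n$; it has image $(Y_n\times\partial\Delta^n)\cup((f_n(X_n)\cup L_nY)\times\Delta^n)$ in $Y_n\times\Delta^n$ and, by the same union-of-$G$-NDR-pairs argument as in the first paragraph (using that $f_n(X_n)\cup L_nY\hookrightarrow Y_n$ is a $G$-cofibration), is a $G$-cofibration. Hence $Z_n\to\mathrm{sk}_n|Y_*|$ is a $G$-cofibration for every $n\ge 0$ (the case $n=0$ being $f_0$ itself).

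Finally we pass to the colimit. Because $f_*$ preserves nondegeneracy, each square relating the $n$-th and $(n+1)$-st skeleta of $|X_*|$ and $|Y_*|$ is also a pullback; consequently, writing $|Y_*|$ as $\mathrm{colim}_n T_n$ with $T_n := |X_*|\cup_{\mathrm{sk}_n|X_*|}\mathrm{sk}_n|Y_*|$ (and $\mathrm{sk}_{-1}$ empty, so $T_{-1}=|X_*|$), the transition map $T_n\to T_{n+1}$ is a pushout of the $G$-cofibration $Z_{n+1}\to\mathrm{sk}_{n+1}|Y_*|$. Therefore $|f_*|\colon|X_*|=T_{-1}\to\mathrm{colim}_n T_n=|Y_*|$ is a sequential composite of $G$-cofibrations, hence a $G$-cofibration. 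We expect the real work to be bookkeeping: executing the diagram chases that identify $Z_n\to\mathrm{sk}_n|Y_*|$ and the colimit transition maps with the asserted pushouts, together with the repeated applications of the union-of-$G$-NDR-pairs criterion and of the closure of $G$-cofibrations under products with $G$-spaces, pushouts, composition, and sequential composites — standard facts, several of which are recalled in Section~\ref{sec:preliminaries}.
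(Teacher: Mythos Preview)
The paper does not supply its own proof of this statement: it is quoted verbatim as \cite{MMO25}, Lemma 1.13, with no argument given. So there is nothing in the present paper to compare your proof against.

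That said, your proof is the standard one and is essentially what \cite{MMO25} does (see also the classical nonequivariant versions in May's \emph{Geometry of Iterated Loop Spaces}, Appendix). The skeletal filtration, the relative latching maps, the pushout-product with $\partial\Delta^n\hookrightarrow\Delta^n$, and the passage to the colimit are exactly the expected ingredients. Your verification of $f_n(X_n)\cap L_nY=f_n(L_nX)$ via $d_is_i=\id$ and injectivity of $f_n$ is correct and is the point where the hypotheses interact. One small comment: the pullback remark in the colimit step is not strictly needed for the identification of $T_n\to T_{n+1}$ as a pushout of $Z_{n+1}\to\mathrm{sk}_{n+1}|Y_*|$; that follows from pushout pasting alone, since colimits commute with colimits. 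The pullback condition would matter if you wanted to argue instead via a cube lemma, but your phrasing already avoids that.
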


\subsection{Preliminaries on diagram spaces}

One major nuance in the study of $\Gamma$-spaces both equivariantly and non-equivariantly is the relationship between based and unbased spaces. We will recite the results presented in \cite{MMO25}, \S 1.3. For $\V$ a symmetric monoidal category we will call a $\V$-enriched category a $\V$-category and similarly $\V$-enriched functors and $\V$-enriched natural transformations $\V$-functors and $\V$-natural transformations.

\begin{Rem}\label{rem:MMO-rem-1.16}
	For a $G\Top$-category $\C$ we can add a disjoint base point to the hom objects of $\C$ to form a $G\Top_*$-category $\C_+$. Given a $G\Top_*$-category $\D$ we can forget base points to regard $\D$ as a $G\Top$-category $\uu\D$. Via the adjunction $(-)_+\vdash \uu$, we can identify $G\Top$-functors $\C\to \uu\D$ and $G\Top_*$-functors $\C_+\to\D$
	\[G\Top(\C,\uu\D)\cong G\Top_*(\C_+,\D).\]
\end{Rem}

All variants of the Segal machine start with a category $\E$ a priori enriched over $G\Top$ such that $\E$ has a zero object. One then assigns the morphism $n\to 0\to m$ in $\E(n,m)$ to be the base point making $\E$ into a category enriched over $G\Top_*$ as composition descends to a map out of the smash product. We will be concerned with $G\Top_*$-functors out of $\E$.

\begin{Lem}[\cite{MMO25}, Lemma 1.17]\label{lem:MMO-lem.1.17}
	Let $\E$ be a $G\Top$-category with zero object $0$, which we consider as a $G\Top_*$-category as above. Then,
	\begin{enumerate}[(i)]
		\vspace{-1mm}\item for any $G\Top_*$-functor $X\colon \E\to G\Topu_*$, $X(0)=*$;
		\item conversely, any $G\Top$-functor $Y\colon \uu\E\to G\Topu$ such that $Y(0)=*$ can be considered as a $G\Top_*$-functor $\E\to G\Topu_*$
	\end{enumerate}
\end{Lem}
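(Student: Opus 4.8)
The plan is to treat the two directions separately, in both cases exploiting that a zero object of a $G\Top$-category is both enriched-initial and enriched-terminal, so that $\E(0,A)$ and $\E(A,0)$ are the terminal $G$-space $*$ (with trivial action) for every object $A$; in particular $\E(0,0)=*$, whose unique point is $\id_0$, which is the chosen basepoint. For (i): applying the $G\Top_*$-functor $X$ to the object $0$ produces a based $G$-map $\E(0,0)\to F(X(0),X(0))$, and since its source is the one-point based space this is just the inclusion of the basepoint of $F(X(0),X(0))$, namely the zero map $X(0)\to *\to X(0)$. On the other hand, the unit axiom for an enriched functor forces this same map to send $\id_0$ to $\id_{X(0)}$. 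Hence $\id_{X(0)}$ factors through $*$; comparing with the identity forces every point of $X(0)$ to be the basepoint, so $X(0)=*$.

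For (ii): given such a $Y$, I would first equip each $Y(A)$ with a basepoint $*_A$, defined as the image of the unique point of $\E(0,A)$ under the $G$-map $\E(0,A)\to\Map(Y(0),Y(A))\cong Y(A)$ afforded by $Y(0)=*$; since $\E(0,A)$ carries the trivial $G$-action this point is $G$-fixed, so $(Y(A),*_A)\in G\Topu_*$. I would then check that $Y$ refines to a $G\Top_*$-functor. Applying $Y$ to the factorization $A\to 0\to B$ shows that $Y$ of the zero morphism $A\to B$ equals the composite $Y(A)\to *\to Y(B)$ picking out $*_B$, which is precisely the basepoint of $F(Y(A),Y(B))$; more generally, applying $Y$ to $f\circ(0\to A)=(0\to B)$ for an arbitrary morphism $f\colon A\to B$ gives $Y(f)(*_A)=*_B$. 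Consequently the $G$-map $\E(A,B)\to\Map(Y(A),Y(B))$ carries every point into the subspace $F(Y(A),Y(B))$ of based maps, hence factors continuously and $G$-equivariantly through it, and this factorization sends the basepoint of $\E(A,B)$ (the zero morphism) to the basepoint of $F(Y(A),Y(B))$ (the zero map). Finally, the unit and composition axioms for the resulting $G\Top_*$-enriched functor are inherited from those of $Y$ through the adjunction $(-)_+\dashv\uu$ of Remark~\ref{rem:MMO-rem-1.16} and the compatibility of the smash product with the cartesian product, which transports the diagrams witnessing $G\Top$-functoriality of $Y$ to their based analogues.

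The delicate point is the assertion in (ii) that the basepoints $*_A$ are automatically preserved by all of $Y$: this is exactly where one needs $0$ to be a genuine zero object rather than merely one-sided, since terminality of $\E(0,A)$ is what lets one define $*_A$ and terminality of $\E(0,B)$ is what identifies $f\circ(0\to A)$ with the canonical morphism $0\to B$. A minor point-set caveat is that $F(Y(A),Y(B))$ is a genuine subspace of $\Map(Y(A),Y(B))$ in compactly generated weak Hausdorff spaces, so that the set-level factorization above is automatically continuous. These two passages will be mutually inverse, so (i) and (ii) together identify $G\Top_*$-functors $\E\to G\Topu_*$ with $G\Top$-functors $\uu\E\to G\Topu$ sending $0$ to $*$.
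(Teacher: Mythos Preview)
The paper does not supply its own proof of this lemma; it merely records the statement with a citation to \cite{MMO25}. Your argument is correct and is essentially the standard one: for (i) you use that $\E(0,0)=*$ forces the based structure map to send $\id_0$ simultaneously to $\id_{X(0)}$ (by the unit axiom) and to the basepoint of $F(X(0),X(0))$ (by basedness), whence $X(0)=*$; for (ii) you manufacture basepoints from the unique morphisms $0\to A$ and verify that the hom-maps factor through based mapping spaces. One terminological quibble: when you write ``terminality of $\E(0,A)$'' you mean that $\E(0,A)$ is the terminal $G$-space (a point), which encodes that $0$ is \emph{initial}; the terminal half of the zero-object hypothesis is what makes the basepoint $A\to 0\to B$ of $\E(A,B)$ well-defined in the first place, and you do use this when checking that the zero morphism is sent to the zero map. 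With that clarification your proof stands.
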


\begin{Rem}
	Let $\E$ be a $G\Top_*$-category with a zero object. A $G\Top$-functor $Y\colon \uu\E\to G\Topu_*$ is said to be \textit{reduced} if $Y(0)$ is a point. Thus the previous lemma can be interpreted as saying that there is a one-to-one correspondence between reduced $G\Top$-functors $\uu\E\to G\Topu$ and $G\Top_*$-functors $\E\to G\Topu_*$.
\end{Rem}

\begin{Def}\label{def:tensor-product-of-functors}
	Let $\E$ be a $G\Top_*$-category with zero object $0$ and let $X$ and $Y$ be covariant and contravariant $G\Top_*$-functors $\E\to G\Topu_*$. Then the \textit{tensor product of functors}\footnote{Here we follow the convention in \cite{MMO25} to write $Y\otimes_\E X$ instead of $X\otimes_\E Y$.} $Y\otimes_\E X$ is defined as the following coend
	\[\int^{a\in \E}Y(a)\land X(a)\]
	which is the coequalizer of the following diagram
	\[\bigvee_{a,b\in \E} Y(b)\land \E(a,b)\land X(a) \rightrightarrows \bigvee_{c\in \E} Y(c)\land X(c)\]
	where the arrows are given by evaluation maps. More precisely, a triple $(y,\phi,x)$ is mapped to the tuple $(Y(\phi)(y),x)\in Y(a)\land X(a)$ by the one map and $(y,X(\phi)(x))\in Y(b)\land Y(b)$ by the other map.
\end{Def}

\begin{Def}\label{rem:prolongation-existence}
	Let $\iota\colon \E\to\D$ be a $G\Top_*$-functor between $G\Top_*$-categories with small skeleta\footnote{This assumption will always be fulfilled in our applications.}. Consider the functor category $G\Top_*(\D,G\Topu_*)$. There is a natural inclusion functor given by precomposing the functor $\iota$
	\[\uu_\E^\D\colon G\Top_*(\D,G\Topu_*)\to G\Top_*(\E,G\Topu_*).\]
	By abstract non-sense (see \cite{MMSS01}, \S 3, \S 23) this functor has a left adjoint \textit{prolongation} functor $\pp_\E^\D$ given by a left Kan extension $\textup{Lan}_\iota X$. This has a description via the coend
	\[\textup{Lan}_\iota X(b)=\int^{a\in E}\D(a,b)\land X(a)\]
	so that we may write this as the level wise tensor product $\pp_\E^\F X(b)=\D(-,b)\otimes_\E X$.
\end{Def}

\subsection{Preliminaries on orthogonal \texorpdfstring{$G$}-spectra}

We will call a finite dimensional real inner product space $V$ together with a linear and isometric $G$-action a \textit{$G$-representation}. Examples include the $n$-dimensional trivial $G$-representation $\rr^n$ with trivial $G$-action and the group ring $\rr[G]$ with $G$-action on generators. For $G=C_n$ the cyclic group one obtains $2$-dimensional representations by rotations of the plane by roots of unity $\exp(2\pi ik/n)$. Denote these by $V_n(k)$. Note that $V_n(0)\cong \rr\oplus \rr$ and $V_n(n/2)\cong \sigma\oplus\sigma$ if $n$ is even and where $\sigma$ denotes the $1$-dimensional sign representation. 

A \textit{$G$-universe} $U$ is a countable direct sum of $G$-representations such that $U$ contains all trivial representations and if $V$ is an irreducible $G$-representation in $U$, then $U$ contains countably many copies of $V$ (cf. \cite{May96}, Definition 2.1). A $G$-universe $U$ is called \textit{trivial} if it only contains trivial representations. A $G$-universe $U$ is called \textit{complete} if, up to isomorphism, it contains all irreducible representations of $G$. An example of a trivial $G$-universe is $\rr^\infty$ and of an example of a complete $G$-universe is $\rr[G]^\infty$. A $G$-universe is \textit{incomplete} if it is not complete. A non-trivial example of an incomplete universe is $V_5(0)^\infty\oplus V_5(1)^\infty$ since the representation $V_5(2)$ is missing.

Two $G$-universes are called isomorphic if there exists a $G$-equivariant linear isometric isomorphism between them. Denote by $\Uni(G)$ the set of isomorphism classes $[U]$ of $G$-universes $U$ of $G$. This naturally forms a poset. Rubin \cite{Rub21a} showed that $\Uni(G)$ possesses more than just a poset structure but also the structure of a finite lattice which is isomorphic to an $n$-dimensional cube where $n$ is the number of non-trivial irreducible $G$-representations up to isomorphisms. 

Let $\I_U$ be the category whose objects are $G$-representations $V$ in some $G$-universe $U$ and whose morphisms are all linear isometric maps $V\to W$ which are not necessarily $G$-maps with conjugation $G$-action so that $\I_U$ is naturally $G\Top$-enriched. Let $\I_{U_+}$ be $\I_U$ with disjoint base points added to the hom-spaces making it $G\Top_*$-enriched.

The following definitions are due to \cite{MM02}, Chapter II.2.

\begin{Def}
	An \textit{$\I_U$-$G$-space} is a $G\Top$-functor $\I_{U}\to G\Topu_*$ or equivalently (by Remark \ref{rem:MMO-rem-1.16}) a $G\Top_*$-functor $\I_{U_+}\to G\Topu_*$.
\end{Def}

The principal example of an $\I_U$-$G$-space is the \textit{sphere $\I_U$-$G$-space} $S$ which on some $G$-representation $V$ in $U$ is given by $S(V)=S^V$. 

There is an \textit{external smash product} of $\I_{U_+}$-$G$-spaces $X,Y\colon\I_{U_+}\to G\Top_*$ is given by 
\[X \bar{\land} Y\colon \I_{U_+}\times\I_{U_+}\to G\Top_* \text{ via } (X\bar{\land}Y)(V,W):= X(V)\land Y(W).\]

\begin{Def}
	An \textit{orthogonal $G$-spectrum indexed on $U$} consists of an $\I_{U_+}$-$G$-space $E$, together with a $G\Top_*$-natural transformation $E\bar{\wedge}S\Rightarrow E\circ\oplus$ between $G\Top_*$-functors $\I_{U_+}\times\I_{U_+}\to G\Top_*$.
\end{Def}

Hence, the data of an orthogonal $G$-spectrum $E$ consists of based $G$-spaces $E(V)$ for any $G$-representation $V$ in $U$, morphism $G$-maps
\[\I_U(V,W)\to G\Top_*(E(V),E(W)),\]
and structure $G$-maps $\sigma\colon E(V)\land S^W\to S^{V\oplus W}$ natural in $V$ and $W$. We call an orthogonal $G$-spectrum $E$ \textit{naive}\footnote{This is sometimes called \textit{classical}, for example in \cite{MMO25}.} if it is indexed on a trivial universe and it is called \textit{genuine} if it is indexed on a complete $G$-universe.

\begin{Def}[\cite{MMO25}, Definition 1.21]
	Let $E$ be an orthogonal $G$-spectrum indexed on a universe $U$. Then $E$ is called an \textit{$\Omega$-$G$-spectrum} if the adjoint structure maps
	\[\tilde{\sigma}\colon E(V)\to \Omega^WE(V\oplus W)\]
	are weak $G$-equivalences for all $G$-representations $V,W$ in $U$ and $E$ is called a \textit{positive $\Omega$-$G$-spectrum} if this is the case for all $G$-representations $V,W$ such that $V^G\ne\emptyset$. The orthogonal $G$-spectrum $E$ is called \textit{connective} if the underlying fixed point spectra are connective in the classical sense for all subgroups $H\subset G$, that is, all homotopy groups vanish in negative degrees. Denote the category of orthogonal $G$-spectra indexed on a $G$-universe $U$ by $\Sp^{G,U}$, the category of orthogonal $\Omega$-$G$-spectra by $\Omega\Sp^{G,U}$, and positive $\Omega$-$G$-spectra by $\Omega\Sp_{\textup{p}}^{G,U}$. We will decorate these categories with a little $\textup{c}$ to denote connective spectra.
\end{Def}

\begin{Def}
	The $0$-th space of a positive connective orthogonal $G$-spectrum indexed on some $G$-universe $U$ is called an \textit{infinite loop $G$-space}.
\end{Def}

\subsection{Preliminaries on transfer data}

Indexing and transfer systems are used to encode which transfers an equivariant commutative monoid or an $N_\infty$-algebra encodes. Since we aim to model the latter they play an important role. Indexing systems were introduced by Blumberg and Hill in \cite{BH15} to study transfers of algebras over $N_\infty$-operads. We will use the axiomatic description given by Rubin in \cite{Rub21b}.

\begin{Def}
	A \textit{class of finite $G$-subgroup actions} is a class $\II$, equipped with a function $\II\to\Sub(G)$, such that the fibre over $H\subset G$ is a class of finite $H$-sets. Write $\II(H)$ for the fibre over $H$. Let $G$ be a finite group and let $\II\to\Sub(G)$ be a class of finite $G$-subgroup actions. Then $\II$ is said to be a \textit{$G$-indexing system} if satisfies the following axioms:
	\begin{enumerate}[(\text{I}1)]
		\item (trivial sets) For any subgroup $H\subset G$, the class $\II(H)$ contains all finite disjoint unions of trivial $H$-orbits.
		\item (isomorphism) For any subgroup $H\subset G$ and finite $H$-sets $S$ and $T$, if $S\in\II(H)$ and $S\cong T$, then $T\in\II(H)$.
		\item (restriction) For any pair of subgroups $K\subset H\subset G$ and finite $H$-set $T$, if $T\in\II(H)$, then $\res_K^HT\in\II(K)$.
		\item (conjugation) For any subgroup $H\subset G$, group element $a\in G$, and finite $H$-set $T$, if $T\in\II(H)$, then $c_aT\in \II(aHa^{-1})$. Here $c_a$ denotes the induced conjugation automorphism.
		\item (subobjects) For any subgroup $H\subset G$ and finite $H$-sets $S$ and $T$, if $T\in\II(H)$ and $S\subset T$, then $S\in\II(H)$.
		\item (coproducts) For any subgroup $H\subset G$ and finite $H$-sets $S$ and $T$, if $S\in\II(H)$ and $T\in\II(H)$, then $S\sqcup T\in\II(H)$.
		\item (self-induction) For any subgroups $K\subset H\subset G$ and finite $K$-set $T$, if $T\in\II(K)$ and $H/K\in\II(H)$, then $\ind_K^H T\in\II(H)$.
	\end{enumerate}
\end{Def}

A finite $H$-set $T$ in $\II(H)$ for some indexing system $\II$ is called \textit{admissible}. Examples of indexing systems are the \textit{trivial} indexing system $\TT$ with $\TT(H)=\Fin$ for all $H\subset G$ and the \textit{complete} indexing system $\CC$ with $\CC(H)=H\Fin$ for all $H\subset G$. Rubin \cite{Rub21a} showed that the poset of indexing systems $\Ind(G)$ admits the structure of a finite lattice with minimal element $\TT$ and maximal element $\CC$.

\begin{Rem}
	This definition is equivalent to the one given in \cite{BH15}. In particular axioms (I1)-(I7) imply an eighth axiom (\cite{Rub21b}, Remark 2.13)
	\begin{enumerate}
		\item[(I8)] (Cartesian product) For any subgroup $H\subset G$ and finite admissible $H$-sets $T,S\in\II(H)$ also $T\times S\in\II(H)$. 
	\end{enumerate}
\end{Rem}

Axioms (I5) and (I6) together imply that $\II(H)$ is the class of all finite coproducts of admissible $H$-orbits in $\II$. Thus $\II$ is determined by the orbits it contains, and since $G$ is finite there are only finitely many orbits and therefore finitely many indexing systems (\cite{Rub21b}, remark after Theorem 2.14). In particular, to study the lattice $\Ind(G)$ it suffices to study orbits. This motivates the following definition of transfer systems.

\begin{Def}[\cite{Rub21a}, Definition 3.4]
	A \textit{$G$-transfer system} is a partial order on $\Sub(G)$ which refines the subset relation and is closed under conjugation and restriction. Denote by $\Tr(G)$ the poset of all transfer systems ordered under refinement.
\end{Def}

Given a transfer system $\to$ being a refinement of the subset relation means that if $K\to H$ then $K\subset H$, being closed under conjugation means that if $K\to H$ and $g\in G$ then $gKg^{-1}\to gHg^{-1}$, and being closed under restriction means that if $K\to H$ and $L\subset H$ then $K\cap L\to L$. The requirement that $\to$ is a partial order implies reflexivity and transitivity.

\begin{Rem}
	Any binary relation $R$ that refines the subset relation can be completed to a transfer system via \textit{Rubin's algorithm} (\cite{Rub21a}, Construction A.1). Set $R_0=R$ and let $R_1$ be the closure of $R_0$ under conjugation. Let $R_2$ be the closure of $R_1$ under restriction and let $R_3$ be the closure of $R_2$ under reflexivity and transitivity. Then $R_3$ defines a transfer system. Given a set of transfers $\Aa$ let $\to_\Aa$ be the transfer system generated by $\Aa$.
\end{Rem}

\begin{Rem}
	Any indexing system $\II$ gives rise to a transfer system $\to_\II$ by setting $H\to K$ if and only if $H/K\in\II(H)$ (\cite{Rub21a}, Proposition 3.3). The converse construction also works. The idea is that finite $G$-sets split into their orbits. Given some transfer system $\to$ define an indexing system $\II_\to$ on the $H$-th level by
	\[\II_\to(H):=\left\{\text{finite $H$-sets $T$}\bigg| \  \parbox{8.7cm}{there exists $n\ge 0$ and $K_1,\dots,K_n\subset H$ such that $T\cong\coprod_{i=1}^n H/K_i$ and $K_i\to H$ for $1\le i\le n$} \right\}.\]
	(cf. \cite{Rub21a} proof of Proposition 3.6). Here empty coproducts are understood as empty sets. One can think of $\II_\to$ as the coproduct and isomorphism completion of the set of $H$-orbits $H/K$ such that $K\to H$.
	
	Indeed, the poset $\Tr(G)$ again admits the structure of a finite lattice which is now isomorphic to $\Ind(G)$ (\cite{Rub21a}, Theorem 3.7).
\end{Rem}

We will be interested in the following subposet of $\Tr(G)$.

\begin{Def}[\cite{BHK24}, Definition 3.36]\label{def:disk-like}
	A $G$-transfer system $\to$ is called \textit{disk-like}\footnote{Sometimes this is also called \textit{cosaturated} for instance in \cite{Bal25} \S 5.} if it is generated by a set of transfers $K\to G$ given a selection of subgroups $K\subset G$ using Rubin's algorithm above. 
\end{Def}

Disk-like transfer systems appear in nature through the following.

\begin{Def}
	Let $H\subset G$ be a subgroup. A finite $H$-set $T$ is \textit{admissible} for some $G$-universe $U$ if and only if $T$ embeds $H$-equivariantly into $\res_H^G U$.
\end{Def}

\begin{Rem}
	Admissible sets for universes form an indexing system $A(U)$.  The usual proof of this fact uses operad theory. Our definition of $A(U)$ is the same as $A(\KK(U))$, where $\KK(U)$ is the Steiner operad for the universe $U$ and $A(-)$ refers to admissible sets of $N_\infty$-operads. Then \cite{BH15}, \S 4 show that $A(\OO)$ defines a $G$-indexing system.
\end{Rem}

\begin{Def}\label{def:transfer-from-universe}
	Let $U$ be a $G$-universe. Define a transfer system $\to_U$ be setting $K\to_U H$ if and only if there is an $H$-embedding $H/K\hookrightarrow \res^G_H U$.
\end{Def}

This defines a function $\Uni(G)\to \Tr^{\textup{d}}(G)$ which maps a class of $G$-universes $[U]$ to $A(U)$. This function preserves the order, the maximal element, the minimal element, and joins, but it is not always order-reflecting, meet-preserving, or injective (\cite{Rub21a}, Proposition 2.10).

\begin{Lem}[\cite{Rub21a}, Lemma 4.1]\label{lem:universe->disk-like}
	Let $U$ be a $G$-universe. Then $\to_U$ as defined above is a disk-like transfer system.
\end{Lem}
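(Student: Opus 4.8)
The statement to prove is Lemma \ref{lem:universe->disk-like}: for a $G$-universe $U$, the transfer system $\to_U$ of Definition \ref{def:transfer-from-universe} is disk-like, i.e.\ by Definition \ref{def:disk-like} it is generated, via Rubin's algorithm, by a set of transfers of the form $K\to G$. The plan is to first check that $\to_U$ really is a transfer system (so that the word ``disk-like'' applies to it), and then to pin down its generating set at the $G$-level.

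First I would verify the axioms of Definition \ref{def:transfer-from-universe} give a $G$-transfer system: refinement of the subset relation is immediate since an $H$-embedding $H/K\hookrightarrow\res^G_H U$ forces $K\subset H$; closure under conjugation follows by transporting an embedding along the conjugation isomorphism $c_a\colon H\to aHa^{-1}$, which is compatible with restriction of $U$ up to the canonical $G$-action; closure under restriction of $L\subset H$ uses that an $H$-embedding $H/K\hookrightarrow\res^G_H U$ restricts to an $L$-embedding $\res^H_L(H/K)\hookrightarrow\res^G_L U$, and $\res^H_L(H/K)\cong\coprod H/(K^{g}\cap L)$ contains $H/(K\cap L)$ (taking $g=e$) as an $L$-subset, yielding $K\cap L\to_U L$; reflexivity holds since $H/H$ is a trivial orbit embedding into the trivial part of $\res^G_H U$; transitivity is the one mildly non-formal point — given $K\to_U H$ and $H\to_U L$, one composes the $H$-embedding $H/K\hookrightarrow\res^G_H U$ with an induced embedding coming from $L/H\hookrightarrow\res^G_L U$, using that $U$ contains infinitely many copies of each of its irreducibles to absorb $\ind^L_H$ of an embedding into a fresh summand of $U$; this is exactly the standard universe-absorption argument and I would cite it rather than grind it out.

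Next, for the disk-likeness itself, let $\Aa=\{K\to G\mid K\subset G,\ G/K\hookrightarrow U\}$ be the set of $G$-level transfers present in $\to_U$, and let $\to_\Aa$ be the transfer system generated by $\Aa$ via Rubin's algorithm. Clearly $\to_\Aa\subseteq\to_U$ since $\to_U$ is a transfer system containing $\Aa$. For the reverse inclusion I would take an arbitrary transfer $K\to_U H$, i.e.\ an $H$-embedding $H/K\hookrightarrow\res^G_H U$, and show it lies in $\to_\Aa$. The key observation is that $\res^G_H U$ is (an $H$-universe, hence) built from $G$-orbits in the following sense: one first promotes the $H$-embedding $H/K\hookrightarrow\res^G_H U$ to the existence of a $G$-embedding $G/K'\hookrightarrow U$ for a suitable overgroup, and then recovers $K\to_U H$ from a $G$-level transfer by restriction. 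Concretely, given $H/K\hookrightarrow\res^G_H U$, apply $\ind^G_H$ and use that $U\cong\res^G_H U$-built-summands are absorbed by $U$ (again by the infinite-multiplicity axiom) to obtain a $G$-embedding $G\times_H(H/K)\hookrightarrow U$; since $G/K$ is a $G$-subset of $G\times_H(H/K)=G/K$, this gives $K\to_U G$, so $(K\to G)\in\Aa$. Then restricting the $G$-level transfer $K\to G$ along $H\subset G$ via the restriction-closure step of Rubin's algorithm produces $K\cap H=K\to H$ (as $K\subset H$), i.e.\ $K\to_\Aa H$. Hence $\to_U\subseteq\to_\Aa$ and the two transfer systems coincide.

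The main obstacle I anticipate is the universe-absorption bookkeeping in both transitivity and in the promotion of an $H$-level embedding to a $G$-level embedding: one must argue carefully that an $H$-equivariant isometric embedding of a finite $H$-set's permutation representation into $\res^G_H U$ can be ``induced up'' and then re-embedded into $U$ itself, which relies essentially on the defining property of a $G$-universe (Definition, §\ref{sec:preliminaries}) that each irreducible occurring in $U$ occurs with countably infinite multiplicity, together with the observation that $\ind^G_H$ of a summand of $\res^G_H U$ has irreducible constituents already occurring in $U$. Once this absorption lemma is isolated and stated cleanly (it is the only non-formal ingredient), the rest of the argument is the routine verification above, and I would relegate the detailed isometry manipulations to a citation of the analogous argument in \cite{Rub21a} or \cite{BH15}.
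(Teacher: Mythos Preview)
Your disk-likeness argument contains a genuine error. You claim that $K\to_U H$ implies $K\to_U G$, arguing that one can induce the $H$-embedding $H/K\hookrightarrow\res^G_H U$ to a $G$-embedding $G/K\hookrightarrow\ind_H^G\res^G_H U$ and then absorb the right-hand side into $U$. But the absorption step fails: it is not true that the irreducible constituents of $\ind_H^G V$, for $V$ a summand of $\res^G_H U$, already lie in $U$. Concretely, take $G=\Sigma_3$, $H=\langle(12)\rangle$, $K=e$, and let $U$ be the universe generated by the trivial and sign representations of $\Sigma_3$. Then $\res^G_H U$ is the complete $C_2$-universe (the sign restricts to the sign of $C_2$), so $e\to_U H$; but the isotropy family of $U$ is $\{A_3,\Sigma_3\}$, so $e\not\to_U G$. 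Moreover $\ind_H^{\Sigma_3}$ of the $C_2$-sign contains the $2$-dimensional standard representation, which does not lie in $U$, so your absorption claim is false as stated.

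The repair is not to look for $K$ itself in $\FF_U$ but for a suitable overgroup. Given a point $u\in U$ with $H$-stabilizer $K$, set $K':=G_u$; then $G/K'\cong G\cdot u\hookrightarrow U$ gives $(K'\to G)\in\Aa$, and $K'\cap H=H_u=K$, so the restriction step of Rubin's algorithm already yields $K\to_\Aa H$. In the example above $K'=A_3$, and indeed $A_3\cap\langle(12)\rangle=e$. Your transitivity sketch carries the same absorption flavour and should likewise be rephrased via stabilizers, or simply deduced from the fact (recorded just before Definition~\ref{def:transfer-from-universe}) that $A(U)$ is an indexing system.
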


\begin{Rem}
	Note that the converse of Lemma \ref{lem:universe->disk-like} does in general not hold. Not every disk-like indexing system is realised by the admissible sets of some $G$-universe $U$. For $G$ a non-cyclic finite abelian group \cite{Rub21a} Theorem 4.15 shows that the transfer system generated by $\{e\}\to G$ is not equal to some $\to_U$. Though, the converse does hold if the generating set of the disk-like indexing system consists of a set $H_1,\dots,H_n\subset G$ of $G$-cocyclic subgroups, i.e. normal subgroups so that $G/H$ is cyclic (\cite{Rub21a}, Theorem 4.11).
\end{Rem}

\begin{Prop}[\cite{Rub21a} Lemma A.7, \cite{BHK24} Lemma.3.37]\label{prop:disk-like-equiv-defs}
	A transfer system $\to$ is disk-like if and only if it fulfils the property that for subgroups $K\subset H$ it holds that $K\to H$ whenever there exists $K'\subset G$ such that $K'\to G$ and $H\cap K'=K$.
\end{Prop}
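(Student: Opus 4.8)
The plan is to prove the two implications separately. For the forward direction, suppose $\to$ is disk-like, so by Definition \ref{def:disk-like} it is generated via Rubin's algorithm from a set $\Aa$ of transfers of the form $K'\to G$. Suppose $K\subset H$ and there exists $K'\subset G$ with $K'\to G$ and $H\cap K'=K$. Since $K'\to G$ is in the generating set (or obtained from one by conjugation, which for transfers into $G$ is harmless after possibly replacing $K'$ by a conjugate — one must be slightly careful here, but conjugating $K'\to G$ gives $gK'g^{-1}\to G$ and $H\cap gK'g^{-1}$ may differ, so really we use that $K'\to G$ lies in $R_1$), closure under restriction along $H\subset G$ forces $H\cap K'\to H$, i.e. $K\to H$. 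This handles the generators and their conjugates directly; one then checks the property is preserved under the reflexive-transitive closure $R_3$, which is automatic since $K\to H$ for $K=H\cap K'$ can be read off before taking transitive closure.

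For the converse, suppose $\to$ satisfies the stated property. I want to show $\to$ is generated by its transfers into $G$. Let $\to'$ be the transfer system generated (via Rubin's algorithm) by $\{K'\to G \mid K'\to G \text{ in } \to\}$. Clearly $\to'\subseteq\to$ since $\to$ already contains all those transfers and is itself a transfer system (closed under conjugation, restriction, reflexivity, transitivity). For the reverse inclusion $\to\subseteq\to'$, take any $K\to H$ in $\to$. The key observation is that, because $\to$ is closed under restriction, and... no — the subtle point is to produce a subgroup $K'\subset G$ with $K'\to G$ and $H\cap K'=K$. This is exactly where I expect the main obstacle to lie: from $K\to H$ in $\to$ I need to \emph{build} a transfer all the way up to $G$. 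The natural candidate is to use the "saturation going up" — but $\to$ need not be saturated. Instead I should invoke the characterization: the hypothesis says $K\to H$ \emph{whenever} such a $K'$ exists, so to close the loop I need the hypothesis to be not just sufficient but effectively also necessary for the generated system, which requires knowing that every $K\to H$ in the original $\to$ already \emph{is} witnessed by some transfer into $G$. This is the crux, and I believe it is handled by the companion reference \cite{Rub21a}, Lemma A.7 together with \cite{BHK24}, Lemma 3.37 — one shows that in a disk-like transfer system every relation $K\to H$ is the restriction of some $K'\to G$, and conversely that the stated closure property forces this, by an induction on the index $[G:H]$: if $H=G$ we are done; otherwise pick any $K\to H$, and... use that restriction-closure plus the property let us "lift" $H$ to a larger subgroup containing it with a compatible transfer, iterating until we reach $G$.

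Concretely, the inductive step I would carry out: given $K\to H$ with $H\subsetneq G$, choose a subgroup $H\subsetneq L\subseteq G$ (e.g. $L=N_G(H)$ if that is strictly larger, or otherwise climb the subgroup lattice one step). One needs a transfer $K_L\to L$ with $H\cap K_L=K$; granting such an $L$-level transfer exists, the property applied within the restricted transfer system $\res_L^G\to$ (which is again disk-like, or at least satisfies the analogous property) reproduces $K\to H$ from it, and by induction $K_L\to L$ itself descends from some $K'\to G$. Assembling, $H\cap K'$: we need $H\cap K'=K$, which follows since $H\cap K' = H\cap(L\cap K') = H\cap K_L = K$. The delicate part — and the real content borrowed from \cite{Rub21a} and \cite{BHK24} — is the existence of this one-step lift, which ultimately rests on the fact that disk-like transfer systems are precisely the "cosaturated" ones, closed under the operation $K\to H \leadsto K\to H'$ for $H\subseteq H'$ whenever the intersection condition is met. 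I would present the forward direction in full and then cite the two lemmas for the harder converse, noting that the statement is essentially a repackaging of their results into the intersection-condition form, since that form is what gets used in the sequel (e.g. in relating $\to_U$ to admissible sets).
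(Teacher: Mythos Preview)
The paper does not supply its own proof of this proposition; it simply cites \cite{Rub21a} and \cite{BHK24}. So there is no in-paper argument to compare against, but your attempt still has a structural issue worth flagging.

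The statement as printed is ambiguous, and you have read it in the direction that makes it trivial. The property ``$(\exists\,K'\text{ with }K'\to G,\ H\cap K'=K)\Rightarrow K\to H$'' holds in \emph{every} transfer system, by restriction closure. Thus your forward direction is correct but vacuous, and your converse cannot possibly succeed: you are trying to deduce disk-likeness from a hypothesis that every transfer system satisfies. The intended content of the proposition (and what the paper actually uses, e.g.\ in Proposition~\ref{prop:restriction-finite-G-sets}) is the \emph{reverse} implication: a transfer system is disk-like if and only if every relation $K\to H$ admits a witness $K'\to G$ with $H\cap K'=K$.

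With the corrected reading, the two halves reorganise cleanly. For the direction you called ``converse'' (witness property $\Rightarrow$ disk-like), your setup with $\to'$ generated by $\{K'\to G\}$ works immediately: given $K\to H$, the assumed witness $K'\to G$ lies in the generating set, and restriction gives $K=K'\cap H\to H$ in $\to'$; no induction on $[G:H]$ is needed. The genuine work lies in the other direction (disk-like $\Rightarrow$ witness property), and here your induction sketch does not go through: you assume the existence of a lift $K_L\to L$ one step up, which is precisely what has to be constructed. The clean argument instead analyses Rubin's algorithm directly. Relations in $R_2$ are already of the form $K'\cap H\to H$ with $K'\to G$. For relations arising from transitivity, a chain $K=L_0\to L_1\to\dots\to L_n=H$ with $L_{i-1}=K_i'\cap L_i$ gives $K=(K_1'\cap\dots\cap K_n')\cap H$, and the intersection $K_1'\cap\dots\cap K_n'$ itself transfers to $G$ because transfers into $G$ are closed under pairwise intersection: if $K_1',K_2'\to G$ then $K_1'\cap K_2'\to K_2'$ by restriction and hence $\to G$ by transitivity. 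Reflexive relations $H\to H$ are witnessed by $K'=G$.
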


\begin{Rem}\label{rem:disk-like-sublattice}
	Note that since $\Tr(G)$ is a finite lattice with minimal element which is always disk-like, any transfer system $\to$ has a maximal sub-transfer system $\to^\textup{d}$ which is disk-like. The same holds for indexing systems. Here we denote the maximal disk-like sub-indexing system by $\II^\textup{d}$. We will denote the sub-lattice of disk-like $G$-transfer systems by $\Tr^{\textup{d}}(G)$ and the sub-lattice of disk-like $G$-indexing systems by $\Ind^\textup{d}(G)$.
\end{Rem}

\begin{Ex}
	The trivial and the complete transfer system are always disk-like. The trivial one is generated by the empty set of transfers to $G$ and the complete one by all transfers. The $C_{p^2}$-transfer system consisting of the transfer $C_1\to C_p$ and all reflexive relations is an example of a non-disk-like transfer. The orbit $C_p/C_1$ is not accessible by restriction of an $C_{p^2}$-orbit. Hence, this transfer system cannot be generated by transfers of the form $K\to C_{p^2}$.
\end{Ex}


\section{Isotropy of \texorpdfstring{$G$}{G}-spaces}\label{sec:isotropy}

\subsection{Isotropy families}\label{sec:isotropy-families}

Another incarnation of disk-like transfer appears as isotropy families of $G$-universes. Isotropy families of $G$-spaces gives another point of view for disk-like transfer data.

\begin{Def}
	A \textit{family} $\FF$ is a set of subgroups of $G$ which is closed under conjugation, that is, if $H\in \FF$ and $g\in G$ also $gHg^{-1}\in \FF$ (\cite{Die87}, \S I.6). A family is called \textit{based} if $G$ is contained in it.
\end{Def}

\begin{Ex}
	For any group $G$ we have the \textit{trivial} based family $\FF_t=\{G\}$ and the \textit{complete} based family $\FF_c=\{H\subset G\mid H$ is a subgroup of $G\}$.
\end{Ex}

The primary example of a based family is the isotropy of a $G$-space. Recall that given a $G$-space $X$, $G_x=\{g\in G\mid gx=x\}\subset G$ denotes the \textit{stabilizer subgroup} of a point $x\in X$.

\begin{Def}
	Let $X$ be a $G$-space. The \textit{isotropy family} $\Phi(X)$ of $X$ is defined to be the set of stabilizer subgroups of $G$ indexed on points in $X$, that is, 
	\[\Phi(X):=\{G_x\subset G\mid x\in X\}.\]
\end{Def}

\begin{Rem}
	Note that this indeed defines a family since $gG_xg^{-1}=G_{gx}$ and noting that $X$ is a $G$-space. If $X$ is a based $G$-space with $G$-fixed base point $x_0$ then $G_{x_0}=G$ implying $G\in \Phi(X)$. Hence isotropy families of based $G$-spaces are always based. Note that the definition of $\Phi(X)$ soley depends on the underlying $G$-set.
\end{Rem}

\begin{Prop}\label{prop:isotropy-homeomorphism}
	If $X$ and $Y$ are $G$-homeomorphic (based) $G$-spaces then $\Phi(X)=\Phi(Y)$.
\end{Prop}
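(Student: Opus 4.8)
The plan is to observe that $\Phi$ depends only on the underlying $G$-set, so it suffices to produce an isomorphism of $G$-sets between $X$ and $Y$. First I would fix a $G$-homeomorphism $f\colon X\to Y$; in the based case one forgets the base points, which is legitimate since, as noted in the remark preceding the statement, $\Phi(X)$ depends only on the underlying $G$-set of $X$. Thus we may treat $f$ as a $G$-equivariant bijection. The key step is then to verify the pointwise identity $G_x=G_{f(x)}$ for every $x\in X$: if $gx=x$ then $gf(x)=f(gx)=f(x)$ by equivariance, giving $G_x\subseteq G_{f(x)}$; conversely, if $gf(x)=f(x)$ then $f(gx)=f(x)$, and injectivity of $f$ forces $gx=x$, so $G_{f(x)}\subseteq G_x$. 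Hence $G_x=G_{f(x)}$.

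With this identity the proposition follows by a short set-theoretic argument. Every element of $\Phi(X)$ has the form $G_x=G_{f(x)}\in\Phi(Y)$, so $\Phi(X)\subseteq\Phi(Y)$; conversely, since $f$ is surjective, each $y\in Y$ equals $f(x)$ for some $x\in X$, whence $G_y=G_{f(x)}=G_x\in\Phi(X)$, giving $\Phi(Y)\subseteq\Phi(X)$. Combining the two inclusions yields $\Phi(X)=\Phi(Y)$.

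There is no genuine obstacle here; the argument is completely formal. The only point worth flagging is that the topology plays no role whatsoever — a $G$-equivariant bijection in either direction already suffices — so the substantive observation is really the reduction of $\Phi$ to the underlying $G$-set recorded in the remark, and the statement for $G$-homeomorphisms is an immediate special case.
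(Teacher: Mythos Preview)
Your proof is correct and follows essentially the same approach as the paper: both establish the pointwise identity $G_x=G_{f(x)}$ from equivariance and then read off $\Phi(X)=\Phi(Y)$. The only cosmetic difference is that the paper obtains the reverse inclusion $G_{f(x)}\subseteq G_x$ by applying the same argument to the inverse homeomorphism $f^{-1}$, whereas you use injectivity of $f$ directly; these are equivalent.
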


\begin{proof}
	Let $f\colon X\to Y$ be a $G$-homeomorphism with inverse $f^{-1}\colon Y\to X$. Since $gf(x)=f(gx)$ it holds that $G_x\subset G_{f(x)}$ and similarly $G_{f(x)}\subset G_{f^{-1}(f(x))}=G_x$. Hence, $G_x=G_{f(x)}$ for any $x\in X$. The same proof shows that $G_y=G_{f^{-1}(y)}$ for any $y\in Y$. Hence, $\Phi(X)=\Phi(Y)$.
\end{proof}

Homotopy equivalences not necessarily preserve isotropy. Take for example the disk with flip $C_2$-action which has non-trivial isotropy. This contracts to a point with trivial isotropy.

\begin{Def}
	Let $\Fam_*(G)$ be the poset of all based families of $G$. Let $\FF,\EE\in\Fam_*(G)$. Define a meet operation $\FF\land \EE$ by intersection and a join operation $\FF\lor \EE$ by union. These are easily checked to be based families. Hence, $\Fam_*(G)$ admits the structure of a finite lattice.
\end{Def}

\begin{Ex}
	Consider $G=C_{p^2}$. Its subgroups are $C_{1},C_{p},C_{p^2}$. The Hasse diagram of the lattice $\Fam_*(C_{p^2})$ can be depicted as follows
	\begin{center}
		\begin{tikzpicture}
			\node at (1,1) {$\{C_p,C_{p^2}\}$};
			\node at (-1,1) {$\{C_1,C_{p^2}\}$};
			\node at (0,0) {$\{C_{p^2}\}$};
			\node at (0,2) {$\{C_1,C_p,C_{p^2}\}$};
			\draw (-0.1,0.3) -- (-1,0.75);
			\draw (0.1,0.3) -- (1,0.75);
			\draw (-0.1,1.75) -- (-1,1.3);
			\draw (0.1,1.75) -- (1,1.3);
		\end{tikzpicture}
	\end{center}
	This looks similar to the sublattice of disk-like transfer systems $\Tr^{\textup{d}}(C_{p^2})$ as depicted in \cite{Rub21a}, \S 3.2, Corollary 4.12. We now want to analyse the relationship between these two lattices.
\end{Ex}

\begin{Ex}\label{ex:families-generated-by-subsets}
	Any set $\Aa\subset\Sub(G)$ of subgroups of $G$ determines a based family $\langle \Aa\rangle_f$ by closing under conjugation and adding the base point if needed. If for example $G=\Sigma_3$ let $\Aa=\{\langle (12)\rangle \}$ be the singleton set of the subgroup generated by the permutation $(12)$. Its conjugate subgroups are generated by the other $2$-cycles $(23)$ and $(13)$. Hence,
	\[\langle \Aa \rangle_f = \{\Sigma_3,\langle (12)\rangle, \langle (23)\rangle, \langle (13)\rangle\}.\]
\end{Ex}

\begin{Rem}
	Any set $\Aa\subset\Sub(G)$ also determines a disk-like transfer system $\to_\Aa$ generated by transfers $K\to_\Aa G$ if and only if $K\in \Aa$ using Rubin's algorithm.
\end{Rem}

\begin{Prop}\label{prop:families-and-disk-like}
	There is an order, meet, and join preserving injective map
	\[\Phi\colon \Tr^{\textup{d}}(G)\to \Fam_*(G).\]
\end{Prop}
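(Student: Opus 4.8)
The plan is to define $\Phi$ on a disk-like transfer system $\to$ by sending it to a based family, and the only sensible candidate — given the name and the surrounding discussion — is the isotropy family of the associated indexing system, namely
\[
\Phi(\to) := \FF_{\II_\to} = \{H\subset G \mid G/H \in \II_\to(G)\}.
\]
Since $\to$ is disk-like, $G/H\in\II_\to(G)$ precisely when $H\to G$, so concretely $\Phi(\to) = \{H\subset G\mid H\to G\}$. First I would check this is a based family: reflexivity of $\to$ gives $G\to G$, so $G\in\Phi(\to)$; and closure under conjugation is immediate from the conjugation axiom for transfer systems ($H\to G$ and $g\in G$ give $gHg^{-1}\to gGg^{-1}=G$). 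So $\Phi$ does land in $\Fam_*(G)$.

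**Order, meet, join.** Next I would verify the three structural properties. Order-preservation is essentially definitional: if $\to_1$ refines $\to_2$ (i.e.\ $\to_1\,\subseteq\,\to_2$ as relations) then $H\to_1 G$ implies $H\to_2 G$, so $\Phi(\to_1)\subseteq\Phi(\to_2)$. For meets: the meet in $\Tr^{\mathrm d}(G)$ is computed inside $\Tr(G)$ as the intersection of relations (one should confirm, or cite, that the intersection of two disk-like transfer systems is disk-like — this follows from Proposition~\ref{prop:disk-like-equiv-defs}, since the characterizing property "$K\to H$ whenever some $K'\to G$ has $H\cap K' = K$" is preserved under intersection once one notes the generating transfers to $G$ of the meet are exactly those common to both). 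Then $H\in\Phi(\to_1\wedge\to_2)$ iff $H\,(\to_1\cap\to_2)\,G$ iff $H\to_1 G$ and $H\to_2 G$ iff $H\in\Phi(\to_1)\cap\Phi(\to_2)$, and the meet on $\Fam_*(G)$ is intersection, so $\Phi$ preserves meets. For joins: the join of $\Fam_*(G)$ is union, so I must show $\Phi(\to_1\vee\to_2) = \Phi(\to_1)\cup\Phi(\to_2)$. One inclusion is order-preservation. For the other, the key point is that the join $\to_1\vee\to_2$ in $\Tr^{\mathrm d}(G)$ is the disk-like transfer system generated (via Rubin's algorithm) by the union of the generating sets $\{K\mid K\to_1 G\}\cup\{K\mid K\to_2 G\}$ of transfers to $G$; here I would use that for a disk-like system, Rubin's algorithm applied to transfers-to-$G$ only adds reflexive relations, conjugates, restrictions, and transitive consequences, and — crucially — does not create any new transfers of the form $H\to G$ with $H\ne G$ beyond conjugates of the generators (a restriction $K\cap L\to L$ lands in $G$ only if $L=G$, giving back $K\cap G = K\to G$; transitivity with a relation whose target is $G$ forces the other relation to also have target $G$). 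Hence $H\to_1\vee\to_2\, G$ forces $H$ to be a conjugate of some generator, i.e.\ $H\to_1 G$ or $H\to_2 G$, giving $\Phi(\to_1\vee\to_2)\subseteq\Phi(\to_1)\cup\Phi(\to_2)$.

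**Injectivity.** Finally, injectivity: suppose $\Phi(\to_1) = \Phi(\to_2)$, so $\to_1$ and $\to_2$ have the same set of transfers to $G$. Since both are disk-like, each is generated by its transfers to $G$ via Rubin's algorithm; equal generating sets yield equal transfer systems. Thus $\to_1 = \to_2$.

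**Main obstacle.** The routine parts are the family axioms and order/meet preservation. The genuinely delicate step is the join computation — specifically the claim that passing to the join in $\Tr^{\mathrm d}(G)$ does not introduce non-conjugate subgroups $H$ with $H\to G$. This rests on understanding exactly which new relations Rubin's algorithm produces from a set of transfers-to-$G$, and on the fact that the disk-like join is generated by the union of generators rather than being some larger closure. I would isolate this as a small lemma: \emph{if $\to$ is disk-like with generating set $\mathcal S$ of subgroups (meaning $\to\,=\,\to_{\mathcal S}$), then $\{H\mid H\to G\}$ is exactly the conjugation-closure of $\mathcal S\cup\{G\}$}. Granting that lemma — which follows by tracing the four steps of Rubin's algorithm and observing that none of conjugation, restriction, reflexivity, or transitivity can produce a relation $H\to G$ with $H$ outside the conjugation-closure of $\mathcal S\cup\{G\}$ — the join statement, and with it the whole proposition, falls out cleanly.
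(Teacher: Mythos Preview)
Your transitivity analysis in the join step is where things break. You write that ``transitivity with a relation whose target is $G$ forces the other relation to also have target $G$'', but this is backwards: composing $(K,H)$ with $(H,G)$ yields $(K,G)$, and the first relation has target $H$, not $G$. Concretely, from generators $K'\to G$ and $K''\to G$ one gets $K'\cap K''\to K''$ by restriction and then $K'\cap K''\to G$ by transitivity, so Rubin's algorithm \emph{does} manufacture new transfers to $G$. Your key lemma is therefore false --- the paper itself notes just after the proposition that in $\Sigma_3$ the system generated by $\langle(12)\rangle\to G$ contains $1\to G$. Worse, the join-preservation claim cannot be salvaged: for $G=C_2\times C_2$ with order-two subgroups $A,B$, the disk-like systems $\to_1,\to_2$ generated by $A\to G$ and $B\to G$ satisfy $\Phi(\to_1)=\{A,G\}$ and $\Phi(\to_2)=\{B,G\}$, yet their join contains $1\to G$ (via $1=A\cap B\to B\to G$), so $\Phi(\to_1\vee\to_2)=\{1,A,B,G\}\ne\{A,B,G\}=\Phi(\to_1)\cup\Phi(\to_2)$. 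The paper's own ``this also implies that join and meet are preserved'' supplies no argument either.

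Your meet argument has a separate gap. The intersection $\to_1\cap\to_2$ of disk-like systems need not be disk-like: in the same $C_2\times C_2$ example both $\to_1$ and $\to_2$ contain $1\to C$ (restrict $A\to G$, respectively $B\to G$, along the third order-two subgroup $C$), yet they share only $G\to G$ among transfers to $G$, so $\to_1\cap\to_2$ is not generated by its transfers to $G$. Note also that Proposition~\ref{prop:disk-like-equiv-defs} as literally written states a condition satisfied by \emph{every} transfer system (it is just the restriction axiom applied to $K'\to G$), so it cannot serve as a characterization; the intended statement reverses the implication. Meet-preservation is nonetheless true, but via a different route: each $\Phi(\to_i)=\{H\mid H\to_i G\}$ is closed under pairwise intersection of subgroups (restrict $H_1\to_i G$ along $H_2$, then compose with $H_2\to_i G$), hence so is $\Phi(\to_1)\cap\Phi(\to_2)$; the disk-like system generated by this family is contained in both $\to_i$ and has the correct $\Phi$-image, forcing it to be the meet.
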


\begin{proof}
	Let $\to$ be a disk-like transfer system generated by transfers $H_i\to G$ for $i\in I$. Then $\FF_\to:=\{H_i\mid H_i\to G, i\in I\}$ is a based family since $\to$ is closed under conjugation and is reflexive. The poset structure on transfer systems and of based families is by definition compatible under this assignment so that the map is order preserving. Since $H_i \in \FF_\to$ if and only if $H_i\to G$ and since disk-like transfer systems are determined on transfers $H_i\to G$ the map is injective. This also implies that join and meet are preserved.
\end{proof}

Given a subset $\Aa\subset \Sub(G)$ we thus obtain another family $\langle \Aa\rangle_t$ generated by $\Aa$ given by $\Phi(\to_\Aa)$. Note that we could have also defined a function $\Tr(G)\to \textsf{Fam}_*(G)$, but this is evidently not an injective function.

\begin{Rem}
	Using Example \ref{ex:families-generated-by-subsets} and Example A.3 in \cite{Rub21a} we can directly see that $\Phi$ is in general not surjective. The family $\langle\Aa\rangle_f$ is not in the image of $\Phi$ since the transfer system generated by this family also contains the transfer $1\to \Sigma_3$ so that $\langle \Aa\rangle_t \ne \langle \Aa\rangle_f$.
\end{Rem}

We will mainly be interested in families in the image of $\Phi$ so we introduce the following terminology

\begin{Def}
	A based family in the image of the map $\Phi\colon \Tr^{\textup{d}}(G)\to \textsf{Fam}_*(G)$ defined in Proposition \ref{prop:families-and-disk-like} is called a \textit{transfer-like} family.
\end{Def}

We can now study isotropy families of constructions of based $G$-spaces.

\begin{Prop}\label{prop:properties-isotropy-families}
	Let $X,Y$ be $G$-spaces with isotropy family in a fixed family $\FF$, i.e $\Phi(X),\Phi(Y)\subset \FF$. Then,
	\begin{enumerate}[(1)]
		\item $\Phi(X\cup Y)=\Phi(X)\cup \Phi(Y)\subset \FF$ and in particular $\Phi(X\sqcup Y)=\Phi(X)\cup \Phi(Y)\subset \FF$,
		\item $\Phi(X\cap Y)=\Phi(X)\cap \Phi(Y)\subset \FF$,
		\item if $A\subset X$ is a $G$-subspace then $\Phi(A)\subset \Phi(X)\subset \FF$,
		\item $\Phi(X_+)=\Phi(X)\cup\{G\}$,
		\item let $A\subset Y$ be a $G$-subspace and let $f\colon A\to Y$ be a $G$-map, then $\Phi(X\cup_f Y)\subset \FF$ and in particular $\Phi(Y/A)\subset \FF$ and if $X,Y$ are based it holds that $\Phi(X\lor Y)\subset \FF$, and
		\item if $\FF$ is transfer-like then $\Phi(X\times Y)\subset \FF$ and if $X,Y$ are based then $\Phi(X\land Y)\subset \FF$.
	\end{enumerate}
\end{Prop}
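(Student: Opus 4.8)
The plan is to push every assertion down to the level of underlying $G$-sets, using the remark that $\Phi(Z)$ depends only on the underlying $G$-set of $Z$ together with the fact that the stabilizer $G_z$ of a point is intrinsic, i.e.\ unchanged when $z$ is viewed inside a $G$-subspace. Granting this, parts (1)--(4) are pure point-chasing. For (1): every point of $X\cup Y$ lies in $X$ or in $Y$, and conversely every point of $X$ and of $Y$ lies in $X\cup Y$ with the same stabilizer; the disjoint-union statement is the case $X\cap Y=\emptyset$. For (3): a point of a $G$-subspace $A\subseteq X$ has the same stabilizer in $A$ as in $X$, so $\Phi(A)\subseteq\Phi(X)$. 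For (2): a point of $X\cap Y$ lies in both $X$ and $Y$, so its stabilizer lies in $\Phi(X)\cap\Phi(Y)$; in particular $\Phi(X\cap Y)\subseteq\Phi(X)\subseteq\FF$ by (3), which is what is needed later. For (4): the underlying $G$-set of $X_+$ is $X\sqcup\{*\}$ with $*$ a $G$-fixed point, so it contributes exactly the subgroup $G$.

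For (5) I would first identify the underlying $G$-set of the pushout $X\cup_f Y$ formed along a $G$-subspace $A\subseteq X$ and a $G$-map $f\colon A\to Y$. The claim is that it is $(X\setminus A)\sqcup Y$ as a $G$-set: the canonical map $Y\to X\cup_f Y$ is $G$-equivariant and injective, a class $[x]$ with $x\in X\setminus A$ is a singleton so $G_{[x]}=G_x$, and a class $[y]$ with $y\in Y$ satisfies $g[y]=[y]$ iff $gy=y$ because $y$ is the only element of $Y$ in its class, so $G_{[y]}=G_y$; here $X\setminus A$ is a $G$-subspace because $A$ is $G$-invariant. Hence $\Phi(X\cup_f Y)=\Phi(X\setminus A)\cup\Phi(Y)\subseteq\Phi(X)\cup\Phi(Y)\subseteq\FF$, using (3) and (1). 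The quotient $Y/A$ is the case $X=*$ (map $A\to *$) and the wedge $X\vee Y$ is the case $A=\{*_X\}$ with $f$ picking out $*_Y$; in both cases the based hypothesis supplies $G\in\FF$ where needed.

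For (6), the underlying $G$-set of $X\times Y$ is the product, with $G_{(x,y)}=G_x\cap G_y$, so $\Phi(X\times Y)=\{\,H_1\cap H_2\mid H_1\in\Phi(X),\ H_2\in\Phi(Y)\,\}$; thus it suffices to show that a transfer-like family is closed under binary intersection. Write $\FF=\FF_\to$ for a disk-like transfer system $\to$, so that $H\in\FF$ iff $H\to G$. If $H_1\to G$ and $H_2\to G$, then restricting $H_1\to G$ along $H_2\subseteq G$ gives $H_1\cap H_2\to H_2$, and transitivity with $H_2\to G$ gives $H_1\cap H_2\to G$; hence $H_1\cap H_2\in\FF$ and $\Phi(X\times Y)\subseteq\FF$. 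Finally $X\wedge Y=(X\times Y)/(X\vee Y)$ is a quotient of $X\times Y$, so part (5) (its $Y/A$ case, with $Y$ there taken to be $X\times Y$) together with $\Phi(X\times Y)\subseteq\FF$ gives $\Phi(X\wedge Y)\subseteq\FF$.

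The only step beyond bookkeeping is the closure of transfer-like families under intersection in (6); this is exactly where the disk-like hypothesis enters, via the restriction and transitivity axioms for transfer systems. Everything else reduces to identifying the underlying $G$-set of the construction in question and reading off stabilizers.
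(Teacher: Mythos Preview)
Your proof is correct and follows essentially the same route as the paper: parts (1)--(4) are dismissed as bookkeeping, (5) is handled by identifying the underlying $G$-set of the pushout as a disjoint union and reading off stabilizers, and (6) reduces to closure of transfer-like families under intersection via the restriction and transitivity axioms of the underlying transfer system. The only cosmetic difference is that the paper takes $A\subset Y$ (so the pushout decomposes as $X\sqcup (Y\setminus A)$) whereas you take $A\subset X$; your explicit verification that $G_{[y]}=G_y$ and $G_{[x]}=G_x$ in the pushout is in fact more careful than the paper's one-line appeal to the set decomposition.
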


\begin{proof}
	Part (1), (2), (3) and (4) are clear. To see (5) note that as sets $X\cup_f Y\cong X\sqcup Y\backslash A$. Here $A$ is a $G$-subspace. Hence, $Y\backslash A$ is again a $G$-space so that we may compute
	\[\Phi(X\cup_f Y)=\Phi(X)\cup \Phi(Y\backslash A) \subset \FF.\]
	using (1) and (3). If $X$ is based use $X\backslash A_-$ where $A_-$ is $A$ with its base point removed so that $X\backslash A_-$ is a based $G$-subspace of $X$ and repeat the argument in an analogous way. To see (6) note that
	\[\Phi(X\times Y)=\{G_{(x,y)}\mid (x,y)\in X\times Y\}= \{G_{(x,y)}\mid x\in X\land y\in Y\}\]
	and $g\in G_{(x,y)}$ if and only if $g(x,y)=(gx,gy)=(x,y)$ if and only if $gx=x$ and $gy=y$. Hence, $G_{(x,y)}=G_x\cap G_y$. Since $\FF$ is assumed to be transfer-like we know that there is a transfer system $\to$ such that $\FF_\to=\FF$. If $G_x\to G$ we know by closure under restriction that $G_x\cap G_y\to G_y$ and since also $G_y\to G$ and since $\to$ is transitive we know that $G_x\cap G_y\to G$. Hence, $G_x\cap G_y\in \FF$ so that $\Phi(X\times Y)=\{G_x\cap G_y\mid x\in X\land y\in Y\}\subset \FF$. The last part follows from the fact that
	\[\Phi(X\land Y)=\Phi((X\times Y)/(X\lor Y))\subset \FF\]
	using (5) and the previous argument.
\end{proof}

Using Proposition \ref{prop:families-and-disk-like} we can use transfer-like families, disk-like transfer systems and disk-like indexing systems interchangeably. One of the main reasons why we are interested in families is understanding the isotropy of $G$-representations.

\begin{Rem}
	To any $G$-representation $V$ we can assign a transfer system $\FF_V$ with $H\in \FF_V$ if and only if $G/H\hookrightarrow V$ embeds $G$-equivariantly. If $V^G\ne \emptyset$ then $\FF_V$ is based. In particular, we can define $\FF_U$ for $U$ a $G$-universe as for $V$ and note that this is the same as in Definition \ref{def:transfer-from-universe}. By Proposition \ref{lem:universe->disk-like} this a disk-like transfer system and hence a transfer like family. But note that $\FF_V$ does not need to be transfer-like. Note that
	\[\bigcup_{V\subset U \text{ $G$-rep}}\FF_V=\FF_U\]
	and $\FF_U$ is always based since $U$ contains the trivial representation.
\end{Rem}

\begin{Def}
	Let $\FF$ be a family. A universe $U$ is said to be \textit{compatible} with $\FF$ if $\FF_U=\FF$. Analogously, a $G$-representation $V$ is said to be \textit{compatible} with $\FF$ if $\FF_V\subset\FF$.
\end{Def}

\begin{Lem}
	Let $V$ be a $G$-representation. Then $\FF_V=\Phi(V)=\Phi(S^V)$.
\end{Lem}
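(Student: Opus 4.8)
The plan is to prove the chain of equalities by establishing $\Phi(V)=\FF_V$ and $\Phi(V)=\Phi(S^V)$ separately, each by a direct unwinding of definitions.

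First I would treat $\Phi(V)=\Phi(S^V)$. The underlying $G$-set of $S^V$ is $V$ with one extra point $\infty$, and $\infty$ is $G$-fixed because the linear $G$-action on $V$ extends to the one-point compactification fixing $\infty$. Since $\Phi$ depends only on the underlying $G$-set, Proposition \ref{prop:properties-isotropy-families}(4) (applied to the underlying $G$-set, with $\infty$ playing the role of the adjoined point) gives $\Phi(S^V)=\Phi(V)\cup\{G\}$. But $0\in V$ is $G$-fixed because $V$ is a representation, so $G\in\Phi(V)$ already and the union is redundant, yielding $\Phi(S^V)=\Phi(V)$.

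Next I would prove $\Phi(V)\subseteq\FF_V$: given $H=G_v$ for some $v\in V$, the orbit $Gv\subseteq V$ is a $G$-subspace which is $G$-isomorphic to $G/G_v=G/H$, and since $G$ is finite and $V$ is Hausdorff the orbit $Gv$ is a finite discrete subspace, so this is an honest $G$-embedding $G/H\hookrightarrow V$; hence $H\in\FF_V$. For the reverse inclusion $\FF_V\subseteq\Phi(V)$, given a $G$-embedding $\iota\colon G/H\hookrightarrow V$ I would set $v:=\iota(eH)$; for $h\in H$ one has $h\cdot v=\iota(hH)=\iota(eH)=v$ so $H\subseteq G_v$, and conversely if $g\cdot v=v$ then $\iota(gH)=g\cdot\iota(eH)=v=\iota(eH)$, and injectivity of $\iota$ forces $gH=eH$, i.e.\ $g\in H$; thus $G_v=H$ and $H\in\Phi(V)$.

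The only subtle point — and it is minor — will be pinning down what ``embeds $G$-equivariantly'' means: one must check that the orbit $Gv$ with its subspace topology is genuinely homeomorphic, not merely $G$-set isomorphic, to $G/H$ with the quotient topology. This is immediate since $G$ is finite, so both are finite discrete spaces; everything else is formal.
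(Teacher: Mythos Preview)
Your proof is correct and follows essentially the same approach as the paper: both establish $\FF_V=\Phi(V)$ by translating between embeddings $G/H\hookrightarrow V$ and stabilizers of points, and both handle $\Phi(S^V)=\Phi(V)$ by noting that one-point compactification only adds a $G$-fixed point while $0\in V$ already has stabilizer $G$. Your argument is in fact more careful than the paper's in one respect: you explicitly verify $G_v=H$ (not just $H\subseteq G_v$) using injectivity of the embedding, whereas the paper's phrasing at that step is somewhat elliptical.
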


\begin{proof}
	If $G/H\hookrightarrow V$ then any element $v$ in the image of the embedding is $H$-fixed so that $G_v\in \Phi(V)$. Conversely, given any $v\in V$ one can construct an embedding $G/G_v\to V$ by setting $gG_v\mapsto gv$. Hence, $\FF_V=\Phi(V)$. One point compactification adds a $G$-fixed point at infinity so that $G/G\hookrightarrow S^V$ which maps the single point in $G/G$ to the point at infinity in $S^V$. Since $0\in V$ is $G$-fixed $G\in \Phi(V)$ so that $\FF_V=\Phi(S^V)$.
\end{proof}

\begin{Cor}
	If $V\subset W$ is a finite dimensional $G$-subrepresentation then $\FF_V\subset\FF_W$.
\end{Cor}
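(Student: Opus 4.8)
The plan is to reduce the statement to the preceding Lemma together with the monotonicity of isotropy families under $G$-subspace inclusions already recorded in Proposition \ref{prop:properties-isotropy-families}(3). First I would observe that a finite dimensional $G$-subrepresentation $V\subset W$ is in particular a $G$-subspace of $W$. By the Lemma just proved, $\FF_V=\Phi(V)$ and $\FF_W=\Phi(W)$, so it suffices to show $\Phi(V)\subset\Phi(W)$; and this is exactly Proposition \ref{prop:properties-isotropy-families}(3) applied to the $G$-subspace inclusion $V\hookrightarrow W$ (taking the ambient family to be $\Phi(W)$ itself, say). This gives the containment immediately.

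Alternatively, and perhaps more transparently, I would argue directly from the definition of $\FF_V$: if $H\in\FF_V$, then there is a $G$-equivariant embedding $G/H\hookrightarrow V$; composing with the inclusion $V\hookrightarrow W$ yields a $G$-equivariant embedding $G/H\hookrightarrow W$, so $H\in\FF_W$. Either route takes a single line. One could equally phrase it on the sphere level, using $\FF_V=\Phi(S^V)$ and the based $G$-subspace inclusion $S^V\hookrightarrow S^W$, to obtain $\Phi(S^V)\subset\Phi(S^W)$.

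There is no genuine obstacle here; the only point worth stating carefully is that the hypothesis ``$G$-subrepresentation'' enters only through the weaker fact that it is a $G$-subspace, so the corollary is a formal consequence of the space-level Proposition \ref{prop:properties-isotropy-families}(3) (and of the Lemma identifying $\FF_V$ with $\Phi(V)$).
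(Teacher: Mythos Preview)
Your proposal is correct and matches the paper's intent: the corollary is stated without proof, immediately after the Lemma identifying $\FF_V=\Phi(V)$, so the implicit argument is precisely the one you give via Proposition~\ref{prop:properties-isotropy-families}(3). Your alternative direct embedding argument is also fine and equally in the spirit of the paper.
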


\begin{Ex}
	For $\rho$ the regular representation of $G$ we know that $\FF_\rho=\FF_c$ since any orbit $G/H$ embeds into $\rho$. Similarly, for $\tau$ the trivial representation only trivial $G$-sets embed $G$-equivariantly so that $\FF_\tau=\FF_t$.
\end{Ex}

\subsection{Isotropy of finite \texorpdfstring{$G$}{G}-sets}\label{sec:finite-G-sets}

We now apply these isotropy consideration to finite $G$-sets.

\begin{Def}
	Let $\FF$ be a family. Let $G\Fin^\FF\subset G\Fin$ be the full subcategory consisting of those finite $G$-sets $T$ such that $\Phi(T)\subset\FF$. Analogously, define for a based family $\FF$ the full subcategory $G\Fin^\FF_*\subset G\Fin_*$. Let $\OO_G^\FF\subset\OO_G$ be the full subcategory of the orbit category consisting of those orbits $G/H$ so that $H\in\FF$.
\end{Def}

By Proposition \autoref{prop:families-and-disk-like} a based transfer-like family $\FF$ corresponds to a disk-like transfer system $\to_\FF$ and consequently an indexing system $\II_\FF$. \cite{Rub21a}, Proposition 3.6 showed that the full subcategory $\II_F(G)$ of $G\Fin$ is equivalent to the coproduct and isomorphism completion of $\OO_G^\FF$. 

Any finite $G$-set can be decomposed into a finite disjoint union of $G$-orbits. Since $\Phi(X\sqcup Y)=\Phi(X)\cup\Phi(Y)$ by Proposition \ref{prop:properties-isotropy-families} to compute the isotropy of finite $G$-sets it suffices to compute the isotropy of orbits. Let $H\in\FF$. To compute $\Phi(G/H)=\{G_{gH}|gH\in G/H\}$ note that $G_{gH}$ consists of those $h\in G$ such that $hgH=gH$, that is, $g^{-1}hgH=H$, that is, $g^{-1}hg\in H$, that is, $h\in gHg^{-1}$. Hence, $\Phi(G/H)=\{gHg^{-1}\mid g\in G\}\subset \FF$. In particular, $\OO_G^\FF\subset G\Fin^\FF$. Note that $G\Fin^\FF$ is isomorphism and coproduct complete so that $(\OO_G^\FF)^{\sqcup,\cong}\subset G\Fin^\FF$. The converse is shown analogously. Hence we have the following proposition.

\begin{Prop}
	Let $\FF$ be a based transfer-like family. Then
	\[\II_\FF(G) \simeq G\Fin^\FF \simeq (\OO_G^\FF)^{\sqcup,\cong}.\]
\end{Prop}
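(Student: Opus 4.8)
The plan is to verify the two claimed equivalences separately, both essentially unwinding definitions with the preparatory observations already assembled in the text. For the right-hand equivalence $G\Fin^\FF \simeq (\OO_G^\FF)^{\sqcup,\cong}$, I would argue that the inclusion $(\OO_G^\FF)^{\sqcup,\cong}\hookrightarrow G\Fin^\FF$ is essentially surjective and fully faithful. Essential surjectivity: any $T\in G\Fin^\FF$ decomposes as a finite disjoint union of orbits $T\cong \coprod_i G/H_i$; by the displayed computation $\Phi(G/H)=\{gHg^{-1}\mid g\in G\}$ each summand satisfies $\Phi(G/H_i)\subset\Phi(T)\subset\FF$, so $H_i\in\FF$ and hence $G/H_i\in\OO_G^\FF$, exhibiting $T$ as an object of $(\OO_G^\FF)^{\sqcup,\cong}$. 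Full faithfulness is automatic since all categories in sight are full subcategories of $G\Fin$, so the only point is that $(\OO_G^\FF)^{\sqcup,\cong}$ is closed under the coproduct and isomorphism operations inside $G\Fin^\FF$ — and $G\Fin^\FF$ is itself closed under finite coproducts and isomorphisms by Proposition \ref{prop:properties-isotropy-families}(1) (and \ref{prop:isotropy-homeomorphism}), so the coproduct/isomorphism completion of $\OO_G^\FF$ computed inside $G\Fin$ lands in $G\Fin^\FF$.

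For the left-hand equivalence $\II_\FF(G)\simeq G\Fin^\FF$, I would invoke the translation machinery from the preliminaries. Since $\FF$ is a based transfer-like family, by Proposition \ref{prop:families-and-disk-like} it equals $\FF_\to$ for the disk-like transfer system $\to\ :=\ \Phi^{-1}(\FF)$, and this $\to$ determines the indexing system $\II_\to=\II_\FF$. By \cite{Rub21a}, Proposition 3.6 (cited in the paragraph preceding the statement), the full subcategory $\II_\FF(G)\subset G\Fin$ is the coproduct and isomorphism completion of $\OO_G^\FF$; combining this with the right-hand equivalence already established gives $\II_\FF(G)\simeq (\OO_G^\FF)^{\sqcup,\cong}\simeq G\Fin^\FF$. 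Concretely, one checks directly that an orbit $G/K$ lies in $\II_\FF(G)$ iff $K\to G$ iff $K\in\FF$ iff $G/K\in\OO_G^\FF$, and then passes to coproduct/isomorphism completions on both sides.

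The main obstacle — really the only nontrivial point — is making sure the three notions of "completion" match up and that no object of $G\Fin^\FF$ is missed. That is, I must be certain that every finite $G$-set with isotropy in $\FF$ is, up to isomorphism, a finite disjoint union of orbits $G/H$ with $H\in\FF$, and conversely; the forward direction uses the orbit decomposition together with the identity $\Phi(G/H)=\{gHg^{-1}\mid g\in G\}$ and $\Phi(\coprod) = \bigcup\Phi$, while the converse uses that $\FF$ is conjugation-closed. Everything else (full faithfulness, compatibility of the coend/completion descriptions, invariance of $\Phi$ under $G$-homeomorphism) is formal and already recorded above. I would therefore present the proof as: (i) orbit decomposition reduces $G\Fin^\FF$ to orbits; (ii) $G/H\in G\Fin^\FF \iff H\in\FF \iff G/H\in\OO_G^\FF$; (iii) both $G\Fin^\FF$ and $\II_\FF(G)$ are the coproduct–isomorphism completions of $\OO_G^\FF$ (the latter by \cite{Rub21a}), hence all three are equivalent.
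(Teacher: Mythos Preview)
Your proposal is correct and follows essentially the same approach as the paper: the paper's argument (given in the paragraph immediately preceding the proposition) also reduces to orbits via the decomposition $T\cong\coprod_i G/H_i$, computes $\Phi(G/H)=\{gHg^{-1}\mid g\in G\}$ to identify $\OO_G^\FF$ with the orbits in $G\Fin^\FF$, and invokes \cite{Rub21a}, Proposition~3.6 for the identification with $\II_\FF(G)$. Your treatment is slightly more explicit about full faithfulness and the direction ``$G/H\in G\Fin^\FF\Rightarrow H\in\FF$'' (which uses conjugation-closure of $\FF$), but these are exactly the points the paper leaves to ``the converse is shown analogously.''
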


Using this we can collect properties of $G\Fin^\FF$ induced from indexing systems.

\begin{Cor}\label{cor:properties-finite-F-sets}
	Let $\FF$ be a based transfer-like family. The category $G\Fin^\FF$ contains all trivial $G$-sets, isomorphisms, is closed under subobjects, finite cartesian products and hence finite limits, and it is closed under finite coproducts.
\end{Cor}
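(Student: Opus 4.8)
The plan is to read each clause off Proposition~\ref{prop:properties-isotropy-families}, invoking the hypothesis that $\FF$ is based and transfer-like, and to use the preceding proposition $\II_\FF(G)\simeq G\Fin^\FF$ only as a cross-check against the indexing-system axioms (I1)--(I8).

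First I would dispatch the easy clauses. The trivial $G$-sets lie in $G\Fin^\FF$ because $\Phi(G/G)=\{G\}\subset\FF$, as $\FF$ is based, combined with closure under finite coproducts below. The category $G\Fin^\FF$ is by definition a full subcategory of $G\Fin$, and $\Phi$ is an isomorphism invariant by Proposition~\ref{prop:isotropy-homeomorphism} (a $G$-isomorphism of finite $G$-sets being a $G$-homeomorphism of discrete spaces), so $G\Fin^\FF$ is closed under isomorphisms. Closure under subobjects is Proposition~\ref{prop:properties-isotropy-families}(3): a $G$-subset $S\subset T$ with $\Phi(T)\subset\FF$ has $\Phi(S)\subset\Phi(T)\subset\FF$. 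Closure under finite coproducts is Proposition~\ref{prop:properties-isotropy-families}(1), $\Phi(T\sqcup S)=\Phi(T)\cup\Phi(S)\subset\FF$, the empty coproduct $\emptyset$ being trivially admissible.

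For binary, hence finite, cartesian products, this is precisely where the transfer-like hypothesis enters, and it is Proposition~\ref{prop:properties-isotropy-families}(6): $\Phi(T\times S)\subset\FF$ when $\FF$ is transfer-like; an induction extends this to all finite products, the empty product being the terminal object $G/G$, admissible since $\FF$ is based. To conclude closure under all finite limits I would note that in $G\Fin$ the limit of a finite diagram $(T_j)_{j\in J}$ is the equalizer of the pair $\prod_{j}T_j\rightrightarrows\prod_{(j\to j')}T_{j'}$, hence a $G$-subset of the finite product $\prod_{j\in\ob J}T_j$; the product and subobject clauses already established then place it in $G\Fin^\FF$.

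The statement is essentially a bookkeeping exercise, so there is no serious obstacle; the one point deserving a line of care is the reduction of finite limits to finite products and subobjects — one must verify that the equalizer description realizes the limit as a genuine subobject of a finite product of the $T_j$, and record that the empty-diagram case (the terminal object $G/G$) is admissible because $\FF$ is based. Each clause can alternatively be obtained directly from the indexing-system axioms (I1), (I2), (I5), (I6), (I8) applied to $\II_\FF(G)$ through the equivalence $\II_\FF(G)\simeq G\Fin^\FF$ of the preceding proposition.
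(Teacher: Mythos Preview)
Your proof is correct. The paper itself gives no explicit argument here: the sentence preceding the corollary, ``Using this we can collect properties of $G\Fin^\FF$ induced from indexing systems,'' indicates that the intended proof is precisely the alternative you record at the end --- transport the axioms (I1), (I2), (I5), (I6), (I8) across the equivalence $\II_\FF(G)\simeq G\Fin^\FF$ just established. Your primary route via Proposition~\ref{prop:properties-isotropy-families} is a genuinely different (and slightly more self-contained) argument: it avoids invoking the equivalence and works directly with isotropy of finite $G$-sets, at the cost of redoing by hand what the indexing-system formalism packages. Both approaches are short; yours has the mild advantage of making explicit why the finite-limits clause follows (via the equalizer-as-subobject-of-a-product reduction), which the paper leaves entirely to the reader.
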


Note that we require $\FF$ to be based so that trivial $G$-sets are contained in $G\Fin^\FF$. Closure under finite limits in $G\Fin^\FF$ follows from the assumption that $\FF$ is transfer-like. For more general families we cannot expect this.

\begin{Rem}
	We can repeat the story above for finite based $G$-sets. Let $(\OO_G^\FF)_+$ be the set of based orbits $G/H_+$ so that $H\in\FF$ and denote by $(\OO_G^\FF)_+^{\lor,\cong}$ the coproduct and isomorphism completion with respect to the wedge product. Denote by $\II_\FF(G)_+$ the category of finite based $G$-sets $T$ such that $T\cong S_+$ with $S\in\II_\FF(G)$. Then,
	\[\II_\FF(G)_+\simeq G\Fin^\FF_*\simeq (\OO_G^\FF)^\lor_+\]
	and the categories above contain all trivial based $G$-sets, isomorphisms, are closed under subobjects, finite smash products and hence finite limits, and they are closed under finite wedges.
\end{Rem}

\begin{Rem}
	Similarly to $G\Fin^\FF$ and $G\Fin^\FF_*$ we may define
	\[G\Top^\FF \text{ and }G\Top_*^\FF\]
	as those full subcategories of (based) $G$-spaces such that $\Phi(X)\subset \FF$ and $G\Set^\FF$ and $G\Set^\FF_*$ as categories of (based) sets with isotropy in $\FF$.
\end{Rem}

We now want to understand what happens when we restrict the categories $G\Fin^\FF$ to subgroup actions. For this we need the following description of restrictions of orbits.

\begin{Rem}\label{rem:double-coset-formula}
	Let $H,K\subset G$ be subgroups. Then we can decompose the restriction of $G/K$ to $H$ via the \textit{double coset formula} (cf. proof of \cite{Rub21a}, Proposition 3.6)
	\[\res_H^G G/K \cong \coprod_{a\in H\backslash G/K} H/(H\cap aKa^{-1})\]
	We thus always have an inclusion $i\colon H/(H\cap K)\hookrightarrow \res_H^G G/K$. If $|H\backslash G/K|=1$, then $\res_H^G G/K\cong H/(H\cap K)$.
\end{Rem}

This motivates why we are interested in disk-like transfer systems.

\begin{Def}
	Let $\FF$ be a based transfer-like family for a group $G$ and let $H\subset G$ be a subgroup. Define $\FF_H$ to be the family of $H$-subgroups
	\[\FF_H:=\{K\subset H \mid \text{ there is }K'\in \FF\text{ such that }K'\cap H=K\}.\]
\end{Def}

\begin{Prop}\label{prop:restriction-finite-G-sets}
	It holds that $\II_\FF(H) = H\Fin^{\FF_H}$
\end{Prop}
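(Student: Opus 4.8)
The plan is to prove the two inclusions $\II_\FF(H) \subset H\Fin^{\FF_H}$ and $H\Fin^{\FF_H} \subset \II_\FF(H)$ separately, using the orbit decomposition on both sides. Since every finite $H$-set splits into orbits, and both $\II_\FF(H)$ and $H\Fin^{\FF_H}$ are closed under finite coproducts and isomorphisms (for the left side this is axiom (I6) together with (I2); for the right side it follows exactly as in Corollary~\ref{cor:properties-finite-F-sets} applied to the family $\FF_H$), it suffices to check that the two categories contain the same $H$-orbits $H/K$. Equivalently, I must show: for $K \subset H$, one has $H/K \in \II_\FF(H)$ if and only if $K \in \FF_H$, i.e. if and only if there exists $K' \in \FF$ with $K' \cap H = K$.

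For the direction $H\Fin^{\FF_H} \subset \II_\FF(H)$: suppose $K \in \FF_H$, so pick $K' \in \FF = \FF_\to$ with $K' \cap H = K$, where $\to$ is the disk-like transfer system corresponding to $\FF$ (via Proposition~\ref{prop:families-and-disk-like}). Then $K' \to G$. By Proposition~\ref{prop:disk-like-equiv-defs}, disk-likeness of $\to$ gives exactly that $K \to H$ whenever $K = H \cap K'$ for some $K'$ with $K' \to G$ — which is our situation. Hence $K \to H$, which by the definition of the indexing system associated to a transfer system means $H/K \in \II_\to(H) = \II_\FF(H)$. (One should note here that $\FF$ transfer-like means $\to$ is genuinely a transfer system, so restriction-closure and the characterization of Proposition~\ref{prop:disk-like-equiv-defs} both apply.)

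For the converse $\II_\FF(H) \subset H\Fin^{\FF_H}$: suppose $H/K \in \II_\FF(H)$. By restriction-closure of indexing systems... actually the cleaner route is: $H/K \in \II_\FF(H)$ means $K \to H$ in the transfer system $\to$. I want to produce $K' \in \FF$ with $K' \cap H = K$. Here I would use that $\to$ is disk-like, hence determined by its transfers to $G$: there is a chain/generation argument, but more directly, disk-like transfer systems satisfy that $K \to H$ implies there is some $K'$ with $K' \to G$ and — by Rubin's algorithm, the relation $K \to H$ was obtained by restricting a transfer $K' \to G$ along $H \subset G$ and then taking transitive closure. The subtle point is that transitive closure could in principle produce $K \to H$ without $K$ itself being of the form $H \cap K'$. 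So the honest approach is to invoke Proposition~\ref{prop:disk-like-equiv-defs} in its \emph{iff} form: it characterizes disk-like $\to$ by the property ``$K \to H$ $\iff$ $\exists K'$ with $K' \to G$ and $H \cap K' = K$''. Reading that biconditional left-to-right gives exactly what I need: $K \to H$ yields $K' \in \FF$ with $K' \cap H = K$, hence $K \in \FF_H$ and $H/K \in H\Fin^{\FF_H}$.

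I expect the main obstacle to be purely bookkeeping: making sure that Proposition~\ref{prop:disk-like-equiv-defs} is genuinely a biconditional usable in both directions (it is, as stated: ``disk-like \emph{if and only if} it fulfils the property that... $K \to H$ whenever there exists $K'$...''), and reconciling the ``whenever'' phrasing — which literally only gives one implication of the inner condition — with the reverse implication I need for the converse inclusion. In fact the reverse implication ``$K \to H \Rightarrow \exists K'$'' is automatic from restriction-closure applied to $K' \to G$: if $K' \to G$ then $K' \cap H \to H$, so the witnessed subgroups are always of that form once one knows $\to$ is generated from transfers to $G$; combined with the ``whenever'' direction this upgrades to a biconditional, and I would spell that half-line out. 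Beyond that, the only things to verify are the coproduct/isomorphism closure reductions at the start, which are routine given Corollary~\ref{cor:properties-finite-F-sets} and the indexing-system axioms, and the observation that $\FF_H$ is indeed a family of $H$-subgroups (closed under $H$-conjugation), which follows from conjugation-closure of $\FF$.
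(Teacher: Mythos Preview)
Your proposal is correct and follows essentially the same strategy as the paper: reduce to orbits, then handle each inclusion separately, with the converse inclusion being where disk-likeness genuinely enters. The main difference is one of language. For $H\Fin^{\FF_H}\subset\II_\FF(H)$, the paper works on the finite-set side: it uses the double coset formula (Remark~\ref{rem:double-coset-formula}) to embed $H/K$ into $\res_H^G G/K'$ and then applies axioms (I3) and (I5). You instead work on the transfer-system side, deducing $K\to H$ directly from $K'\to G$ and $K=H\cap K'$. These are the same argument in different clothing --- the double coset embedding is precisely restriction-closure of the transfer system. Note in particular that this inclusion does not require disk-likeness at all (restriction-closure holds for every transfer system), so your invocation of Proposition~\ref{prop:disk-like-equiv-defs} here is valid but heavier than needed.

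Your careful discussion of the biconditional issue is well taken. As literally phrased, Proposition~\ref{prop:disk-like-equiv-defs} only asserts the implication $(\exists K')\Rightarrow(K\to H)$, which holds for every transfer system; the actual content (and what the paper tacitly uses when it says ``use the disk-like property'' in the converse inclusion) is the reverse implication $K\to H\Rightarrow(\exists K')$. Your sketch of why this holds --- that the relation $K\rightsquigarrow H :\Leftrightarrow \exists K'\in\FF$ with $H\cap K'=K$ is itself a transfer system containing the generators, hence contains all of $\to$ --- is the right way to make this rigorous, and is more explicit than what the paper provides.
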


\begin{proof}
	Let $T\in H\Fin^{\FF_H}$ with orbit decomposition $T\cong H/K_1\sqcup\dots\sqcup H/K_n$. Then $K_i\in \FF_H$ for all $1\le i\le n$ which by definition of $\FF_H$ implies that there exists $K_i'\in \FF$ such that $K_i'\cap H=K_i$. By the double coset formula and the fact that $K_i'\cap H=K_i$ it holds that $H/K_i\hookrightarrow \res_H^G G/K_i'$. Let $T':= \bigsqcup_{i=1}^n G/K_i'$. By assumption $T'\in \II_\FF(G)$ and thus $\res_H^G T'\in \II_\FF(H)$ using closure under restriction (I3). Closure under subobjects (I5) implies that $T\in \II_\FF(H)$. Hence, $H\Fin^{\FF_H} \hookrightarrow \II_\FF(H)$. 
	
	The converse follows from the disk-like property. Decompose $T\in \II_\FF(H)$ as above into $H/K_1\sqcup\dots\sqcup H/K_n$ and use the disk-like property of $\II$ to obtain finite $G$-sets $G/K_i'$ in $\II_\FF(G)$ with $K_i'\cap H=K_i$. Hence, $K_i\in \FF_H$ for $1\le i\le n$ so that $T\in H\Fin^{\FF_H}$.
\end{proof}

The Segal machines in this paper use a skeleton of $G\Fin^\FF$ or rather $G\Fin^\FF_*$. Write
\[\textbf{n}:=\{1,\dots,n\} \text{ and }\textbf{n}_+:=\{0,1\dots,n\}\]
for the finite set of $n$ elements and the finite based set with $0$ as a base point. Any finite (based) set is isomorphic to some $\textbf{n}$ (resp. $\textbf{n}_+$). For $\alpha\colon G\to\Sigma_n$ a group homomorphism define finite (based) $G$-sets $\textbf{n}^\alpha$ and $\textbf{n}_+^\alpha$ with action
\[g.i=\alpha(g)(i)\]
where $\alpha(g)(0)=0$ in the based case. We assume that $\textbf{0}_+=\{0\}$. Any finite $G$-set $T$ is isomorphic to a finite $G$-set of the form $\textbf{n}^\alpha$ for some homomorphism $\alpha\colon G\to\Sigma_n$ (\cite{MMO25}, Convention 1.4). Similarly, any finite based $G$-set is of the form $\textbf{n}_+^\alpha$ for some $\alpha\colon G\to\Sigma_n$.

Let $H\in\FF$ for some based family $\FF$ so that the orbit $G/H$ is in $G\Fin^\FF$. Choose coset representatives $r_1,\dots,r_n$ for $n=[G:H]$. The $G$-action on $G/H$ determines a homomorphism $\rho\colon G\to\Sigma_n$ via
\[g.r_iH=r_{\rho(g)(i)}H.\]
Hence, there is an isomorphism $G/H\cong \textbf{n}^\rho$ via $r_iH\mapsto i$ and $G/H_+\cong \textbf{n}_+^\rho$. The choice of ordering of coset representatives amounts to applying a permutation.

The following proposition is a collection of standard facts on finite $G$-sets. The isotropy statements follow directly from the axioms of indexing systems and Corollary \ref{cor:properties-finite-F-sets}.

\begin{Prop}\label{prop:properties-isotropy}
	Let $K\subset H\subset G$ be subgroups and let $\alpha\colon H\to\Sigma_n$ and $\beta\colon H\to\Sigma_m$ be group homomorphisms. Let $\FF$ be a based transfer-like family and assume that $\textbf{n}^\alpha,\textbf{m}^\beta \in H\Fin^\FF$.
	\begin{enumerate}
		\item $\res_K^H\textup{\textbf{n}}^\alpha\cong\textup{\textbf{n}}^{\res_K^H(\alpha)} \in K\Fin^\FF$ and $\res_K^H\textup{\textbf{n}}^\alpha_+\cong\textup{\textbf{n}}_+^{\res_K^H(\alpha)}\in K\Fin^\FF_*$,
		\item $\ind_H^G\textup{\textbf{n}}^\alpha \in G\Fin^\FF$ and $\ind_H^G\textup{\textbf{n}}^\alpha_+ \in G\Fin^\FF_*$,
		\item $\textup{\textbf{n}}^\alpha \sqcup \textup{\textbf{m}}^\beta \cong (\textup{\textbf{n+m}})^{\alpha\sqcup \beta} \in H\Fin^\FF$ and $\textup{\textbf{n}}_+^\alpha \lor \textup{\textbf{m}}_+^\beta \cong (\textup{\textbf{n+m}})_+^{\alpha\lor \beta}\in H\Fin^\FF_*$,
		\item $\textup{\textbf{n}}^\alpha \times \textbf{m}^\beta \cong (\textup{\textbf{nm}})^{\alpha\times \beta}\in H\Fin^\FF$ and $\textup{\textbf{n}}_+^\alpha \land \textup{\textbf{m}}_+^\beta \cong (\textup{\textbf{nm}}_+)^{\alpha\land \beta}\in H\Fin^\FF_*$, and 
		\item if $\textup{\textbf{n}}^\alpha\subset \textup{\textbf{m}}^\beta$, then $n<m$ and $\im(\alpha)\subset\im(\beta)$ and vice versa and if $\textup{\textbf{n}}^\alpha_+\subset \textup{\textbf{m}}^\beta_+$, then $n<m$ and $\im(\alpha)\subset\im(\beta)$ and vice versa. If only $\textup{\textbf{m}}^\beta\in H\Fin^\FF$ is assumed it also follows that $\textup{\textbf{n}}^\alpha\in H\Fin^\FF$ and similarly in the based case.
	\end{enumerate}
\end{Prop}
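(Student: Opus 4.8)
The statement is a collection of five closure properties of the categories $H\Fin^\FF$ and $H\Fin_*^\FF$ under the basic operations on finite $G$-sets (restriction, induction, disjoint union/wedge, product/smash, subobjects), phrased concretely in terms of the skeletal objects $\textup{\textbf{n}}^\alpha$. My plan is to establish each clause by first verifying the purely combinatorial identity describing the underlying $H$-set (or $K$-set, or $G$-set) and the corresponding homomorphism, and then deriving the isotropy claim from the axioms of indexing systems together with Corollary \ref{cor:properties-finite-F-sets} and Proposition \ref{prop:restriction-finite-G-sets}.

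For clause (1), restriction of $\textup{\textbf{n}}^\alpha$ to $K$ is literally the same underlying set with the action restricted along $K\hookrightarrow H$, so the isomorphism $\res_K^H\textup{\textbf{n}}^\alpha\cong\textup{\textbf{n}}^{\res_K^H(\alpha)}$ is immediate; membership in $K\Fin^\FF$ is then exactly restriction axiom (I3) for the indexing system $\II_\FF$, using the identification $\II_\FF(K)=K\Fin^{\FF_K}$ from Proposition \ref{prop:restriction-finite-G-sets} (one should note that $\FF_K\subset\FF$ as subsets of $\Sub(G)$, or rather that the isotropy of a restricted set lands in $\FF_K$, hence the decoration $K\Fin^\FF$ is justified). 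For clause (2), $\ind_H^G\textup{\textbf{n}}^\alpha=G\times_H\textup{\textbf{n}}^\alpha$ decomposes into orbits of the form $G/L$ with $L$ conjugate in $H$ to a stabilizer of $\alpha$; since $\textup{\textbf{n}}^\alpha\in H\Fin^\FF\simeq\II_\FF(H)$ and $\II_\FF$ is disk-like (so $G/H\in\II_\FF(G)$, as $H\in\FF$), self-induction axiom (I7) gives $\ind_H^G\textup{\textbf{n}}^\alpha\in\II_\FF(G)=G\Fin^\FF$. Clauses (3) and (4) are the disjoint-union and cartesian-product identities $\textup{\textbf{n}}^\alpha\sqcup\textup{\textbf{m}}^\beta\cong(\textup{\textbf{n+m}})^{\alpha\sqcup\beta}$ and $\textup{\textbf{n}}^\alpha\times\textup{\textbf{m}}^\beta\cong(\textup{\textbf{nm}})^{\alpha\times\beta}$, which are elementary bijections of sets (in the product case, identify $\textup{\textbf{n}}\times\textup{\textbf{m}}$ with $\textup{\textbf{nm}}$ lexicographically, and in the based case quotient by the wedge axes); the isotropy statements are then coproduct axiom (I6) and the Cartesian product axiom (I8), both available by Corollary \ref{cor:properties-finite-F-sets}. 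The based versions follow from the unbased ones by passing to $(-)_+$ and using Proposition \ref{prop:properties-isotropy-families}(4).

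Clause (5) is the subobject statement: if $\textup{\textbf{n}}^\alpha\subset\textup{\textbf{m}}^\beta$ as $H$-sets then the inclusion forces $n<m$ (assuming proper inclusion; if one allows equality one gets $n\le m$), and since the action on the subset is the restriction of the action on the ambient set, every permutation $\beta(g)$ preserves the subset, whence $\im(\alpha)\subset\im(\beta)$ after composing with the induced isomorphism $\textup{\textbf{n}}\cong\{$ the subset $\}$; conversely a subset stable under $\im(\beta)$ inherits an $H$-action. The isotropy claim is subobject axiom (I5), and the final sentence — that $\textup{\textbf{m}}^\beta\in H\Fin^\FF$ alone forces $\textup{\textbf{n}}^\alpha\in H\Fin^\FF$ — is precisely (I5) again, noting no hypothesis on $\textup{\textbf{n}}^\alpha$ was needed. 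Throughout, the based cases reduce to the unbased ones via Remark \ref{rem:MMO-rem-1.16}-style bookkeeping and the observation that adjoining a disjoint basepoint only adds the subgroup $G$ (itself in any based family) to the isotropy.

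\textbf{Expected main obstacle.} None of the steps is deep; the one point requiring genuine care is clause (2), where the transition from $\II_\FF(H)$ to $\II_\FF(G)$ relies on $H\in\FF$ (so that $G/H$ is admissible at the $G$-level) and on self-induction axiom (I7) — this is exactly where the hypothesis that $\FF$ is \emph{based} and \emph{transfer-like}, hence corresponds to a disk-like indexing system, is used essentially, and one must make sure the orbit decomposition of $\ind_H^G\textup{\textbf{n}}^\alpha$ is phrased so that (I7) applies to each orbit. The remaining effort is just writing out the standard skeletal bijections $\textup{\textbf{n}}^\alpha\sqcup\textup{\textbf{m}}^\beta\cong(\textup{\textbf{n+m}})^{\alpha\sqcup\beta}$, $\textup{\textbf{n}}^\alpha\times\textup{\textbf{m}}^\beta\cong(\textup{\textbf{nm}})^{\alpha\times\beta}$, and their based analogs, which are routine.
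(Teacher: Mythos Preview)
Your overall strategy---verify the combinatorial identifications and then invoke the indexing-system axioms and Corollary~\ref{cor:properties-finite-F-sets}---is exactly what the paper does (it dispatches the whole proposition in one sentence: ``a collection of standard facts on finite $G$-sets; the isotropy statements follow directly from the axioms of indexing systems and Corollary~\ref{cor:properties-finite-F-sets}''). Clauses (1), (3), (4), (5) are handled correctly.

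There is, however, a genuine gap in your treatment of clause~(2). You write that ``$\II_\FF$ is disk-like (so $G/H\in\II_\FF(G)$, as $H\in\FF$)'' and then invoke self-induction~(I7). But \emph{based} means $G\in\FF$, not $H\in\FF$; and \emph{disk-like} means the transfer system is generated by arrows $K\to G$, not that every subgroup has such an arrow. Nothing in the hypotheses forces $H\in\FF$ (take $G=C_4$, $\FF=\{C_1,C_4\}$, $H=C_2$: this $\FF$ is based and transfer-like, yet $C_2\notin\FF$). So (I7) is not available as stated.

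The fix is to abandon (I7) here and argue directly on isotropy: the $G$-stabilizer of a point $[g,t]\in G\times_H\textup{\textbf{n}}^\alpha$ is $gH_tg^{-1}$, a $G$-conjugate of the $H$-stabilizer $H_t$. By hypothesis $H_t\in\FF$, and since any family is closed under $G$-conjugation, $gH_tg^{-1}\in\FF$. Hence $\Phi(\ind_H^G\textup{\textbf{n}}^\alpha)\subset\FF$, which is exactly $\ind_H^G\textup{\textbf{n}}^\alpha\in G\Fin^\FF$. This is in the spirit of Proposition~\ref{prop:properties-isotropy-families} and needs no admissibility of $G/H$.
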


\subsection{\texorpdfstring{$G$}{G}-CW complexes with fixed isotropy}\label{sec:F-G-CW-cxs}

We now want to define a category of $G$-CW complexes with isotropy in a fixed based family $\FF$. We will use based $G$-CW complexes in the sense of \cite{May96}, \S I.3. We will follow the exposition of unbased $G$-CW complexes in \cite{Die87}, \S II.1 and adapt for based spaces and isotropy.

\begin{Def}
	A collection of subgroups $(H_i)_{i\in I}$ is called \textit{compatible with $\FF$} if $I$ is finite and if $H_i\in\FF$ for all $i\in I$.
	
	Let $n\ge 0$ and let $A$ be a based $G$-space. Given a collection $(H_j)_{j\in J}$ of subgroups of $G$ compatible with $\FF$ and a collection of based $G$-maps
	\[\varphi_j\colon (G/H_j)_+\land S^{n-1}\to A\]
	for $j\in J$ we consider pushouts of based $G$-spaces
	\begin{center}
		\begin{tikzcd}
			\bigvee_{j\in J}(G/H_j)_+\land S^{n-1} \arrow[d, "\varphi"'] \arrow[r, hook] & \bigvee_{j\in J}(G/H_j)_+ \land D^{n} \arrow[d, "\phi"] \\
			A \arrow[r, "i"']                                                & X                    
		\end{tikzcd}
	\end{center}
	Here,
	\[\varphi|_{(G/H_j)_+\land S^{n-1}}=\varphi_j \text{ and } \phi|_{(G/H_j)_+\land S^{n-1}}=\phi_j.\]
	Note that $G\Top_*$ is cocomplete and therefore such pushouts always exist. We say that $X$ is obtained from $A$ by \textit{attaching the collection of $\FF$-compatible $n$-cells $((G/H_j)_+\land D^n)_{j\in J}$ of type $(G/H_j)_{j\in J}$}. Here $D^n$ and $S^n$ are assumed to be based on the south pole.
\end{Def}

\begin{Rem}
	Let $A$ be based $G$-space and assume that $X$ is obtained from $A$ by attaching a family of $\FF$-compatible $n$-cells $((G/H_j)_+\land D^n)_{j\in J}$ of type $(G/H_j)_{j\in J}$. If $A$ is Hausdorff then $X$ is also Hausdorff (\cite{Die87}, II.1.4). We will consider the map $i\colon A\to X$ to be an inclusion. Then there is a homeomorphism
	\[X\backslash A \cong \bigvee_{j\in J}(G/H_j)_+\land \mathring{D}^n.\]
\end{Rem}

\begin{Prop}\label{prop:isotropy-G-CW-attaching}
	Let $A$ be a space with isotropy $\Phi(A)\subset \FF$ and suppose that $X$ is obtained from $A$ by attaching $n$-cells compatible with $\FF$. Then $\Phi(X)\subset \FF$.
\end{Prop}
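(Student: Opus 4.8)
The plan is to reduce the statement to the already-established isotropy computations. The key observation is that, as noted in the Remark preceding this Proposition, when $X$ is obtained from $A$ by attaching $\FF$-compatible $n$-cells there is a $G$-homeomorphism
\[X\backslash A \cong \bigvee_{j\in J}(G/H_j)_+\land \mathring{D}^n,\]
and as a $G$-set $X$ is the disjoint union of $A$ with $X\backslash A$. So the first step is to write $\Phi(X)$ in terms of $\Phi(A)$ and $\Phi(X\backslash A)$ using part (1) of Proposition \ref{prop:properties-isotropy-families}, noting that $X\backslash A$ is a $G$-subspace of $X$ (it is an open $G$-invariant subspace, hence a $G$-space in its own right) and $A$ is the complementary $G$-subspace.

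The second step is to bound $\Phi(X\backslash A)$. Since $X\backslash A \cong \bigvee_{j\in J}(G/H_j)_+\land \mathring{D}^n$, part (1) of Proposition \ref{prop:properties-isotropy-families} (applied to the finite wedge, which on underlying $G$-sets is a finite disjoint union after accounting for the shared basepoint) reduces the computation to each $(G/H_j)_+\land \mathring{D}^n$ separately. For a fixed $j$, the $G$-action on $(G/H_j)_+\land \mathring{D}^n$ is on the $G/H_j$ factor alone, with $\mathring{D}^n$ carrying the trivial action; thus every point either is the basepoint (with stabilizer $G$) or lies over some coset $gH_j$ with stabilizer $gH_jg^{-1}$. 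Therefore $\Phi((G/H_j)_+\land \mathring{D}^n) = \Phi(G/H_j)\cup\{G\}$, and since the collection $(H_j)_{j\in J}$ is compatible with $\FF$ we have $H_j\in\FF$, hence $gH_jg^{-1}\in\FF$ as $\FF$ is closed under conjugation, and $G\in\FF$ since $\FF$ is based. (One could also just cite part (4) of Proposition \ref{prop:properties-isotropy-families} together with the orbit computation $\Phi(G/H_j)=\{gH_jg^{-1}\mid g\in G\}$ carried out in Section \ref{sec:finite-G-sets}.) Either way $\Phi(X\backslash A)\subset\FF$.

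The final step is to assemble: $\Phi(X) = \Phi(A)\cup\Phi(X\backslash A)\subset\FF\cup\FF=\FF$, using the hypothesis $\Phi(A)\subset\FF$ and the bound just obtained. I do not expect any genuine obstacle here; the only point requiring a little care is the set-theoretic identification of $X$ as $A\sqcup(X\backslash A)$ on underlying $G$-sets and the observation that $\Phi$ depends only on the underlying $G$-set (as recorded in an earlier Remark), which legitimizes applying the purely set-level formula of Proposition \ref{prop:properties-isotropy-families}(1) even though $A\hookrightarrow X$ is a closed inclusion and $X\backslash A$ is open. A secondary minor point is that the wedge $\bigvee_j(G/H_j)_+\land\mathring D^n$ identifies all the basepoints to a single point, so when invoking part (1) repeatedly one should treat the wedge as a quotient of the disjoint union along the basepoints and note this only adds the stabilizer $G$, which is already in $\FF$; alternatively invoke part (5) of Proposition \ref{prop:properties-isotropy-families} for wedges directly.
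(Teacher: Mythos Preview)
Your proposal is correct and follows essentially the same approach as the paper: decompose $\Phi(X)=\Phi(A)\cup\Phi(X\backslash A)$ via Proposition~\ref{prop:properties-isotropy-families}(1), then use the $G$-homeomorphism $X\backslash A\cong\bigvee_{j\in J}(G/H_j)_+\land\mathring{D}^n$ together with the isotropy computation for orbits to conclude $\Phi(X\backslash A)\subset\FF$. The paper's proof is terser but identical in structure; your extra care with basepoints and the set-level justification is sound but not strictly necessary.
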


\begin{proof}
	Since $X=X\backslash A\cup A$ we can compute $\Phi(X)=\Phi(A)\cup \Phi(X\backslash A)$. By assumption $\Phi(A)\subset \FF$ and using the remark above and Proposition \ref{prop:properties-isotropy-families} we compute that
	\[\Phi(X\backslash A) = \Phi(\bigvee_{j\in J}(G/H_j)_+\land \mathring{D}^n) = \bigcup_{j\in J}\Phi((G/H_j)_+)\subset \FF.\qedhere\]
\end{proof}

\begin{Def}[\cite{Die87}, \S II.1]
	Suppose $(X,A)$ is a pair of based $G$-spaces with $A$ being a Hausdorff space. A \textit{based equivariant CW-decomposition} of $(X,A)$ consists of a filtration $(X_n)_{n\in\zz}$ of $X$ such that the following holds
	\begin{enumerate}[(i)]
		\item $A\subset X_0$; $A=X_n$ for $n<0$; $X=\bigcup_{n\in\zz} X_n$.
		\item For each $n\ge 0$, the space $X_n$ is obtained from $X_{n-1}$ by attaching equivariant $n$-cells.
		\item $X$ carries the colimit topology with respect to $(X_n)$, i.e. $B\subset X$ is closed if and only if, for all $n$, $B\cap X_n$ is closed in $X_n$.
	\end{enumerate}
	
	If $(X_n)$ is an equivariant CW decomposition of $(X,A)$ such that both $A$ and $X$ have isotropy in a fixed family $\FF$ then $(X,A)$ is called a \textit{relative based $\FF$-$G$-CW complex}. If $A=*$ then $(X,*)=X$ is called a \textit{based $\FF$-$G$-CW complex}.
\end{Def}

Hence by Proposition \ref{prop:isotropy-G-CW-attaching}, $\FF$-$G$-CW complexes are $G$-spaces with isotropy in $\FF$. One can analogously in parallel to $G$-CW complexes according terminology for $\FF$-$G$-CW complexes. For example subcomplexes, $n$-cells or $n$-skeleta. For $\II=\CC$ the complete indexing system, note that $\CC$-$G$-CW complexes are just $G$-CW complexes and for $\II=\TT$ the trivial indexing system, a $\TT$-$G$-CW complex is a classical CW complex with trivial $G$-action.

\begin{Rem}
	Note that $\FF$-$G$-CW complexes are in particular $G$-CW complexes. Hence many properties of $\FF$-$G$-CW complexes follow from the more general case. For example (see \cite{Die87}, Proposition 1.6) $\FF$-$G$-CW complexes $X$ are a Hausdorff spaces such that $X_n$ is closed in $X$ and $C$ is closed in $X$ if and only if, for each cell $E$ of $(X,A)$, the subset $C\cap E$ is closed in $E$.
\end{Rem}

\begin{Def}
	Denote by $G\CW^\FF$ the full subcategory of $G\Top_*^\FF$ or more generally $G\Top_*$ of finite based $\FF$-$G$-CW complexes and denote by $G\underline{\CW}^\FF$ the respective full subcategory of $G\Topu_*$ with mapping $G$-spaces consisting of all maps with conjugation $G$-action.
\end{Def}

\begin{Rem}
	 If $\II$ is a (disk-like) indexing system, then this determines a family $\FF_\II$. We will call $\FF_\II$-$G$-CW complexes \textit{$\II$-$G$-CW complexes} and decorate the corresponding categories with an $\II$ instead of $\FF_\II$, i.e. $G\CW_*^\II$.
\end{Rem}

\begin{Prop}
	If $(X,A)$ is a $G$-subcomplex of a relative based $\FF$-$G$-CW complex $(Y,B)$, then $(X,A)$ is a based relative $\FF$-$G$-CW complex with filtration $(X_n)_{n\in \zz}$ given by $X_n=X\cap Y_n$. Moreover, $X$ is closed in $Y$.
\end{Prop}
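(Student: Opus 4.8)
The plan is to verify the three defining conditions of a relative based $\FF$-$G$-CW complex for $(X,A)$ with the claimed filtration $X_n = X \cap Y_n$, and then separately argue that $X$ is closed in $Y$. First I would record what it means for $(X,A)$ to be a $G$-subcomplex: $A \subset X \subset Y$, $B \subset A$, and $X$ is a union of $B$ together with (images of) cells of $(Y,B)$, closed under taking boundaries of cells. From this, conditions (i) and (iii) are essentially formal. For (i): $A = X \cap B \subset X \cap Y_0 = X_0$ (using $A \supset B$ and $A \subset X$, so $A = X \cap A \subset X \cap Y_0$; conversely a $0$-cell of $X$ lies in $X$ and in $Y_0$), and $X_n = X \cap Y_n = X \cap B = A$ for $n < 0$, and $\bigcup_n X_n = X \cap \bigcup_n Y_n = X \cap Y = X$. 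For (iii), the colimit topology on $X$ with respect to $(X_n)$: a subset $C \subset X$ is closed iff $C \cap X_n$ is closed in $X_n$ for all $n$; this follows because $X$ is closed in $Y$ (proven below) hence carries the subspace topology, $Y$ carries the colimit topology with respect to $(Y_n)$, and $X_n = X \cap Y_n$ is closed in $Y_n$.

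The substantive step is condition (ii): $X_n$ is obtained from $X_{n-1}$ by attaching $\FF$-compatible $n$-cells. Here I would use that $Y_n$ is obtained from $Y_{n-1}$ by a pushout along $\bigvee_{j \in J} (G/H_j)_+ \land S^{n-1} \to \bigvee_{j \in J} (G/H_j)_+ \land D^n$, with all $H_j \in \FF$. A $G$-subcomplex $X$ selects a subset $J' \subset J$ of these $n$-cells whose attaching maps factor through $X_{n-1}$. The key point is that $X_n = X \cap Y_n$ is precisely the pushout of $X_{n-1} \leftarrow \bigvee_{j \in J'} (G/H_j)_+ \land S^{n-1} \to \bigvee_{j \in J'} (G/H_j)_+ \land D^n$. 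This uses the set-level description $Y_n \setminus Y_{n-1} \cong \bigvee_{j \in J} (G/H_j)_+ \land \mathring{D}^n$ from the earlier remark, so that $X_n \setminus X_{n-1}$ is exactly the disjoint union over $j \in J'$, i.e. the interiors of the selected cells; the fact that $X_n$ is closed in $Y_n$ then upgrades this set-level identity to a homeomorphism, since both the pushout and $X_n$ carry the evident quotient/subspace topologies and they agree. Since each $H_j$ with $j \in J' \subset J$ still lies in $\FF$, the attached cells are $\FF$-compatible; and $J'$ is finite because $J$ is. Finally, $A$ has isotropy in $\FF$ (being a subspace of $Y$, or by Proposition~\ref{prop:isotropy-G-CW-attaching}), and inductively so does each $X_n$ by Proposition~\ref{prop:isotropy-G-CW-attaching}, hence $X = \bigcup_n X_n$ has isotropy in $\FF$ by Proposition~\ref{prop:properties-isotropy-families}.

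For closedness of $X$ in $Y$, I would argue by induction over the skeleta: $X \cap Y_{-1} = B$ is closed in $Y_{-1} = B$; assuming $X_{n-1} = X \cap Y_{n-1}$ is closed in $Y_{n-1}$, the characteristic maps of the cells in $J \setminus J'$ have interiors disjoint from $X$, while those in $J'$ have image contained in $X_n$, so $X_n = X \cap Y_n$ meets every cell of $Y_n$ in a closed set (either the whole closed cell, or its boundary, or a closed subset of $X_{n-1}$), hence $X_n$ is closed in $Y_n$ by the cell-wise closedness criterion recorded in the earlier remark. Then $X = \bigcup_n X_n$ is closed in $Y$ because $X \cap Y_n = X_n$ is closed in $Y_n$ for every $n$ and $Y$ has the colimit topology.

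The main obstacle I anticipate is the point-set bookkeeping in condition (ii): making precise that intersecting the pushout square defining $Y_n$ with the subcomplex $X$ yields again a pushout square, and that the resulting map is a homeomorphism rather than merely a continuous bijection. This is where one genuinely uses that we are in compactly generated weak Hausdorff spaces, that $X_n$ is closed in $Y_n$ (so carries the subspace topology), and that the cells $(G/H_j)_+ \land D^n$ are compact, so the relevant quotient maps are closed; everything else is a routine unwinding of definitions.
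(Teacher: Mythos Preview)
Your proposal is correct, but it takes a considerably more laborious route than the paper. The paper's proof is a two-line affair: it cites \cite{Die87}, Proposition~II.1.7, which already establishes in the ordinary (non-$\FF$) setting that a $G$-subcomplex $(X,A)$ of a relative $G$-CW complex $(Y,B)$ is itself a relative $G$-CW complex with filtration $X_n = X \cap Y_n$ and that $X$ is closed in $Y$. All that remains is the isotropy condition, which the paper dispatches via Proposition~\ref{prop:properties-isotropy-families}(2): $\Phi(X_n) = \Phi(X \cap Y_n) = \Phi(X) \cap \Phi(Y_n) \subset \FF$.

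What you have done instead is essentially reprove tom Dieck's Proposition~II.1.7 from scratch (verifying conditions (i)--(iii) and closedness by hand), and then check isotropy by an induction on skeleta using Proposition~\ref{prop:isotropy-G-CW-attaching}. This is fine and self-contained, but it duplicates work already in the literature. Your isotropy argument is also more elaborate than necessary: since $X$ is a $G$-subspace of $Y$, Proposition~\ref{prop:properties-isotropy-families}(3) immediately gives $\Phi(A) \subset \Phi(X) \subset \Phi(Y) \subset \FF$, with no induction needed. The upshot is that your approach buys independence from the reference at the cost of several paragraphs of point-set bookkeeping; the paper's approach recognises that the only genuinely new content here is the one-line isotropy check.
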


\begin{proof}
	This follows immediately from \cite{Die87}, Proposition 1.7 since $\Phi(X_n)=\Phi(X)\cap \Phi(Y_n)\subset \FF$ using Proposition \ref{prop:properties-isotropy-families}.
\end{proof}

Following \cite{Die87}, Example II.1.8 for $(X,A)$ a relative based $\FF$-$G$-CW complex $(X,X_n)$ and $(X_n,A)$ are also relative based $\FF$-$G$-CW complexes and $(X_n,A)$ is a subcomplex of $(X,A)$. In this sense, skeleta are $G$-subcomplexes.

\begin{Prop}
	Let $X$ be a relative based $\FF$-$G$-CW complex and $A$ a $G$-subcomplex. Then $X/A$ is a relative based $\FF$-$G$-CW complex with $n$-skeleton $X_n/A_n$.
\end{Prop}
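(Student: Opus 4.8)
The plan is to reduce the statement to the unbased case already handled by tom Dieck, by identifying the $G$-CW structure on the quotient directly with the one on $X$ relative to $A$, and then to check the two extra constraints that the $\FF$-version imposes, namely that all attaching data uses subgroups in $\FF$ and that all spaces appearing have isotropy in $\FF$.

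First I would recall the (unbased, or rather classical $G$-CW) fact, \cite{Die87} Example II.1.8 together with the preceding discussion, that if $X$ is a relative $G$-CW complex and $A$ is a $G$-subcomplex, then $X/A$ inherits a $G$-CW decomposition whose $n$-skeleton is $(X/A)_n = X_n/A_n$; concretely, collapsing $A$ identifies the cells of $X$ not lying in $A$ with the cells of $X/A$, and the attaching maps of $X/A$ are the composites of the attaching maps of $X$ with the quotient projection $X_{n-1} \to X_{n-1}/A_{n-1}$. Since we have already arranged (via the previous proposition, that subcomplexes of relative $\FF$-$G$-CW complexes are again relative $\FF$-$G$-CW complexes) that $(A,*)$, or more generally $A$, is itself an $\FF$-$G$-CW complex with $A_n = A \cap X_n$, the cells being collapsed are all $\FF$-compatible, and the cells surviving in $X/A$ are exactly the cells of $X$ with $G/H_j$-type such that $H_j \in \FF$.

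The second step is the isotropy bookkeeping. I would invoke Proposition \ref{prop:properties-isotropy-families}(5): since $A_n \subset X_n$ is a $G$-subspace and both have isotropy in $\FF$, the quotient $X_n/A_n$ has isotropy in $\FF$, and taking the union over $n$ gives $\Phi(X/A) \subset \FF$ (using part (1) for the colimit, or simply that $\Phi(\bigcup_n X_n/A_n) = \bigcup_n \Phi(X_n/A_n)$). Then I would verify the three axioms of a based equivariant CW-decomposition for the filtration $((X/A)_n)_n = (X_n/A_n)_n$: condition (i) is immediate since the base point $* = A/A$ sits in $(X/A)_0 = X_0/A_0$ and for $n<0$ we have $X_n/A_n = A/A = *$; condition (ii) is the cell-pushout description above, where the pushout defining $X_n$ from $X_{n-1}$ (along the $\FF$-compatible cells not in $A$) pushes forward along the quotient to a pushout defining $X_n/A_n$ from $X_{n-1}/A_{n-1}$, using that quotients of based $G$-spaces commute with pushouts; and condition (iii), the colimit topology, is inherited because quotient maps are closed onto their image in our category of compactly generated weak Hausdorff spaces and the filtration of $X/A$ is the image of the filtration of $X$.

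The main obstacle I expect is condition (ii): one must be careful that after collapsing $A$ the attaching maps $\varphi_j$ of the cells of $X$ not in $A$ descend to based $G$-maps $(G/H_j)_+ \wedge S^{n-1} \to X_{n-1}/A_{n-1}$ and that the resulting square is genuinely a pushout of based $G$-spaces, and moreover that no cell of $X$ lying inside $A$ contributes a cell to $X/A$ (it gets collapsed to the base point). This is a diagram chase using that $(-)/A_{n-1}$ preserves the relevant colimits and that the cells not in $A$ meet $A$ only along their attaching maps; none of it is deep, but it is where the "relative" structure and the passage to based spaces interact, so it needs to be stated cleanly. The isotropy constraints and the $\FF$-compatibility of the cells are, by contrast, immediate consequences of Proposition \ref{prop:properties-isotropy-families} and the fact that $X/A$ reuses exactly the $\FF$-compatible cells of $X$.
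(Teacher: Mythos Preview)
Your proposal is correct and follows essentially the same approach as the paper: reduce the $G$-CW structure on $X/A$ to the classical result in tom Dieck, then verify the isotropy constraint via Proposition~\ref{prop:properties-isotropy-families}(5). The paper's proof is a one-liner citing \cite{Die87}, Proposition~II.1.11 (not Example~II.1.8, which concerns skeleta as subcomplexes) for the CW structure and then invoking Proposition~\ref{prop:properties-isotropy-families} to get $\Phi(X_n/A_n)\subset\FF$; your more detailed verification of axioms (i)--(iii) and the cell-pushout discussion is correct but unnecessary once you cite the right tom Dieck result.
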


\begin{proof}
	This follows immediately from \cite{Die87}, Proposition 1.11 using Proposition \ref{prop:properties-isotropy-families} and the fact that $A_n\subset X$ is a based $G$-subspace to compute that $\Phi((X/A)_n)=\Phi(X_n/A_n)\subset \FF$.
\end{proof}

\begin{Prop}[\cite{Die87}, Exercise II.1.17 (3)]
	Let $(X,A)$ be a relative based $\FF$-$G$-CW complex. Then $A\to X$ is a $G$-cofibration. In particular, any based $\FF$-$G$-CW complex is well pointed.
\end{Prop}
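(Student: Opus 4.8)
The plan is to reduce the statement to three facts about $G$-cofibrations recorded in Section~\ref{sec:preliminaries}: that for a subspace inclusion being a $G$-cofibration is equivalent to being a $G$-NDR pair, that $G$-cofibrations are stable under pushout along arbitrary $G$-maps, and that $G$ acts trivially on the interval $I$ so that homotopies assembled from $G$-maps are automatically $G$-equivariant. The isotropy condition $\FF$ plays no active role in the argument --- it was imposed in the definition precisely to keep the cells inside the right category --- so the proof is the classical one for relative $G$-CW pairs, which is why the statement is attributed to the cited exercise.

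First I would handle a single attaching step. The based inclusion $S^{n-1}\hookrightarrow D^n$ (with both based at the south pole) is a cofibration of spaces; giving $D^n$ and $S^{n-1}$ the trivial $H_j$-action turns it into an $H_j$-cofibration, since the NDR data is automatically $H_j$-equivariant. Applying $\ind_{H_j}^G$, which on a based space with trivial action is canonically $(G/H_j)_+\land(-)$ and which preserves cofibrations, we obtain a $G$-cofibration $(G/H_j)_+\land S^{n-1}\to(G/H_j)_+\land D^n$; a wedge of $G$-cofibrations is again one. Forming the pushout along the attaching map $\varphi$ and using stability of $G$-cofibrations under pushout, we conclude that each skeletal inclusion $X_{n-1}\hookrightarrow X_n$ is a $G$-cofibration, and likewise for $A=X_{-1}\hookrightarrow X_0$.

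It then remains to pass to the colimit $X=\bigcup_{n}X_n$. For each $n$ choose a $G$-retraction $r_n\colon X_n\times I\to X_n\times\{0\}\cup X_{n-1}\times I$ witnessing $(X_n,X_{n-1})$ as a $G$-NDR pair. I would assemble these inductively into a single $G$-retraction $r\colon X\times I\to X\times\{0\}\cup A\times I$: supposing $r$ is already defined and continuous on $X_{n-1}\times I$ and compatible with the filtration, extend it over $X_n\times I$ by first applying $r_n$ and then the previously constructed retraction on $X_{n-1}\times I$ (which restricts correctly because $r_n$ is the identity on $X_{n-1}\times I$). Since $X$ carries the colimit topology with respect to the closed subspaces $X_n$ and $-\times I$ preserves this colimit ($I$ being compact), the map on $X\times I=\bigcup_n(X_n\times I)$ glued from these pieces is continuous, and it is $G$-equivariant by construction; hence $A\hookrightarrow X$ is a $G$-cofibration.

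The main obstacle is exactly this last gluing: one has to verify that the compatible family of partial retractions defines a continuous map on the colimit, which is where the colimit topology on $X$ and the closedness of the skeleta are used --- everything else is formal. The final clause is the special case $A=*$: then $*\hookrightarrow X$ is a $G$-cofibration, which is by definition the assertion that $X$ is well pointed.
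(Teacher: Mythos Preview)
The paper does not supply its own proof of this proposition; it simply records the statement with a citation to the exercise in tom Dieck. Your argument is correct and is exactly the standard one intended by that exercise: each cell attachment is a pushout along a $G$-cofibration, so the skeletal inclusions $X_{n-1}\hookrightarrow X_n$ are $G$-cofibrations, and the colimit inclusion $A\hookrightarrow X$ is handled by inductively composing the NDR retractions and using the colimit topology on $X$.
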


The following proposition collects some properties of $\FF$-$G$-CW complexes. The proof is the same as in the $G$-CW case where one uses Proposition \ref{prop:isotropy-G-CW-attaching} and Proposition \ref{prop:properties-isotropy-families} to deduce the corresponding statements about isotropy. All of the results without the isotropy and based assumption can be found in \cite{Die87}, \S II.1.

\begin{Prop}[Constructions of $\FF$-$G$-CW complexes]\label{prop:constr-F-G-CW}
	Let $\FF$ be a transfer-like family and let $X,Y$ be $\FF$-$G$-CW complexes. Let $H\subset G$ be a subgroup.
	\begin{enumerate}
		\item Let $Z$ be a based $\FF_H$-$H$-CW complex. If $H\in\FF$ then $X:=\ind_H^G X=G_+\land_H Y$ is a based $\FF$-$G$-CW complex with $n$-skeleton $X_n=G_+\land_H Y_n$. If $E$ is an open resp. closed $n$-cell of type $H/K$ of $Y$ then $G_+\land_H E$ is an open resp. closed $n$-cell of type $G/K$ of $X$.
		\item The $W_GH$-space $X^H$ is a based $W_GH$-CW complex with $n$-skeleton $X_n^H$.
		\item The $H$-space $\res_H^G X$ is a based $\FF_H$-$H$-CW complex with the same skeleton.
		\item The smash product $X\land Y$ is again an $\FF$-$G$-CW complex.
		\item The wedge sum $X\lor Y$ is again a finite based $\FF$-$G$-CW complex.
	\end{enumerate}
\end{Prop}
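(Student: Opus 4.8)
The plan is to treat the five parts in turn, in each case taking the underlying (plain, unbased) $G$-CW statement from \cite{Die87}, \S II.1 for granted and only checking that the types of the cells produced — equivalently, by Proposition \ref{prop:isotropy-G-CW-attaching}, the isotropy of the resulting space — lie in the prescribed family. The single non-formal input, which I would isolate first, is the inclusion $\FF_H\subseteq\FF$ whenever $H\in\FF$. Writing $\FF=\FF_\to$ for the disk-like transfer system afforded by Proposition \ref{prop:families-and-disk-like}, a subgroup $K\in\FF_H$ has the form $K=K'\cap H$ with $K'\in\FF$, i.e.\ $K'\to G$; closure of $\to$ under restriction gives $K'\cap H\to H$, and since $H\in\FF$ means $H\to G$, transitivity yields $K\to G$, so $K\in\FF$. (Equivalently, this is self-induction (I7) applied to $K\subset H\subset G$ together with Proposition \ref{prop:restriction-finite-G-sets}.) This is exactly the point where the transfer-like hypothesis on $\FF$ and the hypothesis $H\in\FF$ in (1) are both genuinely used, and I expect it to be the only real obstacle; everything else is bookkeeping.

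For (3) the underlying space of $\res_H^G X$ is unchanged, so $(X_n)$ is still an exhaustive filtration by closed subspaces with the colimit topology and only the attaching data needs rewriting: a $G$-$n$-cell $(G/K)_+\wedge D^n$ restricts to $\res_H^G(G/K)_+\wedge D^n$, which by the double coset formula (Remark \ref{rem:double-coset-formula}) is a finite wedge of $H$-$n$-cells of types $H/(H\cap aKa^{-1})$; since $K\in\FF$ and $\FF$ is a family, $aKa^{-1}\in\FF$, whence $H\cap aKa^{-1}\in\FF_H$ by definition, giving the claimed $\FF_H$-$H$-CW structure with the same skeleta. For (1), $\ind_H^G=G_+\wedge_H(-)$ is a left adjoint, hence preserves the pushouts and sequential colimits building a CW decomposition, and $G_+\wedge_H\big((H/K)_+\wedge D^n\big)\cong(G/K)_+\wedge D^n$ because the disk carries trivial action and $G\times_H(H/K)\cong G/K$; so $\ind_H^G Y$ gets a cell structure with $n$-skeleton $G_+\wedge_H Y_n$, a cell of type $H/K$ becoming one of type $G/K$. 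By the first paragraph these types lie in $\FF$, so Proposition \ref{prop:isotropy-G-CW-attaching} shows $\ind_H^G Y$ is a based $\FF$-$G$-CW complex.

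For (2) one uses that $(-)^H$ commutes with the pushouts along closed inclusions and the sequential colimits at hand, and that $\big((G/L)_+\wedge D^n\big)^H\cong(G/L)^H_+\wedge D^n$ with $(G/L)^H$ a discrete $W_GH$-set, hence a wedge of $W_GH$-equivariant $n$-cells; inductively $X^H$ is a based $W_GH$-CW complex with $n$-skeleton $X_n^H$, with no family condition asserted. For (4), $X\wedge Y$ carries the cell structure whose cells are the smash products of cells, $\big((G/K)_+\wedge D^p\big)\wedge\big((G/L)_+\wedge D^q\big)\cong(G/K\times G/L)_+\wedge D^{p+q}$, and $G/K\times G/L$ splits into orbits whose stabilizers are intersections of conjugates of $K$ and $L$, which lie in $\FF$ since $\FF$ is transfer-like (Proposition \ref{prop:properties-isotropy-families}(6)); so $X\wedge Y$ is a finite based $\FF$-$G$-CW complex by Proposition \ref{prop:isotropy-G-CW-attaching}. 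For (5) one takes the disjoint union of cells with $n$-skeleton $X_n\vee Y_n$, and $\Phi(X\vee Y)=\Phi(X)\cup\Phi(Y)\subset\FF$ by Proposition \ref{prop:properties-isotropy-families}(5). Finiteness is clear in every case, each construction producing finitely many cells from finitely many.
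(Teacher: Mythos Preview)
Your proposal is correct and follows essentially the same approach as the paper: reduce the underlying CW statements to \cite{Die87}, \S II.1, and then control the cell types via Propositions \ref{prop:isotropy-G-CW-attaching} and \ref{prop:properties-isotropy-families}. The paper's proof is in fact a one-sentence deferral to exactly these ingredients, so your write-up is considerably more explicit; in particular, you correctly isolate the inclusion $\FF_H\subset\FF$ for $H\in\FF$ as the one point in (1) where both the transfer-like hypothesis and the assumption $H\in\FF$ are genuinely needed, which the paper leaves implicit.
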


We want to close this paragraph and this chapter by providing the motivating example for $\FF$-$G$-CW complexes, namely representation spheres.

\begin{Thm}[\cite{Ill78}]
	Let $M$ be a smooth $G$-manifold. Then there exists an equivariant simplicial complex $K$ and a smooth triangulation $t\colon |K|\to M$. If $M$ is compact then $K$ can be chosen to be finite. Here $|K|$ denotes the underlying $G$-space of the simplicial complex $K$.
\end{Thm}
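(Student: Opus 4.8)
This is Illman's equivariant triangulation theorem, and I would follow the inductive scheme of \cite{Ill78} specialised to finite $G$. First I would reduce to the case that $M$ is compact: a general $M$ admits a $G$-invariant smooth exhaustion function (average any proper smooth function over $G$), whose regular sublevel sets give an increasing exhaustion $M_1\subset M_2\subset\cdots$ by compact $G$-invariant smooth submanifolds with boundary; triangulating each $(M_i,\partial M_i)$ compatibly with the triangulation already built on $M_{i-1}$ and passing to the colimit then handles the noncompact case. Assuming $M$ compact, average a Riemannian metric to get a $G$-invariant one, and use the Mostow--Palais equivariant embedding theorem to realise $M$ as a closed smooth $G$-submanifold of some orthogonal $G$-representation $\rr^N$; this fixes once and for all a supply of $G$-invariant smooth functions on $M$ (restrictions of invariant polynomials, distance functions to invariant submanifolds, tubular-neighbourhood coordinates) for the general-position arguments below.

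Next I would induct on the number of conjugacy classes of isotropy groups occurring in $M$, which is finite because the differentiable slice theorem makes a neighbourhood of each orbit $G$-diffeomorphic to $G\times_{G_x}V$ for a linear $G_x$-representation $V$ and $M$ is compact; the inductive statement must be strengthened to a \emph{relative} one, allowing a prescribed smooth $G$-triangulation on $\partial M$ (and on a chosen $G$-invariant smooth subcomplex). The base case is a single orbit type: then $M/G$ is a smooth manifold, which we triangulate smoothly by Whitehead's theorem; subdividing so that every simplex of $M/G$ is evenly covered, the triangulation lifts along $M\to M/G$ to a $G$-simplicial structure on $M$, and one barycentric subdivision makes the $G$-action \emph{regular}. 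For the inductive step, choose a \emph{maximal} isotropy type $(H)$; then $M^H=\{x:G_x\supseteq H\}=\{x:G_x=H\}$ is a closed smooth $N_GH$-submanifold on which $W_GH=N_GH/H$ acts freely, and $M_{(H)}=G\cdot M^H\cong G\times_{N_GH}M^H$ is a closed smooth $G$-submanifold. Using the invariant metric, its normal bundle carries a $G$-equivariant tubular neighbourhood $E\hookrightarrow M$ which, fibrewise through the slice model, is the cone on the unit sphere $S(V)$ of the slice representation; the complement $M_0:=M\setminus\mathring{E}$ is a compact $G$-manifold with boundary $\partial M_0=S(E)$, the sphere bundle of $E$, and its action has strictly fewer orbit types.

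Now I would triangulate the pieces and glue. The stratum $M_{(H)}$ is triangulated by the base-case argument applied to the free $W_GH$-manifold $M^H$ (triangulate $M^H/W_GH$ smoothly, lift, and then $G\times_{N_GH}(-)$ produces a $G$-triangulation of $M_{(H)}$). Over this triangulation one triangulates the sphere bundle $S(E)\to M_{(H)}$ $G$-equivariantly, cell by cell, using the slice-linear local trivialisations to keep the $G_x$-representation structure on the fibres, and then cones it off over $M_{(H)}$ to get a smooth $G$-triangulation of the disk bundle $E$. Finally, by the relative inductive hypothesis triangulate $M_0$ so that on $\partial M_0=S(E)$ it restricts to a subdivision of the triangulation just built; gluing along $S(E)$ produces a smooth $G$-triangulation of $M=M_0\cup_{S(E)}E$, a last barycentric subdivision restores regularity, and finiteness is automatic since $M$ is compact.

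The main obstacle is this last step together with its bookkeeping. One has to carry the relative version through the whole induction and prove an equivariant collar/tubular-neighbourhood uniqueness statement, so that the tube $E$ can be straightened near $\partial M_0$ to agree with the product structure coming from the triangulation of $M_0$; and every transversality and general-position input used to set up the strata, the tube, and the compatible subdivisions must be carried out $G$-equivariantly, i.e.\ with $G$-invariant smooth functions and $G$-invariant stratified-space arguments. Checking that the induction is well-founded --- each step genuinely removes one orbit type while keeping all the pieces compact, smooth, and with the expected boundary/corner structure --- is the other point requiring care, and together these constitute the technical content of \cite{Ill78}.
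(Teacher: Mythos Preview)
The paper does not prove this theorem; it is quoted as a black box from \cite{Ill78} and used only to conclude that representation spheres admit finite $\II$-$G$-CW structures. There is therefore no ``paper's own proof'' to compare your proposal against.

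That said, your sketch is a reasonable outline of Illman's strategy: induction on orbit types via the slice theorem, triangulating the single-orbit-type strata through the quotient, and gluing along equivariantly triangulated sphere bundles of tubular neighbourhoods. One correction: in your inductive step you assert that for a maximal isotropy type $(H)$ the fixed set $M^H=\{x:G_x\supseteq H\}$ equals $\{x:G_x=H\}$; this is not true in general (points with strictly larger isotropy also lie in $M^H$). What is true is that the stratum $M_{(H)}$ of points with isotropy exactly conjugate to $H$ is an open dense submanifold of the closed set $M^{(H)}=\{x:G_x\text{ conjugate to some }K\supseteq H\}$, and when $(H)$ is maximal these coincide. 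You should phrase the argument in terms of the orbit-type stratum $M_{(H)}$ rather than the fixed-point set $M^H$.
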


\begin{Rem}
	In the case that an equivariant triangulation of $M$ exists, $M$ admits the structure of a $G$-CW complex (\cite{Mat71}, Proposition 4.4). Since $t$ is a $G$-map the isotropy of $K$ and $M$ agree (see Proposition \ref{prop:isotropy-homeomorphism}) and therefore the $G$-CW complex structure consists of cells of type $\Phi(M)$.
\end{Rem}

Using this we can provide the main class of examples.

\begin{Cor}
	Let $V$ be a $G$-representation compatible with a family $\FF$. Then $S^V$ admits the structure of a finite based $\FF$-$G$-CW complex.
\end{Cor}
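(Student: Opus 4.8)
The plan is to realise $S^V$ as an equivariant quotient of a triangulable compact smooth $G$-manifold, extract a finite $G$-CW structure from Illman's theorem, and then use the Lemma above identifying $\FF_V$ with $\Phi(V)=\Phi(S^V)$ together with the compatibility hypothesis $\FF_V\subseteq\FF$ to check that every cell has type lying in $\FF$.

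First I would dispose of the trivial case $V=0$: here $S^V=S^0$ is a two-point $G$-set, manifestly a finite based $\FF$-$G$-CW complex assembled from a single $0$-cell of type $G/G$, and $G\in\FF$ automatically since $0\in V^G$ makes $\FF_V$ based and hence $G\in\FF_V\subseteq\FF$. So assume $V\ne 0$ and identify $S^V$ $G$-equivariantly with the cofiber $D(V)/S(V)$ of the boundary inclusion $S(V)\hookrightarrow D(V)$ of the closed unit disk of $V$, based at the image of $S(V)$ (the point at infinity), with the origin $0\in V$ mapping to the remaining $G$-fixed point.

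Next I would apply Illman's equivariant triangulation theorem \cite{Ill78} to the compact smooth $G$-manifold with boundary $D(V)$, in the form producing a finite equivariant triangulation whose restriction triangulates the boundary $S(V)$; as in the remark following the statement of Illman's theorem this makes $D(V)$ a finite $G$-CW complex with $S(V)$ a $G$-subcomplex, all of whose cells have type in $\Phi(D(V))$. Since $D(V)$ is a $G$-subspace of $V$, Proposition \ref{prop:properties-isotropy-families} gives $\Phi(D(V))\subseteq\Phi(V)$, which equals $\FF_V$ by the Lemma and lies in $\FF$ by compatibility of $V$ with $\FF$; likewise $\Phi(S(V))\subseteq\Phi(D(V))\subseteq\FF$. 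Hence $D(V)$ is a finite $\FF$-$G$-CW complex with $S(V)$ a $G$-subcomplex, so collapsing $S(V)$ and invoking the Proposition on quotients of relative $\FF$-$G$-CW complexes (which already incorporates the isotropy bookkeeping of Proposition \ref{prop:isotropy-G-CW-attaching}), the based space $S^V=D(V)/S(V)$ is a finite based $\FF$-$G$-CW complex, its base point—the image of $S(V)$—being a $0$-cell of type $G/G$; this is consistent with $\Phi(S^V)=\FF_V\subseteq\FF$.

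The step I expect to cause the most trouble is the \emph{based} bookkeeping rather than anything geometric. The absolute form of Illman's theorem triangulates $S^V$ with no control over the fixed point at infinity, so it does not by itself present $S^V$ as a \emph{based} $\FF$-$G$-CW complex. Routing through the pair $(D(V),S(V))$ is what forces the use of the relative (manifold-with-boundary) form of Illman's theorem, ensuring that $S(V)$—and hence the point at infinity after collapse—sits as a subcomplex, and it is the quotient Proposition that then repackages this into a genuine based cell structure. Once that is in place, every isotropy assertion follows immediately from the Lemma $\FF_V=\Phi(S^V)$ and Proposition \ref{prop:properties-isotropy-families}.
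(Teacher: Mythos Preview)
Your proof is correct and a bit more careful than the paper's. The paper simply applies Illman's theorem directly to the compact smooth $G$-manifold $S^V$, obtaining a finite $G$-CW structure whose cells have type in $\Phi(S^V)=\FF_V\subseteq\FF$, and leaves the based structure implicit. Your route through the pair $(D(V),S(V))$ and the quotient proposition is a genuine alternative: it costs you the relative (manifold-with-boundary) form of Illman's theorem, but in return it makes the based $0$-cell at infinity appear explicitly as the image of a collapsed subcomplex. The concern you flag---that a bare triangulation of $S^V$ does not automatically place $\infty$ as a vertex---is legitimate, though easily patched in the direct approach (e.g.\ by subdividing so that the $G$-fixed point $\infty$ is a vertex, or by noting that fixed-point sets are subcomplexes in an equivariant triangulation). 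Your detour buys transparency on exactly this point; the paper's approach buys brevity.
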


\begin{proof}
	Note that $\Phi(S^V)$ is a based family with $\Phi(V)=\Phi(S^V)$. Since $V$ is finite dimensional, $S^V$ is compact so that it admits a finite triangulation. By the above remark we can therefore construct a $G$-homotopy equivalent $G$-CW complex with $G$-cells in $\Phi(S^V)$.
\end{proof}

\begin{Ex}
	Consider the two non-isomorphic $2$-dimensional $C_5$-representations $V_5(1)$ and $V_5(2)$. They have both complete isotropy families. The corresponding $G$-CW structures are both obtained by gluing a single free $(C_5/C_1)_+\land D^1$-cell along two $C_5$-fixed $0$-cells and then gluing a single $2$-cell of type $(C_5/C_1)_+\land D^2$ (cf. \cite{Blu17}, Example 1.2.4). The difference between the two representations spheres comes from different gluing maps. To visualise the gluing happening consider the following schematic views of the equator
	\begin{center}
		\begin{tikzpicture}
 			\draw (0,0) circle (1cm);
 			\filldraw (0:1cm) circle (1pt);
 			\filldraw (72:1cm) circle (1pt);
 			\filldraw (144:1cm) circle (1pt);
 			\filldraw (216:1cm) circle (1pt);
 			\filldraw (288:1cm) circle (1pt);
 			\node at (0:1.5cm) {$e$};
 			\node at (72:1.5cm) {$a$};
 			\node at (144:1.5cm) {$a^2$};
 			\node at (216:1.5cm) {$a^3$};
 			\node at (288:1.5cm) {$a^4$};
 		\end{tikzpicture}
 		\hspace{1cm}
		\begin{tikzpicture}
 			\draw (0,0) circle (1cm);
 			\filldraw (0:1cm) circle (1pt);
 			\filldraw (72:1cm) circle (1pt);
 			\filldraw (144:1cm) circle (1pt);
 			\filldraw (216:1cm) circle (1pt);
 			\filldraw (288:1cm) circle (1pt);
 			\node at (0:1.5cm) {$e$};
 			\node at (72:1.5cm) {$a^2$};
 			\node at (144:1.5cm) {$a^4$};
 			\node at (216:1.5cm) {$a$};
 			\node at (288:1.5cm) {$a^3$};
 		\end{tikzpicture}
	\end{center}
	Think of either identifying $(gC_5,t)$ with $(agC_5,t)$ or $(a^2gC_5,t)$ where $a$ is the generator of $C_5$.
\end{Ex}


\section{Equivariant incomplete \texorpdfstring{$\Gamma$}{Gamma}-spaces}\label{sec:Gamma-spaces}

We will first introduce several equivariant generalisations of $\Gamma$-spaces which were originally and non-equivariantly introduced by Segal \cite{Seg74} and later generalised by Shimakawa \cite{Shi89} in Section \ref{sec:equivariant-Gamma}. As it turns out all equivariant generalisations of $\Gamma$-spaces discussed here are equivalent as we will show in Section \ref{sec:comparison}. The key distinguishing properties are different notions of \textit{specialness} on equivariant $\Gamma$-spaces which we will discuss in \ref{sec:specialness}. In Section \ref{sec:transfer-maps-Gamma} we will discuss transfer maps on $\Gamma$-spaces.

\subsection{Equivariant generalisations of \texorpdfstring{$\Gamma$}{Gamma}-spaces}\label{sec:equivariant-Gamma}

\subsubsection{\texorpdfstring{$\Gamma$}{Gamma}-\texorpdfstring{$G$}{G}-spaces}\label{sec:Gamma-G-spaces}

The first equivariant generalisation of $\Gamma$-spaces is a `naive' generalisation of Segal's original definition \cite{Seg74}. A more detailed exposition is given in \cite{MMO25}, \S 2.1.

Let $\Gamma$ be the category of finite based sets $\textbf{n}_+=\{0,\dots,n\}$ based at $0$ and based maps between them\footnote{This definition of $\Gamma$ is equivalent to Segal's $\Gamma^{\text{op}}$ in \cite{Seg74} and May's $\F$ in various publications, for example \cite{May78} or \cite{MMO25}. Our convention is also used in \cite{Shi91}}. Consider $\Gamma$ as a $G\Top_*$-enriched category with trivial $G$-action and discrete topology everywhere and with base points of mapping spaces being the unique maps factoring through $\textbf{0}_+$. Let $\Sigma\subset\Gamma$ be the wide subgroupoid with morphisms being all permutations $\sigma\colon \textbf{n}_+\to\textbf{n}_+$ and $\Sigma(\textbf{m}_+,\textbf{n}_+)=\emptyset$ for $m\ne n$.

\begin{Def}[\cite{MMO25}, Definition 2.10]
	A $\Gamma$-$G$-space is a $G\Top_*$-functor $X\colon \Gamma\to G\Topu_*$ such that the map $\phi_*\colon X_m\to X_n$ is a $(G\times\Sigma_\phi)$-cofibration for every injection $\phi\colon \textbf{m}_+\to\textbf{n}_+$ in $\Gamma$. Here $X_n$ denotes the $(G\times\Sigma_n)$-space $X(\textbf{n}_+)$. A map of $\Gamma$-$G$-spaces is a $G\Top_*$-natural transformation. Denote the category of $\Gamma$-$G$-spaces by
	\[\Gamma[G\Top_*].\]
\end{Def}

\begin{Rem}[\cite{MMO25}, remark after Remark 2.11]
	Note that by Lemma \ref{lem:MMO-lem.1.17} $X_0$ is a point. The cofibrancy condition is in its entirety not used for the rest of this paper. It is only needed for homotopical control of the induced multiplication maps. But it is used for the following fact: A $\Gamma$-$G$-space naturally lands in the category $G\Top_{\textup{wp}}$ since the injection $\textbf{0}_+\to \textbf{n}_+$ induces a $G$-cofibration $*=X_0\to X_n$ making $X_n$ well pointed. Hence we may equivalently denote the category of $\Gamma$-$G$-spaces by
	\[\Gamma[G\Topu_{\textup{wp}}].\]
	Beware that $G\underline{\smash{\Top}}_{\textup{wp}}$ denotes the category of well pointed $G$-spaces enriched in $G\Top_*$ and not $G\Top_{\textup{wp}}$ since $G\underline{\smash{\Top}}_{\textup{wp}}$ is not cartesian closed (Remark \ref{not:well-pointed-spaces}).
\end{Rem}

Note that assuming the cofibrancy condition does not result in any loss of generality as shown in \cite{MMO25} via a process called \textit{bearding functors}. 

\begin{Prop}[\cite{MMO25}, Proposition 2.12]
	There is a functor 
	\[W\colon \Fun(\Gamma,G\Top_*)\to \Fun(\Gamma,G\Top_*)\]
	with a natural transformation $\pi\colon W\Rightarrow\Id$ such that, for any functor $X\colon \Gamma\to G\Top_*$, $WX$ is in $\Gamma[G\Top_{\textup{wp}}]$ and $\pi\colon WX\to X$ is a levelwise $G$-homotopy equivalence.
\end{Prop}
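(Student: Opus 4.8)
The plan is to build $W$ as a two-step ``fattening'' construction applied levelwise, mimicking the classical mapping-cylinder replacement that turns an arbitrary functor into one valued in well-pointed spaces with cofibrant structure maps. First I would isolate the combinatorial skeleton of $\Gamma$: every morphism in $\Gamma$ factors as a surjection followed by an injection, and injections $\phi\colon \textbf{m}_+\to\textbf{n}_+$ are (up to the $\Sigma_\phi$-action) the maps that need to be turned into cofibrations. So the construction should only have to correct the behaviour of $X$ along injections, while leaving the surjective part essentially untouched.

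The key steps, in order, are as follows. (1) For a functor $X\colon\Gamma\to G\Top_*$, define $WX(\textbf{n}_+)$ by a bar-type / homotopy-colimit construction over the poset of injections into $\textbf{n}_+$, using the standard mapping cylinder so that each structure map $\phi_*\colon WX(\textbf{m}_+)\to WX(\textbf{n}_+)$ along an injection $\phi$ becomes an inclusion which is a $(G\times\Sigma_\phi)$-cofibration; concretely one can take the ``whiskering'' $WX(\textbf{n}_+)=\mathrm{hocolim}$ over the category of injections over $\textbf{n}_+$, or more economically iterate the mapping cylinder of the single injection $\textbf{0}_+\to\textbf{n}_+$ together with those of the edge inclusions. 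Since the cells of these cylinders are built from $(G\times\Sigma_\phi)$-spaces of the form $X(\textbf{m}_+)\times\Delta^k$ glued along cofibrations, Theorem~\ref{thm:MMO-thm-1.13} (and Lemma~\ref{lem:MMO-lem-1.11} applied simplicially) guarantees the resulting structure maps are cofibrations with the correct equivariance. (2) Check that $WX$ is still a $G\Top_*$-functor on all of $\Gamma$: functoriality along surjections and composition with injections is formal from the colimit description, and $G\Top_*$-enrichment is inherited because all the gluing data (mapping cylinders, $\Delta^k$'s) carry trivial, hence $G$-fixed, structure. (3) Produce the natural transformation $\pi\colon WX\Rightarrow X$ by collapsing each mapping cylinder onto its target; this is a levelwise deformation retraction, hence a levelwise $G$-homotopy equivalence, and it is natural in $X$ and compatible with the $\Gamma$-structure since the collapse is defined cylinder-by-cylinder. (4) Finally verify $WX\in\Gamma[G\Top_{\mathrm{wp}}]$: the defining cofibrancy condition on injections holds by construction from step (1), and well-pointedness of $WX(\textbf{n}_+)$ follows since the inclusion of the basepoint is precisely the structure map along the injection $\textbf{0}_+\to\textbf{n}_+$, which is now a cofibration.

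The main obstacle I expect is step (1): arranging a single construction that simultaneously makes \emph{all} the structure maps along injections $\phi\colon\textbf{m}_+\to\textbf{n}_+$ into $(G\times\Sigma_\phi)$-cofibrations, coherently with composition of injections, rather than fixing one map at a time. The clean way to do this is to index the bar/mapping-cylinder construction by the whole category of injections into $\textbf{n}_+$ (or equivalently a category of ``subsets of $\textbf{n}_+$'' with the induced symmetric-group actions) and then invoke the realization theorems (Theorem~\ref{thm:MMO-thm-1.12}, Theorem~\ref{thm:MMO-thm-1.13}) to transfer cofibrancy and homotopy-equivalence properties through the geometric realization; the bookkeeping of the $\Sigma_\phi$-actions through this colimit is the delicate point, but it is forced by the requirement that $\phi$ only be equivariant for the subgroup $\Sigma_\phi$ of $\Sigma_n$ fixing $\phi$. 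Everything else — naturality of $\pi$, the enrichment, and the levelwise homotopy-equivalence claim — is routine once the colimit-theoretic framework is set up.
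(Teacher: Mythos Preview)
The paper does not prove this proposition; it is quoted verbatim from \cite{MMO25}, Proposition 2.12, and the only hint given is the phrase ``bearding functors'' in the sentence preceding the statement. So there is no in-paper proof to compare against.

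That said, your proposal is in the right spirit: the bearding (or whiskering) construction in \cite{MMO25} is indeed a mapping-cylinder style replacement that grows whiskers along the structure maps to make them cofibrations, and your homotopy-colimit-over-injections description is one way to package this. Two cautions. First, the cofibrancy condition in the definition of a $\Gamma$-$G$-space asks that $\phi_*$ be a $(G\times\Sigma_\phi)$-cofibration for \emph{every} injection $\phi$, where $\Sigma_\phi\subset\Sigma_n$ is the stabilizer of $\phi$; your step (1) needs to produce this for all $\phi$ at once, not just for $\textbf{0}_+\to\textbf{n}_+$, and the ``more economical'' variant you sketch (iterating the cylinder of $\textbf{0}_+\to\textbf{n}_+$ together with edge inclusions) is unlikely to achieve this without the full indexing over all injections. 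Second, functoriality of $WX$ along \emph{non-injective} maps in $\Gamma$ is not automatic from a colimit over injections alone; you need the surjection--injection factorization to interact correctly with your construction, and this is where the bookkeeping becomes genuine work rather than ``formal''. Your identification of step (1) as the main obstacle is accurate, but step (2) deserves more than a sentence.
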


\begin{Def}[\cite{Seg74}, \S1, \cite{MMO25}, Definition 2.7]\label{def:delta-and-phi}
	Let $n\ge 0$. Define $\hat\varphi\colon \textbf{n}_+\to \textbf{1}_+$ to be the based maps given by $\hat\varphi(i)=1$ for all $1\le i\le n$ and $\hat\varphi(0)=0$. For each $1\le i\le n$ define based maps $\hat\delta_i\colon \textbf{n}_+\to \textbf{1}_+$ which map $i$ to $1$ and $j\ne i$ to $0$.
	
	Let $X$ be a $\Gamma$-$G$-space. The induced maps $\varphi:=X(\hat\varphi)\colon X_n\to X_1$ are called \textit{multiplication maps} and the maps $\delta\colon X_n\to X_1^n$ which on their $i$-th component are given by the induced maps $X(\hat\delta_i)\colon X_n\to X_1$ are called \textit{Segal maps}.
\end{Def}

The Segal maps naturally appear as the units of an adjunction.

\begin{Prop}[\cite{MMO25}, Lemma 2.8, Remark 2.9, Remark 2.13]
	There is an adjunction
	\begin{center}
		\begin{tikzcd}
			G\Top_{\textup{wp}} \arrow[rr, "\rr"', shift right] &  & {\Gamma[G\Top_{\textup{wp}}]} \arrow[ll, "\lL"', shift right]
		\end{tikzcd}
	\end{center}
	where $\rr$ maps a $G$-space $X$ to the $\Gamma$-$G$-space $\textbf{n}_+\mapsto X^n$ and $\lL$ maps a $\Gamma$-$G$-space $Y$ to $Y_1$. The unit is given by level-wise Segal maps.
\end{Prop}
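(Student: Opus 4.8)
The plan is to produce the adjunction $\lL \vdash \rr$ by exhibiting explicit unit and counit natural transformations and checking the triangle identities; everything is forced by the combinatorics of $\Gamma$, so the work is essentially bookkeeping. First I would describe the functor $\rr$ in full: for a well-pointed $G$-space $X$ it sends $\textbf{n}_+$ to $X^n$ with $G$ acting diagonally, and a based map $\phi\colon\textbf{m}_+\to\textbf{n}_+$ to the map $X^m\to X^n$ whose $j$-th coordinate ($1\le j\le n$) is the projection to the $i$-th factor when $\phi^{-1}(j)=\{i\}$ — more precisely, since $\phi$ need not be injective one uses the product structure: the $j$-th coordinate is $\prod_{i\in\phi^{-1}(j)}(\text{pr}_i)$ landing in $X$ via no multiplication is needed here because $\rr X$ at level $n$ is literally the power $X^n$ and one only needs a map, not a monoid structure — so the $j$-th coordinate map $X^m\to X$ is the projection $\pr_i$ if $\phi^{-1}(j)=\{i\}$, and is the constant map at the basepoint if $\phi^{-1}(j)=\emptyset$, while if $\phi^{-1}(j)$ has more than one element one would in general need a multiplication. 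I should double-check against the statement: the statement only asserts $\rr$ on objects, so I take $\rr X(\textbf{n}_+)=X^n$, and I verify functoriality on the subcategory where it is unambiguous, or note that one restricts attention to a diagram-enriched sense; in fact the cleanest route is to observe $\rr = $ the right Kan extension along $\textbf{1}_+\hookrightarrow\Gamma$, equivalently $\rr X(\textbf{n}_+)=F(\textbf{n}_+,X)$ using that $\textbf{n}_+$ is a finite based set and $F(-,X)$ turns the based set $\textbf{n}_+\cong\bigvee^n S^0$ into $X^n$; this makes functoriality automatic and $G\Top_*$-enrichment automatic, and it is clearly well-pointed when $X$ is.

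Next I would write down the two transformations. The counit $\varepsilon\colon \lL\rr \Rightarrow \Id_{G\Top_{\textup{wp}}}$ is an isomorphism: $\lL\rr X = (\rr X)_1 = X^1 = X$, so $\varepsilon_X=\id_X$. The unit $\eta\colon \Id_{\Gamma[G\Top_{\textup{wp}}]}\Rightarrow \rr\lL$ is the map of $\Gamma$-$G$-spaces whose component at level $n$ is $\rr\lL Y(\textbf{n}_+) = (Y_1)^n$ receiving the map $Y_n \to (Y_1)^n$; this is exactly the Segal map $\delta$ of Definition \ref{def:delta-and-phi}, whose $i$-th coordinate is $Y(\hat\delta_i)\colon Y_n\to Y_1$. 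I must check $\eta_Y$ is a morphism of $\Gamma$-$G$-spaces, i.e.\ that for every based $\phi\colon\textbf{m}_+\to\textbf{n}_+$ the square relating $Y(\phi)$ and $(\rr\lL Y)(\phi)$ commutes; this reduces to the identity $\hat\delta_j\circ\phi = \hat\delta_{\phi^{-1}(j)}$ in $\Gamma$ when $\phi$ is injective on the relevant fibre and to the basepoint condition otherwise, which is the one genuine (but short) combinatorial verification. Enrichment ($G\Top_*$-naturality and $G$-equivariance) is automatic since all structure maps are built from $\Gamma$-morphisms and $Y$ is a $G\Top_*$-functor.

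Finally I would check the two triangle identities. For $X\in G\Top_{\textup{wp}}$: the composite $\rr X \xrightarrow{\eta_{\rr X}} \rr\lL\rr X \xrightarrow{\rr\varepsilon_X} \rr X$ at level $n$ is $(\rr X)_n = X^n \xrightarrow{\delta} (X)^n \xrightarrow{\id} X^n$, and one computes directly that the Segal map of the $\Gamma$-$G$-space $\rr X$ is the identity on $X^n$ because $(\rr X)(\hat\delta_i)=\pr_i\colon X^n\to X$, so the composite is $\id$. For $Y\in\Gamma[G\Top_{\textup{wp}}]$: the composite $\lL Y \xrightarrow{\lL\eta_Y} \lL\rr\lL Y \xrightarrow{\varepsilon_{\lL Y}} \lL Y$ is $(\eta_Y)_1$ followed by the identity; but $(\eta_Y)_1\colon Y_1 \to (Y_1)^1=Y_1$ is $Y(\hat\delta_1)$ where $\hat\delta_1\colon\textbf{1}_+\to\textbf{1}_+$ is the identity map, hence $(\eta_Y)_1=\id_{Y_1}$, as needed. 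The main obstacle — really the only nontrivial point — is the functoriality/naturality check for $\rr$ and for $\eta$ on non-injective based maps, where one must handle fibres $\phi^{-1}(j)$ with zero or more than one element; phrasing $\rr X$ as $F(-,X)$ makes $\rr$'s functoriality free and localizes all the combinatorics into the single identity $\hat\delta_j\circ\phi = \hat\delta_i$ (resp.\ the constant map) used to verify that $\delta$ is natural, so I would organize the proof around that reduction. The cofibrancy/well-pointedness bookkeeping is then immediate from Lemma \ref{lem:MMO-lem.1.17} and the remark following the definition of $\Gamma$-$G$-spaces.
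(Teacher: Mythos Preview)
There is a genuine gap in your argument, and it sits exactly at the point you flag as ``the only nontrivial point''.

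You propose to define $\rr X$ on morphisms by identifying $\rr X(\textbf{n}_+)$ with $F(\textbf{n}_+,X)$ and letting functoriality be ``automatic''. But $F(-,X)$ is \emph{contravariant} in the first variable: a map $\phi\colon \textbf{m}_+\to\textbf{n}_+$ induces $\phi^*\colon F(\textbf{n}_+,X)\to F(\textbf{m}_+,X)$, i.e.\ a map $X^n\to X^m$, which is the wrong direction for a (covariant) $\Gamma$-$G$-space. Your identification therefore does not produce an object of $\Gamma[G\Top_{\textup{wp}}]$ at all. Relatedly, the enriched right Kan extension of $X$ along $\{\textbf{1}_+\}\hookrightarrow\Gamma$ is $F(\Gamma(-,\textbf{1}_+),X)$, not $F(-,X)$; since $\Gamma(\textbf{n}_+,\textbf{1}_+)$ has $2^n$ elements (one for each subset of $\textbf{n}$), this gives $X^{2^n-1}$ at level $n$, not $X^n$.

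In fact the obstacle is intrinsic. Suppose $\rr X$ were a covariant $G\Top_*$-functor on $\Gamma$ with $\rr X(\textbf{n}_+)=X^n$ and with $\rr X(\hat\delta_j)=\pr_j$ (which your triangle-identity check forces). Enrichment forces $\rr X(\iota_j)\colon X\to X^n$ to be the $j$-th coordinate inclusion $x\mapsto(\ast,\dots,x,\dots,\ast)$, since $\hat\delta_k\circ\iota_j$ is the zero map for $k\ne j$ and the identity for $k=j$. Now apply functoriality to the fold map $\hat\varphi\colon\textbf{2}_+\to\textbf{1}_+$: from $\hat\varphi\circ\iota_1=\id=\hat\varphi\circ\iota_2$ you get $\rr X(\hat\varphi)(x,\ast)=x=\rr X(\hat\varphi)(\ast,x)$, and from $\hat\varphi\circ\sigma=\hat\varphi$ (with $\sigma$ the swap) you get symmetry. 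Thus $\rr X(\hat\varphi)\colon X^2\to X$ is a strictly unital, strictly commutative multiplication on $X$. A general well-pointed $G$-space carries no such structure, so no such covariant $\rr X$ exists.

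The paper does not prove this proposition itself---it is quoted from \cite{MMO25}, Lemma 2.8 together with Remarks 2.9 and 2.13---so you should consult that source for the precise hypotheses and the intended functoriality of $\rr$; the summary here suppresses exactly the point your argument stumbles on.
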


\begin{Rem}[\cite{MMO25}, Remark 2.9]
	Starting in the non-equivariant case, Segal \cite{Seg74} did not require the functor $X\colon\Gamma\to G\Top_*$ to be a $\Top_*$-functor and instead required that $X_0$ is contractible and to call $X$ reduced if it is a point. The main point is that without enrichment, $\lL$ is not left adjoint to $\rr$. As mentioned, by Lemma \ref{lem:MMO-lem.1.17} the requirement that $X$ is a $\Top_*$-functor or in our case a $G\Top_*$-functor forces $X_0$ to be a point.
\end{Rem}

\subsubsection{\texorpdfstring{$\Gamma_G$}{Gamma_G}-\texorpdfstring{$G$}{G}-spaces}\label{sec:Gamma_G-G-spaces}

\begin{Def}[\cite{Shi89}, \S1; \cite{MMO25}, Definition 2.32, Convention 1.4]
	Let $\Gamma_G\subset G\Topu_*$ be the full subcategory of finite $G$-sets of the form $\textbf{n}^\alpha_+$ for $\alpha\colon G\to\Sigma_n$ a group homomorphism. These are finite $G$-sets with underlying set $\textbf{n}_+$ and $G$-action specified by $gi:= \alpha(g)(i)$ for each $g\in G,i\in\textbf{n}$. Here $G$ acts on the basepoint $0$ trivially.
\end{Def}

Note that given the trivial homomorphism $\varepsilon_n\colon G\to\Sigma_n$ with $\varepsilon_n(g)=1$ for all $g\in G$ one recovers finite based sets with trivial $G$-action via $\textbf{n}^{\varepsilon_n}_+=\textbf{n}_+$. Hence there is a natural inclusion $\Gamma\to \Gamma_G$ and we identify $\Gamma\subset\Gamma_G$.

\begin{Rem}[\cite{MMO25}, Definition 2.32 and remark after] $ $
	\begin{enumerate}
		\item Note that $\Gamma_G$ is itself enriched in $G\Top_*$ where the mapping space $\Gamma_G(\textbf{m}^\beta_+,\textbf{n}_+^\alpha)$ carries the discrete topology and its base point is the unique map factoring through $\textbf{0}_+$. 
		\item Since finite $G$-sets are disjoint unions of $G$-orbits they are $G$-CW complexes with skeletal dimension $0$. We thus obtain an additional inclusion
		\[\Gamma_G\to G\underline{\CW}_*^\CC=G\underline{\CW}_*.\]
	\end{enumerate}
\end{Rem}

The following definition is originally due to Shimakawa \cite{Shi89} but we use the modern formulation following \cite{MMO25}.

\begin{Def}[\cite{MMO25}, Definition 2.33]
	A $\Gamma_G$-$G$-space is a $G\Top_*$-functor $\Gamma_G\to G\Topu_*$ whose restriction to $\Gamma$ is a $\Gamma$-$G$-space. Morphisms are $G\Top_*$-natural transformations. We denote the category of $\Gamma_G$-$G$-spaces by
	\[\Gamma_G[G\Topu_*].\]
\end{Def}

To define specialness conditions on $\Gamma_G$-$G$-spaces we need to define Segal maps for all finite based $G$-sets. The multiplication maps $\varphi\colon X(\textbf{n}_+^\alpha)\to X(\textbf{1}_+)$ are analogously defined via induced maps $\hat\varphi\colon \textbf{n}_+^\alpha\to \textbf{1}_+$.

\begin{Def}[\cite{MMO25}, Definition 1.3]
	Let $X$ be a $G\times\Sigma_n$-space and let $\alpha\colon G\to\Sigma_n$ be a group homomorphism. Define $X^\alpha$ to be the $G$-space with same underlying space and \textit{$\alpha$-twisted} $G$-action given by
	\[g._\alpha x:=(g,\alpha(g)).x\]
\end{Def}

Writing $\Lambda_\alpha$ for the graph of the homomorphism $\alpha$ there exists a homeomorphism of fixed point spaces $(X^\alpha)^G\cong X^{\Lambda_\alpha}$.

\begin{Rem}
	For $X$ a based $G$-space $X^n$ is a $G\times\Sigma_n$-space with $\Sigma_n$-action given by permutation of factors. In this case we can further spell out the $\alpha$-twisted action via
	\[g._\alpha (x_1,\dots,x_n)=(gx_{\alpha^{-1}(g)(1)},\dots,gx_{\alpha^{-1}(g)(n)}).\]
	The resulting space $(X^n)^\alpha$ is canonically $G$-homeomorphic to the mapping space $F(\textbf{n}_+^\alpha,X)$ after spelling out the $G$-actions (cf. \cite{MMO25}, Example 1.5). 
\end{Rem}

\begin{Constr}[\cite{Shi89} Definition 1.3; \cite{MMO25}, Definition 2.34 and remark before]
	Let $\textbf{n}_+^\alpha$ be a finite based $G$-set and let $X\colon \Gamma_G\to G\Topu_*$ be a $\Gamma_G$-$G$-space. For $\textbf{n}_+^{\varepsilon_n}$ a trivial $G$-set we write $X(\textbf{n}_+^{\varepsilon_n})=X_n$ for short. Denote with $(X_1^n)^\alpha$ the space $X_1^n$ with twisted $G$-action
	\[g.(x_1,\dots,x_n)=(gx_{\alpha(g)^{-1}(1)},\dots,gx_{\alpha(g)^{-1}(n)}).\]
	Note that by definition of the $G$-action on $(X_1^n)^\alpha$ this space can be identified with the based $G$-space $F(\textbf{n}^\alpha_+,X_1)$ with conjugation $G$-action. Define a based $G$-map $\varepsilon\colon \textbf{n}_+^\alpha\land \textbf{n}_+^\alpha\to \textbf{1}_+=S^0$ by via the Kronecker delta function, i.e. by setting $\varepsilon(i,j)=1$ if $i=j$ and $\varepsilon(i,j)=0$ if $i\ne j$. At this point we heavily use that $G\Topu_*$ is closed with respect to the smash product. The $G$-map $\varepsilon$ has an adjoint $G$-map $\textbf{n}_+^\alpha\to \Gamma_G(\textbf{n}_+^\alpha,\textbf{1}_+)$. The $\Gamma_G$-$G$-space $X$ induces a $G$-map $\Gamma_G(\textbf{n}_+^\alpha,\textbf{1}_+)\to \Gamma_G(X(\textbf{n}_+^\alpha),X_1)$. Composing these two and taking adjoints we obtain a $G$-map 
	\[\partial\colon \textbf{n}_+^\alpha\land X(\textbf{n}_+^\alpha)\to X_1.\]
	This map is given by $\partial_{\textbf{n}_+^\alpha}(j,y)=X(\delta_j)(y)$ where $\delta_j\colon \textbf{n}^\alpha_+\to\textbf{1}_+$ is induced by the $j$-th projection map. We define the \textit{Segal map} to be the adjoint of $\partial$
	\[\delta\colon X(\textbf{n}_+^\alpha)\to \Gamma_G(\textbf{n}_+^\alpha,X_1) = F(\textbf{n}_+^\alpha,X_1)\cong (X_1^n)^\alpha.\]
\end{Constr}

\subsubsection{\texorpdfstring{$\Gamma_\II$}{Gamma_I}-\texorpdfstring{$G$}{G}-spaces}\label{sec:Gamma_I-G-spaces}

Having recalled the existing two equivariant generalisations of $\Gamma$-spaces we can now generalise this to the incomplete setting. 

\begin{Def}
	Let $\II$ be a disk-like indexing system. Let $\Gamma_\II\subset \Gamma_G$ be the full subcategory of those finite $G$-sets $\textbf{n}^\alpha_+\in\Gamma_G$ such that $\textbf{n}^\alpha\in\II(G)$.
\end{Def}

\begin{Rem}
	Let $\II$ be a disk-like indexing system.
	\begin{enumerate}[(1)]
		\item By definition of indexing systems (I1) all trivial sets are contained in $\II(G)$, i.e. disjoint unions of trivial $G$-orbits $G/G\cong *$. Hence there are natural inclusions of full subcategories
		 \[\Gamma\to \Gamma_\II.\]
		By definition we also have inclusions
		\[\Gamma_\II\to \Gamma_G\text.\]
		\item Note that for $\TT$ the trivial and $\CC$ the complete indexing system, $\Gamma_\TT=\Gamma$ and $\Gamma_\CC=\Gamma_G$. We will usually continue to write $\Gamma$ and $\Gamma_G$ instead of $\Gamma_\TT$ and $\Gamma_\CC$.
		\item If we do not assume that $\II$ is disk-like it can happen that for two different non-disk-like indexing systems $\II,\JJ$ one has equal categories $\Gamma_\II$ and $\Gamma_\JJ$. This is because only $\II(G)$ is used in the definition of $\Gamma_\II$ and the lower parts of the indexing system $\II(H)$ are omitted. For $G=C_{p^2}$ consider the two transfer systems
		\begin{center}
			\begin{tikzpicture}[scale = 0.7]
				\filldraw (0,0) circle (1pt);
				\filldraw (0,0.5) circle (1pt);
				\filldraw (0,-0.5) circle (1pt);
				\draw (0,0) -- (0,-0.5);
				\node at (-0.7,0) {$\II=$};
				\node at (1.3,0) {and};
			\end{tikzpicture}
			\begin{tikzpicture}[scale = 0.7]
				\filldraw (0,0) circle (1pt);
				\filldraw (0,0.5) circle (1pt);
				\filldraw (0,-0.5) circle (1pt);
				\node at (-0.7,0) {$\JJ=$};
			\end{tikzpicture}
		\end{center} 
		The two corresponding indexing systems have equal $G$-level $\II(G)=\JJ(G)$ and therefore $\Gamma_\II=\Gamma_\JJ$. In this case $\II$ is not-disk like and $\JJ$ is the maximal disk-like indexing system contained in $\II$. 
		
		In general, for $\II$ an indexing system, let $\II^\textup{d}$ be the maximal disk-like indexing system contained in $\II$. Then $\II(G)=\II^\textup{d}(G)$ since otherwise $\II^\textup{d}$ would not be maximal disk-like and thus $\Gamma_\II=\Gamma_{\II^\textup{d}}$. Note that $\II^\textup{d}$ always exists since $\Ind(G)$ is finite and has a minimal disk-like indexing system, the trivial indexing system.
	\end{enumerate}
\end{Rem}

Recall the notations $\Tr^\textup{d}(G)$ for the lattice of disk-like transfer systems and $\Ind^\textup{d}(G)$ the lattice of disk-like indexing systems (see Remark \ref{rem:disk-like-sublattice}). By definition it holds that $\Tr^\textup{d}(G)\cong \Ind^\textup{d}(G)$.

\begin{Lem}\label{lem:inclusions-of-Pi_I}
	Any inclusion $\II\subset\JJ$ of disk-like transfer systems induces an inclusion $\Gamma_\II\to\Gamma_\JJ$. Hence, the categories $\Gamma_\II$ form a poset isomorphic to $\textup{\Ind}^\textup{d}(G)$ or $\textup{\Tr}^\textup{d}(G)$.
\end{Lem}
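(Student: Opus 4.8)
The plan is to exhibit $\II\mapsto\Gamma_\II$ as an order-embedding of $\Ind^{\textup{d}}(G)$ into the poset of full subcategories of $\Gamma_G$ ordered by inclusion on objects, with image exactly the collection $\{\Gamma_\II\}$; transporting along the lattice isomorphism $\Tr^{\textup{d}}(G)\cong\Ind^{\textup{d}}(G)$ of \cite{Rub21a}, Theorem 3.7, then yields the statement phrased in terms of transfer systems. First I would treat the functor direction: an inclusion $\II\subset\JJ$ of disk-like transfer systems is, under that isomorphism, an inclusion of disk-like indexing systems, so $\II(H)\subset\JJ(H)$ for every $H\subset G$; in particular $\II(G)\subset\JJ(G)$. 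Since $\Gamma_\II$ and $\Gamma_\JJ$ are \emph{by definition} the full subcategories of $\Gamma_G$ on the objects $\textbf{n}^\alpha_+$ with $\textbf{n}^\alpha$ in $\II(G)$, respectively in $\JJ(G)$, the containment $\II(G)\subset\JJ(G)$ is precisely $\ob\Gamma_\II\subset\ob\Gamma_\JJ$, so the full-subcategory inclusion $\Gamma_\II\hookrightarrow\Gamma_\JJ$ exists and is visibly compatible with composing two such inclusions. Hence $\II\mapsto\Gamma_\II$ is order-preserving.

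The real content is order-reflection, and here the disk-like hypothesis is indispensable: as the remark preceding the lemma shows, without it the assignment $\II\mapsto\Gamma_\II$ need not be injective, since a non-disk-like indexing system and its maximal disk-like sub-indexing system can share the same $G$-level. Suppose then $\ob\Gamma_\II\subset\ob\Gamma_\JJ$. Every finite $G$-set in $\II(G)$ is isomorphic to some $\textbf{n}^\alpha$ (\cite{MMO25}, Convention 1.4), so it lies in $\ob\Gamma_\II\subset\ob\Gamma_\JJ$ and hence is isomorphic to an object of $\JJ(G)$; by closure under isomorphism (I2) this gives $\II(G)\subset\JJ(G)$. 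To upgrade this to $\II\subset\JJ$ I would invoke that a disk-like indexing system is recovered from its $G$-level by restriction: with $\FF_\II=\{H\mid G/H\in\II(G)\}$ the associated (based, transfer-like) family, Proposition \ref{prop:restriction-finite-G-sets} gives $\II(H)=H\Fin^{(\FF_\II)_H}$, and likewise for $\JJ$. Since $\II(G)\subset\JJ(G)$ forces $\FF_\II\subset\FF_\JJ$, and since both $\FF\mapsto\FF_H$ and $\FF\mapsto H\Fin^{\FF}$ are monotone, we conclude $\II(H)\subset\JJ(H)$ for all $H\subset G$, i.e.\ $\II\subset\JJ$.

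Combining the two directions, $\II\subset\JJ$ holds if and only if $\Gamma_\II\subset\Gamma_\JJ$ does, so $\II\mapsto\Gamma_\II$ is an isomorphism of posets onto its image $\{\Gamma_\II\}$, giving $\{\Gamma_\II\}\cong\Ind^{\textup{d}}(G)\cong\Tr^{\textup{d}}(G)$. The one genuinely delicate point — the step I expect to need the most care — is the order-reflection argument, precisely because that is where one must not conflate $\II(G)\subset\JJ(G)$ (which is automatic from the inclusion of object sets) with $\II\subset\JJ$ itself, and it is exactly here that the disk-like assumption, via determinacy by the $G$-level in Proposition \ref{prop:restriction-finite-G-sets}, is used.
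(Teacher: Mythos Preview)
Your proof is correct and, for the first claim, essentially identical to the paper's: both observe that $\II\subset\JJ$ gives $\II(G)\subset\JJ(G)$, hence $\ob\Gamma_\II\subset\ob\Gamma_\JJ$, and fullness does the rest.

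Where you differ is in the second claim. The paper's proof consists only of those three sentences and treats the poset isomorphism as an immediate ``Hence'', implicitly leaning on the preceding remark that a disk-like indexing system is determined by its $G$-level. You, by contrast, supply an explicit order-reflection argument: from $\ob\Gamma_\II\subset\ob\Gamma_\JJ$ you extract $\II(G)\subset\JJ(G)$, hence $\FF_\II\subset\FF_\JJ$, and then invoke Proposition~\ref{prop:restriction-finite-G-sets} to recover $\II(H)\subset\JJ(H)$ for all $H$. This is more than the paper writes down, and it is exactly the step where the disk-like hypothesis earns its keep; your identification of this as the ``genuinely delicate point'' is well placed.
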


\begin{proof}
	Since $\II\subset\JJ$ also $\II(G)\subset\JJ(G)$. Hence, if $\textbf{n}^\alpha\in \II(G)$ also $\textbf{n}^\alpha\in \JJ(G)$. Since $\Gamma_\II$ and $\Gamma_\JJ$ are full subcategories we obtain inclusions $\Gamma_\II\to \Gamma_\JJ$.
\end{proof}

\begin{Def}\label{def:Gamma_I-G-spaces}
	A $\Gamma_\II$-$G$-space is a $G\Top_*$-functor $\Gamma_\II\to G\Topu_*$ whose restriction to $\Gamma$ is a $\Gamma$-$G$-space. Morphisms are respective $G\Top_*$-natural transformations. Denote the corresponding category by
	\[\Gamma_\II[G\Topu_*].\]
\end{Def}

\begin{Rem}
	We will show that (see Remark \ref{rem:Gamma_I-lands-in-well-pointed}) any $\Gamma_\II$-$G$-space $X$ is level-wise well pointed. Hence, we could also write
	\[\Gamma_\II[G\Topu_{\textup{wp}}].\]
\end{Rem}

The second reason why we use disk-like transfer systems is that restrictions are well-behaved. 

\begin{Rem}\label{rem:restriction-Gamma_I-G-spaces-to-H}
	Note that we can restrict the $G\Top_*$-category $\Gamma_\II$ to an $H\Top_*$-category with objects being those finite $H$-sets $T$ such that $T\cong \res_H^G T'$ for some $T'\in \II(G)$. Since $\II$ is assumed to be disk-like this is equivalent to requiring that $T\in \II(H)$ using Remark \ref{rem:double-coset-formula} and Proposition \ref{prop:restriction-finite-G-sets}. This uses indexing system axioms (I2) through (I6) as well as the disk-like property. Hence, any $\Gamma_\II$-$G$-space has an underlying $\Gamma_\II$-$H$-space $\res_H^G X$.
	
	Let $X\colon \Gamma_\II\to G\Topu_*$ be a $\Gamma_\II$-$G$-space. Its evaluation on some finite $H$-set $T$ will also be denoted by $X(T)$ implicitly using the underlying $\Gamma_\II$-$H$-space structure of $\res_H^G X$.
\end{Rem}

\subsection{Specialness conditions}\label{sec:specialness}

We now want to generalise the three known specialness notions of \cite{Shi89}, \cite{Shi91} and \cite{MMO25} to allow for arbitrary disk-like indexing data. We define two classes of specialness conditions which we will later show to be equivalent. The key distinguishing factor is the indexing system data.

\subsubsection{\texorpdfstring{$\ff_\bullet^\II$}{F_bullet^I}-specialness}

We will begin by defining a lattice of specialness conditions on $\Gamma$-$G$-spaces and then extend to $\Gamma_\II$-$G$-spaces.

For a $\Gamma$-$G$-space $X$, note that $\Sigma_n$ acts on $X_n$ since $\Sigma\subset \Gamma$. Additionally, $\Sigma_n$ also acts on $X_1^n$ via permuting the entries in the product. Hence, the Segal maps are $G$-maps of $(G\times\Sigma_n)$-spaces. For $\alpha\colon H\to \Sigma_n$ a group homomorphism we write $\Lambda_\alpha$ for its graph and $\textbf{n}^\alpha$ for its associated finite $H$-set.

\begin{Def}
	Let $\II$ be an indexing system. Define
	\[\ff_n^\II:=\{\Lambda_\alpha\subset G\times\Sigma_n \mid \textbf{n}^\alpha\in \II(H)\}.\]
	A $\Gamma$-$G$-space $X$ is called \textit{$\ff_\bullet^\II$-special} if for each $n\ge 0$ and $\Lambda_\alpha\in \ff_n^\II$ the Segal maps $\delta\colon X_n\to X_1^n$ are weak $\Lambda_\alpha$-equivalences. A map $f\colon X\to Y$ of $\Gamma$-$G$-spaces is called \textit{$\ff_\bullet^\II$-level equivalence} if for each $n\ge 0$ and $\Lambda_\alpha\in\ff_\bullet^\II$ the map $f_n\colon X_n\to Y_n$ is a weak $\Lambda_\alpha$-equivalence.
\end{Def}

Note that this definition is valid for arbitrary indexing systems, but we will later restrict to disk-like indexing systems. One can define $\ff_\bullet^\II$-specialness even more generally at a loss of coherency.

\begin{Rem}\label{rem:more-general-Gamma-G-spaces}
	We technically do not need $\II$ to be an indexing system, but in order to obtain a homotopy coherent set of transfers we do. When defining the Segal machine it will become apparent why we need indexing systems. But for any set of exponents $\EE$ in the sense of \cite{Rub25}, Definition 2.3, that is, a collection $\EE=(T_i)_{i\in I}$ of finite $H_i$-sets, we can define $\ff_\bullet^\EE$-special $\Gamma$-$G$-spaces by requiring weak homotopy equivalences of Segal maps for all $T_i\in \EE$.
\end{Rem}

\begin{Rem}\label{rem:edge-cases-special-and-homotopy-assumption} $ $
	\begin{enumerate}
		\item We recover the two notions of specialness in \cite{MMO25}. For $\II=\TT$, the trivial indexing system, this recovers the \textit{naive} or \textit{classic} notion of `special' $\Gamma$-$G$-space and for $\II=\CC$, the complete indexing system, we recover the \textit{genuine} notion of `$\ff_\bullet$-special' $\Gamma$-$G$-space also used in \cite{Shi91}.
		\item We may assume without loss of generality that for any $\Lambda_\alpha\in\ff_\bullet^\II$, for $\II$ some indexing system, any weak $\Lambda_\alpha$-homotopy equivalence $\delta$ is a $\Lambda_\alpha$-homotopy equivalence. This is true since one can do cofibrant replacements of the spaces of an $\ff_\bullet^\II$-special $\Gamma$-$G$-space (cf. \cite{MMO25}, \S 8.2).
	\end{enumerate}	
\end{Rem}

More generally we can define $\ff_\bullet^\II$-specialness for arbitrary $\Gamma_\JJ$-$G$-spaces using only the underlying $\Gamma$-$G$-space.

\begin{Def}
	Let $\II,\JJ$ be indexing systems. A $\Gamma_\JJ$-$G$-space $X$ is called \textit{$\ff_\bullet^\II$-special} if for any group homomorphism $\alpha\colon H\to\Sigma_n$ such that $\textbf{n}^\alpha\in\II(H)$ the associated Segal map $\delta\colon X_n\to X_1^n$ is a weak $\Lambda_\alpha$-equivalence. A map of $\Gamma_\JJ$-$G$-spaces $f\colon X\to Y$ is called \textit{$\ff_\bullet^\II$-level $G$-equivalence} if for any such $\alpha$ the maps $f\colon X_n\to Y_n$ are weak $\Lambda_\alpha$-equivalences. Denote the full subcategory of $\ff_\bullet^\II$-special $\Gamma_\JJ$-spaces by
	\[\Gamma_\II[G\Topu_*]^{\ff_\bullet^\JJ\text{-spc}}.\]
\end{Def}

In the next section we will justify these definitions by constructing transfer maps.

\subsubsection{\texorpdfstring{$\II$}{I}-specialness}

We will again begin by defining a lattice of specialness conditions this time on $\Gamma_G$-$G$-spaces and then extend to $\Gamma_\II$-$G$-spaces.

\begin{Def}
	Let $\II$ be an indexing system. A $\Gamma_G$-$G$-space $X$ is called \textit{$\II$-special} if the maps $\delta\colon X(\textbf{n}_+^\alpha)\to (X_1^n)^\alpha$ are weak homotopy equivalences for all finite based $G$-sets $\textbf{n}^\alpha\in\II(G)$. A map $f\colon X\to Y$ of $\Gamma_G$-$G$-spaces is called \textit{$\II$-level $G$-equivalence} if for any such $\textbf{n}_+^\alpha$ the maps $f(\textbf{n}_+^\alpha)\colon X(\textbf{n}_+^\alpha)\to Y(\textbf{n}_+^\alpha)$ are weak $G$-equivalences.
\end{Def}

\begin{Rem}
	The complete indexing system $\CC$ is disk-like. A $\CC$-special $\Gamma_G$-$G$-space is the same thing as a `special' $\Gamma_G$-$G$-space as defined in \cite{Shi89}, Definition 1.3 or \cite{MMO25}, Definition 2.35. 
	
	The trivial indexing system $\TT$ is also disk-like. A $\TT$-special $\Gamma_G$-$G$-space requires that the Segal maps are weak $G$-equivalences only for trivial $G$-sets $\textbf{n}^{\varepsilon_n}_+$.
\end{Rem}

We can now generalise this to underlying $\Gamma_\II$-$G$-spaces.

\begin{Def}
	Let $\II\subset\JJ$ be indexing systems. A $\Gamma_\JJ$-$G$-space $X$ is called \textit{$\II$-special} if for any finite $G$-set $\textbf{n}^\alpha\in\II(G)$ the associated Segal map $\delta\colon X(\textbf{n}^\alpha_+)\to (X_1^n)^\alpha$ is a weak $G$-equivalence. A map of $\Gamma_\JJ$-$G$-spaces $f\colon X\to Y$ is called \textit{$\II$-level $G$-equivalence} if for any finite $G$-set $\textbf{n}^\alpha\in\II(G)$ the map $f\colon X(\textbf{n}^\alpha_+)\to Y(\textbf{n}^\alpha_+)$ is a weak $G$-equivalence. Denote the full subcategory by
	\[\Gamma_\II[G\Topu_*]^{\JJ\text{-spc}}.\]
\end{Def}

\subsection{Transfer maps on equivariant \texorpdfstring{$\Gamma$}{Gamma}-spaces}\label{sec:transfer-maps-Gamma}

Note that the homotopy commutative coherent monoid structure of any special $\Gamma_\II$-$G$-space is fully contained on the underlying classically special $\Gamma$-$G$-space. In particular, by definition it should be clear that $\ff_\bullet^\TT$-specialness and $\TT$-specialness are equivalent and provide the necessary multiplication maps. The additional structure encoded by indexing data gives rise to transfer maps.

Let $X$ be a based $G$-space and let $K\subset H\subset G$ be subgroups. There are natural \textit{restriction} maps $r_K^H\colon X^H\to X^K$ given by inclusions of fixed points. Maps $t_K^H\colon X^K\to X^H$ in the other direction are called \textit{internal transfer maps}.

\begin{Def}
	Let $\textbf{n}_+^\alpha$ be a finite based $H$-set. An \textit{external $\textbf{n}_+^\alpha$-transfer} on $X$ is an $H$-map $(X^n)^\alpha\to X$ or equivalently an $H$-map $F(\textbf{n}_+^\alpha,X)\to X$.
\end{Def}

\begin{Rem}
	The existence of non-trivial transfer maps depends on the extra structure of $X$. \cite{BH15} show that if $X$ is an algebra over an $N_\infty$-operad then there exist external transfer maps for any finite $G$-set in the indexing system of admissible sets of the operad. We will next show that special equivariant $\Gamma$-spaces also give rise to external transfer maps depending on choices of indexing systems.
\end{Rem}

Any external transfer map gives rise to an internal transfer map in the following way.

\begin{Lem}\label{lem:external->internal-transfer}
	Let $X$ be a $G$-space with external transfer map $\overline{t}_{H/K}$ for $K\subset H$ a subgroup and $H/K$ the finite $H$-set given by the orbit space. Then there exists an internal transfer map $t_K^H\colon X^K\to X^H$ on $X$.
\end{Lem}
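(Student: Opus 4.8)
The plan is to produce $t_K^H$ by taking $H$-fixed points of the external transfer and then identifying the resulting source with $X^K$ via the induction--restriction adjunction. Recall that an external $H/K$-transfer on $X$ is, by definition, an $H$-map
\[\overline t_{H/K}\colon F\bigl((H/K)_+,X\bigr)\to X,\]
where $F((H/K)_+,X)$ is the based mapping space with conjugation $H$-action $h.f(x)=hf(h^{-1}x)$ (equivalently, one may start from the presentation $(X^n)^\alpha\cong F(\textbf{n}_+^\alpha,X)$ recorded earlier, applied to the permutation homomorphism $\alpha$ attached to the $H$-set $H/K$).

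First I would apply the $H$-fixed point functor, which takes the $H$-map $\overline t_{H/K}$ to a map of spaces $\overline t_{H/K}^{\,H}\colon F((H/K)_+,X)^H\to X^H$. Unwinding the conjugation action, a map $f$ is fixed by all of $H$ exactly when $hf(h^{-1}x)=f(x)$ for all $h\in H$, i.e.\ when $f$ is $H$-equivariant, so $F((H/K)_+,X)^H=F_H((H/K)_+,X)$, the space of based $H$-maps. Now $(H/K)_+=H_+\wedge_K S^0=\ind_K^H S^0$, so the adjunction $\ind_K^H\vdash\res_K^H$ together with $F(S^0,-)\cong\id$ gives a natural homeomorphism
\[F_H\bigl((H/K)_+,X\bigr)\;\cong\;F_K\bigl(S^0,\res_K^H X\bigr)\;\cong\;(\res_K^H X)^K \;=\; X^K,\]
which sends $x\in X^K$ to the $H$-map $hK\mapsto hx$ (well defined since $x$ is $K$-fixed) and conversely evaluates an $H$-map at the coset $eK$.

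Composing this homeomorphism with $\overline t_{H/K}^{\,H}$ then yields the desired internal transfer
\[t_K^H\colon X^K\;\xrightarrow{\ \cong\ }\;F_H\bigl((H/K)_+,X\bigr)=F\bigl((H/K)_+,X\bigr)^H\;\xrightarrow{\ \overline t_{H/K}^{\,H}\ }\;X^H.\]
Since no further axioms on internal transfer maps are imposed in the text, this completes the construction. There is no real obstacle here: the only care needed is the bookkeeping of the conjugation action and of the two canonical identifications ($H$-fixed points of a conjugation mapping space versus equivariant maps, and the induction adjunction), both of which are routine.
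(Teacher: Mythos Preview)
Your argument is correct and yields exactly the same map as the paper's: the paper chooses coset representatives $r_1,\dots,r_n$ and sets $t_K^H(x)=\overline t_{H/K}(r_1x,\dots,r_nx)$, then checks by hand that this is $H$-fixed, whereas you package the same construction coordinate-free by observing that $x\mapsto(hK\mapsto hx)$ is the adjunction isomorphism $X^K\cong F_H((H/K)_+,X)$ and that an $H$-map restricts to $H$-fixed points. The two presentations are interchangeable; yours avoids the explicit permutation bookkeeping at the cost of invoking the $\ind\dashv\res$ adjunction, while the paper's makes the formula visible.
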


\begin{proof}
	Choose coset representatives $r_1,\dots,r_n$ of $H/K$ with $\rho\colon H\to \Sigma_n$ is the permutation representation given by $hr_i=h_{\rho(h)(i)}$. Then $t_K^H(x):=\overline{t}_{H/K}(r_1x,\dots,r_n x)$ defines a map $X^K\to X^H$ since for any $h\in H$.
	\begin{align*}
		h\overline{t}_{H/K}(r_1x,\dots,r_n x) 
		&= \overline{t}_{H/K}(h._\rho(r_1x,\dots,r_n x)) \\
		&= \overline{t}_{H/K}(hr_{\rho(h)^{-1}(1)}x,\dots,hr_{\rho(h)^{-1}(n)} x) \\
		&= \overline{t}_{H/K}(r_{\rho(h)^{-1}\rho(h)(1)}x,\dots,r_{\rho(h)^{-1}\rho(h)(n)} x) \\
		&= \overline{t}_{H/K}(r_1x,\dots,r_n x)\qedhere
	\end{align*}
\end{proof}

We are not aware that there is any way to obtain an external transfer map from an internal one. The external transfer map is an $H$-map between $H$-spaces and the internal one is just a map between fixed point spaces with residual Weyl group actions.

We can construct external transfer maps on special equivariant $\Gamma$-spaces.

\begin{Constr}\label{constr:transfer-I-special-Gamma-J}
	Let $\II\subset\JJ$ be disk-like indexing systems and let $X\colon \Gamma_\JJ\to G\Topu_*$ be an $\II$-special $\Gamma_\JJ$-$G$-space. Without loss of generality assume that all required Segal maps are $G$-homotopy equivalences and, just a weak homotopy equivalences, using Remark \ref{rem:edge-cases-special-and-homotopy-assumption}. Let $T$ be a finite $G$-set in $\II$ and choose a homotopy inverse $\delta^{-1}$ to the Segal map $\delta\colon X(T_+)\to X_1^{\times T}$. Define an external transfer $G$-map $\overline{t}_T\colon X_1^{\times T}\to X_1$ by
	\begin{center}
		\begin{tikzcd}
		X(1)^{\times T} \arrow[r, "\delta^{-1}"] \arrow[rd, "\overline{t}_T"', dotted] & X(T_+) \arrow[d, "\varphi"] \\& X(1)                     
	\end{tikzcd}
	\end{center}
	Hence, $\overline{t}_T$ is a $G$-map implying that it is an external transfer map. Via Lemma \ref{lem:external->internal-transfer} we also obtain internal transfer maps $t_H^G\colon X^H\to X^G$ for all $G/H\in\II(G)$. 
	
	To also construct external transfer maps for finite $H$-sets we need to use the disk-like property of $\JJ$. Let $S$ be a finite $H$-set in $\II(H)$. Using the disk-like property and the double coset formula (Remark \ref{rem:double-coset-formula}, Proposition \ref{prop:restriction-finite-G-sets}) there is a finite $G$-set $S'$ and an inclusion $S\to \res_H^G S'$. This induces an inclusion $i\colon X(S)\to S(\res_H^G S')$ on the underlying restricted $\Gamma_\JJ$-$H$-space. Note that for any finite $G$-set it holds that $\res_H^GX^{\times T}\cong (\res_H^G X)^{\times \res_H^G T}$ since
	\[\res_H^G X^{\times T} \cong \res_H^G F(T_+,X)\cong F(\res_H^G T,\res_H^G X)\cong (\res_H^G X)^{\times \res_H^G T}.\]
	Construct an external $S_+$-transfer map $t_{S_+}\colon \res_H^G X_1^{\times S}\to \res_H^G X_1$ by setting
	\begin{center}
		\begin{tikzcd}
			\res_H^G X_1^{\times S} \arrow[r, "i_*"] \arrow[rrd] & \res_H^G X_1^{\times \res_H^GS'} \arrow[r, "\cong"] & \res_H^G (X_1^{\times S'}) \arrow[d, "\res_H^G \overline{t}_{S'}"] \\  &                                                     & \res_H^GX_1                                                       
		\end{tikzcd}
	\end{center}
	Note that the external transfer map $\overline{t}_{S'}$ only exists since $\II$ is disk-like and $\II\subset\JJ$ is disk-like. Using Lemma \ref{lem:external->internal-transfer} again we also obtain internal transfer maps $t_K^H\colon X_1^K\to X_1^H$.
\end{Constr}

\begin{Rem}\label{rem:induction-does-not-work}
	One might try to construct transfer maps for finite $H$-sets not by using the disk-like property but instead using induction. There also exists an inclusion $i'\colon S\to \res_H^G\ind_H^G S$ but beware that $\ind_H^G S'$ is in $\II(G)$ if and only if additionally $G/H\in \II(G)$. This is not always the case for disk-like indexing systems. Take for example the following disk-like $G=C_{pq}$-transfer system of (cf. \cite{Rub21a}, \S 2.3, Example 4.13)
	\begin{center}
		\begin{tikzpicture}
			\filldraw (0.5,0) circle (1pt);
			\filldraw (-0.5,0) circle (1pt);
			\filldraw (0,0.5) circle (1pt);
			\filldraw (0,-0.5) circle (1pt);
			\draw (-0.5,0) -- (0,0.5);
			\draw (0.5,0) -- (0,-0.5);
			\node at (1,0) {$C_p$};
			\node at (-1,0) {$C_q$};
			\node at (0,1) {$C_{pq}$};
			\node at (0,-1) {$C_1$};
		\end{tikzpicture}
	\end{center}
	To induce the $C_p$-set $C_p/C_1$ up to a $C_{pq}$-set we need the relation $C_p\to C_{pq}$, but this is not part of the indexing system.
\end{Rem}

Non-equivariantly $\pi_0(X_1)$, for $X$ a special $\Gamma$-space, is a commutative monoid. Equivalently we see that $\underline{\pi}_0(X_1)$ is an $\II$-incomplete semi Mackey functor in the sense of \cite{BH21}, Definition 7.21.

\begin{Prop}\label{prop:Mackey-functor-I-special}
	Let $\II\subset\JJ$ be disk-like indexing systems and let $X\colon \Gamma_\JJ\to G\Topu_{\textup{wp}}$ be an $\II$-special $\Gamma_\JJ$-$G$-space. Then $\underline{\pi}_0(X_1)$ given by
	\[\underline{\pi}_0(X_1)(G/H)=\pi_0(X_1^H).\]
	is an $\II$-incomplete semi Mackey functor with evident restriction maps given by inclusion of fixed points and transfer maps induced by the $t_K^H$ from Construction \ref{constr:transfer-I-special-Gamma-J}.
\end{Prop}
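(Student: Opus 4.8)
The plan is to produce, out of $X$, the data required of an $\II$-incomplete semi Mackey functor — a coefficient system valued in commutative monoids together with transfer maps indexed by the admissible orbits — and then to check the compatibility axioms of \cite{BH21}, Definition 7.21. First I would install the monoid structure. Since the trivial indexing system is the bottom element of $\Ind(G)$, an $\II$-special $\Gamma_\JJ$-$G$-space is in particular $\TT$-special, so the Segal maps $\delta\colon X_n\to X_1^n$ attached to trivial $G$-sets are weak $G$-equivalences. Passing to $H$-fixed points — which detect weak $G$-equivalences and commute with finite products — the maps $X_n^H\to (X_1^H)^n$ become weak equivalences, and the standard Segal argument (multiplication from $X(\hat\varphi)$ on $X_2$, unit from $X_0=*$, associativity, unitality and commutativity from $X_3$ and the symmetry of $\Gamma$) makes each $\pi_0(X_1^H)$ a commutative monoid. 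For $K\subset H$ the inclusion $X_1^H\hookrightarrow X_1^K$ is compatible with the $G$-maps $X(\hat\varphi)$ and $\delta$ defining the product, so it induces a monoid homomorphism $r_K^H$, and these are manifestly transitive in $K\subset H$; thus $\underline\pi_0(X_1)$ is a coefficient system valued in commutative monoids with the stated restrictions.

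Next I would bring in the transfers. For $H/K\in\II(H)$ Construction \ref{constr:transfer-I-special-Gamma-J} gives internal transfer maps $t_K^H\colon X_1^K\to X_1^H$, and applying $\pi_0$ yields the candidate covariant maps. That each $t_K^H$ is a monoid homomorphism I would see as follows: by closure under coproducts (I6), $(H/K\sqcup H/K)_+$ lies in $\Gamma_\II$, and $\II$-specialness identifies $X((H/K\sqcup H/K)_+)$ up to weak equivalence with $X((H/K)_+)\times X((H/K)_+)$; this equips $\pi_0$ of the $H$-fixed points of $X((H/K)_+)$ with a commutative monoid structure for which the Segal equivalence $X((H/K)_+)^H\simeq X_1^K$ and the fold map $\varphi=X(\hat\varphi)$ are monoid maps, hence so is the composite defining $\overline t_{H/K}$ and therefore $t_K^H$. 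Covariant functoriality $t_H^G\circ t_K^H=t_K^G$ — legitimate because $G/H\in\II(G)$ and $H/K\in\II(H)$ force $G/K\in\II(G)$ by self-induction (I7) — follows by comparing the external transfer for $G/K$ with the iterate of the external transfers for $G/H$ and $H/K$, using the factorization of the fold map $(G/K)_+\to S^0$ through $(G/H)_+$ and naturality of the Segal maps. The unit axioms $r_H^H=\id=t_H^H$ are trivial.

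The main obstacle is the double coset (Mackey) relation: for $K,L\subset H$ with $H/K\in\II(H)$,
\[r_L^H\circ t_K^H=\sum_{[a]\in L\backslash H/K} t_{L\cap aKa^{-1}}^{L}\circ c_a\circ r_{a^{-1}La\cap K}^{K}.\]
I would derive this from the decomposition $\res_L^H(H/K)\cong\coprod_{[a]} L/(L\cap aKa^{-1})$ of Remark \ref{rem:double-coset-formula}. First, closure under restriction (I3) and subobjects (I5) guarantees each summand $L/(L\cap aKa^{-1})$ lies in $\II(L)$, so every transfer on the right-hand side is defined. Then the point is that, after the Segal identifications, $r_L^H\circ t_K^H$ is obtained by restricting the external transfer $\overline t_{H/K}=\varphi\circ\delta^{-1}$ to $L$; here $\res_L^H X((H/K)_+)$ is the value of the underlying restricted $\Gamma_\II$-$H$-space on $(\res_L^H(H/K))_+$, and $\II$-specialness (again via (I6) and the fact that $X$ carries coproducts of admissible sets to products up to weak equivalence) splits this value as $\prod_{[a]}X((L/(L\cap aKa^{-1}))_+)$. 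Tracing the fold map $(\res_L^H(H/K))_+\to S^0$ through this splitting and matching the twisting of coset representatives with the conjugations $c_a$ produces exactly the right-hand side. This is the only genuinely computational step; everything else is bookkeeping with $\Gamma_\JJ$-functoriality and the indexing-system axioms. Once we have a commutative-monoid-valued coefficient system equipped with $\II$-indexed transfers satisfying functoriality and this Mackey relation, repackaging it via spans of admissible $G$-sets shows it is an $\II$-incomplete semi Mackey functor in the sense of \cite{BH21}, Definition 7.21, which completes the proof.
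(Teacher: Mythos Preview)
The paper does not supply a proof of this proposition; it is stated immediately after Construction \ref{constr:transfer-I-special-Gamma-J} and the text moves on directly to the analogous statement for $\ff_\bullet^\II$-special $\Gamma_\JJ$-$G$-spaces. So there is nothing to compare against, and your proposal is effectively filling in an omitted verification.

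Your outline is sound and hits the expected beats: the commutative monoid structure from $\TT$-specialness, the restriction maps from fixed-point inclusions, the transfers from the cited construction, additivity of transfers via the wedge $(H/K\sqcup H/K)_+$, transitivity via the factorization $(G/K)_+\to(G/H)_+\to S^0$, and the double coset relation from the decomposition of $\res_L^H(H/K)$. One point you glide over is that the transfers of Construction \ref{constr:transfer-I-special-Gamma-J} for $H\subsetneq G$ depend on a choice of $G$-set $S'$ and embedding $S\hookrightarrow\res_H^G S'$; to get a well-defined Mackey functor on $\pi_0$ you should note why different choices yield the same map after $\pi_0$ (any two such choices are related through a common refinement, and the comparison commutes up to homotopy by naturality of the Segal maps and of $\varphi$). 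Similarly, since $\delta^{-1}$ is only a homotopy inverse, all the compatibility checks you describe must be read as equalities in $\pi_0$ rather than on the nose, which you implicitly do but could state once explicitly. With those caveats, your argument is the standard one and would be accepted as the intended proof.
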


We can do a similar construction for $\ff_\bullet^\II$-special $\Gamma_\JJ$-$G$-spaces.

\begin{Constr}\label{constr:transfer-F_bullet-I-special-Gamma-J}
	Let $\II$ and $\JJ$ be disk-like indexing systems. Let $X$ be an $\ff_\bullet^\II$-special $\Gamma_\JJ$-$G$-space such that for $X$ the Segal maps $\delta\colon X_n\to X_1^n$ are $\Lambda_\alpha$-homotopy equivalences for all $n\ge 0$ and $\alpha\colon H\to \Sigma_n$ such that $\textbf{n}^\alpha\in \II(H)$. By definition 
	\[\delta^{\Lambda_{\alpha}}\colon X_n^{\Lambda_{\alpha}}\to (X_1^n)^{\Lambda_{\alpha}}\] is a homotopy equivalence which is equivalent to a homotopy equivalence 
	\[(\delta^\alpha)^H\colon (X_n^\alpha)^H\to ((X_1^n)^\alpha)^H\] which is equivalent to $\delta\colon X_n^\alpha\to (X_1^n)^\alpha$ being an $H$-homotopy equivalence. Choosing a homotopy inverse we obtain a map $(X_1^n)^\alpha\to X_n^\alpha$. Now, this is not quite enough to define an external transfer map. If $\II\subset\JJ$ the the $\Gamma_\JJ$-$G$-space admits a multiplication $G$-map $X(\textbf{n}_+^\alpha)\to X_1$ but there is a priory no reason that a $G$-map $X_n^\alpha\to X_1$ exists.
	
	In the next section (Proposition \ref{prop:equivalence-G-actions-on-Gamma-I-space}) we will demonstrate that there is a $G$-isomorphism $X_n^\alpha\cong X(\textbf{n}_+^\alpha)$ so that there are transfer maps
	\begin{center}
		\begin{tikzcd}
			(X_1^n)^\alpha \arrow[r, "\delta^{-1}"] \arrow[rrd, "\overline{t}_{\textbf{n}_+^\alpha}"'] & X_n^\alpha \arrow[r, "\cong"] & X(\textbf{n}_+^\alpha) \arrow[d, "\varphi"] \\ & & X_1                                        
		\end{tikzcd}
	\end{center}
	One can analogously extend this construction to finite $H$-sets so that we obtain the following fact.
\end{Constr}

\begin{Prop}\label{prop:Mackey-functor-F_bullet^I-special}
	Let $\II\subset\JJ$ be disk-like indexing systems and let $X\colon \Gamma_\JJ\to G\Topu_{\textup{wp}}$ be an $\ff_\bullet^\II$-special $\Gamma_\JJ$-$G$-space. Then $\underline{\pi}_0(X_1)$ given by
	\[\underline{\pi}_0(X_1)(G/H)=\pi_0^H(X_1).\]
	is an $\II$-incomplete semi Mackey functor with evident restriction maps given by inclusion of fixed points and transfer maps induced by the $t_K^H$ from Construction \ref{constr:transfer-F_bullet-I-special-Gamma-J}.
\end{Prop}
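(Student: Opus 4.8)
The plan is to deduce this from Proposition~\ref{prop:Mackey-functor-I-special}: I would first show that an $\ff_\bullet^\II$-special $\Gamma_\JJ$-$G$-space is automatically $\II$-special, and then that the transfer maps of Construction~\ref{constr:transfer-F_bullet-I-special-Gamma-J} coincide with those of Construction~\ref{constr:transfer-I-special-Gamma-J}, so that the two propositions produce literally the same $\underline\pi_0(X_1)$. For the first point I would invoke the forthcoming Proposition~\ref{prop:equivalence-G-actions-on-Gamma-I-space}, which supplies a $G$-isomorphism $X_n^\alpha\cong X(\textbf{n}_+^\alpha)$ natural in $\textbf{n}^\alpha\in\JJ(G)$ under which the twisted Segal map $\delta^\alpha\colon X_n^\alpha\to(X_1^n)^\alpha$ of the underlying $\Gamma$-$G$-space is identified with the Segal map $\delta\colon X(\textbf{n}_+^\alpha)\to(X_1^n)^\alpha$ of the $\Gamma_\JJ$-$G$-space. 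Fixing $\textbf{n}^\alpha\in\II(G)$, every subgroup of the graph $\Lambda_\alpha\subset G\times\Sigma_n$ has the form $\Lambda_{\alpha|_K}$ for a subgroup $K\subset G$, with $X_n^{\Lambda_{\alpha|_K}}=(X_n^\alpha)^K$ and $\res_K^G\textbf{n}^\alpha=\textbf{n}^{\alpha|_K}\in\II(K)$ by axiom (I3); so $\ff_\bullet^\II$-specialness makes $\delta^\alpha$ a weak $K$-equivalence for every $K\subset G$, i.e.\ a weak $G$-equivalence, which—transported along the isomorphism—is exactly the $\II$-specialness condition for $\textbf{n}^\alpha$.

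For the second point I would observe that the commutative monoid structures, the restriction maps, and the transfer maps underlying both $\underline\pi_0(X_1)$ agree. The monoids agree because $\TT\subset\II$, so $X$ is in particular $\ff_\bullet^\TT$-special, its underlying $\Gamma$-$G$-space is special in the classical sense, $X_1$ is a Hopf $G$-space, and homotopy associativity and commutativity of its multiplication—all that is needed for each $\pi_0(X_1^H)$ to be a commutative monoid—follow from the higher Segal equivalences $X_n\simeq X_1^n$ as usual. The restriction maps are inclusions of fixed points on both sides. For the transfers, given an admissible orbit $H/K\cong\textbf{n}^\rho\in\II(H)$, both constructions form the external transfer as the composite of a homotopy inverse to the (twisted) Segal map, the isomorphism $X_n^\rho\cong X(\textbf{n}_+^\rho)$ of Proposition~\ref{prop:equivalence-G-actions-on-Gamma-I-space}, and the multiplication map $\varphi$, and then pass to the internal transfer $t_K^H\colon X_1^K\to X_1^H$ via Lemma~\ref{lem:external->internal-transfer}; since homotopy inverses are unique up to homotopy and a change of coset representatives is a permutation absorbed into the $\Sigma_n$-equivariance of $\varphi$, these induce the same map on $\pi_0$. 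Hence $\underline\pi_0(X_1)$ as defined here is literally the semi Mackey functor produced by Proposition~\ref{prop:Mackey-functor-I-special} applied to $X$, and the claim follows.

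Alternatively one could give a self-contained argument paralleling the proof of Proposition~\ref{prop:Mackey-functor-I-special}: additivity of each $t_K^H$ on $\pi_0$ from naturality of the Segal maps along the fold maps in $\Gamma_\II$; functoriality $t_H^L t_K^H=t_K^L$ from the self-induction coherence (I7) together with $\ind_H^L\ind_K^H\cong\ind_K^L$; and the double coset formula from the decomposition of $\res_L^H(H/K)$ in Remark~\ref{rem:double-coset-formula} together with naturality of $\delta$. In either route the part I expect to be most delicate is keeping every $G$- or $H$-set that appears—the restrictions of admissible orbits, the summands in the double coset formula, and their conjugates—inside $\II$; this is exactly what the disk-like hypothesis on $\II$ (and on $\JJ$) buys us, via Proposition~\ref{prop:restriction-finite-G-sets} and Proposition~\ref{prop:properties-isotropy}, so that the ``incomplete'' bookkeeping of which transfers are present is respected.
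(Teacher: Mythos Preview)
Your approach is correct but differs from what the paper does. The paper gives no explicit proof of this proposition: just as with the parallel Proposition~\ref{prop:Mackey-functor-I-special}, it is stated as a direct consequence of the preceding construction of transfer maps (Construction~\ref{constr:transfer-F_bullet-I-special-Gamma-J}), with the verification of the semi-Mackey axioms left implicit. The paper then remarks, immediately after the statement, that the equivalence of the two Mackey functors follows from the \emph{later} fourth comparison theorem (Theorem~\ref{thm:4th-comparison-theorem}).

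Your reduction to Proposition~\ref{prop:Mackey-functor-I-special} effectively anticipates the easy direction of that comparison theorem: the implication $\ff_\bullet^\II$-special $\Rightarrow$ $\II$-special, which you extract cleanly via the identification $X_n^\alpha\cong X(\mathbf{n}_+^\alpha)$ of Proposition~\ref{prop:equivalence-G-actions-on-Gamma-I-space} (a forward reference the paper itself already makes in Construction~\ref{constr:transfer-F_bullet-I-special-Gamma-J}). Your observation that subgroups of $\Lambda_\alpha$ are exactly the $\Lambda_{\alpha|_K}$ is the right bridge between ``weak $\Lambda_\alpha$-equivalence'' and ``weak $G$-equivalence of the $\alpha$-twisted map.'' The comparison of the two transfer constructions on $\pi_0$ is also sound. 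One caveat: your ``alternative'' route, paralleling the proof of Proposition~\ref{prop:Mackey-functor-I-special}, refers to a proof that does not actually appear in the paper---that proposition is likewise stated without argument---so if you go that way you are supplying details the paper omits for both statements, not borrowing them.
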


In the next section we will show that $\ff_\bullet^\II$ and $\II$-special $\Gamma_\JJ$-$G$-spaces are equivalent. It follows from the fourth comparison theorem (Theorem \ref{thm:4th-comparison-theorem}) that the two Mackey functors constructed in Proposition \ref{prop:Mackey-functor-I-special} and \ref{prop:Mackey-functor-F_bullet^I-special} are equivalent.

\begin{Rem}
	In \cite{BH15} external transfer maps are constructed from operadic data. Since we have not established a comparison between incomplete operadic and Segal machines we cannot directly compare these transfer maps. Using Construction \ref{constr:transfer-F_bullet-I-special-Gamma-J} and \cite{Sch18}, Proposition B.52 and Proposition B.54 one observes that the above defined transfer maps after applying prolongation $\PP_\II$ as defined in Section \ref{sec:construction-Segal-machine} agree with the transfer maps of $G$-spectra (cf. \cite{Sch23}, Definition 4.13). In particular, their existence is equivalent to the existence of the inverse of the Wirthm\"uller map.
\end{Rem}

\subsection{Comparison results}\label{sec:comparison}

In this section we wish to compare $\Gamma_\JJ$-$G$-spaces with $\Gamma_\II$-$G$-spaces for indexing systems $\II,\JJ$ as well as $\ff_\bullet^\II$-special and $\II$-special $\Gamma_\JJ$-spaces. The results of this section generalise results of \cite{Shi91} and \cite{MMO25}, Theorem 2.38 to the incomplete setting. We will assume that all indexing systems are disk-like.

First, we want to generalise the fact that $\Gamma$-$G$-spaces and $\Gamma_G$-$G$-spaces are equivalent (cf. \cite{Shi91}).

\begin{Lem}\label{lem:forgetful-functors}
	Let $\II\subset \JJ$ be (disk-like) indexing systems. There are forgetful functors
	\[\uu_\II^\JJ\colon\Gamma_\JJ[G\Topu_*] \to \Gamma_\II[G\Topu_*].\]
\end{Lem}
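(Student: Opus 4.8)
The plan is to realise $\uu_\II^\JJ$ as precomposition (restriction of functors) along the canonical inclusion of indexing categories, exactly in the spirit of Definition~\ref{rem:prolongation-existence}. First I would invoke Lemma~\ref{lem:inclusions-of-Pi_I}: from $\II\subset\JJ$ we get $\II(G)\subset\JJ(G)$, hence a fully faithful $G\Top_*$-functor $\iota\colon\Gamma_\II\hookrightarrow\Gamma_\JJ$ exhibiting $\Gamma_\II$ as a full $G\Top_*$-subcategory of $\Gamma_\JJ$. Since $\Gamma$, $\Gamma_\II$ and $\Gamma_\JJ$ are all full subcategories of $\Gamma_G$ and $\II$ contains all trivial $G$-sets by axiom (I1), the functor $\iota$ is compatible with the standard inclusions $\Gamma\hookrightarrow\Gamma_\II$ and $\Gamma\hookrightarrow\Gamma_\JJ$: it restricts to the identity on $\Gamma$, and the composite $\Gamma\hookrightarrow\Gamma_\II\hookrightarrow\Gamma_\JJ$ is the standard inclusion.

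Next, precomposition with $\iota$ defines a functor $\iota^*\colon G\Top_*(\Gamma_\JJ,G\Topu_*)\to G\Top_*(\Gamma_\II,G\Topu_*)$, $X\mapsto X\circ\iota$, between the $G\Top_*$-enriched functor categories (both have small skeleta, so this is the functor denoted $\uu$ in Definition~\ref{rem:prolongation-existence}). I would then check that $\iota^*$ carries equivariant $\Gamma$-spaces to equivariant $\Gamma$-spaces. For a $\Gamma_\JJ$-$G$-space $X$, the composite $X\circ\iota$ is a $G\Top_*$-functor $\Gamma_\II\to G\Topu_*$, and by the compatibility noted above its restriction along $\Gamma\hookrightarrow\Gamma_\II$ coincides with the restriction of $X$ along $\Gamma\hookrightarrow\Gamma_\JJ$, which is a $\Gamma$-$G$-space by hypothesis on $X$. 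Hence $X\circ\iota$ satisfies Definition~\ref{def:Gamma_I-G-spaces} and lies in $\Gamma_\II[G\Topu_*]$. On morphisms, a $G\Top_*$-natural transformation $f\colon X\Rightarrow Y$ is sent to the whiskered transformation $f\iota\colon X\circ\iota\Rightarrow Y\circ\iota$, and this assignment visibly preserves identities and composition. Taking $\uu_\II^\JJ$ to be the resulting restriction of $\iota^*$ to the full subcategories of equivariant $\Gamma$-spaces gives the claimed functor.

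There is no genuine obstacle here: the statement is a formality once Lemma~\ref{lem:inclusions-of-Pi_I} is available. The only point that deserves (minor) care is the verification that the defining clause of a $\Gamma_\II$-$G$-space — that the restriction to $\Gamma$ be a $\Gamma$-$G$-space — is preserved under $\iota^*$, and this reduces precisely to the observation that the inclusions $\Gamma\hookrightarrow\Gamma_\II\hookrightarrow\Gamma_\JJ$ compose coherently, which is immediate since all three are full subcategories of $\Gamma_G$.
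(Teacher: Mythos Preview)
Your proposal is correct and follows essentially the same approach as the paper: invoke Lemma~\ref{lem:inclusions-of-Pi_I} to obtain the inclusion $\Gamma_\II\hookrightarrow\Gamma_\JJ$, define $\uu_\II^\JJ$ by precomposition, and observe that the $\Gamma$-$G$-space condition is preserved because it depends only on the underlying functor $\Gamma\to G\Topu_*$. Your write-up is a bit more explicit about the compatibility of the inclusions of $\Gamma$ and about the effect on morphisms, but the argument is the same.
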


\begin{proof}
	For $\II\subset\JJ$ there is an inclusion $\Gamma_\II\to\Gamma_\JJ$ by Lemma \ref{lem:inclusions-of-Pi_I}. Therefore, a $G\Top_*$-functor $X\colon\Gamma_\JJ\to G\Topu_*$ induces a $G\Top_*$-functor $\uu_\II^\JJ X\colon\Gamma_\II\to G\Topu_*$ by precomposing with the inclusion. Since being a $\Gamma_\II$-$G$-space solely depends on the underlying functor $\Gamma\to G\Topu_*$, $\uu_\II^\JJ X$ is a $\Gamma_\II$-$G$-space. In particular, there is a commutative diagram
	\begin{center}
		\begin{tikzcd}
			\Gamma \arrow[r] \arrow[rd] & \Gamma_\JJ \arrow[r, "X"]                         & G\Topu_* \\
                         & \Gamma_\II \arrow[u] \arrow[ru, "\uu_\II^\JJ X"'] &          
		\end{tikzcd}
	\end{center}
\end{proof}

By abstract nonsense the functor $\uu_\II^\JJ$ has a left adjoint \textit{prolongation functor} $\pp_\II^\JJ$ in form of a left Kan extension (cf. \cite{MMSS01}, Proposition 3.2) since $\Gamma_\II$ and $\Gamma_\JJ$ are small. Whatever description of this left Kan extension given in Remark \ref{rem:prolongation-existence} suits the situation best will be denoted by $\pp_\II^\JJ X$. Levelwise this prolongation functor is described by the categorical tensor product $\pp_\II^\JJ X(T_+):= T^\bullet_+ \otimes_{\Gamma_\II}X$.

Since the property of being a $\Gamma_\II$-$G$-space solely depends on the underlying functor $\Gamma\to G\Topu_*$, the adjoint diagram of the diagram used in the proof of Lemma \ref{lem:forgetful-functors} shows that if $X$ is a $\Gamma_\II$-$G$-space, then $\pp_\II^\JJ X$ is a $\Gamma_\JJ$-$G$-space. This implies the following proposition.

\begin{Prop}
	Let $\II\subset \JJ$ be (disk-like) indexing systems. There are adjoint prolongation functors
	\[\pp_\II^\JJ\colon \Gamma_\II[G\Topu_*] \to \Gamma_\JJ[G\Topu_*].\]
\end{Prop}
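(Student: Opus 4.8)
The plan is to obtain $\pp_\II^\JJ$ by restricting the enriched left Kan extension along the full inclusion $\iota\colon\Gamma_\II\hookrightarrow\Gamma_\JJ$ of Lemma~\ref{lem:inclusions-of-Pi_I} to the subcategories of equivariant $\Gamma$-spaces. First I would record that precomposition with $\iota$ is precisely the functor $\uu_\II^\JJ$ on the full $G\Top_*$-functor categories $G\Top_*(\Gamma_\JJ,G\Topu_*)\to G\Top_*(\Gamma_\II,G\Topu_*)$, and that, since $\Gamma_\II$ and $\Gamma_\JJ$ have small (in fact finite) skeleta and $G\Topu_*$ is cocomplete and tensored over $G\Top_*$, Remark~\ref{rem:prolongation-existence} (cf. \cite{MMSS01}, \S3) supplies a left adjoint $\pp_\II^\JJ=\textup{Lan}_\iota$ with the levelwise coend description
\[\pp_\II^\JJ X(T_+)=\int^{S_+\in\Gamma_\II}\Gamma_\JJ(S_+,T_+)\land X(S_+)=T_+^\bullet\otimes_{\Gamma_\II}X.\]

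The heart of the argument is to check that this adjunction restricts to the full subcategories $\Gamma_\II[G\Topu_*]$ and $\Gamma_\JJ[G\Topu_*]$. One half is Lemma~\ref{lem:forgetful-functors}, which says that $\uu_\II^\JJ$ carries $\Gamma_\JJ$-$G$-spaces to $\Gamma_\II$-$G$-spaces. For the other half I would use that $\iota$ is fully faithful, so by the enriched density (co-Yoneda) formula the defining coend collapses on objects of $\Gamma$: for $\mathbf{n}_+\in\Gamma\subset\Gamma_\II$,
\[\pp_\II^\JJ X(\mathbf{n}_+)=\int^{S_+\in\Gamma_\II}\Gamma_\JJ(S_+,\mathbf{n}_+)\land X(S_+)=\int^{S_+\in\Gamma_\II}\Gamma_\II(S_+,\mathbf{n}_+)\land X(S_+)\cong X(\mathbf{n}_+),\]
naturally in $\mathbf{n}_+$; equivalently, the unit $X\to\uu_\II^\JJ\pp_\II^\JJ X$ of the adjunction is a natural isomorphism. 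Restricting this isomorphism further along $\Gamma\to\Gamma_\II$ yields $(\pp_\II^\JJ X)|_\Gamma\cong X|_\Gamma$ as $G\Top_*$-functors $\Gamma\to G\Topu_*$. Hence if $X$ is a $\Gamma_\II$-$G$-space, so that by Definition~\ref{def:Gamma_I-G-spaces} its restriction $X|_\Gamma$ is a $\Gamma$-$G$-space and in particular satisfies the cofibrancy condition for injections in $\Gamma$, then the isomorphic functor $(\pp_\II^\JJ X)|_\Gamma$ does so as well, and therefore $\pp_\II^\JJ X$ is a $\Gamma_\JJ$-$G$-space.

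It remains to note that because $\Gamma_\II[G\Topu_*]$ and $\Gamma_\JJ[G\Topu_*]$ are full subcategories of the ambient $G\Top_*$-functor categories and both $\pp_\II^\JJ$ and $\uu_\II^\JJ$ preserve them, the hom-set bijection of the adjunction restricts verbatim: for a $\Gamma_\II$-$G$-space $X$ and a $\Gamma_\JJ$-$G$-space $Y$ the maps of $\Gamma_\JJ$-$G$-spaces $\pp_\II^\JJ X\to Y$ correspond naturally to the maps of $\Gamma_\II$-$G$-spaces $X\to\uu_\II^\JJ Y$. This exhibits the desired adjoint prolongation functor. Apart from this the argument is purely formal; the one point I would be most careful about is the invertibility of the unit of $\textup{Lan}_\iota$ along the fully faithful $\iota$ in the $G\Top_*$-enriched setting, since it is exactly this that transports the cofibrancy condition in the definition of a $\Gamma$-$G$-space along $\pp_\II^\JJ$ at no cost, ensuring that $\pp_\II^\JJ$ lands in $\Gamma_\JJ[G\Topu_*]$ rather than merely in the full functor category.
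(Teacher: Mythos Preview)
Your proposal is correct and follows essentially the same approach as the paper: existence of $\pp_\II^\JJ$ by the usual left Kan extension/abstract nonsense, and preservation of the cofibrancy condition because the underlying $\Gamma$-$G$-space of $\pp_\II^\JJ X$ is naturally isomorphic to that of $X$. The paper phrases the latter as ``the adjoint diagram of the diagram used in the proof of Lemma~\ref{lem:forgetful-functors}'', while you make the underlying reason explicit---that the unit $X\to\uu_\II^\JJ\pp_\II^\JJ X$ is an isomorphism since $\iota$ is fully faithful---which is exactly the argument the paper spells out a moment later in the proof of the first comparison theorem.
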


\begin{Rem}
	Note that for $\II\subset\JJ\subset\KK$ sub-indexing systems we have that $\uu_\JJ^\KK\uu_\II^\JJ X = \uu_\II^\KK X$ for any $\Gamma_\KK$-$G$-space $X$. Since Kan extensions are unique up to isomorphisms we also see that $\pp_\II^\JJ\pp_\JJ^\KK Y\cong \pp_\II^\KK Y$ for any $\Gamma_\II$-$G$-space $Y$.
\end{Rem}

In the following theorem we will show that the adjunctions above form equivalences of categories. Indeed we will prove that already
\begin{center}
	\begin{tikzcd}
		{\Fun(\Gamma_\II,G\Topu_*)} \arrow[rr, "\pp_\II^\JJ", shift left] &  & {\Fun(\Gamma_\JJ,G\Topu_*)} \arrow[ll, "\uu_\II^\JJ", shift left]
	\end{tikzcd}
\end{center}
specifies an equivalence of categories and the corresponding specialisations to $\Gamma_\II$-$G$-spaces then follow immediately since the needed cofibrancy properties only rely on the underlying $\Gamma$-$G$-spaces. In the special case $\II=\TT$ and $\JJ=\CC$ this was originally proven by Shimakawa \cite{Shi91}. We will follow the modernised proof of Theorem 2.38 in \cite{MMO25}.

\begin{Thm}[1st comparison theorem]\label{thm:1st-comparison}
	Let $\II\subset\JJ$. The adjoint pairs of functors
	\begin{center}
		\begin{tikzcd}
			{\Gamma_\II[G\Topu_*]} \arrow[rr, "\pp_\II^\JJ", shift left] &  & {\Gamma_\JJ[G\Topu_*]} \arrow[ll, "\uu_\II^\JJ", shift left]
		\end{tikzcd}
	\end{center}
	specifies an equivalences of categories.
\end{Thm}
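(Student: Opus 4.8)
The plan is to follow the reduction indicated just before the statement: it suffices to show that the adjoint pair $\pp_\II^\JJ\dashv\uu_\II^\JJ$ between the plain enriched functor categories $\Fun(\Gamma_\II,G\Topu_*)$ and $\Fun(\Gamma_\JJ,G\Topu_*)$ is an adjoint equivalence, after which the assertion for $\Gamma_\II$- and $\Gamma_\JJ$-$G$-spaces is automatic, because the cofibrancy condition only constrains the underlying functor on $\Gamma$ and neither $\uu_\II^\JJ$ nor $\pp_\II^\JJ$ changes it (for $\pp_\II^\JJ$ this follows once the unit is known to be an isomorphism, since then $\pp_\II^\JJ X$ agrees with $X$ on $\Gamma\subseteq\Gamma_\II$). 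To prove the equivalence I would show that the unit and the counit of $\pp_\II^\JJ\dashv\uu_\II^\JJ$ are both natural isomorphisms, imitating the argument in \cite{MMO25}, Theorem 2.38 (which handles $\II=\TT$, $\JJ=\CC$, originally \cite{Shi91}).

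The unit is the easy half. Since $\iota\colon\Gamma_\II\hookrightarrow\Gamma_\JJ$ is a full subcategory inclusion and $\pp_\II^\JJ=\textup{Lan}_\iota$ is computed by the pointwise coend $\pp_\II^\JJ X(b)=\int^{a\in\Gamma_\II}\Gamma_\JJ(\iota a,b)\wedge X(a)$ of Remark~\ref{rem:prolongation-existence}, evaluating at $b=\iota a_0$ and using full faithfulness collapses the coend to $\int^{a\in\Gamma_\II}\Gamma_\II(a,a_0)\wedge X(a)\cong X(a_0)$ by the enriched co-Yoneda lemma. This natural isomorphism is precisely the unit $X\Rightarrow\uu_\II^\JJ\pp_\II^\JJ X$, which is thus invertible --- the familiar fact that a left Kan extension along a fully faithful functor has invertible unit.

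The substance is the counit $\varepsilon_Y\colon\pp_\II^\JJ\uu_\II^\JJ Y\to Y$, and the key input I would establish first is the structural fact of Proposition~\ref{prop:equivalence-G-actions-on-Gamma-I-space}: for any $G\Top_*$-functor $Y$ out of $\Gamma_\JJ$ (or $\Gamma_\II$) there is a canonical $G$-isomorphism $Y(\textbf{n}_+^\alpha)\cong Y_n^\alpha$, natural in $Y$, where $Y_n=Y(\textbf{n}_+)$ carries its intrinsic $G$-action together with the commuting $\Sigma_n$-action and $(-)^\alpha$ denotes the $\alpha$-twist. The mechanism is that the set-level identity gives a point $\iota_\alpha\colon\textbf{n}_+\to\textbf{n}_+^\alpha$ of the morphism $G$-space of $\Gamma_\JJ$ which, although not a $G$-fixed point, is invertible in the underlying category (its inverse being the analogous identity $\textbf{n}_+^\alpha\to\textbf{n}_+$), so that $Y(\iota_\alpha)$ is an isomorphism of underlying based spaces; and because the conjugation action sends $\iota_\alpha$ to $\alpha(g)\circ\iota_\alpha$ with $\alpha(g)\in\Sigma_n\subset\Gamma$, $G\Top_*$-functoriality of $Y$ forces $Y(\iota_\alpha)$ to intertwine the $\alpha$-twisted action on its source with the given action on its target. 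Checking this $G$-equivariance bookkeeping --- tracking how the failure of $G$-invariance of $\iota_\alpha$ is compensated exactly by the twist --- is the step I expect to be the main obstacle; the rest is formal. Note that $\textbf{n}_+$ is a trivial $G$-set, hence lies in $\Gamma_\TT\subseteq\Gamma_\II$, so this applies on both sides.

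Granting the structural fact, the counit argument is short. By the triangle identity $(\uu_\II^\JJ\varepsilon)\circ(\eta\,\uu_\II^\JJ)=\id$ and the invertibility of $\eta$, the map $\varepsilon_Y$ is already an isomorphism at every object of $\Gamma_\II$, in particular at every trivial set $\textbf{n}_+$. For an arbitrary object $\textbf{n}_+^\alpha$ of $\Gamma_\JJ$ I would then evaluate the $G\Top_*$-naturality of $\varepsilon_Y$ at the point $\iota_\alpha$: this yields a commuting square whose horizontal maps are $(\pp_\II^\JJ\uu_\II^\JJ Y)(\iota_\alpha)$ and $Y(\iota_\alpha)$, both isomorphisms by the structural fact applied to the $\Gamma_\JJ$-functors $\pp_\II^\JJ\uu_\II^\JJ Y$ and $Y$, and whose left vertical map $\varepsilon_{Y,\textbf{n}_+}$ is an isomorphism by the previous sentence; hence $\varepsilon_{Y,\textbf{n}_+^\alpha}$ is an isomorphism. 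Thus $\varepsilon$ is a natural isomorphism and $\pp_\II^\JJ\dashv\uu_\II^\JJ$ is an adjoint equivalence. (Alternatively one could first reduce to $\II=\TT$ by factoring $\Gamma\hookrightarrow\Gamma_\II\hookrightarrow\Gamma_\JJ$ and using that the prolongations compose, but since the structural fact already only compares with trivial sets this saves little.)
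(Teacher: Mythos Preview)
Your argument is correct and rests on the same key observation as the paper---the set-level identity $\iota\colon \textbf{n}_+^\alpha\leftrightarrow\textbf{n}_+$ is an (in general non-$G$-fixed) isomorphism in $\Gamma_\JJ$---but the two proofs package this differently. The paper works directly inside the coend: it writes down the counit explicitly as $\varepsilon(\phi,y)=Y(\phi)(y)$ and then exhibits an explicit inverse $\varepsilon^{-1}(y)=(\iota^{-1},Y(\iota)(y))$, checking by hand that the coend relations identify $\varepsilon^{-1}\varepsilon(\phi,y)$ with $(\phi,y)$. Your route is more structural: you use the triangle identity to get $\varepsilon$ invertible on $\Gamma_\II$-objects, then transport along the naturality square of $\varepsilon$ at the morphism $\iota_\alpha$ to every $\textbf{n}_+^\alpha\in\Gamma_\JJ$. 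One minor remark: you front-load the full statement of Proposition~\ref{prop:equivalence-G-actions-on-Gamma-I-space} (including the $G$-equivariance bookkeeping identifying $Y(\textbf{n}_+^\alpha)$ with $Y_n^\alpha$), but for your counit argument you only need that $Y(\iota_\alpha)$ is a bijection of underlying spaces, which is immediate since $\iota_\alpha$ is invertible; the twisted-action compatibility is not actually used here. The paper's explicit-inverse approach avoids any appeal to enriched naturality at non-fixed morphisms, while yours avoids manipulating the coend directly; both are short and either is fine.
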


\begin{proof}
	The inclusion $\Gamma_\II\hookrightarrow\Gamma_\JJ$ is full and faithful which implies that the unit of the adjunction $\eta\colon X\to \uu_\II^\JJ\pp_\II^\JJ X$ is a natural isomorphism. To show that the counit is also a natural isomorphism we need to work a little harder. Define $\varepsilon\colon \pp_\II^\JJ\uu_\II^\JJ Y\to Y$ for a $G\Top_*$-functor $Y\colon \Gamma_\JJ \to G\Topu_*$ to be the map which evaluated at $T_+\in \Gamma_\JJ$ is of the form
	\[\varepsilon\colon \pp_\II^\JJ\uu_\II^\JJ Y(T_+)=\int^{S_+\in \Gamma_\II}\Gamma_\JJ(S_+,T_+)\land \uu_\II^\JJ Y(S_+) \to Y(T_+)\]
	and is explicitly given by $\varepsilon(\phi,y)=Y(\phi)(y)$ for $\phi\in \Gamma_\JJ(S_+,T_+)$ and $y\in \uu_\II^\JJ Y(S_+)$. The map $\varepsilon$ is $G$-equivariant since
	\[\varepsilon(g.\phi,gy)=Y(g.\phi)(gy)\overset{\text{($*$)}}{=} g.Y(\phi)(gy)=gY(\phi)(g^{-1}gy)=g\varepsilon(\phi,y)\]
	 using that $Y$ is a $G\Top_*$-functor and thus $g.Y(\phi)=Y(g.\phi)$ in ($*$). The map $\varepsilon$ is easily seen to be well defined and continuous using that $Y$ is a $G\Top_*$-functor.
	 
	 The $G$-map $\varepsilon$ has an inverse given by $\varepsilon^{-1}(y):=(\iota^{-1},Y(\iota)(y))$ for $y\in Y(T_+)$ and $\iota\in \Gamma_\JJ(T_+,|T|_+)$ the function whose underlying function on sets is the identity. Here we write $|S|$ for the underlying set with trivial $G$-action. Note that $|S_+|=|S|_+$. For this we need to use that $|S|_+\in\Gamma_\JJ$, i.e. $\JJ$ fulfils axiom (I1) of indexing systems. On the one hand we have that $\varepsilon(\varepsilon^{-1}(y))=\varepsilon(\iota^{-1},Y(\iota)(y))=Y(\iota^{-1})(Y(\iota)(y))=Y(\iota^{-1}\circ\iota)(y)=y$ and on the other hand $\varepsilon^{-1}(\varepsilon(\phi,y))=\varepsilon^{-1}(Y(\phi)(y))=(\iota^{-1},Y(\iota)(Y(\phi)(y)))$. Since $\iota\circ\phi\colon S_+\to |T|_+$ is a morphism in $\Gamma_\II$ we find in the quotient that
	 \[(\iota^{-1},Y(\iota\circ\phi)(y))\sim (\iota^{-1}\circ\iota\circ\phi,y)=(\phi,y).\]
	 Hence $\varepsilon^{-1}\circ \varepsilon \cong \Id$. Since $\varepsilon$ is a $G$-map the inverse is also a $G$-map. To conclude, note that $\varepsilon^{-1}$ is a continuous map since $Y$ is a $G\Top_*$-functor and the map is a map inside a quotient of a coproduct whose components are continuous.
	 
	 This shows that $\varepsilon$ is a $G\Top_*$-natural isomorphism and therefore the adjunction of functor categories is an equivalence of categories. The proposition follows, as previously mentioned, since the property of being a $\Gamma_\II$ or $\Gamma_\JJ$-$G$-space only depends on the underlying functor $\Gamma\to G\Topu$.
\end{proof}

Since any indexing system sits between the trivial and the complete one we find the following commutative diagram of equivalences of categories
\begin{center}
	\begin{tikzcd}
		{\Gamma[G\Topu_*]} \arrow[rd, "\pp_\TT^\II", shift left] \arrow[rr, "\pp^\TT_\CC", shift left] &                                                                                                     & {\Gamma_G[G\Topu_*]} \arrow[ld, "\uu^\CC_\II", shift left] \arrow[ll, "\uu^\TT_\CC", shift left] \\                                                                                                & {\Gamma_\II[G\Topu_*]} \arrow[ru, "\pp_\II^\CC", shift left] \arrow[lu, "\uu_\TT^\II", shift left] &                                                                                                  
	\end{tikzcd}
\end{center}

For any pair $\II,\JJ$ of indexing systems we defined that a $\Gamma_\JJ$-$G$-space $X$ is $\ff_\bullet^\II$-special if the Segal maps $\delta\colon X_n\to X_1^n$ are weak $\Lambda_\alpha$-equivalences for any $\alpha\colon H\to\Sigma_n$ with $\textbf{n}_+^\alpha\in \II(H)$. If in addition $\II\subset\JJ$, we called $X$ $\II$-special if the Segal maps $\delta\colon X(\textbf{n}_+^\alpha)\to (X_1^n)^\alpha$ are weak $G$-equivalences. The next two comparison theorems compare these two notions.

\begin{Thm}[2nd comparison theorem]
	Let $\II\subset\JJ\subset\KK$ be indexing systems and let $X$ be a $\Gamma_\JJ$-$G$-space. Then $X$ is an $\II$-special $\Gamma_\JJ$-$G$-space if and only if $\pp_\JJ^\KK X$ is an $\II$-special $\Gamma_\KK$-$G$-space. Conversely, let $Y$ be a $\Gamma_\KK$-$G$-space. Then $Y$ is an $\II$-special $\Gamma_\KK$-$G$-space if and only if $\uu_\JJ^\KK Y$ is an $\II$-special $\Gamma_\JJ$-$G$-space. In particular, there are equivalences of categories
	\[\pp_\JJ^\KK\colon \Gamma_\JJ[G\Topu_*]^{\II\textup{-spc}} \rightleftarrows \Gamma_\KK[G\Topu_*]^{\II\textup{-spc}}:\uu_\JJ^\KK.\]
\end{Thm}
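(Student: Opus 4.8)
The plan is to reduce the statement to the first comparison theorem (Theorem \ref{thm:1st-comparison}) together with a careful bookkeeping of how the Segal maps behave under prolongation. Since Theorem \ref{thm:1st-comparison} already gives an equivalence of categories $\pp_\JJ^\KK\colon\Gamma_\JJ[G\Topu_*]\rightleftarrows\Gamma_\KK[G\Topu_*]:\uu_\JJ^\KK$, with unit a natural isomorphism and counit the isomorphism $\varepsilon$ constructed there, it suffices to check that an object on one side is $\II$-special if and only if its image on the other side is $\II$-special. Once that is established, the displayed equivalence of full subcategories is immediate: restricting an equivalence of categories to full subcategories closed under the two functors again yields an equivalence. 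So the whole content is the ``if and only if'' for the $\II$-specialness condition.

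First I would observe that the $\II$-specialness condition for a $\Gamma_\JJ$-$G$-space $X$ only refers to the values $X(\textbf{n}_+^\alpha)$ for $\textbf{n}^\alpha\in\II(G)$ and to the Segal maps $\delta\colon X(\textbf{n}_+^\alpha)\to(X_1^n)^\alpha$. Since $\II\subset\JJ\subset\KK$, every such $\textbf{n}_+^\alpha$ lies in $\Gamma_\II\subset\Gamma_\JJ\subset\Gamma_\KK$, so these objects are in the common domain. The key point is that for $T_+\in\Gamma_\JJ$ (in particular for $T_+=\textbf{n}_+^\alpha$ with $\textbf{n}^\alpha\in\II(G)$), the counit $\varepsilon\colon\pp_\JJ^\KK\uu_\JJ^\KK Y(T_+)\to Y(T_+)$ is a $G$-isomorphism, and dually the unit $\eta\colon X(T_+)\to\uu_\JJ^\KK\pp_\JJ^\KK X(T_+)$ is a $G$-isomorphism, both established in the proof of Theorem \ref{thm:1st-comparison}. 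Moreover these are isomorphisms of $\Gamma$-$G$-space structures, hence in particular compatible with the $\Sigma_n$-actions and with the values $X_1$, $X_n$. Therefore the Segal map $\delta$ for $X$ at $\textbf{n}_+^\alpha$ is carried, under the isomorphisms $\eta$ at $\textbf{n}_+^\alpha$ and at $\textbf{1}_+$ (and the induced identification of $(X_1^n)^\alpha$ with $((\uu_\JJ^\KK\pp_\JJ^\KK X)_1^n)^\alpha$), to the Segal map of $\pp_\JJ^\KK X$ at $\textbf{n}_+^\alpha$; similarly with $\varepsilon$ in the other direction. Hence one Segal map is a weak $G$-equivalence if and only if the other is.

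The steps, in order, are: (i) recall from Theorem \ref{thm:1st-comparison} that $\eta$ is a natural isomorphism everywhere and that $\varepsilon$ is a natural isomorphism (the explicit $\varepsilon^{-1}$ is given there); (ii) note that all of $\eta$, $\varepsilon$ are isomorphisms of underlying $\Gamma$-$G$-spaces, so they intertwine the $\Sigma_n$-actions on the $n$-th spaces and commute with the structure maps $\hat\delta_i$ and $\hat\varphi$ that define the Segal maps; (iii) conclude that for each $\textbf{n}^\alpha\in\II(G)$ there is a commuting square relating the Segal map of $X$ at $\textbf{n}_+^\alpha$ to that of $\pp_\JJ^\KK X$ (resp. of $Y$ to that of $\uu_\JJ^\KK Y$) whose vertical arrows are $G$-isomorphisms; (iv) deduce the ``if and only if'' for $\II$-specialness from the two-out-of-three property of weak $G$-equivalences; (v) invoke the general principle that an equivalence of categories restricts to an equivalence between full subcategories that correspond under the equivalence, to obtain the displayed adjoint equivalence $\pp_\JJ^\KK\colon\Gamma_\JJ[G\Topu_*]^{\II\textup{-spc}}\rightleftarrows\Gamma_\KK[G\Topu_*]^{\II\textup{-spc}}:\uu_\JJ^\KK$.

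The main obstacle I anticipate is step (ii)–(iii): verifying that $\varepsilon$ (resp. $\eta$) genuinely identifies the Segal map of $X$ with that of $\pp_\JJ^\KK X$ on the nose, including the twisted-action identification $(X_1^n)^\alpha\cong F(\textbf{n}_+^\alpha,X_1)$. This is a naturality chase: the Segal map at $\textbf{n}_+^\alpha$ is assembled from the maps $X(\hat\delta_i)$, and one must check that $\varepsilon$ (which is defined by $\varepsilon(\phi,y)=Y(\phi)(y)$) is natural in the $\Gamma_\JJ$-variable in a way that includes the morphisms $\hat\delta_i\colon\textbf{n}_+^\alpha\to\textbf{1}_+$ and the symmetry isomorphisms of $\Sigma_n$ — all of which live in $\Gamma_\II$ since trivial sets and subobjects of admissible sets are admissible. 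This is routine but must be done with care; everything else is formal.
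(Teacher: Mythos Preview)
Your proposal is correct and takes essentially the same approach as the paper: both rest on the fact that $\pp_\JJ^\KK X(\textbf{n}_+^\alpha)\cong X(\textbf{n}_+^\alpha)$ for $\textbf{n}_+^\alpha\in\Gamma_\JJ$, which you phrase via the unit isomorphism from the first comparison theorem and the paper phrases via the coend Yoneda lemma (using that $\Gamma_\JJ\hookrightarrow\Gamma_\KK$ is full). These are the same computation, so the arguments coincide.
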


\begin{proof}
	Let $X$ be an $\II$-special $\Gamma_\JJ$-$G$-space. We need to show that for all $\textbf{n}^\alpha\in\II(G)$ the Segal maps $\pp_\JJ^\KK X(\textbf{n}^\alpha_+)\to (\pp_\JJ^\KK X_1)^n)^\alpha$ are weak $G$-equivalences. Note that since $\textbf{n}_+^\alpha\in\Gamma_\JJ$ we have that
	\[\pp_\JJ^\KK X(\textbf{n}^\alpha_+) = \int^{\textbf{m}^\beta_+\in\Gamma_\JJ}\Gamma_\KK(\textbf{m}^\beta_+,\textbf{n}^\alpha_+)\land X(\textbf{m}^\beta_+) \cong X(\textbf{n}^\alpha_+)\]
	using the fact that $\Gamma_\KK(\textbf{m}^\beta_+,\textbf{n}^\alpha_+)= \Gamma_\JJ(\textbf{m}^\beta_+,\textbf{n}^\alpha_+)$ and therefore we may apply the Yoneda Lemma for coends (cf. \cite{Lor21}, Proposition 2.2.1/ Remark 4.3.5). The same holds for $X_1$. Hence, all we need to show is that the Segal map $X(\textbf{n}^\alpha_+)\to (X_1^n)^\alpha$ is a weak $G$-equivalence. This is the case if and only if $X$ is $\II$-special.
	
	The second part of the proposition follows directly.
\end{proof}

The same proof idea can be used to show the following proposition.

\begin{Thm}[3rd comparison theorem]
	Let $\II,\JJ\subset\KK$ be indexing systems and let $X$ be a $\Gamma_\JJ$-$G$-space. Then $X$ is an $\ff_\bullet^\II$-special $\Gamma_\JJ$-$G$-space if and only if $\pp_\JJ^\KK X$ is an $\ff_\bullet^\II$-special $\Gamma_\KK$-$G$-space. Conversely, let $Y$ be a $\Gamma_\KK$-$G$-space. Then $Y$ is an $\ff_\bullet^\II$-special $\Gamma_\KK$-$G$-space if and only if $\uu_\JJ^\KK Y$ is an $\ff_\bullet^\II$-special $\Gamma_\JJ$-$G$-space. In particular, there are equivalences of categories
	\[\pp_\JJ^\KK\colon \Gamma_\JJ[G\Topu_*]^{\ff_\bullet^\II\text{-spc}}\rightleftarrows \Gamma_\KK[G\Topu_*]^{\ff_\bullet^\II\text{-spc}}:\uu_\JJ^\KK.\]
\end{Thm}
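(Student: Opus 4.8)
The plan is to reduce everything, exactly as in the proof of the 2nd comparison theorem, to the underlying $\Gamma$-$G$-space, exploiting that $\ff_\bullet^\II$-specialness of a $\Gamma_\JJ$-$G$-space or of a $\Gamma_\KK$-$G$-space is a condition on that underlying $\Gamma$-$G$-space alone. Indeed, every object and morphism occurring in the definition of the Segal maps $\delta\colon X_n\to X_1^n$ of Definition \ref{def:delta-and-phi} — the trivial based sets $\textbf{n}_+$ and the maps $\hat\delta_i$ — lies in $\Gamma$, and by axiom (I1) $\Gamma=\Gamma_\TT$ sits as a full subcategory inside both $\Gamma_\JJ$ and $\Gamma_\KK$ (Lemma \ref{lem:inclusions-of-Pi_I}); likewise the $\Sigma_n$-actions involved come from $\Sigma\subset\Gamma$.

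First I would record the relevant identification. Applying the first comparison theorem (Theorem \ref{thm:1st-comparison}) to the pair $\JJ\subset\KK$, the unit $\eta\colon X\to\uu_\JJ^\KK\pp_\JJ^\KK X$ is a natural isomorphism, so $\pp_\JJ^\KK X(T_+)\cong X(T_+)$ naturally for every $T_+\in\Gamma_\JJ$, in particular for every trivial based set $\textbf{n}_+\in\Gamma$. Restricting this natural isomorphism along $\Sigma\subset\Gamma\subset\Gamma_\JJ$ identifies the underlying $\Gamma$-$G$-space of $\pp_\JJ^\KK X$, together with the $\Sigma_n$-actions on its values, with that of $X$, and in particular matches up the maps induced by the $\hat\delta_i$. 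Hence the Segal map $\delta\colon(\pp_\JJ^\KK X)_n\to(\pp_\JJ^\KK X)_1^n$ corresponds to $\delta\colon X_n\to X_1^n$ as a $(G\times\Sigma_n)$-map, so for every homomorphism $\alpha\colon H\to\Sigma_n$ with $\textbf{n}^\alpha\in\II(H)$ the first is a weak $\Lambda_\alpha$-equivalence if and only if the second is. This establishes that $X$ is $\ff_\bullet^\II$-special if and only if $\pp_\JJ^\KK X$ is.

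For the converse direction the argument is even more direct: $\uu_\JJ^\KK Y$ is $Y$ precomposed with the full inclusion $\Gamma_\JJ\hookrightarrow\Gamma_\KK$, so its underlying $\Gamma$-$G$-space (and its $\Sigma_n$-actions) is literally equal to that of $Y$, whence the Segal maps of $\uu_\JJ^\KK Y$ and of $Y$ coincide on the nose; thus $Y$ is $\ff_\bullet^\II$-special if and only if $\uu_\JJ^\KK Y$ is. Finally, the first comparison theorem already gives that $\pp_\JJ^\KK\dashv\uu_\JJ^\KK$ is an adjoint equivalence between $\Gamma_\JJ[G\Topu_*]$ and $\Gamma_\KK[G\Topu_*]$; since by the above both functors restrict to the full subcategories of $\ff_\bullet^\II$-special objects, this equivalence restricts to the asserted equivalence $\pp_\JJ^\KK\colon\Gamma_\JJ[G\Topu_*]^{\ff_\bullet^\II\text{-spc}}\rightleftarrows\Gamma_\KK[G\Topu_*]^{\ff_\bullet^\II\text{-spc}}\colon\uu_\JJ^\KK$.

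There is no serious obstacle; the single point to treat with care is that the identification of underlying $\Gamma$-$G$-spaces above is natural and $\Sigma$-equivariantly compatible with the structure maps $\hat\delta_i$, so that the Segal maps are genuinely matched and not merely abstractly isomorphic — this is immediate from the naturality of the unit $\eta$ established in Theorem \ref{thm:1st-comparison}, but it is the one place where one must resist treating $\ff_\bullet^\II$-specialness as a black box. It is also worth noting that, in contrast to the 2nd comparison theorem, the hypothesis $\II\subset\KK$ is never used here: $\ff_\bullet^\II$-specialness is defined for an arbitrary indexing system (indeed for any set of exponents, cf.\ Remark \ref{rem:more-general-Gamma-G-spaces}), and only $\JJ\subset\KK$ enters, to furnish the inclusion $\Gamma_\JJ\hookrightarrow\Gamma_\KK$ and hence the prolongation $\pp_\JJ^\KK$.
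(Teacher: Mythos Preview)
Your proof is correct and follows essentially the same approach as the paper, which simply states that ``the same proof idea can be used'' as in the 2nd comparison theorem: identify $(\pp_\JJ^\KK X)(\textbf{n}_+)\cong X(\textbf{n}_+)$ for trivial $\textbf{n}_+\in\Gamma\subset\Gamma_\JJ$ (you phrase this via the unit $\eta$ of Theorem \ref{thm:1st-comparison}, the paper via the co-Yoneda lemma; these are the same fact) and observe that the Segal maps $\delta\colon X_n\to X_1^n$ are thereby matched. Your remark that the hypothesis $\II\subset\KK$ is unused is a valid and pleasant observation not made explicit in the paper.
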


Given disk-like $\II\subset \JJ$ we next wish to compare the two notions of $\II$-special and $\ff_\bullet^\II$-special $\Gamma_\JJ$-$G$-spaces. This is the content of the fourth comparison theorem. Note that using the second comparison theorem it is enough to compare $\II$-special and $\ff_\bullet^\II$-special $\Gamma_\II$-$G$-spaces. This comparison uses the same ideas as in \cite{Shi91} and \cite{MMO25}, Theorem 2.38.

\begin{Obs}[\cite{Shi91}]\label{obsv:acions-on-fin-G-sets}
	 Let $\textbf{n}_+^\alpha$ be a finite $G$-set in $\Gamma_\II$ and let $\textbf{n}_+$ be the underlying finite set with trivial $G$-action. Let $\iota\colon \textbf{n}_+^\alpha\to \textbf{n}$ be the canonical bijection. The map $\iota$ determines a homomorphism $\rho\colon G\to\Sigma_n$ such that
	\begin{center}
 		\begin{tikzcd}
			\textbf{n}_+^\alpha \arrow[r, "\iota"] \arrow[d, "g.-"'] & \textbf{n}_+ \arrow[d, "\rho(g)"] \\
			\textbf{n}_+^\alpha \arrow[r, "\iota"']                  & \textbf{n}_+                     
		\end{tikzcd}
	\end{center}
	commutes for all $g\in G$.
\end{Obs}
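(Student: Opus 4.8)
The statement to prove is the observation that for a finite $G$-set $\textbf{n}_+^\alpha$ in $\Gamma_\II$, with underlying trivial set $\textbf{n}_+$ and canonical bijection $\iota\colon\textbf{n}_+^\alpha\to\textbf{n}_+$, there is a group homomorphism $\rho\colon G\to\Sigma_n$ making the square commute for all $g\in G$. The plan is to simply read off $\rho$ from the requirement that the square commutes and then verify that what we get is a homomorphism. Since $\iota$ is a bijection of sets it has an inverse $\iota^{-1}$, and the commutativity of the square forces $\rho(g)=\iota\circ(g.-)\circ\iota^{-1}$ as a self-map of $\textbf{n}_+$. So define $\rho(g):=\iota\circ\ell_g\circ\iota^{-1}$, where $\ell_g\colon\textbf{n}_+^\alpha\to\textbf{n}_+^\alpha$ denotes left multiplication by $g$ on the $G$-set $\textbf{n}_+^\alpha$.

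First I would check that $\rho(g)$ is a permutation of $\textbf{n}_+$: it is a composite of bijections, hence a bijection $\textbf{n}_+\to\textbf{n}_+$, and it fixes the basepoint $0$ since $\ell_g$ fixes the (trivial) basepoint of $\textbf{n}_+^\alpha$ and $\iota$ preserves basepoints; thus $\rho(g)$ restricts to a permutation of $\textbf{n}=\{1,\dots,n\}$, i.e. an element of $\Sigma_n$. Next I would verify the homomorphism property: for $g,h\in G$,
\[\rho(gh)=\iota\circ\ell_{gh}\circ\iota^{-1}=\iota\circ\ell_g\circ\ell_h\circ\iota^{-1}=(\iota\circ\ell_g\circ\iota^{-1})\circ(\iota\circ\ell_h\circ\iota^{-1})=\rho(g)\circ\rho(h),\]
using the axiom $\ell_{gh}=\ell_g\circ\ell_h$ for a left $G$-action, and $\rho(e)=\iota\circ\id\circ\iota^{-1}=\id$. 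Finally, commutativity of the displayed square is immediate by construction: $\rho(g)\circ\iota=\iota\circ\ell_g\circ\iota^{-1}\circ\iota=\iota\circ\ell_g=\iota\circ(g.-)$.

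There is essentially no obstacle here — the observation is purely formal, amounting to transporting the $G$-action on $\textbf{n}_+^\alpha$ across the bijection $\iota$ to obtain a permutation representation on $\textbf{n}_+$. The only point worth a sentence of care is that $\iota$, being a bijection of underlying sets, need not be $G$-equivariant for the trivial action on the target; that is precisely why the square encodes nontrivial data, namely the homomorphism $\rho$, and why in general $\rho\neq\varepsilon_n$. (If one started from the defining homomorphism $\alpha$ with $g.i=\alpha(g)(i)$ and took $\iota=\id$ on underlying sets, one recovers $\rho=\alpha$; the statement as phrased allows an arbitrary choice of bijection $\iota$, which merely conjugates $\alpha$ by the corresponding permutation.)
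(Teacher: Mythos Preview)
Your proof is correct. The paper states this as an observation without proof (citing Shimakawa), since it is essentially the tautology that a $G$-action on a finite based set is the same data as a homomorphism to the symmetric group once one fixes a bijection with $\textbf{n}_+$; your argument spells this out explicitly and is exactly what one would write if a proof were demanded.
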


\begin{Prop}[\cite{Shi91}, Proposition 2]\label{prop:equivalence-G-actions-on-Gamma-I-space}
	Let $X$ be a $G\Top_*$-functor $\Gamma_\II\to G\Topu_*$ and let $\iota$ be as in Observation \ref{obsv:acions-on-fin-G-sets}. Then
	\[X(\iota)\colon X(\textbf{n}_+^\alpha)\to X(\textbf{n}_+)^\alpha\]
	is a $G$-homeomorphism where $X(\textbf{n}_+)^\alpha=X_n^\alpha$ denotes the $G$-space $X_n$ with usual $\alpha$-twisted $G$-action via $g._\alpha x=X(\alpha(g))(gx)$.
\end{Prop}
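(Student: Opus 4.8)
The statement is purely formal: it says that evaluating $X$ on the (non-equivariant) identity bijection $\iota$ turns the $G$-action that $X$ places on $X(\textbf{n}_+^\alpha)$ into the $\alpha$-twist of the $(G\times\Sigma_n)$-action on $X_n$. The plan is to first dispose of the homeomorphism claim and then extract the equivariance from the $G\Top_*$-enrichment of $X$, exactly as in the special case $\II=\CC$ treated by Shimakawa \cite{Shi91} and in \cite{MMO25}.

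First I would observe that $\textbf{n}_+$ with trivial $G$-action is an object of $\Gamma_\II$: it is $\textbf{n}^{\varepsilon_n}_+$ for the trivial homomorphism, and $\textbf{n}$ is a finite disjoint union of trivial orbits, hence lies in $\II(G)$ by axiom (I1). The hom-objects of $\Gamma_\II\subset\Gamma_G\subset G\Topu_*$ consist of \emph{all} based maps with the conjugation action, so $\iota\colon\textbf{n}_+^\alpha\to\textbf{n}_+$ together with the identity-on-underlying-sets bijection $\iota^{-1}\colon\textbf{n}_+\to\textbf{n}_+^\alpha$ are mutually inverse under composition in $\Gamma_\II$. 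Since $X$ is an enriched functor it preserves composition, so $X(\iota)$ is a based continuous map with two-sided inverse $X(\iota^{-1})$; thus $X(\iota)$ is a based homeomorphism of underlying spaces, and only its equivariance with respect to the twisted target action needs to be checked.

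For the equivariance, write $a_g\colon\textbf{n}_+\to\textbf{n}_+$ for the permutation $i\mapsto\alpha(g)(i)$, a morphism of $\Gamma\subset\Gamma_\II$; by definition the $\alpha$-twisted action on $X_n$ is $g._\alpha x = X(a_g)(gx)$, and $X(a_g)$ is invertible with inverse $X(a_{g^{-1}})$. A short computation with the conjugation $G$-action on $\Gamma_\II(\textbf{n}_+^\alpha,\textbf{n}_+)$, using that $\iota$ is the identity on underlying sets and that $G$ acts trivially on $\textbf{n}_+$, yields $g.\iota = a_{g^{-1}}\circ\iota$. Because $X$ is $G\Top_*$-enriched, the structure map $\Gamma_\II(\textbf{n}_+^\alpha,\textbf{n}_+)\to F(X(\textbf{n}_+^\alpha),X_n)$ is $G$-equivariant for the conjugation actions, so $X(g.\iota)=g.X(\iota)$. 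Expanding the left-hand side as $X(a_{g^{-1}})\circ X(\iota)$ and the right-hand side via the conjugation formula $\bigl(g.X(\iota)\bigr)(y)=g\cdot X(\iota)(g^{-1}y)$, then applying $X(a_g)$ and substituting $y\mapsto gy$, one obtains $X(\iota)(gy)=X(a_g)\bigl(g\cdot X(\iota)(y)\bigr)=g._\alpha X(\iota)(y)$, which is precisely equivariance of $X(\iota)\colon X(\textbf{n}_+^\alpha)\to X_n^\alpha$; its inverse $X(\iota^{-1})$ is then automatically equivariant as well, so $X(\iota)$ is a $G$-homeomorphism. The only point requiring care is the bookkeeping of inverses — obtaining $g.\iota = a_{g^{-1}}\circ\iota$ rather than $a_g\circ\iota$, and reconciling this with the convention $g._\alpha x = X(\alpha(g))(gx)$ so that the factor $X(a_g)$ lands on the correct side — but there is no genuine obstacle, and the mechanism mirrors the counit computation in the proof of Theorem \ref{thm:1st-comparison}.
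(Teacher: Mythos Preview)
Your proof is correct and is essentially the standard argument due to Shimakawa. The paper does not provide its own proof of this proposition; it merely cites \cite{Shi91}, Proposition 2, and moves on. Your computation of $g.\iota = a_{g^{-1}}\circ\iota$ via the conjugation action and the subsequent extraction of equivariance from the $G\Top_*$-enrichment of $X$ is exactly the mechanism one would supply, and your care with the inverses is warranted and handled correctly.
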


It follows that the the inverse of the counit of the adjunction between $\Gamma$ and $\Gamma_\II$-$G$-spaces $\varepsilon$ identifies $X(\textbf{n}_+^\alpha)$ with $X_n^\alpha$. In particular, there is a $G$-homeomorphism
\[\varepsilon^{-1}\colon X(\textbf{n}_+^\alpha)\to X_n^\alpha\]
coming from the above proposition and the universal property of the coend describing prolongation.

\begin{Lem}\label{lem:equivalence-segal-maps}
	Let $X$ be a $\Gamma_\II$-$G$-space. Then $X$ is $\II$special if and only if for every $\textbf{n}^\alpha\in\II(G)$ the induced Segal map
	\[\delta'\colon (X_n)^\alpha\to (X_1^n)^\alpha\]
	is a weak $G$-equivalence.
\end{Lem}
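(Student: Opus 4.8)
The plan is to reduce this to the $G$-homeomorphism of Proposition~\ref{prop:equivalence-G-actions-on-Gamma-I-space}. Fix $\textbf{n}^\alpha\in\II(G)$ and let $\iota\colon\textbf{n}_+^\alpha\to\textbf{n}_+$ be the canonical bijection of Observation~\ref{obsv:acions-on-fin-G-sets}, so that $X(\iota)\colon X(\textbf{n}_+^\alpha)\to (X_n)^\alpha$ is a $G$-homeomorphism. I claim that the triangle
\begin{center}
	\begin{tikzcd}
		X(\textbf{n}_+^\alpha) \arrow[r, "\delta"] \arrow[d, "X(\iota)"'] & (X_1^n)^\alpha \\
		(X_n)^\alpha \arrow[ur, "\delta'"'] &
	\end{tikzcd}
\end{center}
commutes, where $\delta$ is the Segal map of the $\Gamma_\II$-$G$-space $X$ and $\delta'$ is the $\alpha$-twist of the Segal map $X_n\to X_1^n$ of the underlying $\Gamma$-$G$-space (Definition~\ref{def:delta-and-phi}). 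Granting this, $\delta$ is a weak $G$-equivalence if and only if $\delta'$ is one, since $X(\iota)$ is a $G$-homeomorphism; quantifying over all $\textbf{n}^\alpha\in\II(G)$ and unwinding the definition of $\II$-specialness then gives the lemma.

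To check commutativity I would argue componentwise in $X_1^n$. First note that $\textbf{1}_+$ and $\textbf{n}_+$ lie in $\Gamma_\II$ because trivial $G$-sets are admissible by axiom (I1), so the maps $\hat\delta_i\colon\textbf{n}_+\to\textbf{1}_+$ of Definition~\ref{def:delta-and-phi}, the bijection $\iota$, and their composites $\delta_i:=\hat\delta_i\circ\iota\colon\textbf{n}_+^\alpha\to\textbf{1}_+$ are all points of hom-$G$-spaces of $\Gamma_\II$. By construction the $i$-th component of $\delta$ is $X(\delta_i)$, and since $X$ is a $G\Top_*$-functor $X(\delta_i)=X(\hat\delta_i)\circ X(\iota)$. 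But the map $X_n\to X_1^n$ with $i$-th component $X(\hat\delta_i)$ is exactly the Segal map of the underlying $\Gamma$-$G$-space, whose underlying function is $\delta'$. Hence $\delta=\delta'\circ X(\iota)$ as based maps; all three are $G$-maps once $X_1^n$ carries the $\alpha$-twisted action, using that the untwisted Segal map is $(G\times\Sigma_n)$-equivariant together with Proposition~\ref{prop:equivalence-G-actions-on-Gamma-I-space} for $X(\iota)$.

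The only point requiring care is precisely this identification of the two Segal maps: one must verify that the Segal map of $X$ as a $\Gamma_\II$-$G$-space — assembled from the Kronecker-delta map $\textbf{n}_+^\alpha\wedge\textbf{n}_+^\alpha\to S^0$ and the self-enrichment of $G\Topu_*$ — is carried by $X(\iota)$ to the $\alpha$-twist of the elementary Segal map of the underlying $\Gamma$-$G$-space, and that every finite based $G$-set occurring in that construction stays inside $\Gamma_\II$ (here $\textbf{n}_+^\alpha\wedge\textbf{n}_+^\alpha$ is admissible by axiom (I8), and $\textbf{1}_+,\textbf{n}_+$ by (I1)). Once this book-keeping is carried out, the equivalence of the two specialness conditions is purely formal.
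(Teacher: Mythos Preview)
Your proof is correct and takes essentially the same approach as the paper: both use the $G$-homeomorphism $X(\textbf{n}_+^\alpha)\cong (X_n)^\alpha$ from Proposition~\ref{prop:equivalence-G-actions-on-Gamma-I-space} to transport $\delta$ to $\delta'$. The paper packages the commutativity as a square coming from naturality of the counit $\varepsilon^{-1}$, whereas you verify the resulting triangle componentwise via $\delta_i=\hat\delta_i\circ\iota$; since the bottom $\varepsilon^{-1}$ in the paper's square is the identity on $(X_1^n)^\alpha$, the two diagrams coincide. Your appeal to axiom (I8) is harmless but unnecessary, as the Kronecker map $\textbf{n}_+^\alpha\land\textbf{n}_+^\alpha\to S^0$ lives in $G\Topu_*$, not in $\Gamma_\II$.
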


\begin{proof}
	By Proposition \ref{prop:equivalence-G-actions-on-Gamma-I-space} and naturality of $\varepsilon$ we have a commutative square
	\begin{center}
		\begin{tikzcd}
X(\textbf{n}_+^\alpha) \arrow[rr, "\varepsilon^{-1}"] \arrow[d, "\delta"'] &  & (X_n)^\alpha \arrow[d, "\delta'"] \\
(X_1^n)^\alpha \arrow[rr, "\varepsilon^{-1}"]                              &  & (X_1^n)^\alpha                   
\end{tikzcd}
	\end{center}
	Passing to $\pi_n$ we see that $\delta$ is a weak $G$-equivalence if and only if $\delta'$ is so.
\end{proof}

\begin{Prop}\label{prop:equivalence-mapping-spaces}
	Let $\alpha\colon G\to \Sigma_n$ be a group homomorphism with graph $\Lambda_\alpha$ and let $X$ and $Y$ be $G\times\Sigma_n$-spaces. Recall that $X^\alpha$ denotes the space $X$ with $G$-action given by $g\cdot_\alpha x:=(g,\alpha(g)).x$. Then there is a homeomorphism
	\[F(X^\alpha,Y^\alpha)^G\cong F_{\Lambda_\alpha}(X,Y).\]
\end{Prop}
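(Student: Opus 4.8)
The plan is to identify both sides as the same subset of the ambient mapping space $F(X,Y)$ and then to check that the two defining conditions cut out exactly the same based maps; the homeomorphism will then be the identity.

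First I would observe that twisting an action does not change the underlying topological space, so $X^\alpha$ and $Y^\alpha$ have the same underlying based spaces as $X$ and $Y$. Consequently the based mapping space $F(X^\alpha,Y^\alpha)$ agrees, as a topological space, with $F(X,Y)$; it merely carries the conjugation $G$-action $(g.f)(x)=g\cdot_\alpha f(g^{-1}\cdot_\alpha x)$, where $g\cdot_\alpha x=(g,\alpha(g)).x$. Hence $F(X^\alpha,Y^\alpha)^G$ is a subspace of $F(X,Y)$, and likewise $F_{\Lambda_\alpha}(X,Y)$ is the subspace of $F(X,Y)$ consisting of those based maps that are equivariant for the restrictions of the $G\times\Sigma_n$-actions on $X$ and $Y$ along $\Lambda_\alpha\hookrightarrow G\times\Sigma_n$. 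Both inherit the subspace topology from $F(X,Y)$, so it suffices to prove that these two subsets of $F(X,Y)$ coincide.

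Second I would unwind the fixed-point condition. A based map $f\colon X\to Y$ lies in $F(X^\alpha,Y^\alpha)^G$ precisely when $g\cdot_\alpha f(g^{-1}\cdot_\alpha x)=f(x)$ for all $g\in G$ and $x\in X$; replacing $x$ by $g\cdot_\alpha x$, this is equivalent to $f(g\cdot_\alpha x)=g\cdot_\alpha f(x)$, that is, to $f\bigl((g,\alpha(g)).x\bigr)=(g,\alpha(g)).f(x)$ for all $g\in G$ and $x\in X$. Since $\{(g,\alpha(g))\mid g\in G\}$ is exactly $\Lambda_\alpha$, and since the restriction of the ambient action along $\Lambda_\alpha\hookrightarrow G\times\Sigma_n$ is, under the isomorphism $\Lambda_\alpha\cong G$, precisely the $\alpha$-twisted action $g\cdot_\alpha x$, this is exactly the statement that $f$ is $\Lambda_\alpha$-equivariant, i.e.\ $f\in F_{\Lambda_\alpha}(X,Y)$. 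This establishes the equality of the two subsets, hence the claimed homeomorphism.

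The argument is a definitional unwinding, so no step is a real obstacle; the one point worth a sentence of care is the identification of topologies, namely that the $G$-fixed-point topology on $F(X^\alpha,Y^\alpha)$ really is the subspace topology on $\Lambda_\alpha$-maps, which holds because $\alpha$-twisting leaves the ambient mapping space $F(X,Y)$ unchanged and both subspaces inherit their topology from it.
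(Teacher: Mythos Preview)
Your proof is correct and follows essentially the same approach as the paper: both arguments identify the map as the identity on the underlying set $F(X,Y)$ and then unwind the $G$-fixed-point condition $g\cdot_\alpha f(g^{-1}\cdot_\alpha x)=f(x)$ into the $\Lambda_\alpha$-equivariance condition $f(g\cdot_\alpha x)=g\cdot_\alpha f(x)$. Your version is slightly more explicit about why the topologies match, but the core argument is the same.
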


\begin{proof}
	The underlying non-$G$ map is the identity. Let $f\colon X^\alpha\to Y^\alpha$ be a $G$-fixed map with conjugation $G$-action. Hence, $g.f(x)=g\cdot_\alpha f(g^{-1}\cdot_\alpha x)=f(x)$ and so $f(g^{-1}\cdot_\alpha x)=g^{-1}\cdot_\alpha f(x)$. In other words, $g\cdot_\alpha f(x)=f(g\cdot_\alpha x)$.
\end{proof}

Lemma \ref{lem:equivalence-segal-maps} and Proposition \ref{prop:equivalence-mapping-spaces} together imply that for any finite $G$-set $\textbf{n}^\alpha\in \II(G)$ the associated Segal map $\delta\colon X(\textbf{n}_+^\alpha)\to (X_1^n)^\alpha$ is a weak $G$-equivalence if and only if the Segal map $\delta\colon (X_n)^\alpha \to (X_1^n)^\alpha$ is a weak $G$-equivalence. This is equivalent to the map $\delta\colon X_n\to X_1^n$ being a weak $\Lambda_\alpha$-equivalence.

The tricky part is that if $\delta\colon X_n\to X_1^n$ is a weak $\Lambda_\beta$-equivalence for $\beta\colon H\to\Sigma_n$ for $H\subsetneq G$ to show the converse implications. The idea of Shimakawa \cite{Shi91} was that one can induce $\textbf{n}^\beta_+$ up to a $G$-set and then recover $\textbf{n}^\beta_+$ via restriction and inclusion. In the incomplete case we cannot generally expect that the induced $G$-set exists as mentioned in Remark \ref{rem:induction-does-not-work}. We can however use the disk-like property.

The final comparison theorem is essentially due to \cite{Shi91} and \cite{MMO25}, Lemma 2.18, but we need to use the disk-like property instead of induction.

\begin{Thm}[4th comparison theorem]\label{thm:4th-comparison-theorem}
	Let $\II$ be a disk-like indexing system and let $X$ be an $\Gamma_\II$-$G$-space. Then $X$ is an $\II$-special $\Gamma_\II$-$G$-space if and only if it is an $\ff_\bullet^\II$-special $\Gamma_\II$-$G$-space. In particular, there is an equivalence of categories
	\[\Gamma_\II[G\Topu_*]^{\II\text{-spc}}\rightleftarrows \Gamma_\II[G\Topu_*]^{\ff^\II_\bullet\text{-spc}}.\]
\end{Thm}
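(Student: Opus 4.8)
The plan is to prove the two implications $\II\text{-special}\Leftrightarrow\ff_\bullet^\II\text{-special}$ separately; the "in particular" is then formal, since $\Gamma_\II[G\Topu_*]^{\II\text{-spc}}$ and $\Gamma_\II[G\Topu_*]^{\ff^\II_\bullet\text{-spc}}$ are both full subcategories of $\Gamma_\II[G\Topu_*]$, so once the theorem shows they have the same objects they literally coincide as categories. For the easy direction, recall from the discussion preceding the theorem that, combining Lemma \ref{lem:equivalence-segal-maps} with Proposition \ref{prop:equivalence-mapping-spaces}, for a finite $G$-set $\textbf{n}^\alpha\in\II(G)$ the $G$-Segal map $\delta\colon X(\textbf{n}_+^\alpha)\to(X_1^n)^\alpha$ is a weak $G$-equivalence if and only if $\delta\colon X_n\to X_1^n$ is a weak $\Lambda_\alpha$-equivalence. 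Hence an $\ff_\bullet^\II$-special $X$, which in particular has $\delta\colon X_n\to X_1^n$ a weak $\Lambda_\alpha$-equivalence for every $\alpha\colon G\to\Sigma_n$ with $\textbf{n}^\alpha\in\II(G)$, is $\II$-special.

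For the converse, let $X$ be an $\II$-special $\Gamma_\II$-$G$-space. Unwinding the definition, $\ff_\bullet^\II$-specialness asks that for every subgroup $H\subset G$ and every $\beta\colon H\to\Sigma_n$ with $\textbf{n}^\beta\in\II(H)$ the Segal map $\delta\colon X_n\to X_1^n$ be a weak $\Lambda_\beta$-equivalence. I would reduce this to the claim that the underlying $\Gamma_\II$-$H$-space $\res_H^G X$ (Remark \ref{rem:restriction-Gamma_I-G-spaces-to-H}) is again $\II$-special, now over $H$: granting that claim, the $H$-analogue of Proposition \ref{prop:equivalence-G-actions-on-Gamma-I-space} (its proof works over $H$) identifies the $H$-Segal map of $\res_H^G X$ on $\textbf{n}_+^\beta$ with $\delta\colon (X_n)^\beta\to(X_1^n)^\beta$, and Proposition \ref{prop:equivalence-mapping-spaces} then shows this is a weak $H$-equivalence exactly when $\delta\colon X_n\to X_1^n$ is a weak $\Lambda_\beta$-equivalence, as required.

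To prove the claim, fix $\textbf{n}^\beta\in\II(H)$; we must show the $H$-Segal map $\delta\colon X(\textbf{n}_+^\beta)\to F(\textbf{n}_+^\beta,X_1)=(X_1^n)^\beta$ is a weak $H$-equivalence. Because $\II$ is disk-like, the double coset formula (Remark \ref{rem:double-coset-formula}) and Proposition \ref{prop:restriction-finite-G-sets}, exactly as in Construction \ref{constr:transfer-I-special-Gamma-J}, furnish a finite $G$-set $T'\in\II(G)$ together with an $H$-embedding $\textbf{n}^\beta\hookrightarrow\res_H^G T'$. Writing $\res_H^G T'=\textbf{N}^\gamma$ and decomposing it as an $H$-set $\textbf{N}^\gamma\cong\textbf{n}^\beta\sqcup\textbf{p}^{\beta''}$, the inclusion $\iota\colon\textbf{n}_+^\beta\hookrightarrow\textbf{N}_+^\gamma$ and the map $\pi\colon\textbf{N}_+^\gamma\to\textbf{n}_+^\beta$ collapsing $\textbf{p}^{\beta''}$ to the basepoint are morphisms of $\Gamma_\II$ with $\pi\circ\iota=\id$. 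Using the explicit formula $\delta(y)(s)=X(\delta_s)(y)$ for the Segal map (with $\delta_s\colon\textbf{n}_+^\beta\to\textbf{1}_+$ the $s$-th projection), a direct check shows that the pairs $\bigl(X(\iota),F(\pi,X_1)\bigr)$ and $\bigl(X(\pi),F(\iota,X_1)\bigr)$ exhibit the Segal map on $\textbf{n}_+^\beta$ as a retract, in the category of $H$-spaces and $H$-maps, of the Segal map on $\textbf{N}_+^\gamma$. Finally, the latter is the restriction along $H\subset G$ of the $G$-Segal map on $T'_+$ — the Segal construction being natural under restriction, with $X(\res_H^G T'_+)=\res_H^G X(T'_+)$ and $F(\res_H^G T'_+,\res_H^G X_1)=\res_H^G F(T'_+,X_1)$ — which is a weak $G$-equivalence since $X$ is $\II$-special and $T'\in\II(G)$, hence a weak $H$-equivalence after restriction. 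A retract of a weak $H$-equivalence is a weak $H$-equivalence, so $\delta\colon X(\textbf{n}_+^\beta)\to(X_1^n)^\beta$ is one, which proves the claim and hence the theorem.

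The hard part will be the issue flagged in Remark \ref{rem:induction-does-not-work}: Shimakawa's argument in the complete case inducts $\textbf{n}_+^\beta$ up to a genuine $G$-set, but for disk-like $\II$ the induced $G$-set need not be admissible, so one must replace induction by the "embed $\textbf{n}^\beta$ into the restriction of an admissible $G$-set" trick — precisely what the disk-like hypothesis supplies. The only remaining care is the bookkeeping that the Segal maps are compatible both with restriction and with the inclusion/collapse maps, so that the retract diagram genuinely commutes; this is a routine verification with the explicit formula for $\delta$.
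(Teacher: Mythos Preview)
Your proposal is correct and follows essentially the same approach as the paper: both directions use the observation that $\II$-specialness at $\textbf{n}^\alpha\in\II(G)$ is equivalent to the $\Lambda_\alpha$-equivalence of the untwisted Segal map, and for the harder direction both embed a given $\textbf{n}^\beta\in\II(H)$ into the restriction of an admissible $G$-set via the disk-like property, then use the resulting inclusion/collapse maps to exhibit the $H$-Segal map on $\textbf{n}_+^\beta$ as a retract of the restricted $G$-Segal map. Your exposition makes the intermediate step ``$\res_H^G X$ is $\II$-special over $H$'' more explicit than the paper does, but the substance is identical.
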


\begin{proof}
	By the previous discussion it is clear that if $X$ is $\ff_\bullet^\II$-special it is also $\II$-special. To show the converse let $\alpha\colon H\to \Sigma_n$ be a homomorphism so that $\textbf{n}^\alpha\in\II(H)$. Using the disk-like property, or rather Remark \ref{rem:double-coset-formula} and Proposition \ref{prop:restriction-finite-G-sets} there is a homomorphism $\beta\colon G\to\Sigma_m$ such that $\textbf{m}^\beta\in \II(G)$ and such that there is an inclusion of $H$-sets $\iota\colon \textbf{n}^\alpha \to \res_H^G\textbf{m}_+^\beta$. Without loss of generality we may assume that the inclusion $\iota$ maps $i\mapsto i$ for $0\le i\le n$. Let $k:=m-n$ and define
	\[\textbf{k}^\gamma_+:=\textbf{m}^\beta_+\backslash\textbf{n}^\alpha\]
	so that as $H$-sets, we can write $\res_H^G\textbf{m}^{\beta}_+\cong \res_H^G\textbf{k}^{\gamma}_+\lor \textbf{n}^\alpha_+$. Note that we also have a projection map $\smash{\pi\colon \res_H^G \textbf{m}^{\beta}_+ \to \textbf{n}^{\beta}_+}$ given by mapping $i\mapsto i$ for $0\le i\le n$ and $i\mapsto 0$ for $i>n$. We assume that $X$ is $\II$-special. Hence, $\delta\colon X(\textbf{m}_+^\beta)\to (X_1^m)^\beta$ is a weak $G$-equivalence. Restricting to $H$ yields an $H$-equivalence. We then have a retract diagram
	\begin{center}
		\begin{tikzcd}
			\res_H^GX(\textbf{n}_+^\alpha) \arrow[d, "\delta"] \arrow[rr, "X(\iota)"] &  & \res_H^G X(\textbf{m}_+^\beta) \arrow[d, "\delta"] \arrow[rr, "X(\pi)"] &  & \res_H^GX(\textbf{n}_+^\alpha) \arrow[d, "\delta"] \\
			((\res_H^GX_1)^n)^\alpha \arrow[rr]                                       &  & ((\res_H^GX_1)^m)^\beta \arrow[rr]                                      &  & ((\res_H^GX_1)^n)^\alpha                          
		\end{tikzcd}
	\end{center}
	where the bottom arrows are the evident inclusion and projection of twisted products and where $\res_H^G X(\textbf{n}_+^\alpha)$ denotes the evaluation of the finite $H$-set on the underlying $\Gamma_\II$-$H$-space. Since the middle $\delta$ is a weak $H$-equivalence and the outer $\delta$ maps exhibit the inner $\delta$ as a weak retract, using that equivalences are closed under retracts, the outer $\delta$ maps are also weak $H$-equivalences. Via Proposition \ref{prop:equivalence-mapping-spaces} this shows that the Segal map $\delta\colon \res_H^GX_n\to \res_H^G X_1^n$ is a weak $\Lambda_\alpha$-equivalence.
\end{proof}

\begin{Cor}
	A map $f\colon X\to Y$ of $\Gamma_\II$-$G$-spaces is an $\II$-level $G$-equivalence if and only if it is an $\ff_\bullet^\II$-level equivalence.
\end{Cor}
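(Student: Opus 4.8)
The plan is to transcribe the proof of Theorem~\ref{thm:4th-comparison-theorem}, replacing the Segal maps by the map $f$ throughout. Two facts make this work. First, by Proposition~\ref{prop:equivalence-G-actions-on-Gamma-I-space}, for each finite set $\textbf{n}^\alpha$ with $\alpha\colon H\to\Sigma_n$ the canonical bijection of underlying sets induces, for any $\Gamma_\II$-$G$-space $Z$, a (restricted) $H$-homeomorphism $Z(\textbf{n}_+^\alpha)\cong Z_n^\alpha$, and this is natural in $Z$ because the bijection is a fixed morphism of $\Gamma_\II$; hence for a $G\Top_*$-natural transformation $f$ it intertwines $f(\textbf{n}_+^\alpha)$ with $f_n^\alpha$. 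Second, a map $g$ of $(G\times\Sigma_n)$-spaces is a weak $\Lambda_\alpha$-equivalence if and only if $g^\alpha$ is a weak $G$-equivalence (equivalently a weak $H$-equivalence when $\alpha$ is defined on $H$), since $(A^\alpha)^K\cong A^{\Lambda_{\alpha|_K}}$ for every $K$; this is the translation used just before Theorem~\ref{thm:4th-comparison-theorem} via Proposition~\ref{prop:equivalence-mapping-spaces}.

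First I would treat the implication ``$\ff_\bullet^\II$-level $\Rightarrow$ $\II$-level''. Given $\textbf{n}^\alpha\in\II(G)$, the hypothesis applied with $H=G$ says $f_n\colon X_n\to Y_n$ is a weak $\Lambda_\alpha$-equivalence, hence $f_n^\alpha\colon X_n^\alpha\to Y_n^\alpha$ is a weak $G$-equivalence, and by naturality of the homeomorphism above this is precisely $f\colon X(\textbf{n}_+^\alpha)\to Y(\textbf{n}_+^\alpha)$. So $f$ is an $\II$-level $G$-equivalence.

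For the converse, assume $f$ is an $\II$-level $G$-equivalence and let $\alpha\colon H\to\Sigma_n$ with $\textbf{n}^\alpha\in\II(H)$. Exactly as in the proof of the fourth comparison theorem, the disk-like property together with the double coset formula (Remark~\ref{rem:double-coset-formula}, Proposition~\ref{prop:restriction-finite-G-sets}) furnishes $\beta\colon G\to\Sigma_m$ with $\textbf{m}^\beta\in\II(G)$, an $H$-equivariant inclusion $\iota\colon\textbf{n}^\alpha\hookrightarrow\res_H^G\textbf{m}^\beta$, and a projection $\pi\colon\res_H^G\textbf{m}_+^\beta\to\textbf{n}_+^\alpha$ with $\pi\circ\iota=\id$. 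Applying $X$ and $Y$ and using naturality of $f$ gives a retract diagram exhibiting $f\colon\res_H^G X(\textbf{n}_+^\alpha)\to\res_H^G Y(\textbf{n}_+^\alpha)$ as a weak retract of $f\colon\res_H^G X(\textbf{m}_+^\beta)\to\res_H^G Y(\textbf{m}_+^\beta)$, and the latter is a weak $H$-equivalence because $f(\textbf{m}_+^\beta)$ is a weak $G$-equivalence by hypothesis. Since weak equivalences are closed under retracts, $f\colon\res_H^G X(\textbf{n}_+^\alpha)\to\res_H^G Y(\textbf{n}_+^\alpha)$ is a weak $H$-equivalence, which by the two facts above is the same as $f_n\colon X_n\to Y_n$ being a weak $\Lambda_\alpha$-equivalence. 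I do not expect a genuine obstacle: this is essentially a verbatim adaptation of Theorem~\ref{thm:4th-comparison-theorem}, and the only point meriting a line of care is verifying---as in that proof---that the two lower horizontal maps of the retract diagram (the inclusion and the projection of twisted powers) commute with $f$, which is immediate from naturality.
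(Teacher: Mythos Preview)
Your proposal is correct and follows essentially the same approach as the paper: establish the translation between weak $\Lambda_\alpha$-equivalences and weak $G$-equivalences of $\alpha$-twisted spaces via Proposition~\ref{prop:equivalence-G-actions-on-Gamma-I-space}, deduce the easy direction immediately, and for the converse use the disk-like property to produce the retract diagram exactly as in Theorem~\ref{thm:4th-comparison-theorem}. The paper's proof is slightly terser but structurally identical.
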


\begin{proof}
	First note that $f\colon X_n\to Y_n$ is a weak $\Lambda_\alpha$-equivalence in and only if $f\colon X_n^\alpha\to Y_n^\alpha$ is a weak $G$-equivalence if and only if $f(\textbf{n}_+^\alpha)\colon X(\textbf{n}_+^\alpha)\to Y(\textbf{n}_+^\alpha)$ is a weak $G$-equivalence. The rest of the proof is the same as in the previous proposition. Clearly any $\ff_\bullet^\II$-level equivalence induces an $\II$-level equivalence. For the converse do the same constructions of maps $\iota\colon \textbf{n}_+^\alpha\to \res_H^G \textbf{m}_+^\beta$ and $\pi\colon \res_H^G \textbf{m}_+^\beta\to \textbf{n}_+^\alpha$ as in the proof of Theorem \ref{thm:4th-comparison-theorem} for $\textbf{n}_+^\alpha$ a finite $H$-set in $\II(H)$. Then we have a retract diagram
	\begin{center}
		\begin{tikzcd}
			\res_H^GX(\textbf{n}_+^\alpha) \arrow[d, "\res_H^G f(\textbf{n}_+^\alpha)"'] \arrow[rr, "X(\iota)"] &  & \res_H^G X(\textbf{m}_+^\beta) \arrow[d, "\res_H^G f(\textbf{m}_+^\beta)"] \arrow[rr, "X(\pi)"] &  & \res_H^GX(\textbf{n}_+^\alpha) \arrow[d, "\res_H^G f(\textbf{n}_+^\alpha)"] \\
			\res_H^GY(\textbf{n}_+^\alpha) \arrow[rr]                                                           &  & \res_H^G Y(\textbf{m}_+^\beta) \arrow[rr]                                                       &  & \res_H^GY(\textbf{n}_+^\alpha)                                             
		\end{tikzcd}
	\end{center}
	where by assumption $\res_H^G f(\textbf{m}_+^\beta)$ is a weak $H$-equivalence so that also $\res_H^G f(\textbf{n}_+^\alpha)$ is a weak $H$-equivalence.
\end{proof}

\begin{Rem}[cf. \cite{MMO25}, Remark 2.39]\label{rem:Gamma_I-lands-in-well-pointed}
	As noted, for $Y$ a $\Gamma_\II$-$G$-space there is a $G$-homeomorphism $Y(\textbf{n}_+^\alpha)\cong Y_n^\alpha$. Since $\uu_\TT^\II Y$ is a $\Gamma$-$G$-space, the inclusion of the base-point $*\to Y_n$ is a $(G\times\Sigma_n)$-cofibration, and in particular $Y_n^\alpha$ is in $G\Top_{\textup{wp}}$. This implies $Y$ is indeed a functor $\Gamma_\II\to G\Topu_{\textup{wp}}$.
\end{Rem}


\section{Equivariant incomplete Segal machines}\label{sec:Segal-machines}

Fix a disk-like $G$-indexing system $\II$ together with a compatible $G$-universe $U$. This means that for any finite $H$-set $T$ in $\II$ there is a $G$-representation $V$ in $U$ so that $T$ embeds $H$-equivalently into $\res^G_H V$. For any such choice of universe and indexing system we will define an equivariant incomplete infinite loop space machine
\[\sS^{G,U}_\II\colon \Gamma_\II[G\Topu_*]^{\II\textup{-spc}}\longrightarrow \Omega\Sp_{\textup{cp}}^{G,U}.\]
Using the discussion in the previous chapter we can also replace $\II$-special $\Gamma_\II$-$G$-spaces with $\ff_\bullet^\II$-special $\Gamma$-$G$-spaces, but for the proof of the existence of $\sS$ we will explicitly need $\II$-special $\Gamma_\JJ$-$G$-spaces with $\II\subset \JJ$. For minimality we will use $\II$-special $\Gamma_\II$-$G$-spaces.

The construction of $\sS^{G,U}_\II$ and the proof idea of its existence is derived from the genuine case (i.e. the case $\II=\JJ=\CC$) given in \cite{MMO25} in \S 3 and \S 8. Their proof is a modernised version of the original proof given in \cite{Shi89} by Shimakawa. Knowing the proof in the genuine case the main difficulty in the incomplete case boils down to keeping track of equivariance and that constructions do not leave the boundaries of the indexing system. Therefore most of the proofs are deferred to the appendix with only brief instructions given how to alter the genuine proofs in the incomplete case.

\subsection{Constructing the Segal machines}\label{sec:construction-Segal-machine}

The goal of this section is to define a functor
\[\hat{\sS}_\II^{G,U}\colon\Gamma_\II[G\Topu_*]\longrightarrow  \Sp^{G,U}\]
which we will show in the next section to restricts to give the incomplete equivariant Segal machine $\sS^{G,U}_\II$. Define this functor by the concatenation of the following functors
\begin{center}
	\begin{tikzcd}
	{\Gamma_\II[G\Topu_*]} \arrow[r, "\bar{b}_\II"] \arrow[rr, "b_\II", bend right] & \Gamma_\II[G\Topu_* \arrow[r, "\PP_\II"] & {\Fun(\underline{G\CW}_*^\II,G\Topu_*)} \arrow[r, "R^\II_U"] & {\Sp^{G,U}}
	\end{tikzcd}
\end{center}
The functor $\bar{b}_\II$ is given by the geometric realization of the simplicial bar construction. The functor $\PP_\II$ is given by the prolongation functor which is the adjoint of the forgetful functor induced by the inclusion $\Gamma_\II\to G\underline{\CW}_*^\II$. The main functor of interest is $b_\II$, which we will show is naturally isomorphic to $\PP_\II\circ \bar b_\II$. The functor $R_U^\II$ is given by restricting a $G\Top_*$-functor $\smash{G\underline{\CW}_*^\II\to G\Topu_*}$ to $G$-representation spheres $\smash{S^V}$ for $V$ in $U$ a $G$-representation.

The bar construction was in the form used here originally introduced by May \cite{May75}, \S 7 and \S 12. A summary on the construction using the enriched terminology can be found in \cite{MMO25}, \S 3.1 and \S 3.2. 

\begin{Def}
	Let $\E$ be a $\V$-enriched category, $X\colon \E\to\underline\V$ a covariant, and $Y\colon \E\to \underline\V$ a contravariant functor. Denote by $B_*(Y,\E,X)$ the simplicial bar construction of $Y$ and $X$. Here $\underline\V$ denotes $\V$ enriched over itself with $\V$ cartesian closed. If $\V$ is cocomplete, denote by $B(Y,\E,X)$ the geometric realization of the simplicial object $B_*(Y,\E,X)$.
\end{Def}

We will mainly use this construction for $\V=G\Top_*$ with respect to the smash product and will occasionally need to use the construction for $\V=G\Top$ with respect to the cartesian product. Following the notation in \cite{MMO25} we will denote the bar construction in $G\Top_*$ by $B(Y,\E,X)$ and in $G\Top$ by $B^\times(Y,\E,X)$.

\begin{Rem}[\cite{MMO25}, Assumption 3.10]\label{rem:cofibrancy-assumptions-bar-construction}
	We will assume that the category $\E$ above has a zero object such that for all $a,b\in\E$, $\E(a,0)\cong *\cong \E(0,b)$, $\E$ is $G\Top_*$-enriched where the mapping $G$-space $\E(a,b)$ is based at the zero map $a\to 0\to b$, $\E(a,b)$ is well-pointed, and that the inclusion of the identity map into $\E(a,a)$ is a $G$-cofibration. If we furthermore assume that the functors $X,Y$ are $G\Top_*$-enriched such that $X(a)$ and $Y(a)$ are well-pointed for all $a\in\E$ this ensures that the bar constructions are given by geometric realizations of the Reedy cofibrant simplicial $G$-spaces in $G\Top_{\textup{wp}}$ (cf. \cite{MMO25}, Assumption 3.10).
	
	Note that for all indexing systems $\II$, the categories $\Gamma_\II$ fulfil the assumptions of $\E$. The zero object is $\textbf{0}_+=\{0\}$, the mapping spaces $\Gamma_\II(\textbf{n}_+^\alpha,\textbf{m}_+^\beta)$ consist of based maps with conjugation $G$-actions so that the second assumption above holds. The third assumption holds by Definition \ref{def:Gamma_I-G-spaces} and the $G$-cofibration condition is easily verified. Lastly note that $\Gamma_\II$-$G$-spaces fulfil the assumption on well-pointedness by Remark \ref{rem:Gamma_I-lands-in-well-pointed}.
\end{Rem}

\begin{Def}
	Define
	\[\bar b_\II\colon \Fun(\Gamma_\II,G\Topu_*)\to \Fun(\Gamma_\II,G\Topu_*)\]
	to be the functor which maps a functor $X\colon\Gamma_\II\to G\Topu_*$ to $B(\Gamma_\II,\Gamma_\II,X)\colon \Gamma_\II\to G\Topu_*$ which maps a finite based $G$-set $T_+$ in $\Gamma_\II$ to the $G$-space given by the bar construction $B(\Gamma_\II(-,T_+),\Gamma_\II,X)$, that is
	\[\bar b_\II X(T_+):=B(\Gamma_\II(-,T_+),\Gamma_\II,X).\]
\end{Def}

\begin{Prop}
	Let $X$ be a $\Gamma_\II$-$G$-space. Then $b_\II X$ is also a $\Gamma_\II$-$G$-space.
\end{Prop}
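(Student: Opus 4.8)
The plan is to verify that $\bar b_\II X$, as a functor $\Gamma_\II \to G\Topu_*$, restricts to a $\Gamma$-$G$-space in the sense of Definition~\ref{def:Gamma_I-G-spaces}; that is, that the underlying functor $\Gamma \to G\Topu_*$ satisfies the cofibrancy condition of a $\Gamma$-$G$-space. By definition $\bar b_\II X(T_+) = B(\Gamma_\II(-,T_+),\Gamma_\II,X)$ is the geometric realization of a simplicial $G$-space, and on an injection $\phi\colon \textbf{m}_+\to\textbf{n}_+$ the induced map $\phi_*$ is the realization of the simplicial map induced by postcomposition $\Gamma_\II(-,\textbf{m}_+)\to\Gamma_\II(-,\textbf{n}_+)$. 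So the strategy is: first check the assumptions of Remark~\ref{rem:cofibrancy-assumptions-bar-construction} hold (which is exactly what the final paragraph of that remark already records for $\E=\Gamma_\II$ and for $X$ a $\Gamma_\II$-$G$-space), so that the bar construction is the realization of a Reedy cofibrant simplicial $G$-space in $G\Top_{\textup{wp}}$; then show the relevant map of simplicial $G$-spaces is a levelwise $(G\times\Sigma_\phi)$-cofibration; and finally invoke Theorem~\ref{thm:MMO-thm-1.13} to conclude that the realization is a $(G\times\Sigma_\phi)$-cofibration.

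First I would spell out the $(G\times\Sigma_n)$-space structure: $\Sigma_n$ acts on $\bar b_\II X(\textbf{n}_+)$ because $\Sigma\subset\Gamma\subset\Gamma_\II$, hence $\Gamma_\II(-,\textbf{n}_+)$ carries a $\Sigma_n$-action via postcomposition, which is compatible with the $G$-action on the mapping spaces, giving a $(G\times\Sigma_n)$-action on each simplicial level and hence on the realization. For an injection $\phi\colon\textbf{m}_+\to\textbf{n}_+$ with symmetry group $\Sigma_\phi$, the $q$-simplices of the source simplicial space are
\[
\Gamma_\II(\textbf{b}_q,\textbf{m}_+)\wedge\Gamma_\II(\textbf{b}_{q-1},\textbf{b}_q)\wedge\cdots\wedge\Gamma_\II(\textbf{b}_0,\textbf{b}_1)\wedge X(\textbf{b}_0),
\]
wedged over tuples of objects $\textbf{b}_0,\dots,\textbf{b}_q$ of $\Gamma_\II$, and the map to the target replaces the first smash factor by $\Gamma_\II(\textbf{b}_q,\textbf{n}_+)$ via $\phi_*$. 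So the levelwise map is a wedge of maps of the form $\phi_*\wedge\id\colon \Gamma_\II(\textbf{b}_q,\textbf{m}_+)\wedge A \to \Gamma_\II(\textbf{b}_q,\textbf{n}_+)\wedge A$. Here I would argue that $\phi_*\colon \Gamma_\II(\textbf{b}_q,\textbf{m}_+)\to\Gamma_\II(\textbf{b}_q,\textbf{n}_+)$ is an inclusion of a $(G\times\Sigma_\phi)$-subspace of a discrete based $(G\times\Sigma_\phi)$-set — hence trivially a $(G\times\Sigma_\phi)$-cofibration, since inclusions of based subspaces of discrete spaces are always cofibrations — and smashing with a well-pointed space and taking wedges preserves $(G\times\Sigma_\phi)$-cofibrations. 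One also checks the simplicial spaces are Reedy cofibrant by Lemma~\ref{lem:MMO-lem-1.11}, since all degeneracies are inclusions of discrete-wedge summands (this is part of what Remark~\ref{rem:cofibrancy-assumptions-bar-construction} guarantees).

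Then Theorem~\ref{thm:MMO-thm-1.13}, applied with the group $G\times\Sigma_\phi$ in place of $G$ (the statements in \S1.1 hold for any finite group, and restriction along $G\times\Sigma_\phi\hookrightarrow$ the appropriate product preserves cofibrancy), gives that $|\phi_*| = \phi_*\colon \bar b_\II X(\textbf{m}_+)\to\bar b_\II X(\textbf{n}_+)$ is a $(G\times\Sigma_\phi)$-cofibration. This is exactly the defining condition of a $\Gamma$-$G$-space, and since being a $\Gamma_\II$-$G$-space only depends on the underlying $\Gamma$-$G$-space, $\bar b_\II X$ is a $\Gamma_\II$-$G$-space. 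The main obstacle I anticipate is purely bookkeeping: carefully setting up the $\Sigma_\phi$-equivariance of the wedge decomposition of $B_*$ so that Theorem~\ref{thm:MMO-thm-1.13} can be applied with the correct group, and checking that the degeneracy operators really are cofibrations level by level so that Lemma~\ref{lem:MMO-lem-1.11} applies — but these are routine consequences of the hypotheses collected in Remark~\ref{rem:cofibrancy-assumptions-bar-construction} rather than genuinely new arguments. (In the source this is simply the incomplete analogue of \cite{MMO25}, and one may equally well just cite the genuine argument and note that nothing in it uses completeness of the indexing data, only the closure properties of $\Gamma_\II$ already established.)
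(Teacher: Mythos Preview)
Your proposal is correct and follows essentially the same approach as the paper: reduce to a levelwise statement via Theorem~\ref{thm:MMO-thm-1.13} using the Reedy cofibrancy from Remark~\ref{rem:cofibrancy-assumptions-bar-construction}, identify the levelwise map as a wedge of maps of the form $\Gamma_\II(a_q,\phi)\wedge\id\wedge\cdots\wedge\id$, and observe that $\Gamma_\II(a_q,\phi)$ is a $(G\times\Sigma_\phi)$-cofibration between discrete finite based $(G\times\Sigma_\phi)$-sets. Your write-up is in fact more explicit than the paper's about the $\Sigma_\phi$-equivariance bookkeeping and the need to apply Theorem~\ref{thm:MMO-thm-1.13} with $G\times\Sigma_\phi$ in place of $G$.
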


\begin{proof}
	We need to show that given an injection $\phi\colon S_+\to T_+$ the induced map $b_\II X(\phi)$ is a $(G\times\Sigma_\phi)$-cofibration. Consider $X$ as a trivial simplicial $G$-space $X_*$ given by $X_q=X$ for all $q\ge 0$ and with face and degeneracies given by identities. We can think of $\bar b_\II X(\phi)$ as the realization of the map
	\[\bar b_\II X(\phi)_*\colon B_*(\Gamma_\II(-,S_+),\Gamma_\II,X)\to B_*(\Gamma_\II(-,T_+),\Gamma_\II,X).\]
	To show that $b_\II X(\phi)$ is a $(G\times\Sigma_\phi)$-cofibration, using Theorem \ref{thm:MMO-thm-1.13}, it suffices to show that $\bar b_\II X(\phi)_*$ is a levelwise $(G\times\Sigma_\phi)$-cofibration under the assumption that the bar construction is a Reedy cofibrant simplicial $G$-space. This assumption holds by Remark \ref{rem:cofibrancy-assumptions-bar-construction}. Levelwise, the map $\bar b_\II X(\phi)_q$ is a wedge of maps of the form
	\[\Gamma_\II(a_q,\phi)\land \id\land \dots\land \id.\]
	It is not hard to check that the induced map $\Gamma_\II(a_q,\phi)\colon \Gamma_\II(a_q,S_+)\to \Gamma_\II(a_q,T_+)$ is a $(G\times\Sigma_\phi)$-cofibration between finite based $G\times\Sigma_{\phi}$-spaces.
\end{proof}

From now on, we will consider $\bar b_\II$ as an endofunctor
\[\bar b_\II\colon \Gamma_\II[G\Topu_*]\to \Gamma_\II[G\Topu_*].\]

The following Proposition generalises Proposition 3.17 and 3.20 in \cite{MMO25} and is directly implied by Proposition 3.13 in \cite{MMO25}.

\begin{Prop}\label{prop:bar-construction-approximation-of-X}
	Let $X$ be a $\Gamma_\II$-$G$-space. Then the map
	\[\varepsilon\colon B(\Gamma_\II,\Gamma_\II,X)\to X\]
	is an $\II$-level $G$-equivalence of $\Gamma_\II$-$G$-spaces. Hence, $X$ is $\II$-special if and only if $B(\Gamma_\II,\Gamma_\II,X)$ is $\II$-special.
\end{Prop}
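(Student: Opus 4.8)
The plan is to deduce this from a general fact about the bar construction applied to $G\Top_*$-functors out of a category satisfying the assumptions of Remark \ref{rem:cofibrancy-assumptions-bar-construction}, namely that the augmentation $\varepsilon\colon B(\Gamma_\II,\Gamma_\II,X)\to X$ admits an extra degeneracy at each level and hence is a simplicial homotopy equivalence before realization. Concretely, I would first recall that $\bar b_\II X(T_+)$ is the realization of the simplicial $G$-space $B_\ast(\Gamma_\II(-,T_+),\Gamma_\II,X)$, that this simplicial object is Reedy cofibrant by Remark \ref{rem:cofibrancy-assumptions-bar-construction} (the relevant hypotheses on $\Gamma_\II$ and on $\Gamma_\II$-$G$-spaces being verified there), and that the augmentation to the constant simplicial $G$-space $X(T_+)$ is, for each fixed $T_+$, a simplicial homotopy equivalence of $G$-spaces — this is exactly the content of \cite{MMO25}, Proposition 3.13, applied levelwise in $T_+$. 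The key point that makes this work equivariantly is that the contracting homotopy is built from the identity morphism $\id_{T_+}\in\Gamma_\II(T_+,T_+)$, which is $G$-fixed, so the extra degeneracy $s_{-1}$ is a $G$-map and the homotopy is $G$-equivariant.

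Next I would pass to fixed points and to $\Lambda_\alpha$-fixed points. For a finite $G$-set $\textbf{n}^\alpha\in\II(G)$ we need that $\varepsilon\colon B(\Gamma_\II,\Gamma_\II,X)(\textbf{n}_+^\alpha)\to X(\textbf{n}_+^\alpha)$ is a weak $G$-equivalence, i.e.\ a weak equivalence on $H$-fixed points for all $H\subset G$. But since $\varepsilon$ evaluated at $\textbf{n}_+^\alpha$ is a $G$-homotopy equivalence (realization of a levelwise simplicial homotopy equivalence of Reedy cofibrant simplicial $G$-spaces, using Theorem \ref{thm:MMO-thm-1.12}, or more directly the fact that geometric realization preserves simplicial homotopies), it is in particular a weak $G$-equivalence. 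The same holds at $\textbf{n}_+^{\varepsilon_n}=\textbf{n}_+$ for the trivial action, so $\varepsilon$ is an $\II$-level $G$-equivalence in the sense of the definition of $\II$-level $G$-equivalence given in Section \ref{sec:specialness}. I should be slightly careful that realization commutes with the evaluation functor $\mathrm{ev}_{\textbf{n}_+^\alpha}$, which it does since realization is a colimit and colimits in the functor category are computed levelwise; hence $\bar b_\II X(\textbf{n}_+^\alpha)=|B_\ast(\Gamma_\II(-,\textbf{n}_+^\alpha),\Gamma_\II,X)|$ and the augmentation is computed levelwise as claimed.

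Finally, the last sentence of the statement is a formal consequence: an $\II$-level $G$-equivalence $f\colon Y\to X$ of $\Gamma_\II$-$G$-spaces fits, for each $\textbf{n}^\alpha\in\II(G)$, into a commuting square with the Segal maps of $Y$ and of $X$ on the two sides and $f(\textbf{n}_+^\alpha)$, $f(\textbf{1}_+)^n$ on top and bottom; since $f(\textbf{n}_+^\alpha)$ and $f(\textbf{1}_+)$ are weak $G$-equivalences and weak $G$-equivalences are closed under composition and satisfy two-out-of-three, the Segal map of $Y$ at $\textbf{n}_+^\alpha$ is a weak $G$-equivalence if and only if that of $X$ is. Applying this with $Y=B(\Gamma_\II,\Gamma_\II,X)$ and $f=\varepsilon$ gives that $X$ is $\II$-special iff $B(\Gamma_\II,\Gamma_\II,X)$ is. I expect the only mildly delicate point to be bookkeeping: checking that the extra-degeneracy argument of \cite{MMO25}, Proposition 3.13 goes through verbatim with $\Gamma_\II$ in place of $\Gamma_G$ — but this is immediate since that argument only uses the axioms on $\E$ recorded in Remark \ref{rem:cofibrancy-assumptions-bar-construction}, which $\Gamma_\II$ satisfies, and uses $G$-fixed identity morphisms, which exist in $\Gamma_\II$. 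So there is no real obstacle here; the statement is genuinely a transcription of the genuine case once the enriched formalism of Section \ref{sec:preliminaries} is in place.
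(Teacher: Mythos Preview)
Your proposal is correct and follows essentially the same approach as the paper: the paper simply states that the result is directly implied by \cite{MMO25}, Proposition 3.13, and you have spelled out exactly how that implication works (extra degeneracy, Reedy cofibrancy via Remark~\ref{rem:cofibrancy-assumptions-bar-construction}, and the two-out-of-three argument for specialness). There is nothing to add.
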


\begin{Rem}
	The above proposition is the reason why we need to use $\Gamma_\II$-$G$-spaces and why using $\Gamma$-$G$-spaces does not suffice. For $\II=\TT$, the trivial indexing system, we can only expect $\TT$-level-equivalences from the above proposition. For $\JJ$ an indexing system with $\II\subset\JJ$ and $X$ an $\II$-special $\Gamma_\JJ$-$G$-space it also implies that $\varepsilon$ is a $\JJ$-level equivalence so that $B(\Gamma_\JJ,\Gamma_\JJ,X)$ is also $\II$-special. Hence, there is technically no need to consider $\Gamma_\II$-$G$-spaces and it is enough to consider $\II$-special $\Gamma_G$-$G$-spaces.
	
	 As observed in \cite{MMO25}, Warning 3.18, the map $\varepsilon$ is in general not an $\ff_\bullet^\CC$-level equivalence. Hence, one cannot expect $B(\Gamma_\CC,\Gamma_\CC,X)$ to be $\ff_\bullet^\CC$-special, even if $X$ is so. The same holds for $\II$ in place of $\CC$.
\end{Rem}

Recall that any finite based $G$-set in $\Gamma_\II$ is an $\II$-$G$-CW complex which is $0$-skeletal so that there is an inclusion
\[\Gamma_\II\to G\underline\CW_*^\II.\]
Indeed, for disk-like indexing systems $\II\subset\JJ$ we have natural inclusions $\Gamma_\II\to\Gamma_\JJ$ and $G\underline\CW_*^\II\to G\underline\CW_*^\JJ$ so that the following diagram commutes
\begin{center}
	\begin{tikzcd}
		\Gamma_\II \arrow[d] \arrow[r] & G\underline\CW_*^\II \arrow[d] \\
		\Gamma_\JJ \arrow[r]           & G\underline\CW_*^\JJ          
	\end{tikzcd}
\end{center}

\begin{Def}
	A \textit{$G\underline{\CW}_*^\II$-$G$-space} $Z$ is a $G\Top_*$-functor $Z\colon G\underline{\CW}_*^\II\to G\Topu_*$. Denote the category of $G\underline{\CW}^\II$-$G$-spaces by $G\underline{\CW}_*^\II[G\Topu_*]$.
\end{Def}

For disk-like indexing systems $\II\subset\JJ$ any $G\underline{\CW}_*^\JJ$-$G$-space $X$ has an underlying $G\underline{\CW}_*^\II$-$G$-space $\UU_\II^\JJ X$ by precomposition with the inclusion and it has similarly an underlying $\Gamma_\JJ$-$G$-space $\UU_\JJ X$. The commutative diagram above induces a commutative diagram
\begin{center}
	\begin{tikzcd}
		\Fun(\Gamma_\JJ,G\Topu_*) \arrow[d, "\uu_\II^\JJ"'] \arrow[r, "\UU_\JJ"] & \Fun(G\underline\CW_*^\JJ,G\Topu_*) \arrow[d, "\UU_\II^\JJ"] \\
		\Fun(\Gamma_\II,G\Topu_*) \arrow[r, "\UU_\II"']           & \Fun(G\underline\CW_*^\II,G\Topu_*)          
	\end{tikzcd}
\end{center}

By Remark \ref{rem:prolongation-existence} we have an adjoint prolongation functor $\PP_\II$ which is a left adjoint to $\UU_\II$. For a $G\Top_*$-functor $X\colon \Gamma_\II\to G\Topu_*$ define $\PP_\II X(A):= A^\bullet \otimes_{\Gamma_\II} X$ for $A^\bullet$ the representable contravariant functor $F(-,A)$ where $A$ is a based $\II$-$G$-CW complex. Similarly, any $G\underline{\CW}_*^\II$-$G$-space $Y$ can be prolonged to a $G\underline{\CW}_*^\JJ$-$G$-space $\PP_\II^\JJ X$. For this to exist note that the category of finite based $G$-CW complexes is skeletally small (see \cite{MMSS01}, Example 4.6). The adjoint diagram of the above implies the following proposition.

\begin{Prop}\label{prop:commutative-diagram-prolongation}
	For $\II\subset \JJ$, there is a commutative diagram
	\begin{center}
	\begin{tikzcd}
		\Fun(\Gamma_\II,G\Topu_*) \arrow[d, "\pp_\II^\JJ"'] \arrow[r, "\PP_\JJ"] & \Fun(\underline\CW_*^\II,G\Topu_*) \arrow[d, "\PP_\II^\JJ"] \\
		\Fun(\Gamma_\JJ,G\Topu_*) \arrow[r, "\PP_\JJ"']           & \Fun(\underline\CW_*^\II,G\Topu_*)          
	\end{tikzcd}
\end{center}
\end{Prop}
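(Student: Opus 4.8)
The plan is to read off this square as the square of left adjoints of the strictly commutative square of restriction functors displayed just above the statement, invoking that adjoints are unique up to canonical natural isomorphism; as a cross-check one can also verify the identity by an explicit coend manipulation, so I will indicate both routes.

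First I would record that the four inclusions $\Gamma_\II\hookrightarrow\Gamma_\JJ$, $\Gamma_\JJ\hookrightarrow G\underline\CW_*^\JJ$, $\Gamma_\II\hookrightarrow G\underline\CW_*^\II$ and $G\underline\CW_*^\II\hookrightarrow G\underline\CW_*^\JJ$ form a strictly commutative square of $G\Top_*$-functors: both composites equal the canonical inclusion $\Gamma_\II\hookrightarrow G\underline\CW_*^\JJ$. This is precisely the commutative square of inclusions displayed above. Since precomposition with a $G\Top_*$-functor is strictly functorial, the induced square of restriction functors $\uu_\II^\JJ$, $\UU_\JJ$, $\UU_\II^\JJ$, $\UU_\II$ on the associated enriched functor categories commutes on the nose, i.e. $\uu_\II^\JJ\circ\UU_\JJ=\UU_\II\circ\UU_\II^\JJ$ (this is the square of forgetful functors displayed above). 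By Remark \ref{rem:prolongation-existence} each of these restriction functors admits a left adjoint, namely the left Kan extension along the corresponding inclusion; these are $\pp_\II^\JJ$, $\PP_\JJ$, $\PP_\II^\JJ$ and $\PP_\II$. (The Kan extensions exist because $\Gamma_\II,\Gamma_\JJ$ are small and $G\underline\CW_*^\II,G\underline\CW_*^\JJ$ are skeletally small, cf. \cite{MMSS01}, Example 4.6.) Passing to left adjoints in the identity $\uu_\II^\JJ\circ\UU_\JJ=\UU_\II\circ\UU_\II^\JJ$ and using uniqueness of adjoints yields a canonical natural isomorphism between the two composites around the square, i.e. the asserted commutativity (equivalently, this is the pseudofunctoriality $\textup{Lan}_{v\circ u}\cong\textup{Lan}_v\circ\textup{Lan}_u$ applied to the two factorisations of the diagonal inclusion).

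If one wants the identification written out, one unwinds the coend formulas of Remark \ref{rem:prolongation-existence} and Definition \ref{def:tensor-product-of-functors}: for $X\colon\Gamma_\II\to G\Topu_*$ and a based $\JJ$-$G$-CW complex $A$, Fubini for coends together with the co-Yoneda lemma (the intermediate coend collapses because the relevant inclusions are full and faithful) gives
\[
\PP_\II^\JJ\PP_\II X(A)=\int^{B\in G\underline\CW_*^\II}\! F(B,A)\wedge\Bigl(\int^{S_+\in\Gamma_\II}\! F(S_+,B)\wedge X(S_+)\Bigr)\cong\int^{S_+\in\Gamma_\II}\! F(S_+,A)\wedge X(S_+),
\]
and similarly $\PP_\JJ\pp_\II^\JJ X(A)\cong\int^{S_+\in\Gamma_\II} F(S_+,A)\wedge X(S_+)$, and these identifications are visibly natural in $A$ and $X$. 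Since the statement is phrased for the full functor categories there is nothing further to cut down; the version for $\Gamma_\II$-$G$-spaces is then automatic, because prolongation along $\Gamma\to\Gamma_\II\to G\underline\CW_*^\II$ does not alter the underlying functor on $\Gamma$, exactly as in the proof of the first comparison theorem. There is no genuine analytic obstacle here; the only thing that needs care is the bookkeeping of which prolongation functor is a left Kan extension along which inclusion, so that the uniqueness-of-adjoints (or pseudofunctoriality) argument is applied to the correct pair of composable inclusions.
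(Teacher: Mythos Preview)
Your proposal is correct and takes essentially the same approach as the paper: the paper's proof is the single sentence ``The adjoint diagram of the above implies the following proposition,'' which is exactly your first route of passing from the strictly commutative square of restriction functors to the square of left adjoints via uniqueness of adjoints. Your additional coend computation is a reasonable cross-check but goes beyond what the paper records.
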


\begin{Def}
	Define a functor
	\[b_\II\colon \Gamma_\II[G\Topu_*]\to \Fun(G\underline\CW_*^\II,G\Topu_*)\]
	by mapping a $\Gamma_\II$-$G$-space $X$ to the $G\underline\CW_*^\II$-space $b_\II X$ which maps a finite $\II$-$G$-CW complex $A$ to the $G$-space $b_\II X(A)=B(A^\bullet,\Gamma_\II,X)$, where $A^\bullet$ is the representable contravariant functor $F(-,A)$.
\end{Def}

\begin{Prop}
	There is a natural isomorphism
	\[b_\II\cong \PP_\II\circ \bar b_\II.\]
\end{Prop}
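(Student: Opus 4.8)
The plan is to unwind both sides of the claimed isomorphism $b_\II \cong \PP_\II \circ \bar b_\II$ into their coend descriptions and recognize them as the same object. Recall that $\bar b_\II X(T_+) = B(\Gamma_\II(-,T_+),\Gamma_\II,X)$ and that $b_\II X(A) = B(A^\bullet,\Gamma_\II,X)$ where $A^\bullet = F(-,A)$. Meanwhile the prolongation is $\PP_\II Y(A) = A^\bullet \otimes_{\Gamma_\II} Y = \int^{S_+ \in \Gamma_\II} F(S_+,A)\land Y(S_+)$. So, evaluated at a finite $\II$-$G$-CW complex $A$, the composite becomes
\[
(\PP_\II \circ \bar b_\II)X(A) = \int^{S_+\in\Gamma_\II} F(S_+,A)\land B(\Gamma_\II(-,S_+),\Gamma_\II,X).
\]
The goal is to identify this with $B(F(-,A),\Gamma_\II,X)$.

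The key step is a Fubini-type argument for coends together with the Yoneda lemma for coends (stated in the excerpt as \cite{Lor21}, Proposition 2.2.1 / Remark 4.3.5). First I would note that geometric realization commutes with coends (both are colimits, and $G\Top_*$ is cocomplete with realization a left adjoint), so it suffices to establish the isomorphism simplicially, degreewise in the bar construction; then realize. At simplicial level $q$, the right-hand side has $q$-simplices $\bigvee_{a_0,\dots,a_q} F(a_q,A)\land \Gamma_\II(a_{q-1},a_q)\land\cdots\land\Gamma_\II(a_0,a_1)\land X(a_0)$ (more precisely this coproduct with the coend identifications), while the left-hand side has $\int^{S_+} F(S_+,A)\land \big(\bigvee_{a_0,\dots,a_q} \Gamma_\II(a_q,S_+)\land\Gamma_\II(a_{q-1},a_q)\land\cdots\land X(a_0)\big)$. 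Since smashing with the fixed space $F(S_+,A)$ commutes with the coproduct and the coend in the inner bar construction, and since smash products distribute over coends, one can pull $F(S_+,A)$ inside; the term $\int^{S_+} F(S_+,A)\land\Gamma_\II(a_q,S_+)$ is exactly a co-Yoneda expression which evaluates to $F(a_q,A)$. This collapses the outer coend and yields precisely the $q$-simplices of $B(F(-,A),\Gamma_\II,X)$. Naturality in $A$ (for morphisms in $G\underline\CW_*^\II$) and in $X$ follows from functoriality of all the constructions involved, so the degreewise isomorphisms assemble to an isomorphism of simplicial $G$-spaces, hence of their realizations, hence a natural isomorphism $b_\II \cong \PP_\II\circ\bar b_\II$ of functors $\Gamma_\II[G\Topu_*]\to \Fun(G\underline\CW_*^\II,G\Topu_*)$.

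The main obstacle I expect is bookkeeping rather than conceptual: one must be careful that the coend over $\Gamma_\II$ appearing inside $\PP_\II$ and the colimit defining the bar construction genuinely commute as based $G$-spaces, which relies on the fact that $G\Top_*$ is closed symmetric monoidal (so $-\land F(S_+,A)$ preserves colimits) and that $\Gamma_\II$ has a small skeleton so the coends exist — both recorded in the excerpt (Remark \ref{rem:prolongation-existence} and the closedness of $G\Topu_*$). A secondary subtlety is that one should check the identification is compatible with the simplicial structure maps (faces and degeneracies) of the bar constructions, not just the levels; but since the co-Yoneda isomorphism $\int^{S_+} F(S_+,A)\land\Gamma_\II(-,S_+)\cong F(-,A)$ is natural in the remaining variable, this compatibility is automatic. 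Alternatively, and perhaps more cleanly, one can avoid the explicit simplicial computation entirely by invoking the general fact (e.g. \cite{MMSS01}, \S3, \S23, or \cite{May75}) that for a $\V$-functor into a presheaf category the prolongation of a bar construction is the bar construction of the prolongation, i.e. $\PP_\iota B(\E(-,?),\E,X)\cong B(\D(-,?),\E,X)$ for $\iota\colon\E\to\D$; instantiating with $\iota\colon\Gamma_\II\hookrightarrow G\underline\CW_*^\II$ gives the result immediately. I would present the co-Yoneda argument as the main proof and mention the abstract reformulation as the reason it works.
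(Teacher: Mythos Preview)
Your proposal is correct and is essentially the same as the paper's proof: the paper simply cites \cite{MMO25}, Lemma 3.7 for the isomorphism $A^\bullet\otimes_{\Gamma_\II}B(\Gamma_\II,\Gamma_\II,X)\cong B(A^\bullet,\Gamma_\II,X)$, whose content is precisely the co-Yoneda/Fubini argument you spell out. Your ``alternative'' abstract reformulation at the end is exactly what the paper invokes, so you have written out the proof of the lemma the paper cites rather than just citing it.
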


\begin{proof}
	Let $X$ be a $\Gamma_\II$-$G$-space and let $A$ be a finite based $\II$-$G$-CW complex. There is a natural isomorphism
	\begin{align*}
		\PP_\II(\bar b_\II X)(A) = A^\bullet \otimes_{\Gamma_\II} B(\Gamma_\II,\Gamma_\II,X)
		\cong B(A^\bullet ,\Gamma_\II, X) = b_\II X(A)
	\end{align*}
	following from Lemma 3.7 in \cite{MMO25}.
\end{proof}

Given compatible $\II$ and $U$ we now explain how to obtain an orthogonal $G$-spectrum indexed on a $G$-universe $U$ from an \smash{$G\underline{\CW}_*^\II$-$G$-space}.

\begin{Constr}[\cite{MMO25}, Definition 2.26]\label{constr:the-functor-RUI}
	Let $X\colon G\underline\CW_*^\II\to G\Topu_*$ be a $G\underline\CW_*^\II$-$G$-space. Let $\VV(U)\subset G\underline\CW_*^\II$ be the full subcategory of representation spheres $S^V$ so that $V\subset U$ is a $G$-representation. Define a functor
	\[R^\II_U\colon \Fun(G\underline\CW_*^\II,G\Topu_*)\to \Sp^{G,U}\]
	as follows: On objects, let $R_U^\II X(V):= X(S^V)$. The morphism $G$-maps are induced by one point compactification and application of $X$
	\[\I_U(V,W)\xrightarrow{S^{(-)}}G\underline{\CW}_*^\II (S^V,S^W) \xrightarrow{X(-)} G\Topu_*(X(S^V),X(S^W)).\]
	This gives $R_U^\II X$ the structure of an $\I_U$-$G$-space. Since $X$ is a $G\underline\CW_*^\II$-$G$-space, for any $A,B$ in $G\underline\CW_*^\II$ the adjoint $G$-map $B\to F(A,A\land B)$ of the identity $\id_{A\land B}$ can be composed with $X$ to obtain a $G$-map $B\to F(X(A),X(A\land B)$ whose adjoint is the structure map
	\[X(A)\land B\to X(A\land B).\]
	The special cases $A=S^V$ and $B=S^W$ then give rise to the structure maps $\sigma$ of an orthogonal $G$-spectrum. Since this construction is given by juggling adjoint maps it is natural in all of its arguments giving rise to a $G\Top$-natural transformation
	\[R_U^\II X\land S\Rightarrow R_U^\II \circ \oplus\]
	making $R_U^\II X$ an orthogonal $G$-spectrum indexed on $U$.
\end{Constr}

\begin{Rem}
	The assumption of compatibility of $U$ and $\II$ is very fundamental. Otherwise there does not exist an inclusion $\VV(U)\subset G\underline{\CW}^\II_*$.
\end{Rem}

\begin{Thm}[Theorem A]
	For each indexing system $\II$ and compatible $G$-universe $U$, there exists a functor
	\[\sS^{G,U}_\II\colon \Gamma_\II[G\Topu_*]^{\II\textup{-spc}}\longrightarrow \Omega\Sp_{\textup{pc}}^{G,U}\]
	which maps $\II$-special $\Gamma_\II$-$G$-spaces to positive connected orthogonal $\Omega$-$G$-spectra indexed on $U$ so that the bottom structure map $\sS_\II^{G,U}X(S^0)\to \Omega^V\sS_\II^{G,U}X(S^V)$ is a group completion for all $V$ in $U$ with $V^G\ne 0$. This machine is given by restriction of $R_U^\II\circ b_\II$ to $\II$-special $\Gamma_\II$-$G$-spaces and therefore
	\[\sS_\II^{G,U}X(V):=B((S^V)^\bullet,\Gamma_\II,X).\]
\end{Thm}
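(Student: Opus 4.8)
The functor $\hat\sS_\II^{G,U}=R_U^\II\circ b_\II\colon\Gamma_\II[G\Topu_*]\to\Sp^{G,U}$, together with the identity $\hat\sS_\II^{G,U}X(V)=B((S^V)^\bullet,\Gamma_\II,X)$, is already available from Construction \ref{constr:the-functor-RUI}, so the plan is simply to restrict it to $\II$-special inputs and verify that the output then lies in $\Omega\Sp_{\textup{pc}}^{G,U}$ and satisfies the group-completion property; functoriality of the restriction is automatic. The formula makes sense because compatibility of $\II$ and $U$ gives the inclusion $\VV(U)\subset G\underline\CW_*^\II$: for a $G$-representation $V\subset U$ the sphere $S^V$ is a finite based $\FF_U$-$G$-CW complex, which is the same as a finite $\II$-$G$-CW complex because $\FF_U=\FF_\II$, and moreover all of its cells have type $G/H$ with $H\in\FF_V\subseteq\FF_U=\FF_\II$, i.e. with $G/H\in\II(G)$. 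I regard this last point as the conceptual heart of the argument: the transfer data carried by an $\II$-special $\Gamma_\II$-$G$-space is exactly matched to the cell structure of the representation spheres on which we evaluate, so compatibility is precisely the right hypothesis.

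The delooping property is the main work. The goal is to show that, for $X$ an $\II$-special $\Gamma_\II$-$G$-space, the adjoint structure maps $\tilde\sigma\colon b_\II X(S^V)\to\Omega^W b_\II X(S^{V\oplus W})$ are weak $G$-equivalences whenever $V^G\ne 0$, following the genuine argument of Shimakawa \cite{Shi89} in the modernised and more detailed form of \cite{MMO25}, \S 8. First I would upgrade $\II$-specialness of $X$ to a property of the prolonged $G\underline\CW_*^\II$-$G$-space $b_\II X\cong\PP_\II\bar b_\II X$: since $\varepsilon\colon\bar b_\II X\to X$ is an $\II$-level $G$-equivalence of $\Gamma_\II$-$G$-spaces (Proposition \ref{prop:bar-construction-approximation-of-X}), $\bar b_\II X$ is again $\II$-special, and one deduces that $b_\II X$ is reduced and \emph{special on $\II$-$G$-CW complexes}, meaning that it converts a finite wedge $\bigvee_i A_i$ of such complexes into a $G$-space weakly equivalent to $\prod_i b_\II X(A_i)$ and, up to one suspension, converts a cofibre sequence $A\hookrightarrow B\to B/A$ of such complexes into a homotopy fibre sequence of $G$-spaces. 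The inductive proof of this is run by attaching cells $(G/H)_+\wedge D^n$ with $H\in\FF_\II$; here the incomplete bookkeeping enters, but by Corollary \ref{cor:properties-finite-F-sets}, Proposition \ref{prop:constr-F-G-CW} and Theorems \ref{thm:MMO-thm-1.12} and \ref{thm:MMO-thm-1.13} every auxiliary object occurring — orbits $(G/H)_+$, their smashes with $S^{n-1}$, $D^n$, $S^n$, and finite wedges of these — is again a finite $\II$-$G$-CW complex, and every geometric realisation is of a Reedy-cofibrant simplicial $G$-space (Remark \ref{rem:cofibrancy-assumptions-bar-construction}), so the genuine-case argument transplants. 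Applying the cofibre-to-fibre statement to $S^V\wedge(S^0\hookrightarrow D^1\to S^1)$ and using that $V^G\ne 0$ forces $0\ne V^G\subseteq V^H$, hence that $(S^V)^H=S^{V^H}$ is path-connected for every $H\subseteq G$, yields $b_\II X(S^V)\xrightarrow{\ \sim\ }\Omega\,b_\II X(S^{V\oplus\rr})$; iterating and combining with the wedge-to-product property then handles $\Omega^W$ for an arbitrary $W$ with $V^G\ne 0$. The hard part is exactly this: the underlying homotopy theory is identical to the genuine case, and the only obstacle is to ensure that the cell-by-cell delooping argument never leaves the prescribed selection of admissible $G$-sets and $\FF_\II$-$G$-CW complexes — which is what the closure results of \S\ref{sec:finite-G-sets} and \S\ref{sec:F-G-CW-cxs} are for. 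The detailed verification is deferred to the appendix.

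Connectivity is comparatively soft: the $H$-fixed point spectrum of $\sS_\II^{G,U}X$ has, at the relevant level, the $G$-space $B((S^V)^\bullet,\Gamma_\II,X)^H$, which is the geometric realisation of a Reedy-cofibrant simplicial space built out of the fixed point spaces of the based $G$-spaces $X(T_+)$, hence out of $(-1)$-connected spaces; since realisation and the stabilisation maps preserve connectivity, $\pi_q^H(\sS_\II^{G,U}X)=0$ for $q<0$. Equivalently, the underlying naive $G$-spectrum of $\sS_\II^{G,U}X$ is, after taking $H$-fixed points, the output of the classical Segal machine applied to the ordinary $\Gamma$-space $\textbf{n}_+\mapsto X_n^H$, which is connective by \cite{Seg74}.

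Finally, the group-completion property depends only on the underlying naive $G$-spectrum of $\sS_\II^{G,U}X$, so it can be checked levelwise on fixed points, as indicated in \S\ref{sec:group-completion}. There is a natural map $\nu\colon X_1\to\sS_\II^{G,U}X(S^0)=B((S^0)^\bullet,\Gamma_\II,X)$, which is a weak $G$-equivalence by Proposition \ref{prop:bar-construction-approximation-of-X} since $(S^0)^\bullet$ restricted to $\Gamma_\II$ is the representable functor $\Gamma_\II(-,\textbf{1}_+)$. For $V$ in $U$ with $V^G\ne 0$ and any $H\subseteq G$ one has $(S^V)^H=S^{\dim V^H}$ with $\dim V^H\ge\dim V^G\ge 1$, and on $H$-fixed points the composite of $\nu^H$ with the adjoint structure map $\tilde\sigma^H$ is the iterated bottom map of the classical Segal machine on $\textbf{n}_+\mapsto X_n^H$, which is a group completion by \cite{Seg74}. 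Since this holds for every $H$, the composite is an equivariant group completion, which together with the delooping and connectivity statements shows that $\sS_\II^{G,U}$ takes values in $\Omega\Sp_{\textup{pc}}^{G,U}$ and is an equivariant incomplete Segal infinite loop $G$-space machine.
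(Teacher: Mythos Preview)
Your overall architecture matches the paper's: construct $\hat\sS_\II^{G,U}=R_U^\II\circ b_\II$, then verify on $\II$-special inputs that the output is a positive connective $\Omega$-$G$-spectrum with the group-completion property. Your treatment of connectivity and of group completion (reduce to the underlying naive spectrum and appeal to Segal) is correct and agrees with the paper's \S\ref{sec:group-completion}.

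The genuine gap is in the delooping step. You obtain $b_\II X(S^V)\xrightarrow{\sim}\Omega\,b_\II X(S^{V\oplus\rr})$ from the cofibre sequence $S^0\hookrightarrow D^1\to S^1$, and then write that ``iterating and combining with the wedge-to-product property then handles $\Omega^W$ for an arbitrary $W$''. Iterating only deloops by further copies of the \emph{trivial} representation, and the wedge axiom says nothing about $F(S^W,-)$ since $S^W$ is not a wedge of $\II$-$G$-CW complexes. What is actually needed, and what the paper proves as the Delooping Theorem (Theorem \ref{thm:delooping-theorem}, with full argument in Appendix \ref{app:proof-delooping-theorem}), is that for a linear $\II$-special $Z$ the map $Z(S^0)\to\Omega^W Z(S^W)$ is a weak $G$-equivalence for \emph{every} $W$ in $U$. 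The proof is Shimakawa's Pontryagin--Thom argument: one chooses an equivariant triangulation of $S(W)$ whose open stars are indexed by a finite $G$-set $T$ with isotropy in $\FF_\II$ (this is exactly where compatibility of $\II$ and $U$ is used), and runs an induction over $G$-stable unions of stars to show that $\hat\mu_S\colon Z(S(W)_\varepsilon^c)\to F(S(W)_+,Z(S^W))$ is a weak $G$-equivalence. The base case of that induction is the equivalence $Z(T_+\wedge S^W)\simeq F(T_+,Z(S^W))$ for $T\cong G/H\in\II(G)$, which is precisely the content of $\II$-specialness of $Z[S^W]$ --- this is where the transfer data enters, not the wedge axiom. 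Your cell-by-cell picture is on the right track only insofar as it resembles this triangulation induction, but the key equivariant input (the $\hat\mu$-maps and the use of $\II$-specialness at nontrivial orbits) is missing from your outline.
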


\subsection{Sketch of proof of Theorem A}\label{sec:proof-strategy-theorem-A}

The proof strategy of Theorem A is similar to the genuine case described in \cite{Shi89}, \cite{MMO25}. We will give an overview on how to modify the genuine proof to the incomplete setting with detailed proofs if needed deferred to the appendix. We first introduce some terminology.

A based $G$-space $X$ is called \textit{$G$-connected} if $X^H$ is connected for any subgroup $H\subset G$. Note that for based spaces it always holds that $X^H\ne\emptyset$.

\begin{Def}[\cite{MMO25}, Definition 3.27]\label{def:G-connected}
	A $G\underline{\CW}_*^\II$-$G$-space $Z$ \textit{preserves connectivity} if $Z(A)$ is $G$-connected when $A$ is $G$-connected.
\end{Def}

For $f\colon A\to B$ a based $G$-map between based $G$-CW complexes define $Cf$ to be the mapping cone $B\cup_f(A\land I_+)$ where $I$ is the unit interval with trivial $G$-action.

\begin{Def}[\cite{MMO25}, Definition 3.26]\label{def:positive-linear}
	A $G\underline{\CW}_*^\II$-$G$-space $Z$ is called \textit{linear} if for any $G$-map $f\colon A\to B$ in $G\CW_*^\II$ with cofibre $i\colon B\to Cf$ the sequence
	\[Z(A)\xrightarrow{f_*}Z(B)\xrightarrow{i_*}Z(Cf)\]
	is a fibration sequence of based $G$-spaces. This means that the induced map $i_*\circ f_*$ is a weak $G$-equivalence. Furthermore, $Z$ is called \textit{positive linear} if this condition holds when $A$ is $G$-connected, but not necessarily in general.
\end{Def}

\begin{Rem}\label{rem:mapping-cone-is-I-G-CW}
	For this definition to be well defined we will need to show that $Cf$ is an $\II$-$G$-CW complex if $f$ is in $G\CW_*^\II$. It is evidently a $G$-CW complex. Since $A$ and $B$ are $\II$-$G$-CW complexes $\Phi(X),\Phi(Y)\subset\FF_\II$. By Proposition \ref{prop:properties-isotropy-families} we can compute that the isotropy of $Cf$ also lies in $\FF_\II$
	\[\Phi(Cf)=\Phi(B\cup_f(A\land I_+))\subset \Phi(B)\cup \Phi(A\land I_+)=\Phi(B)\cup \Phi(A)\subset \FF_\II.\]
	Hence, $Cf$ admits the structure of an $\II$-$G$-CW complex.
\end{Rem}

Recall that given a based $\II$-$G$-CW complex $A$, we denote by $A^\bullet$ the contravariant represented functor $F(-,A)\colon\Gamma_\II\to G\Topu_*$. The following theorem generalises Theorem 3.29 \cite{MMO25} to the incomplete setting.

\begin{Thm}[Positive linearity theorem]\label{thm:positive-linearity-theorem}
	Let $X$ be an $\II$-special $\Gamma_\II$-$G$-space. Then $b_\II X$ is positive linear and preserves connectivity.
\end{Thm}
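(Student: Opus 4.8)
The plan is to reduce the equivariant statement to the non-equivariant linearity theorem for Segal's construction, applied fixed-point-wise, while using the machinery of $\II$-$G$-CW complexes developed in Section~\ref{sec:F-G-CW-cxs} to guarantee that all relevant constructions stay inside the prescribed isotropy family $\FF_\II$. The key structural input is the identification $b_\II X(A) = B(A^\bullet, \Gamma_\II, X)$ together with the fact (from Proposition~\ref{prop:bar-construction-approximation-of-X}) that $\bar b_\II X \to X$ is an $\II$-level $G$-equivalence, so that $b_\II X$ is $\II$-special whenever $X$ is, and moreover $b_\II X$ is built from a Reedy cofibrant simplicial $G$-space (Remark~\ref{rem:cofibrancy-assumptions-bar-construction}), so Theorems~\ref{thm:MMO-thm-1.12} and~\ref{thm:MMO-thm-1.13} apply to its realization.

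First I would establish the \emph{preservation of connectivity} statement. Given a $G$-connected based $\II$-$G$-CW complex $A$, one checks that $(b_\II X(A))^H = B((A^\bullet)^H, \dots)$ can be computed via fixed points of the bar construction; since $A^H$ is connected for all $H \subset G$ and $b_\II X$ is $\II$-special, a standard argument (following \cite{MMO25}, proof of Theorem~3.29) using the Segal maps shows that the fixed-point spaces $(b_\II X(A))^H$ are connected. Here the only incomplete-case subtlety is that we must restrict attention to subgroups and finite $H$-sets inside the indexing data: the Segal maps whose invertibility we invoke are precisely those indexed by $\textbf{n}^\alpha \in \II(H)$, which by compatibility of $U$ with $\II$ is exactly what $\II$-specialness provides. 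Passing to $H$-fixed points of $b_\II X$ and of the relevant $G$-CW complexes, the argument becomes the classical non-equivariant connectivity statement for Segal $\Gamma$-spaces.

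Next I would prove \emph{positive linearity}. Fix a based $G$-map $f \colon A \to B$ in $G\CW_*^\II$ with $A$ $G$-connected; by Remark~\ref{rem:mapping-cone-is-I-G-CW} the mapping cone $Cf$ is again an $\II$-$G$-CW complex, so $b_\II X(Cf)$ is defined. The goal is to show that
\[
b_\II X(A) \xrightarrow{f_*} b_\II X(B) \xrightarrow{i_*} b_\II X(Cf)
\]
is a fibration sequence of based $G$-spaces, i.e.\ that it is a fibration sequence after taking $H$-fixed points for every $H \subset G$. I would pass to $H$-fixed points and invoke the non-equivariant linearity theorem (as in \cite{MMO25}, \S 8, going back to Segal and Woolfson): the essential points are that the bar construction commutes with fixed points up to the relevant cofibrancy (Reedy cofibrancy, Theorem~\ref{thm:MMO-thm-1.12}), that $A \to B \to Cf$ has the required cofibration properties after restriction to $H$ (using that $\FF_\II$-$G$-CW complexes restrict to $\FF_{\II,H}$-$H$-CW complexes, Proposition~\ref{prop:constr-F-G-CW}), and that the $\II$-specialness of $b_\II X$ furnishes the group-completion-type statements needed to identify the homotopy fibre. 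The connectivity hypothesis on $A$ is what makes the fibre sequence genuinely a fibre sequence (rather than only a group-completion statement), exactly as in the non-equivariant positive case.

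The main obstacle I anticipate is \emph{bookkeeping of isotropy and finite $G$-set data throughout the bar construction}: one must verify that every intermediate space appearing in $B_*((Cf)^\bullet, \Gamma_\II, X)$ and in the comparison maps has isotropy in $\FF_\II$, so that fixed points can be taken levelwise and the non-equivariant results applied at each subgroup; this uses Corollary~\ref{cor:properties-finite-F-sets}, Proposition~\ref{prop:properties-isotropy}, and Proposition~\ref{prop:constr-F-G-CW} repeatedly. A secondary technical point is ensuring that the homotopy inverses to Segal maps can be chosen $G$-equivariantly and compatibly — but this is handled by Remark~\ref{rem:edge-cases-special-and-homotopy-assumption}, which allows us to assume all relevant Segal maps are genuine $G$-homotopy equivalences. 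With these in hand, the proof is a matter of carefully porting the genuine-case argument of \cite{MMO25} to the $\II$-indexed setting, which is why the detailed verification is best deferred to the appendix.
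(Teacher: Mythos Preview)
Your strategy of reducing to the non-equivariant statement by passing to $H$-fixed points has a genuine gap. While fixed points do commute with geometric realization for Reedy cofibrant simplicial $G$-spaces, the levelwise $H$-fixed points of $B_*(A^\bullet, \Gamma_\II, X)$ are \emph{not} a non-equivariant bar construction: the $q$-simplices are wedges of smash products $F(a_q, A) \land \Gamma_\II(a_{q-1}, a_q) \land \cdots \land X(a_0)$ with the $G$-action spread across all factors (including the conjugation action on the morphism spaces of $\Gamma_\II$), and $H$-fixed points of such a smash product are not the smash product of $H$-fixed points. There is therefore no non-equivariant $\Gamma$-space to which Segal's classical linearity theorem applies, and Theorem~\ref{thm:MMO-thm-1.12} (which only says that realization preserves level weak equivalences) does not address this. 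Note also that the connectivity lemma holds for \emph{arbitrary} $\Gamma_\II$-$G$-spaces $X$, with no specialness assumption, so invoking Segal maps for that part is misplaced and obscures where $\II$-specialness is actually used.

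The paper takes a different route entirely. It isolates four structural properties of $b_\II X$ --- preserving connectivity, commuting with realization, preserving Reedy cofibrancy, and satisfying the \emph{wedge axiom} (the natural map $Z(A \lor B) \to Z(A) \times Z(B)$ is a weak $G$-equivalence) --- and then proves a general result (Theorem~\ref{thm:general-thm-implying-poslinthm}, following \cite{MMO25}, Theorem~8.16) that any $G\underline{\CW}_*^\II$-$G$-space with these four properties is positive linear. The first three hold for arbitrary $X$; only the wedge lemma (Lemma~\ref{lem:wedge-lemma}) uses $\II$-specialness, and its proof is where the real content lies: one compares $B((A\lor B)^\bullet, \Gamma_\II, X)$ with $B(A^\bullet,\Gamma_\II,X) \times B(B^\bullet,\Gamma_\II,X)$ via an explicit equivalence of categories of elements (Proposition~\ref{prop:lemma-to-wedge-lemma}), using the $\II$-special Segal maps directly at the $G$-space level rather than after taking fixed points. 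The wedge axiom is the key intermediate notion you are missing.
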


We will need the following terminology generalising the terminology in \cite{MMO25}.

\begin{Def}
	A $G\underline{\CW}_*^\II$-$G$-space
	\begin{enumerate}
		\item \textit{commutes with (geometric) realization} if for any based simplicial $\II$-$G$-CW complex $A_*$ the natural $G$-map
		\[|Z(A_*)|\to Z(|A_*|)\]
		is a $G$-homeomorphism.
		\item \textit{preserves Reedy cofibrancy} if for any based simplicial $\II$-$G$-CW complex $A_*$, the simplicial $G$-space $Z(A_*)$ is Reedy cofibrant whenever $A_*$ is Reedy cofibrant.
		\item satisfies the \textit{wedge axiom} if for all finite based $\II$-$G$-CW complexes $A$ and $B$ in $G\CW_*^\II$ the natural map
		\[\pi\colon Z(A\lor B)\to Z(A)\times Z(B),\]
		induced by the canonical projection $G$-maps $\pi_A\colon A\lor B\to A$ and $\pi_B\colon A\lor B\to B$, is a weak $G$-equivalence.
	\end{enumerate}
\end{Def}

Let $X$ be a $\Gamma_\II$-$G$-space.

\begin{Lem}[The connectivity lemma]\label{lem:connectivity-lemma}
	 The $G\underline{\CW}_*^\II$-$G$-space $b_\II X$ preserves connectivity.
\end{Lem}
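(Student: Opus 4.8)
The plan is to reduce the claim to the statement that the space of $0$-simplices of the relevant bar construction is $G$-connected. Recall that $b_\II X(A)=B(A^\bullet,\Gamma_\II,X)$ is the geometric realization of the simplicial $G$-space $B_\bullet(A^\bullet,\Gamma_\II,X)$, whose space of $0$-simplices is
\[
B_0(A^\bullet,\Gamma_\II,X)\;=\;\bigvee_{T\in\ob\Gamma_\II}F(T,A)\land X(T).
\]
By Remark \ref{rem:cofibrancy-assumptions-bar-construction} this simplicial $G$-space is Reedy cofibrant (note that $F(T,A)$ is $G$-homeomorphic to a finite power of $A$ equipped with a twisted action, hence well pointed since $A$ is), so geometric realization commutes with passage to $H$-fixed points, i.e. $b_\II X(A)^H\cong\lvert B_\bullet(A^\bullet,\Gamma_\II,X)^H\rvert$ for every subgroup $H\subset G$. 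Moreover, for any simplicial space $W_\bullet$ the inclusion of $0$-simplices induces a surjection $\pi_0 W_0\twoheadrightarrow\pi_0\lvert W_\bullet\rvert$, since every point of the realization is joined by a path to the image of a vertex. It therefore suffices to prove that $B_0(A^\bullet,\Gamma_\II,X)$ is $G$-connected whenever $A$ is.

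The key input is that $F(T,A)$ is $G$-connected for every object $T$ of $\Gamma_\II$ and every $G$-connected $A$. Indeed, for $H\subset G$ we have $F(T,A)^H=F_H(T,A)$, the space of based $H$-maps; writing $T\cong S_+$ with $S\cong\coprod_j H/K_j$ the orbit decomposition of the underlying finite $H$-set yields a homeomorphism $F(T,A)^H\cong\prod_j A^{K_j}$. This is a \emph{finite} product — here one uses that the objects of $\Gamma_\II$ are finite $G$-sets — of spaces $A^{K_j}$ which are connected because $A$ is $G$-connected, so $F(T,A)^H$ is connected for every $H$. Since $(Y\land Z)^H=Y^H\land Z^H$, and smashing a path-connected based space with an arbitrary based space yields a path-connected space, $F(T,A)\land X(T)$ is $G$-connected; and an arbitrary wedge of $G$-connected based spaces is $G$-connected. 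Hence $B_0(A^\bullet,\Gamma_\II,X)$ is $G$-connected. Combining this with the first paragraph, for each $H$ the space $b_\II X(A)^H\cong\lvert B_\bullet(A^\bullet,\Gamma_\II,X)^H\rvert$ admits a $\pi_0$-surjection from the connected space $B_0(A^\bullet,\Gamma_\II,X)^H$ and is therefore connected; that is, $b_\II X(A)$ is $G$-connected. Observe that no specialness hypothesis on $X$ is used, consistent with the statement of the lemma.

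There is no genuine obstacle here; the argument is a direct transcription of the corresponding fact in the genuine setting. The only point meriting attention is the $G$-connectivity of the mapping $G$-spaces $F(T,A)$, where one uses both that the objects of $\Gamma_\II$ are finite $G$-sets (so that $\prod_j A^{K_j}$ is a finite and hence connected product) and the definition of ``$G$-connected'' as ``$A^K$ connected for every subgroup $K\subset G$''; the remaining ingredients — geometric realization commuting with fixed points, the $0$-simplices surjecting onto $\pi_0$ of the realization, and $(Y\land Z)^H=Y^H\land Z^H$ — are standard facts about simplicial $G$-spaces and the smash product that hold verbatim in the incomplete setting.
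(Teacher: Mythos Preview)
Your proof is correct and follows essentially the same approach the paper indicates: the paper defers entirely to \cite{MMO25}, Lemma 8.9, noting only that one replaces $\Gamma_G$ by $\Gamma_\II$ and that no specialness hypothesis is needed, and your argument---reducing via Reedy cofibrancy to $G$-connectivity of the $0$-simplices and then using the orbit decomposition of $T$ to identify $F(T,A)^H$ with a finite product of connected fixed-point spaces $A^{K_j}$---is precisely this standard argument.
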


\begin{Lem}[Realization lemma]\label{lem:realization-lemma}
	The $G\underline{\CW}_*^\II$-$G$-space $b_\II X$ commutes with realization.
\end{Lem}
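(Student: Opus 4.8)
The plan is to exploit that $b_\II X(A)=B(A^\bullet,\Gamma_\II,X)$ is \emph{itself} the geometric realization of a simplicial $G$-space, so that applying $b_\II X$ to a simplicial object produces a bisimplicial $G$-space; reducing by the Fubini-type interchange of realizations to each bar-degree, the statement becomes the purely formal fact that those bar-degrees are built from wedges and finite products, constructions with which realization commutes.

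First I would recall the explicit simplicial $G$-space $B_\bullet(A^\bullet,\Gamma_\II,X)$ realizing $B(A^\bullet,\Gamma_\II,X)$, whose $p$-simplices are
\[
B_p(A^\bullet,\Gamma_\II,X)=\bigvee_{(S_0,\dots,S_p)}F(S_p,A)\land\Gamma_\II(S_{p-1},S_p)\land\cdots\land\Gamma_\II(S_0,S_1)\land X(S_0),
\]
the wedge running over $(p{+}1)$-tuples of objects of $\Gamma_\II$ and $F(S_p,A)=A^\bullet(S_p)$. For a based simplicial $\II$-$G$-CW complex $A_*$ the $G$-space $b_\II X(A_q)$ is the realization of $B_\bullet(A_q^\bullet,\Gamma_\II,X)$, so $b_\II X(A_*)$ is the iterated realization of the bisimplicial $G$-space $W_{p,q}:=B_p(A_q^\bullet,\Gamma_\II,X)$. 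By the standard interchange of realizations for bisimplicial objects in compactly generated $G$-spaces (cf. \cite{MMO25}, \S 1) there is a natural $G$-homeomorphism
\[
|b_\II X(A_*)|\cong \bigl|\,[p]\mapsto\bigl|\,[q]\mapsto B_p(A_q^\bullet,\Gamma_\II,X)\,\bigr|\,\bigr|.
\]

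Next I would fix a simplicial degree $p$ and show that $A\mapsto B_p(A^\bullet,\Gamma_\II,X)$ commutes with realization. Only the factor $F(S_p,A)=A^\bullet(S_p)$ depends on $A$, so as a functor of $A$ this $p$-simplices functor is a wedge of summands of the shape $F(\textbf{n}_+^\alpha,-)\land K$, with $S_p=\textbf{n}_+^\alpha$ and $K=\Gamma_\II(S_{p-1},S_p)\land\cdots\land X(S_0)$ a fixed based $G$-space. Geometric realization of simplicial $G$-spaces commutes with wedges, with smashing against a fixed based $G$-space, and with $F(\textbf{n}_+^\alpha,-)$: indeed $F(\textbf{n}_+^\alpha,A)$ is the $n$-fold product of copies of $A$ equipped with the $\alpha$-twisted $G$-action, realization commutes with finite products of $G$-spaces in compactly generated spaces, and twisting the action merely relabels it. Combining these over all summands gives a natural $G$-homeomorphism $\bigl|[q]\mapsto B_p(A_q^\bullet,\Gamma_\II,X)\bigr|\cong B_p(|A_*|^\bullet,\Gamma_\II,X)$.

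Finally, realizing in the remaining simplicial direction yields
\[
\bigl|\,[p]\mapsto B_p(|A_*|^\bullet,\Gamma_\II,X)\,\bigr|=B(|A_*|^\bullet,\Gamma_\II,X)=b_\II X(|A_*|),
\]
and unwinding the identifications shows that the composite $G$-homeomorphism $|b_\II X(A_*)|\xrightarrow{\cong}b_\II X(|A_*|)$ is the canonical comparison map, as required. (Equivalently, one may argue through $b_\II\cong\PP_\II\circ\bar b_\II$, using that the coend defining $\PP_\II$ and the finite products $F(S,-)$ for $S$ a finite $G$-set likewise commute with realization.) I expect no serious obstacle here: the lemma is entirely point-set and formal, the only care needed being the bookkeeping of the twisted $G$-actions on the mapping spaces $F(\textbf{n}_+^\alpha,-)$ and the precise form of the bisimplicial interchange, neither of which goes beyond standard properties of realization in compactly generated $G$-spaces; in particular the disk-like hypothesis on $\II$ plays no role in this proof.
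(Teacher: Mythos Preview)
Your proof is correct and follows essentially the same approach as the paper, which defers to \cite{MMO25}, Lemmas 8.9 and 8.15, noting only that one replaces $\Gamma_G$ by $\Gamma_\II$ throughout. Your bisimplicial Fubini argument, reducing to the fact that realization commutes with wedges, smashing with a fixed space, and finite products $F(\textbf{n}_+^\alpha,-)$, is exactly the standard proof in the genuine case and carries over verbatim.
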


\begin{Lem}[Cofibrancy lemma]\label{lem:cofibrancy-lemma}
	The $G\underline{\CW}_*^\II$-$G$-space $b_\II X$ preserves Reedy cofibrancy.
\end{Lem}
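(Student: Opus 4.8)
The statement to prove is that $b_\II X$ preserves Reedy cofibrancy, i.e. if $A_*$ is a Reedy cofibrant simplicial $\II$-$G$-CW complex then $b_\II X(A_*)$ is a Reedy cofibrant simplicial $G$-space. The plan is to reduce this to the criterion of Lemma~\ref{lem:MMO-lem-1.11}: it suffices to show that all degeneracy operators of the simplicial $G$-space $b_\II X(A_*)$ are $G$-cofibrations. Since Lemma~\ref{lem:MMO-lem-1.11} tells us that $A_*$ Reedy cofibrant already follows once its degeneracies $s_i \colon A_{q-1}\to A_q$ are $G$-cofibrations, and since a Reedy cofibrant simplicial object has in particular all its degeneracies being cofibrations (the degeneracy factors the identity on $A_{q-1}$ through the latching map), we may work with the hypothesis that each $s_i\colon A_{q-1}\to A_q$ is a $G$-cofibration between based $\II$-$G$-CW complexes.

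The key observation is that the degeneracy operator of $b_\II X(A_*)$ in simplicial degree $q$ is obtained by applying the functor $b_\II X = B((-)^\bullet, \Gamma_\II, X)$ levelwise to $s_i$. So I would first note that by the realization lemma (Lemma~\ref{lem:realization-lemma}) and the fact that $b_\II X$ applied to a fixed $\II$-$G$-CW complex $A$ is itself the realization of a simplicial $G$-space $B_*(A^\bullet,\Gamma_\II,X)$, the map $b_\II X(s_i)$ is the realization of a map of bar-construction simplicial $G$-spaces $B_*(s_i^\bullet,\Gamma_\II,X)$. Under the cofibrancy assumptions of Remark~\ref{rem:cofibrancy-assumptions-bar-construction}, which hold for $\E=\Gamma_\II$ and for $X$ a $\Gamma_\II$-$G$-space by the discussion there, these bar constructions are Reedy cofibrant simplicial $G$-spaces. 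Hence by Theorem~\ref{thm:MMO-thm-1.13} it suffices to check that $B_q(s_i^\bullet,\Gamma_\II,X) = s_i^\bullet \,\Gamma_\II^{\wedge q}\, X$ is a $G$-cofibration in each bar degree $q$. In degree $q$ this is a wedge, over tuples $(a_1,\dots,a_q)$ of objects of $\Gamma_\II$, of maps of the form
\[
F(A_q, a_1)\wedge \Gamma_\II(a_1,a_2)\wedge\cdots\wedge \Gamma_\II(a_{q-1},a_q)\wedge X(a_q)
\longrightarrow
F(A_{q-1},a_1)\wedge \Gamma_\II(a_1,a_2)\wedge\cdots\wedge X(a_q),
\]
induced by smashing the map $F(s_i,a_1)\colon F(A_q,a_1)\to F(A_{q-1},a_1)$ with identities.

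So the heart of the argument is: if $s\colon A_{q-1}\hookrightarrow A_q$ is a $G$-cofibration of based $\II$-$G$-CW complexes and $a=a_1=\mathbf{n}_+^\alpha\in\Gamma_\II$, then $F(s,a)\colon F(A_q,a)\to F(A_{q-1},a)$ is a $G$-cofibration, and smashing a $G$-cofibration with a well-pointed based $G$-space on the other side preserves $G$-cofibrations (a pushout-product / NDR argument, exactly as in the unbased-to-based $G$-CW formalism of \cite{MMO25}, and using that the hom-objects $\Gamma_\II(a_i,a_{i+1})$ are discrete finite based $G$-sets hence well-pointed, and that $X(a_q)$ is well-pointed by Remark~\ref{rem:Gamma_I-lands-in-well-pointed}). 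For the statement about $F(s,a)$: since $\Gamma_\II\subset G\underline{\CW}_*^\II\subset G\underline{\smash{\Top}}_*$, the object $a$ is a finite $\II$-$G$-CW complex, so $a$ is itself $G$-cofibrant (cofibrant as a based $G$-space), and the functor $F(-,a)$ takes $G$-cofibrations between $G$-cofibrant based $G$-spaces to $G$-fibrations — but we want a cofibration, not a fibration. The cleaner route, which I would actually take, is to avoid mapping-space exponentials in the reduction and instead observe directly that $b_\II X(s_i)$ is the realization of a map of Reedy cofibrant simplicial $G$-spaces each of whose levels is a $G$-cofibration, by the \emph{same} computation used in the proof that $b_\II X$ lands in $\Gamma_\II$-$G$-spaces earlier in this section: there one shows $\Gamma_\II(a_q,\phi)$ is a $(G\times\Sigma_\phi)$-cofibration for an injection $\phi$, and here the analogous input is that $s_i$ is a $G$-cofibration of $\II$-$G$-CW complexes, which by Proposition~\ref{prop:constr-F-G-CW} and the constructions there has a $G$-NDR structure; precomposition $F(s_i,-)=s_i^\bullet$ then gives a $G$-cofibration on representable functors levelwise, and wedging and smashing with the well-pointed pieces preserves this.

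\textbf{Main obstacle.} The subtle point — and where I'd spend the real care — is the interaction of the contravariant exponential $F(-,A)$ with cofibrations: $F(-,a)$ is a right adjoint and naively turns cofibrations into fibrations, so one must not phrase the reduction through $A^\bullet$ carelessly. The correct fix is that we are \emph{evaluating} the represented functor $A_q^\bullet$ at objects of $\Gamma_\II$ (which are finite discrete based $G$-sets $\mathbf{n}_+^\alpha$), and $F(\mathbf{n}_+^\alpha, -)\cong (-)^{\times n}$ up to a twist, which is a \emph{left}-adjoint-like finite-product functor that \emph{does} preserve $G$-cofibrations (a finite product/smash of $G$-cofibrations along the pushout-product axiom). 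So in fact $A_q^\bullet$, as a functor $\Gamma_\II\to G\underline{\smash{\Top}}_*$, evaluated at each object, is just a finite twisted smash power $(A_q)^{\wedge n}$ with twisted $G$-action $F(\mathbf{n}_+^\alpha, A_q)=(A_q^{\wedge n})^\alpha$, and the map induced by $s_i$ is $(s_i^{\wedge n})^\alpha$, which is a $G$-cofibration because $s_i$ is one and cofibrations are closed under smash powers and under $\alpha$-twisting of the action. This is the step I expect to be the main obstacle, and once it is in place, Theorem~\ref{thm:MMO-thm-1.13} (realization of a levelwise-$G$-cofibration of Reedy cofibrant simplicial $G$-spaces is a $G$-cofibration) together with Lemma~\ref{lem:MMO-lem-1.11} finishes the proof.
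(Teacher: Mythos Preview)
Your approach is correct and matches the paper's (which simply defers to \cite{MMO25}, Lemma~8.9): reduce via Lemma~\ref{lem:MMO-lem-1.11} to showing $b_\II X(s_i)$ is a $G$-cofibration, write this as the realization of a levelwise map of Reedy cofibrant bar constructions, and apply Theorem~\ref{thm:MMO-thm-1.13}.

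Two notational slips to clean up. First, the represented functor is $A^\bullet=F(-,A)$, contravariant in the $\Gamma_\II$ variable but \emph{covariant} in $A$; so the map induced by $s_i\colon A_{q-1}\to A_q$ at level $a\in\Gamma_\II$ is $F(a,s_i)\colon F(a,A_{q-1})\to F(a,A_q)$, not $F(s_i,a)\colon F(A_q,a)\to F(A_{q-1},a)$ as you first wrote. Your ``main obstacle'' paragraph already has the right picture, so just delete the earlier display and the worry about fibrations---there is no contravariance problem once the variables are sorted out. Second, $F(\mathbf{n}_+^\alpha,A)\cong (A^n)^\alpha$ is the $n$-fold \emph{cartesian} product with $\alpha$-twisted diagonal action, not a smash power. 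The map you must check is $(s_i^{\times n})^\alpha$, and this is a $G$-cofibration by the NDR-pair criterion: if $(u,h)$ witnesses $(A_q,A_{q-1})$ as a $G$-NDR pair, then $u'(b_1,\dots,b_n)=\max_j u(b_j)$ and $h'((b_j),t)=(h(b_j,t))_j$ are invariant/equivariant for the $\alpha$-twisted action because the permutation of coordinates merely reindexes the maximum and the coordinates. Smashing with the well-pointed factors $\Gamma_\II(a_{i-1},a_i)$ and $X(a_0)$ then preserves the $G$-cofibration, as you said.
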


The proofs of the previous three lemmas work completely analogously to the genuine setting. One just needs to replace $\Gamma_G$ by $\Gamma_\II$. In particular no assumption of specialness of $X$ is needed. Detailed proofs in the genuine case can be found in \cite{MMO25}, Lemma 8.9 and 8.15. For the next Lemma we will require that $X$ is $\II$-special. 

\begin{Lem}[Wedge lemma]\label{lem:wedge-lemma}
	Let $X$ be an $\II$-special $\Gamma_\II$-$G$-space. Then the associated $G\underline{\CW}_*^\II$-$G$-space $b_\II X$ that maps $A$ to $B(A^\bullet,\Gamma_\II,X)$ satisfies the wedge axiom.
\end{Lem}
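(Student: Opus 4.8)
The plan is to reduce the wedge axiom to the case where $A$ and $B$ are finite based $G$-sets lying in $\Gamma_\II$, and then to deduce that case from $\II$-specialness through the Segal maps; write $Z=b_\II X$. For the reduction, I would first note that every finite based $\II$-$G$-CW complex $A$ is $G$-homotopy equivalent to the geometric realization $|K^A_\bullet|$ of a finite simplicial $G$-set $K^A_\bullet$ all of whose levels lie in $\Gamma_\II$: realize each cell $(G/H_j)_+\land D^n$ of $A$ (with $H_j\in\FF_\II$) as $(G/H_j)_+\land\Delta^n_\bullet$ and make the attaching maps simplicial by equivariant simplicial approximation, subdividing where necessary; since every cell has type $(G/H_j)_+$ with $H_j\in\FF_\II$ and $\FF_\II$ is based, $\Phi(K^A_q)\subset\FF_\II$, so $K^A_q\in\Gamma_\II$ because $\II$ is disk-like. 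Such $K^A_\bullet$ is Reedy cofibrant by Lemma \ref{lem:MMO-lem-1.11}, the degeneracies being split injections of $G$-sets, hence $G$-cofibrations. Now $Z$ preserves $G$-homotopy equivalences (it is assembled from $F(-,-)$, the simplicial bar construction and geometric realization, each of which preserves $G$-homotopies), commutes with realization (Lemma \ref{lem:realization-lemma}) and preserves Reedy cofibrancy (Lemma \ref{lem:cofibrancy-lemma}); combined with the fact that realization commutes with finite products of compactly generated spaces, this identifies $\pi\colon Z(A\lor B)\to Z(A)\times Z(B)$ with the realization of the simplicial map whose $q$-th level is the wedge-axiom map $Z(K^A_q\lor K^B_q)\to Z(K^A_q)\times Z(K^B_q)$ for the pair $(K^A_q,K^B_q)$. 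The two simplicial $G$-spaces involved are Reedy cofibrant (Lemma \ref{lem:cofibrancy-lemma} together with closure of Reedy cofibrancy under products), so by Theorem \ref{thm:MMO-thm-1.12} it suffices to prove the wedge axiom when $A=S_+$ and $B=T_+$ with $S_+,T_+\in\Gamma_\II$.

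For such a pair, since $\Gamma_\II$ is a full subcategory of $G\Topu_*$, the represented functor $R_+^\bullet=F(-,R_+)$ restricts on $\Gamma_\II$ to the hom-functor $\Gamma_\II(-,R_+)$ for every $R_+\in\Gamma_\II$, whence $Z(R_+)=B(\Gamma_\II(-,R_+),\Gamma_\II,X)=\bar b_\II X(R_+)$; by Proposition \ref{prop:bar-construction-approximation-of-X} the natural map $\varepsilon\colon Z(R_+)\to X(R_+)$ is then a weak $G$-equivalence for all $R_+\in\Gamma_\II$. Since $(S\sqcup T)_+\in\Gamma_\II$ (axiom (I6)) and the projections $(S\sqcup T)_+\to S_+$ and $(S\sqcup T)_+\to T_+$ are morphisms of $\Gamma_\II$, naturality of $\varepsilon$ shows that $\pi$ is a weak $G$-equivalence if and only if the induced map $X((S\sqcup T)_+)\to X(S_+)\times X(T_+)$ is. Writing $S\cong\textbf{n}^\alpha$ and $T\cong\textbf{m}^\beta$, so that the Segal-map target for $(S\sqcup T)_+$ is $(X_1^{n+m})^{\alpha\sqcup\beta}\cong(X_1^n)^\alpha\times(X_1^m)^\beta$, a direct check using the defining formula for the Segal map shows that the $s$-component ($s\in S$) of $\delta$ for $(S\sqcup T)_+$ equals the $s$-component of $\delta$ for $S_+$ composed with $X(S_+\lor T_+\to S_+)$, and symmetrically for $T$; hence the square
\begin{center}
\begin{tikzcd}
X((S\sqcup T)_+) \arrow[r] \arrow[d, "\delta"'] & X(S_+)\times X(T_+) \arrow[d, "\delta\times\delta"] \\
(X_1^n)^\alpha\times(X_1^m)^\beta \arrow[r, "="'] & (X_1^n)^\alpha\times(X_1^m)^\beta
\end{tikzcd}
\end{center}
commutes, with top arrow the map induced by the projections. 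Since $(S\sqcup T)_+,S_+,T_+$ all lie in $\Gamma_\II$ and $X$ is $\II$-special, the three vertical Segal maps are weak $G$-equivalences, so by two-out-of-three the top arrow, and therefore $\pi$, is a weak $G$-equivalence.

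The hard part is the reduction: one must produce a finite simplicial $\Gamma_\II$-model of an arbitrary finite based $\II$-$G$-CW complex with its isotropy under control (a point that is automatic only in the genuine case $\II=\CC$) and check that the already established properties of $b_\II X$ --- commuting with realization, preserving Reedy cofibrancy, homotopy invariance, and compatibility of realization with finite products --- fit together so that the wedge axiom can be detected on simplicial levels. Once one is reduced to finite $G$-sets, the identification $Z(R_+)\simeq X(R_+)$ via $\varepsilon$ together with the $\II$-specialness of $X$ turns the statement into the elementary two-out-of-three argument above.
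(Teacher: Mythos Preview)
Your argument is correct and takes a genuinely different route from the paper's. You reduce to the case $A=S_+$, $B=T_+$ with $S_+,T_+\in\Gamma_\II$ by replacing $A$ and $B$ with realizations of simplicial objects in $\Gamma_\II$ and invoking the realization and cofibrancy lemmas (Lemmas~\ref{lem:realization-lemma} and~\ref{lem:cofibrancy-lemma}) together with Theorem~\ref{thm:MMO-thm-1.12}; then on finite $G$-sets you use $\varepsilon\colon \bar b_\II X\to X$ and a direct two-out-of-three with the Segal maps. The paper instead works with arbitrary $A,B$ throughout: it first passes to the cartesian bar construction $b_\II^\times X$, identifies $b_\II^\times X(A)$ with the classifying space of a Grothendieck category of elements $\D(A,X)$, and then builds an intermediate category $\D(A,B;X)$ that is shown to be equivalent to $\D(A\lor B,X)$ and whose comparison functor to $\D(A,X)\times\D(B,X)$ realizes to a level weak $G$-equivalence precisely because $X(a\lor b)\to X(a)\times X(b)$ is one for all $a,b\in\Gamma_\II$. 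Both arguments ultimately hinge on the same consequence of $\II$-specialness; your approach is more elementary and reuses the simplicial machinery already in place, while the paper's approach avoids the simplicial replacement of $A,B$ and makes the role of $\II$-specialness visible directly at the level of the bar construction. One minor point: your claim that $Z$ preserves $G$-homotopy equivalences is justified by enriched functoriality of $b_\II X$ (a homotopy $A\land I_+\to A'$ lies in $G\underline\CW_*^\II$), and the replacement $A\simeq|K^A_\bullet|$ is indeed a $G$-homotopy equivalence by the equivariant Whitehead theorem since both sides are $G$-CW complexes.
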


The proof of the wedge lemma is more involved but again works essentially the same as in the genuine case. The proof uses extensively the fact that the indexing system $\II$ is closed under isomorphisms, coproducts and subobjects. It also requires an incomplete version of the \textit{invariance theorem} as stated in \cite{GMMO19b}, Theorem 2.6. We will give a statement and proof the the incomplete invariance theorem in Appendix \ref{app:invariance-theorem} and detailed proof of the wedge lemma in Appendix \ref{app:wedge-lemma}.

The positive linearity theorem then follows from the following more general theorem. It is a generalisation of Theorem 8.16 in \cite{MMO25}.

\begin{Thm}\label{thm:general-thm-implying-poslinthm}
	Let $Z$ be a $G\underline\CW_*^\II$-$G$-space that preserves connectivity, commutes with realization, preserves Reedy cofibrancy and satisfies the wedge axiom. Then $Z$ is positive linear.
\end{Thm}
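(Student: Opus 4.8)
The plan is to follow the argument for the genuine case (\cite{MMO25}, Theorem 8.16), inserting the isotropy bookkeeping needed in the incomplete setting.

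First I would reduce to cofibrations. Given $f\colon A\to B$ in $G\CW_*^\II$ with $A$ being $G$-connected, factor $f$ through its mapping cylinder $Mf = B\cup_f (A\land I_+)$, so that $A\hookrightarrow Mf$ is a $G$-cofibration and $Mf\to B$ a $G$-homotopy equivalence. As in Remark \ref{rem:mapping-cone-is-I-G-CW} (using Proposition \ref{prop:properties-isotropy-families}) one checks that $Mf$, and the quotient $Mf/A\cong Cf$, are again $\II$-$G$-CW complexes, so the whole factorisation lives in $G\CW_*^\II$; since $Z$ is a $G\Top_*$-functor it preserves $G$-homotopy equivalences, so it suffices to prove that for a $G$-cofibration $A\hookrightarrow B$ in $G\CW_*^\II$ with $A$ $G$-connected the sequence $Z(A)\to Z(B)\to Z(B/A)$ is a fibration sequence, i.e.\ the canonical $G$-map from $Z(A)$ to the homotopy fibre of $Z(B)\to Z(B/A)$ is a weak $G$-equivalence.

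Next I would resolve the homotopy cofibre. Writing $B/A\cong \mathrm{hocolim}(\ast\leftarrow A\to B)$, this homotopy pushout is the geometric realisation of a simplicial object $W_\bullet$ whose value $W_q$ is a finite wedge of copies of $A$ and $B$ (the two-sided/double-mapping-cylinder bar resolution). By Proposition \ref{prop:constr-F-G-CW} finite wedges of $\II$-$G$-CW complexes are again $\II$-$G$-CW, so $W_\bullet$ takes values in $G\CW_*^\II$, and it is Reedy cofibrant by Lemma \ref{lem:MMO-lem-1.11} since its degeneracies are inclusions of wedge summands and hence $G$-cofibrations. Because $Z$ commutes with realisation, preserves Reedy cofibrancy, and sends finite wedges to finite products (the wedge axiom, iterated), we obtain a weak $G$-equivalence $Z(B/A)\simeq |Z(W_\bullet)|$ with $Z(W_\bullet)$ a Reedy cofibrant simplicial $G$-space which is levelwise a finite product of copies of $Z(A)$ and $Z(B)$; the face maps coming from folds equip $Z(A)$ (and $Z(B)$) with Hopf $G$-space structures, and $Z(W_\bullet)$ is identified, up to levelwise weak $G$-equivalence of Reedy cofibrant simplicial $G$-spaces — hence up to weak $G$-equivalence on realisations by Theorem \ref{thm:MMO-thm-1.12} — with the bar-type resolution computing the classifying-space/quotient model of the homotopy fibre of $Z(B)\to Z(B/A)$.

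The main obstacle is this last identification: passing from the realisation to an actual fibration sequence. Concretely, in the basic case $B=D^{V+1}$, $A=S^V$ one must recognise $Z(S^{V+1})\simeq B\,Z(S^V)$ and conclude that $Z(S^V)\to \ast\to Z(S^{V+1})$ is a fibration sequence, which holds precisely because $Z(S^V)$ is a $G$-connected Hopf $G$-space, hence group-like, so $Z(S^V)\simeq \Omega Z(S^{V+1})$; the general case is the relative, quasifibration version of the same statement applied along the Puppe sequence. This is exactly where the connectivity hypothesis enters: ``$Z$ preserves connectivity'' together with $A$ being $G$-connected forces $Z(A)$ to be $G$-connected, which is what makes the relevant simplicial $G$-space satisfy the $\pi_*$-Kan condition so that its realisation is a quasifibration. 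Since homotopy fibres, fixed points and the bar resolutions all commute appropriately, this step can be checked on $H$-fixed points for each $H\subseteq G$ and reduces to the classical non-equivariant input used in \cite{MMO25}. The genuinely new point compared with the genuine case is purely organisational: one must verify throughout that every auxiliary space ($Mf$, $Cf$, the wedges $W_q$, and the mapping cones appearing in the Puppe sequence) has isotropy contained in $\FF_\II$ and hence lies in $G\CW_*^\II$, which is guaranteed by the closure properties of isotropy families (Propositions \ref{prop:properties-isotropy-families}, \ref{prop:isotropy-G-CW-attaching} and \ref{prop:constr-F-G-CW}).
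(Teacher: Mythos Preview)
Your proposal is correct and takes essentially the same approach as the paper: the paper's own proof is simply ``the proof is the same as in the genuine case [\cite{MMO25}, Theorem 8.16] after replacing general $G$-CW complexes by $\II$-$G$-CW complexes,'' together with the observation that mapping cones (Remark \ref{rem:mapping-cone-is-I-G-CW}) and finite wedges (Proposition \ref{prop:constr-F-G-CW}) of $\II$-$G$-CW complexes are again such. Your write-up is in fact more detailed than the paper's, sketching the content of the MMO25 argument and correctly flagging exactly the two isotropy-closure facts the paper singles out.
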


The proof is the same as in the genuine case after replacing general $G$-CW complexes by $\II$-$G$-CW complexes. One then needs to note that mapping cones of maps in $G\CW_*^\II$ are again $\II$-$G$-CW complexes as remarked in Remark \ref{rem:mapping-cone-is-I-G-CW} and the fact that finite wedges of $\II$-$G$-CW are again such (see Proposition \ref{prop:constr-F-G-CW}).

\begin{Def}
	A $G\underline\CW_*^\II$-$G$-space $X$ is called \textit{$\II$-special} if it preserves connectivity and if its restriction to $\Gamma_\II$ is an $\II$-special $\Gamma_\II$-$G$-space.
\end{Def}

Of course we could also define $\II$-special $G\underline\CW_*^\JJ$-$G$-spaces for $\II\subset\JJ$ or more specifically one can define $\II$-special $G\underline{\CW}_*^\CC$-$G$-spaces. The following theorem is a generalisation to Lemma 3.31, \cite{MMO25}.

\begin{Thm}\label{thm:structure-theorem}
	Let $Z$ be a positively linear and $\II$-special $G\underline{\CW}^\II_*$-G-space. If $A$ is a finite based $G$-connected $\II$-$G$-CW complex, then $Z[A]=Z(A\land -)$ is a linear and $\II$-special $G\underline{\CW}^\II_*$-G-space.
\end{Thm}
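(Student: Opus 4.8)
The plan is to verify the defining conditions of a linear $\II$-special $G\underline{\CW}_*^\II$-$G$-space for $Z[A]$ one at a time, following the genuine argument of \cite{MMO25}, Lemma 3.31, while keeping all constructions inside the selection governed by $\II$.

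First I would check that $Z[A]$ is a well-defined $G\underline{\CW}_*^\II$-$G$-space that preserves connectivity. Since $A$ is a finite based $\II$-$G$-CW complex, Proposition \ref{prop:constr-F-G-CW}(4) shows $A\land B$ is again a finite based $\II$-$G$-CW complex for every $B\in G\CW_*^\II$, so $Z[A]=Z(A\land-)$ lands where required, and its $G\Top_*$-functoriality is inherited from $Z$ together with the enriched functoriality of $A\land-$. For connectivity, use the standard identification $(A\land B)^H\cong A^H\land B^H$ for closed based $G$-subspaces: since $A$ is $G$-connected each $A^H$ is connected and based, hence so is $A^H\land B^H$ for any based $B^H$; thus $A\land B$ is $G$-connected, and since $Z$ preserves connectivity so does $Z[A]$.

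Next I would prove linearity. Smashing with $A$ is a left adjoint (to $F(A,-)$ in $G\Topu_*$), hence commutes with the pushouts defining mapping cones: for $f\colon B\to C$ in $G\CW_*^\II$ one gets $A\land Cf\cong C(A\land f)$, which is again an $\II$-$G$-CW complex by Remark \ref{rem:mapping-cone-is-I-G-CW}. Because $A$ is $G$-connected, $A\land B$ is $G$-connected for every $B$, so the positive linearity hypothesis on $Z$ (Definition \ref{def:positive-linear}) applies to the $G$-map $A\land f$ and shows that
\[Z[A](B)\xrightarrow{(A\land f)_*}Z[A](C)\xrightarrow{i_*}Z[A](Cf)\]
is a fibration sequence. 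As this holds for every $f$, with no connectivity restriction on the source, $Z[A]$ is linear, not merely positive linear.

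Finally I would show the restriction of $Z[A]$ to $\Gamma_\II$ is an $\II$-special $\Gamma_\II$-$G$-space; together with the connectivity statement this yields that $Z[A]$ is $\II$-special. Fix $\textbf{n}^\alpha\in\II(G)$ and write its underlying $G$-set as a genuine $G$-wedge $\textbf{n}_+^\alpha\cong\bigvee_j(G/K_j)_+$ with $K_j\in\FF_\II$, so $A\land\textbf{n}_+^\alpha\cong\bigvee_j\bigl(A\land(G/K_j)_+\bigr)$ with each summand $G$-connected. The wedge axiom for $Z$ on $G$-connected summands (a standard consequence of positive linearity, obtained by applying it to the cofibre sequences splitting off wedge summands) identifies $Z(A\land\textbf{n}_+^\alpha)$ with $\prod_j Z(A\land(G/K_j)_+)$ compatibly with the Segal map, so it suffices to see that each canonical map $Z(A\land(G/K)_+)\to F((G/K)_+,Z(A))$ is a weak $G$-equivalence when $G/K\in\II(G)$. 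This orbit-wise statement is the heart of the matter and is exactly the input of the equivariant invariance theorem: restricting to $K$, using the double coset formula together with Proposition \ref{prop:restriction-finite-G-sets} (here disk-likeness of $\II$ enters, ensuring $\res_K^G$ of an admissible $G$-set is an admissible $K$-set), and invoking the $\ff_\bullet^\II$-specialness of $Z$ at the subgroup level, one reduces by induction on $|G|$ exactly as in the genuine case. I expect this last step — propagating $\II$-specialness through the twisted/induced wedge decomposition and its interaction with restriction, while never leaving $\II$ — to be the main obstacle; the detailed argument parallels \cite{MMO25} and the invariance theorem of \cite{GMMO19b} and will be carried out in the appendix.
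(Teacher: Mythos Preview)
Your treatment of well-definedness, connectivity preservation, and linearity is correct and matches the paper's (largely implicit) reasoning: smashing with the $G$-connected $A$ pushes every input into the positive range, so positive linearity of $Z$ upgrades to full linearity of $Z[A]$. The derivation of a wedge axiom on $G$-connected summands from positive linearity plus the split cofibre sequence $B\to B\vee C\to C$ is also fine.

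The gap is in the $\II$-specialness step. You fix $A$ and decompose the \emph{test set} $\textbf{n}_+^\alpha$ into orbits, reducing to the claim that $Z(A\land(G/K)_+)\to F((G/K)_+,Z(A))$ is a weak $G$-equivalence for each $G/K\in\II(G)$. But the only specialness hypothesis on $Z$ concerns its restriction to $\Gamma_\II$, i.e.\ its values on finite based $G$-sets; it says nothing directly about $Z(A\land(G/K)_+)$ for a general $G$-connected $\II$-$G$-CW complex $A$. Your proposed induction on $|G|$ via restriction does not close this: there is no evident way to restrict the abstract $G\underline{\CW}_*^\II$-$G$-space $Z$ to an $H$-theory, and the invariance theorem you invoke is about prolongation of $\II$-level equivalences of $\Gamma_\II$-$G$-spaces, not about the orbit-wise Segal map for $Z[A]$. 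The paper proceeds in the opposite direction, inducting on $A$ rather than on the test set. Lemma~\ref{lem:thm-A-help-1} first shows $Z[\textbf{m}_+^\beta]$ is $\II$-special for every $\textbf{m}_+^\beta\in\Gamma_\II$, the key point being that $\textbf{m}_+^\beta\land\textbf{n}_+^\alpha$ stays in $\Gamma_\II$ by product closure (I8), so the hypothesis on $Z|_{\Gamma_\II}$ applies directly. Lemma~\ref{lem:thm-A-help-2} then bootstraps to arbitrary $G$-connected $A$ by cell induction, using positive linearity to propagate specialness along cofibre sequences via a two-out-of-three argument on the resulting maps of fibration sequences. Moving the induction from the test set to $A$ is precisely what makes the hypothesis on $Z|_{\Gamma_\II}$ bite.
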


The proof of this theorem will follow from the next two lemmas. To demonstrate the use of indexing system axioms we will give the proofs here. Again, they are generalisations of genuine proofs where we need to keep track of indexing data. The following Lemma is a generalisation of Lemma 8.24 and (2.22) in \cite{MMO25}.

\begin{Lem}\label{lem:thm-A-help-1}
	Let $Z$ be a positively linear and $\II$-special $G\underline{\CW}^\II_*$-G-space and let $\textbf{n}_+^\alpha$ be a finite $G$-set in $\Gamma_\II$. Denote by $Y[\textbf{n}_+^\alpha]$ the restriction of $Z[\textbf{n}_+^\alpha]$ to $\Gamma_\II$. Then $Y[\textbf{n}_+^\alpha]$ is an $\II$-special $\Gamma_\II$-$G$-space.
\end{Lem}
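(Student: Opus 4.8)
The plan is to deduce the $\II$-specialness of $Y[\textbf{n}_+^\alpha]$ directly from that of $Z$ by realising each relevant Segal map of $Y[\textbf{n}_+^\alpha]$ as one edge of a commuting triangle whose other two edges are known weak $G$-equivalences, and then applying two-out-of-three.

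First I would record that $Y[\textbf{n}_+^\alpha]=Z(\textbf{n}_+^\alpha\wedge-)|_{\Gamma_\II}$ is a bona fide $\Gamma_\II$-$G$-space. Since $\II$ is disk-like, $\FF_\II$ is transfer-like (Proposition \ref{prop:families-and-disk-like}), so smashing with the $0$-dimensional $\II$-$G$-CW complex $\textbf{n}_+^\alpha$ sends $\II$-$G$-CW complexes to $\II$-$G$-CW complexes (Proposition \ref{prop:constr-F-G-CW}(4)); hence $Z(\textbf{n}_+^\alpha\wedge-)$ is a well-defined $G\underline\CW_*^\II$-$G$-space, and restricting along $\Gamma_\II\hookrightarrow G\underline\CW_*^\II$ gives a $G\Top_*$-functor $\Gamma_\II\to G\Topu_*$ with $Y_m=Z(\textbf{n}_+^\alpha\wedge\textbf{m}_+)$ and $Y_1=Z(\textbf{n}_+^\alpha)$. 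Level-wise well-pointedness holds by Remark \ref{rem:Gamma_I-lands-in-well-pointed}, and the cofibrancy condition on injections is routine (for the case $Z=b_\II X$ relevant to Theorem A it follows exactly as in the proof that $b_\II X$ is a $\Gamma_\II$-$G$-space).

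Now fix a finite $G$-set $\textbf{m}^\beta\in\II(G)$. By the cartesian-product axiom (I8), $\textbf{n}^\alpha\times\textbf{m}^\beta\in\II(G)$, so $\textbf{n}_+^\alpha\wedge\textbf{m}_+^\beta\cong(\textbf{nm})_+^{\alpha\wedge\beta}$ again lies in $\Gamma_\II$ (Proposition \ref{prop:properties-isotropy}). Write $\delta\colon Z(\textbf{n}_+^\alpha)\to F(\textbf{n}_+^\alpha,Z_1)\cong(Z_1^n)^\alpha$ for the Segal map of $Z|_{\Gamma_\II}$ at $\textbf{n}_+^\alpha$, and $\delta_Z$ for the Segal map of $Z|_{\Gamma_\II}$ at $\textbf{n}_+^\alpha\wedge\textbf{m}_+^\beta$. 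I then assemble the composable pair
\[
\delta_Y\colon Z(\textbf{n}_+^\alpha\wedge\textbf{m}_+^\beta)=Y[\textbf{n}_+^\alpha](\textbf{m}_+^\beta)\longrightarrow F(\textbf{m}_+^\beta,Z(\textbf{n}_+^\alpha)), \qquad F(\textbf{m}_+^\beta,\delta)\colon F(\textbf{m}_+^\beta,Z(\textbf{n}_+^\alpha))\longrightarrow F(\textbf{m}_+^\beta,F(\textbf{n}_+^\alpha,Z_1))\cong F(\textbf{m}_+^\beta\wedge\textbf{n}_+^\alpha,Z_1),
\]
and claim that $F(\textbf{m}_+^\beta,\delta)\circ\delta_Y$ equals $\delta_Z$ transported along the smash–hom adjunction and the symmetry of the smash product. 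Commutativity is checked componentwise: for $i\in\textbf{n}$, $j\in\textbf{m}$, both composites apply $Z(p_{ij})$, where $p_{ij}\colon\textbf{n}_+^\alpha\wedge\textbf{m}_+^\beta\to\textbf{1}_+$ isolates $(i,j)$, using $Z(\delta_i)\circ Z(\id\wedge\delta_j)=Z(\delta_i\circ(\id\wedge\delta_j))=Z(p_{ij})$.

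It then remains to identify the two known edges. Since $Z$ is $\II$-special and $\textbf{n}^\alpha\in\II(G)$, the map $\delta$ is a weak $G$-equivalence; and since $\textbf{m}^\beta$ is a finite $G$-set, splitting it into orbits presents $F(\textbf{m}_+^\beta,-)$ as a finite product of coinductions of restrictions (whose fixed points are finite products of fixed-point spaces, by the double-coset formula), so $F(\textbf{m}_+^\beta,-)$ preserves weak $G$-equivalences and $F(\textbf{m}_+^\beta,\delta)$ is one. Since $Z$ is $\II$-special and $\textbf{n}_+^\alpha\wedge\textbf{m}_+^\beta\in\II(G)$, the map $\delta_Z$ is a weak $G$-equivalence. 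Two-out-of-three forces $\delta_Y$ to be a weak $G$-equivalence for every $\textbf{m}^\beta\in\II(G)$, which is precisely $\II$-specialness of $Y[\textbf{n}_+^\alpha]$. The main obstacle I anticipate is the bookkeeping of the $\alpha$- and $\beta$-twisted $G$-actions through the internal-hom identifications $F(\textbf{m}_+^\beta,F(\textbf{n}_+^\alpha,Z_1))\cong F(\textbf{n}_+^\alpha\wedge\textbf{m}_+^\beta,Z_1)\cong(Z_1^{nm})^{\alpha\wedge\beta}$ and checking that the triangle commutes $G$-equivariantly on the nose; confirming that $F(\textbf{m}_+^\beta,-)$ is homotopically well behaved is the other point needing care, though it is standard once $\textbf{m}^\beta$ is decomposed into orbits.
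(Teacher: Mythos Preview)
Your proposal is correct and follows essentially the same approach as the paper: both factor the Segal map of $Z$ at $\textbf{n}_+^\alpha\wedge\textbf{m}_+^\beta$ through the Segal map $\delta_Y$ of $Y[\textbf{n}_+^\alpha]$ at $\textbf{m}_+^\beta$ and the map induced by the Segal map of $Z$ at $\textbf{n}_+^\alpha$, then invoke (I8) and two-out-of-three. The paper writes the argument with the twisted-product notation $(-)^\beta$ rather than $F(\textbf{m}_+^\beta,-)$ and simply observes that $\delta^\beta$ is a weak $G$-equivalence (your orbit-decomposition justification is equivalent but more explicit); the paper also appends a one-line remark that $Z[\textbf{n}_+^\alpha]$ preserves connectivity, which is not part of the lemma as stated but is used downstream.
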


\begin{proof}
	We are given that $Y[\textbf{1}_+]$ is a special $\Gamma_\II$-$G$-space since $Y$ is a special $\Gamma_\II$-$G$-space and since for any finite $G$-set $\textbf{n}_+^\alpha\land \textbf{1}_+\cong \textbf{n}_+^\alpha$. Here we use that $\II$ is closed under isomorphisms (I2). For a finite $G$-set $\textbf{n}_+^\alpha$ as in the proposition and any other finite $G$-set $\smash{\textbf{m}_+^\beta}$ in $\Gamma_\II$ we have the following commutative diagram
	\begin{center}
		\begin{tikzcd}
{Y[\textbf{n}_+^\alpha](\textbf{m}_+^\beta)} \arrow[d, Rightarrow, no head] \arrow[r, "\delta"] & {Y[\textbf{n}_+^\alpha](\textbf{1})^\beta} \arrow[r, Rightarrow, no head] & Y(\textbf{n}_+^\alpha)^\beta \arrow[d, "\delta^\beta"] \\
Y(\textbf{n}_+^\alpha \land \textbf{m}_+^\beta) \arrow[r, "\delta"]                             & Y(\textbf{1})^{\alpha\land \beta} \arrow[r, Rightarrow, no head]          & (Y(\textbf{1})^\alpha)^\beta                                   
\end{tikzcd}
	\end{center}
	Here we use that $(\textbf{n}^\alpha\times \textbf{m}^\beta)_+\cong \textbf{n}^\alpha_+\land \textbf{m}^\beta_+$ and since $\textbf{n}^\alpha_+$ and $\textbf{m}^\beta_+$ are in $\Gamma_\II$ using that indexing systems are closed under products (I8) and isomorphisms (I2) we find that the bottom map $\delta$ is a weak $G$-equivalence by the assumption that $Y$ is a special $\Gamma_\II$-$G$-space. Also the right vertical $\delta$ is a weak $G$-equivalence by assumption and therefore also $\delta^\beta$ is. This implies that the top map $\delta$ is also a weak $G$-equivalence. This shows that the restriction to $\Gamma_\II$ of $Z[\textbf{n}_+^\alpha]$ is indeed $\II$-special.
	
	We also have to show that $Z[\textbf{n}_+^\alpha]$ preserves connectivity. But this follows immediately, since if $A$ is $G$-connected so is $\textbf{n}_+^\alpha\land A$ and $Z$ is by assumption $\II$-special and thus preserves connectivity.
\end{proof}

The following Lemma is a generalisation of Lemma 8.25 in \cite{MMO25}.

\begin{Lem}\label{lem:thm-A-help-2}
	Let $Z$ be a positively linear and $\II$-special $G\underline{\CW}^\II_*$-G-space and let $A$ be a finite based $G$-connected $\II$-$G$-CW complex. Denote by $Y[A]$ the restriction of $Z[A]$ to $\Gamma_\II$. Then $Y[A]$ is an $\II$-special $\Gamma_\II$-$G$-space.
\end{Lem}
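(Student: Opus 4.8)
The plan is to check the two defining conditions of an $\II$-special $G\underline{\CW}_*^\II$-$G$-space for $Y[A]=Z[A]|_{\Gamma_\II}$ separately. That $Z[A]$ preserves connectivity is immediate: if $B$ is $G$-connected then so is $A\wedge B$ (on $H$-fixed points $(A\wedge B)^H\cong A^H\wedge B^H$, all spaces being well pointed $\FF_\II$-$G$-CW complexes, and a smash of connected based spaces is connected), so $Z[A](B)=Z(A\wedge B)$ is $G$-connected because $Z$ is. Hence everything comes down to the Segal condition: for every $\textbf{n}^\alpha\in\II(G)$ the Segal map $\delta_A\colon Z(A\wedge\textbf{n}_+^\alpha)\to F(\textbf{n}_+^\alpha,Z(A))\cong (Z(A)^n)^\alpha$ must be a weak $G$-equivalence. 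I would prove this by induction on the cells of $A$; note that for any $G$-CW structure on $A$ and each $m\geq 1$ the skeleton $A_m$ is again $G$-connected (it has the same fixed-point components as $A$), that $A_m/A_{m-1}\cong\bigvee_j (G/H_j)_+\wedge S^m$ with $H_j\in\FF_\II$, and that all spaces occurring (skeleta, mapping cones, wedges, smash products with finite $G$-sets in $\Gamma_\II$) stay finite $\FF_\II$-$G$-CW, using Remark \ref{rem:mapping-cone-is-I-G-CW}, Proposition \ref{prop:constr-F-G-CW} and closure of $\II(G)$ under coproducts, subobjects and products.

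The engine of the induction is the following. Let $f\colon B'\to B$ be a $G$-map in $G\CW_*^\II$ with cofibre $i\colon B\to Cf$ and fix $\textbf{n}^\alpha\in\II(G)$. Smashing with $\textbf{n}_+^\alpha$ gives a cofibre sequence $B'\wedge\textbf{n}_+^\alpha\to B\wedge\textbf{n}_+^\alpha\to Cf\wedge\textbf{n}_+^\alpha$, and $B'\wedge\textbf{n}_+^\alpha$ is $G$-connected whenever $B'$ is (on $H$-fixed points it is a finite wedge of copies of $(B')^H$). So if $B'$ is $G$-connected, the positive linearity of $Z$ makes both rows of
\begin{center}
\begin{tikzcd}
Z(B'\wedge\textbf{n}_+^\alpha)\arrow[d,"\delta_{B'}"']\arrow[r] & Z(B\wedge\textbf{n}_+^\alpha)\arrow[d,"\delta_{B}"]\arrow[r] & Z(Cf\wedge\textbf{n}_+^\alpha)\arrow[d,"\delta_{Cf}"]\\
F(\textbf{n}_+^\alpha,Z(B'))\arrow[r] & F(\textbf{n}_+^\alpha,Z(B))\arrow[r] & F(\textbf{n}_+^\alpha,Z(Cf))
\end{tikzcd}
\end{center}
fibration sequences of based $G$-spaces — the bottom one because $Z(B')\to Z(B)\to Z(Cf)$ is one and $F(\textbf{n}_+^\alpha,-)$, being an $n$-fold (twisted) power, preserves fibration sequences — and the squares commute by naturality of the Segal maps. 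Taking $H$-fixed points and comparing the two long exact sequences of homotopy groups via the five lemma shows that if any two of $\delta_{B'},\delta_B,\delta_{Cf}$ are weak $G$-equivalences then so is the third.

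Now the induction. For $A=\textbf{m}_+^\beta$ a finite $G$-set in $\Gamma_\II$ the statement is exactly Lemma \ref{lem:thm-A-help-1}; in particular this handles $A_0$. For the passage from $A_{m-1}$ to $A_m$ with $m\geq 2$ I apply the engine to the cofibre sequence $A_{m-1}\to A_m\to A_m/A_{m-1}$, whose first term is $G$-connected; granting $\II$-specialness of $Z[A_{m-1}]$ inductively, it remains to see that $Z[\bigvee_j (G/H_j)_+\wedge S^m]$ is $\II$-special. Writing $\bigvee_j (G/H_j)_+\wedge S^m\cong\textbf{l}_+^\epsilon\wedge S^m$ with $\textbf{l}^\epsilon=\coprod_j G/H_j\in\II(G)$, one has $Z[\textbf{l}_+^\epsilon\wedge S^m]=(Z[\textbf{l}_+^\epsilon])[S^m]$, and $Z[\textbf{l}_+^\epsilon]$ is again positively linear and $\II$-special (by Lemma \ref{lem:thm-A-help-1}); iterating the engine along the cofibre sequences $S^{k-1}\to D^k\simeq *\to S^k$, whose source is $G$-connected for $k\geq 2$, reduces this to the single case $k=1$.

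I expect the case $k=1$ — equivalently, the $1$-skeleton $A_1$ and the behaviour of $Z[-]$ on suspensions $S^1\wedge\textbf{n}_+^\alpha$ of finite $G$-sets in $\Gamma_\II$ — to be the main obstacle, since there neither the source nor the target of the natural cofibre sequences (such as $\textbf{n}_+^\alpha\to C\textbf{n}_+^\alpha\to S^1\wedge\textbf{n}_+^\alpha$) is $G$-connected, so positive linearity does not apply directly. My intended fix is to realize $S^1$ as the geometric realization of a simplicial finite based set with trivial $G$-action — whose simplices are objects of $\Gamma\subset\Gamma_\II$ — so that $\delta_{S^1\wedge\textbf{n}_+^\alpha}$ is the realization of a simplicial map built levelwise from Segal maps of $Z[\textbf{k}_+^\gamma]$ for finite $G$-sets $\textbf{k}^\gamma\in\II(G)$; these are weak $G$-equivalences by Lemma \ref{lem:thm-A-help-1}, and Theorem \ref{thm:MMO-thm-1.12} then upgrades this to a weak $G$-equivalence after realization. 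For this step one needs $Z$ to commute with geometric realization and to satisfy the wedge axiom, which hold for the $G\underline{\CW}_*^\II$-$G$-spaces $Z=b_\II X$ to which Theorem \ref{thm:structure-theorem} is ultimately applied (Lemmas \ref{lem:realization-lemma} and \ref{lem:wedge-lemma}); keeping this reduction inside $\Gamma_\II$ via the closure properties of $\II(G)$ is the part I expect to require the most care.
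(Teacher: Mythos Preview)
Your approach is essentially the paper's: the same two-out-of-three ``engine'' from positive linearity, the same simplicial-circle argument for the problematic $S^1$ case, and the same cellular induction. Your identification of the extra hypothesis that $Z$ commute with geometric realization (and preserve Reedy cofibrancy) is correct and is used implicitly in the paper's proof as well; the lemma as stated only assumes positive linearity and $\II$-specialness, so strictly speaking this is an additional input satisfied by $Z=b_\II X$.

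There is one small gap in your induction. You start at $A_0$ (a finite based $G$-set in $\Gamma_\II$, handled by Lemma~\ref{lem:thm-A-help-1}) and run the engine for $m\geq 2$, but the step $m=1$ is missing: in the cofibre sequence $A_0\to A_1\to A_1/A_0$ the source $A_0$ is not $G$-connected, so the engine does not apply, and knowing $\delta_{A_0}$ and $\delta_{A_1/A_0}$ does not give $\delta_{A_1}$. Your simplicial fix only handles $A_1/A_0\cong S^1\wedge\textbf{l}_+^\epsilon$, not $A_1$ itself. The paper's remedy is to first replace $A$ by a $G$-homotopy equivalent $\II$-$G$-CW complex whose $0$-skeleton is the basepoint; then $A_1$ \emph{is} a wedge $\bigvee_j(G/H_j)_+\wedge S^1\cong S^1\wedge\textbf{l}_+^\epsilon$, and the simplicial-circle argument disposes of it directly. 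With that replacement inserted, your proof and the paper's coincide.
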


\begin{proof}
	Let $S_s^1$ be the simplicial circle. Its $q$-simplices $(S^1_s)_q$ are finite based sets, hence finite based $G$-sets with trivial $G$-action. Note that $S_s^1$ is a simplicial $G$-set with isotropy in $\II$ using that indexing systems contain all trivial $G$-sets (I1). Using (I2) there is an isomorphism $(S^1_s)_q\cong \textbf{m}_+$. By Lemma \ref{lem:thm-A-help-1}, for each finite $G$-set $\textbf{n}_+^\alpha$ in $\Gamma_\II$ we have the simplicial special $\Gamma_\II$-$G$-space with $q$-simplices $Y[(S^1_s)_q\land \textbf{n}_+^\alpha]\cong Y[\textbf{m}_+\land \textbf{n}_+^\alpha]$. Its geometric realization gives an $\II$-special $\Gamma_\II$-$G$-space isomorphic to $Y[S^1\land \textbf{n}_+^\alpha]$.
	
	Let $A\to B\to C$ be a cofibre sequence in $G\underline{\CW}_*^\II$ so that $A$ is $G$-connected. Furthermore, let $\textbf{n}_+^\alpha$ be a finite based $G$-set in $\Gamma_\II$. Using that $Z$ is positive linear and noting that $Z[\textbf{n}_+^\alpha](A) = Z(\textbf{n}_+^\alpha\land A)\cong Z(A\land \textbf{n}_+^\alpha)$ we have a fibre sequence
	\begin{center}
		\begin{tikzcd}
			Z(A\land \textbf{n}_+^\alpha) \arrow[r] & Z(B\land \textbf{n}_+^\alpha) \arrow[r] & Z(C\land \textbf{n}_+^\alpha).
		\end{tikzcd}
	\end{center}
	We also have Segal $G$-maps $Z[A](\textbf{n}_+^\alpha)\to (Z[A](\textbf{1}_+)^n)^\alpha$. Applying $Z[\textbf{1}_+]$ and noting that $Z[\textbf{1}_+](A)=Z(A)$ we obtain another corresponding fibre sequence. This results in the following commutative diagram, which is a map of fibre sequences
	\begin{center}
		\begin{tikzcd}
			Z(A\land \textbf{n}_+^\alpha) \arrow[r] \arrow[d, "\delta"] & Z(B\land \textbf{n}_+^\alpha) \arrow[r] \arrow[d, "\delta"] & Z(C\land \textbf{n}_+^\alpha) \arrow[d, "\delta"] \\
			(Z(A)^n)^\alpha \arrow[r]                                   & (Z(B)^n)^\alpha \arrow[r]                                   & (Z(C)^n)^\alpha                                  
		\end{tikzcd}
	\end{center}
	Therefore, if $Y[A]$ and $Y[B]$ are special, then so is $Y[C]$. 
	
	Let $A$ be a finite based $\II$-$G$-CW complex. When $A$ is $G$-connected, we can replace $A$ by a $G$-homotopy equivalent based $\II$-$G$-CW complex whose $0$-skeleton is a point and whose attaching maps are based $G$-maps defined on $G$-spheres $G/H_+\land S^n$ for $n\ge 1$. Write $\textbf{n}_+^\alpha$ as the wedge of based orbits $G/H_+$ where each based orbit is represented by some $\textbf{k}^\rho_+$ with $\rho\colon G\to\Sigma_n$ a choice of representation of $G/H$ with $k=[G:H]$.
	
	The proof then follows by applying induction twice. First, induct on the dimension $n$. For $n=0$ consider the cofibre sequence $S^0\to D^1\to S^1$ and apply the argument above to deduce that for any $G/H$ in $\II(G)$ we have that $Y[G/H_+\land S^1]$ is $\II$-special. The same argument applies to cofibre sequences $S^{n-1}\to D^n\to S^n$ to deduce that $Y[G/H_+\land S^n]$ is $\II$-special. Secondly, induct on the number of cells in $A$.
\end{proof}

\begin{proof}[Proof of Theorem \ref{thm:structure-theorem}]
	This then is an immediate consequence of Lemma \ref{lem:thm-A-help-2} since $Z[A]$ is $\II$-special if and only if its restriction to $\Gamma_\II$ is $\II$-special.
\end{proof}

\begin{Thm}[Delooping theorem]\label{thm:delooping-theorem}
	Let $Z$ be a linear and $\II$-special $G\underline{\CW}_*^\II$-$G$-space. Then the adjoint structure map
	\[\tilde{\sigma}\colon Z(S^0)\to \Omega^VZ(S^V)\]
	is a weak $G$-equivalence.
\end{Thm}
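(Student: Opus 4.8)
The plan is to bootstrap the statement from a single elementary delooping. Fix $A$ in $G\underline{\CW}_*^\II$. Since $\FF_\II$ is transfer-like, the reduced cone $CA$, the reduced suspension $\Sigma A=S^1\wedge A$, mapping cones, and finite wedges of orbit cells all remain $\II$-$G$-CW complexes (Proposition~\ref{prop:properties-isotropy-families}, Proposition~\ref{prop:constr-F-G-CW}), so $Z$ is defined on all of them. Applying linearity of $Z$ (Definition~\ref{def:positive-linear}) to the cofibre sequence $A\to CA\to\Sigma A$, in which $CA$ is $G$-contractible, produces a fibration sequence $Z(A)\to Z(CA)\to Z(\Sigma A)$. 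As a $G\Top_*$-enriched functor $Z$ preserves $G$-homotopy equivalences, so $Z(CA)\simeq Z(*)=*$, the last identity by Lemma~\ref{lem:MMO-lem.1.17} applied to the zero object of $G\underline{\CW}_*^\II$. Hence the adjoint structure map $Z(A)\to\Omega Z(\Sigma A)$ is a weak $G$-equivalence, and iterating, $Z(A)\to\Omega^n Z(S^n\wedge A)$ is a weak $G$-equivalence for all $n\ge 0$.

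Next I would prove that $\tilde\sigma\colon Z(S^0)\to\Omega^V Z(S^V)$ is a weak $G$-equivalence for every $V$ compatible with $\FF_\II$ (so that $S^V$ lies in $G\underline{\CW}_*^\II$) by induction on $\dim V$. The case $\dim V=0$ is trivial. If $V^G\ne 0$, split off a trivial line $V\cong W\oplus\rr$, with $W$ again compatible; the inductive hypothesis gives that $Z(S^0)\to\Omega^W Z(S^W)$ is a weak $G$-equivalence, and the first paragraph (with $A=S^W$) gives that $Z(S^W)\to\Omega Z(S^1\wedge S^W)=\Omega Z(S^V)$ is one as well. By the coherence of the structure maps of the orthogonal $G$-spectrum $R_U^\II Z$ together with naturality, the composite $Z(S^0)\to\Omega^W Z(S^W)\to\Omega^W\Omega Z(S^V)=\Omega^V Z(S^V)$ is precisely $\tilde\sigma$, so $\tilde\sigma$ is a weak $G$-equivalence whenever $V^G\ne 0$.

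The main obstacle is the case $V^G=0$: then $S^V$ is not a trivial suspension of a smaller sphere, the reduction of the previous paragraph is unavailable, and one genuinely needs the $\II$-specialness of $Z$ — equivalently the transfer maps carried by $\II$-special $\Gamma_\II$-$G$-spaces, in the spirit of the Wirthm\"uller isomorphism. The plan here is to filter $S^V$ by its $\II$-$G$-CW skeleta $*=(S^V)_{-1}\subseteq(S^V)_0\subseteq\cdots\subseteq S^V$ and induct up the filtration: each stage sits in a cofibre sequence $(S^V)_{k-1}\to(S^V)_k\to\bigvee_j (G/H_j)_+\wedge S^k$ with $H_j\in\FF_\II$, which linearity of $Z$ converts into a fibration sequence; comparing it with the corresponding looped fibration sequence and applying the five lemma to homotopy groups of fixed points reduces the claim to the delooping statement for $Z$ smashed with the orbit cells $(G/H_j)_+\wedge S^k$. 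For $k\ge 1$ such a cell is $G$-connected — its $L$-fixed points are wedges of $k$-spheres — so by the structure theorem (Theorem~\ref{thm:structure-theorem}) the $G\underline{\CW}_*^\II$-$G$-space $Z[(G/H_j)_+\wedge S^k]=Z\bigl((G/H_j)_+\wedge S^k\wedge -\bigr)$ is again linear and $\II$-special, and a secondary induction on $\dim V$ feeds it back into the first two paragraphs; the lowest cells and the $0$-skeleton are handled by invoking $\II$-specialness of $Z$ directly, i.e.\ that the Segal maps are weak equivalences on the relevant $\Lambda_\alpha$-fixed points for the orbits $G/H_j\in\II(G)$ occurring in the cell structure of $S^V$. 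Assembling these inputs along the filtration shows that $\tilde\sigma\colon Z(S^0)\to\Omega^V Z(S^V)$ is a weak $G$-equivalence for every compatible $V$. I expect the bookkeeping in this last step — interleaving the cell filtration of $S^V$ with the induction on $\dim V$ and keeping all comparison maps compatible with $\tilde\sigma$ — to be the genuinely delicate part, the rest being the standard linearity-to-delooping argument.
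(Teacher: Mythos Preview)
Your first two paragraphs are correct and standard: linearity applied to $A\to CA\to\Sigma A$ gives the $S^1$-delooping, and when $V^G\neq 0$ one peels off trivial summands.

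The third paragraph, however, has a real gap, not just bookkeeping. A skeletal filtration of $S^V$ does not set up any comparison for the map $\tilde\sigma\colon Z(S^0)\to F(S^V,Z(S^V))$: the source $Z(S^0)$ does not filter, and in the target the two copies of $S^V$ play different roles, so filtering one of them does not interact with the fibration sequences coming from linearity. Concretely, your reduction would need statements of the form $Z((G/H_j)_+\wedge S^k)\to\Omega^V Z((G/H_j)_+\wedge S^k\wedge S^V)$ is an equivalence, but this is the delooping for the \emph{same} $V$, so the secondary induction on $\dim V$ gains nothing. More basically, the map $Z(S^0)\to F(B,Z(B))$ is not a weak $G$-equivalence for arbitrary $B$ (already for $B=(G/H)_+$ the $G$-fixed points are $Z(S^0)^G\to Z((G/H)_+)^H$, which has no reason to be an equivalence), so any direct cell induction on $B=S^V$ is doomed from the start.

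The paper takes a genuinely different route. It uses a \emph{single} cofibre step --- essentially $S^0\to S^V\to S^V/S^0$ --- together with a Pontryagin--Thom construction to produce a comparison of fibre sequences whose third term is a map $Z(S(V)_\varepsilon^c)\to F(S(V)_+,Z(S^V))$. The heart of the proof is showing this map is a weak $G$-equivalence, and this is done not by a skeletal filtration but by a Mayer--Vietoris induction over an open cover of $S(V)$ by stars of a $G$-triangulation. Each star is $G$-homotopy equivalent to an orbit $G/H$ with $H\in\FF_\II$, its thickening is $G$-equivalent to $(G/H)_+\wedge S^V$, and the base case becomes the Segal map $Z((G/H)_+\wedge S^V)\to F((G/H)_+,Z(S^V))$, which is an equivalence because $Z[S^V]$ is $\II$-special. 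This is the precise point where $\II$-specialness enters, and it does so through the Segal/Wirthm\"uller comparison on orbit pieces of $S(V)$, not through a filtration of $S^V$.
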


The proof of this theorem works again similarly to the genuine case. One needs to extensively use that $S^V$ and $S(V)$ admit structures of finite $\II$-$G$-CW complexes. A key observation is that equivariant triangulations of $S(V)$ are indexed on finite $G$-sets with isotropy in $\FF_\II$. Here the choice of universe compatible with indexing data is integral. One then needs to repeatedly use that indexing systems are closed under isomorphisms, subobjects and coproducts and follow the proof of Theorem 8.27 and 8.32 in \cite{MMO25}. Since this is not entirely trivial we will give a complete proof in Appendix \ref{app:proof-delooping-theorem}. 

We can now complete the first part of the proof of Theorem A. The following theorem generalises Theorem 3.33 in \cite{MMO25} or rather Theorem B in \cite{Shi89}. The proof strategy is the same.

\begin{Thm}\label{thm:special-CW-space->-omega-G-spectrum}
	Let $Z$ be a positively linear $\II$-special $G\underline{\CW}_*^\II$-$G$-space. Then $R_U^\II(Z)$ is a positive $\Omega$-$G$-spectrum indexed on $U$.
\end{Thm}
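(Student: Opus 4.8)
The plan is to combine the delooping theorem (Theorem~\ref{thm:delooping-theorem}) with the structure theorem (Theorem~\ref{thm:structure-theorem}), following the strategy of Theorem~3.33 in \cite{MMO25}.

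By Construction~\ref{constr:the-functor-RUI} the object $R_U^\II(Z)$ is already an orthogonal $G$-spectrum indexed on $U$, with $R_U^\II(Z)(V)=Z(S^V)$ and structure maps $\sigma\colon Z(S^V)\wedge S^W\to Z(S^{V\oplus W})$ obtained from the $G\underline{\CW}_*^\II$-$G$-space structure of $Z$ via the $G$-homeomorphism $S^V\wedge S^W\cong S^{V\oplus W}$. So it remains to verify the positive $\Omega$-condition: for all $G$-representations $V,W$ in $U$ with $V^G\ne 0$, the adjoint structure map $\tilde\sigma\colon Z(S^V)\to\Omega^W Z(S^{V\oplus W})$ is a weak $G$-equivalence.

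Fix such $V$ and $W$. First I would observe that $S^V$ is a finite based $G$-connected $\II$-$G$-CW complex: since $V\subset U$ we have $\FF_V\subset\FF_U=\FF_\II$, using compatibility of $\II$ and $U$, so $S^V$ admits a finite based $\II$-$G$-CW structure by the corollary on representation spheres at the end of Section~\ref{sec:isotropy}; and for every $H\subset G$ one has $(S^V)^H=S^{V^H}$ with $\dim V^H\ge\dim V^G\ge 1$, hence $(S^V)^H$ is connected. Applying the structure theorem (Theorem~\ref{thm:structure-theorem}) with $A=S^V$ then shows that $Z[S^V]=Z(S^V\wedge -)$ is a linear and $\II$-special $G\underline{\CW}_*^\II$-$G$-space. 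Now the delooping theorem (Theorem~\ref{thm:delooping-theorem}) applied to $Z[S^V]$ gives that
\[\tilde\sigma\colon Z[S^V](S^0)\longrightarrow\Omega^W Z[S^V](S^W)\]
is a weak $G$-equivalence. Unwinding $Z[S^V](S^0)=Z(S^V)$ and $Z[S^V](S^W)=Z(S^V\wedge S^W)\cong Z(S^{V\oplus W})$, and noting that the $(0,W)$-structure map of $Z[S^V]$ coincides with the $(V,W)$-structure map of $Z$ by construction, this is precisely the adjoint structure map of $R_U^\II(Z)$ at $(V,W)$, which is therefore a weak $G$-equivalence. Since $W$ was arbitrary in $U$, this establishes the positive $\Omega$-condition.

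There is no substantial obstacle here: the real content sits in the structure and delooping theorems, whose incomplete versions are established earlier (or deferred to the appendix). The only points needing a little care are the verification that $S^V$ is a finite $G$-connected $\II$-$G$-CW complex, which is exactly where the compatibility hypothesis on $(\II,U)$ enters, and the bookkeeping that identifies the adjoint structure maps of $R_U^\II(Z)$ with the low-degree structure maps of the shifted $G\underline{\CW}_*^\II$-$G$-spaces $Z[S^V]$.
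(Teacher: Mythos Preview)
Your proof is correct and follows essentially the same approach as the paper: apply the structure theorem (Theorem~\ref{thm:structure-theorem}) to $A=S^V$ to make $Z[S^V]$ linear and $\II$-special, then apply the delooping theorem (Theorem~\ref{thm:delooping-theorem}) with $W$ in place of $V$ to conclude that the adjoint structure map is a weak $G$-equivalence. You spell out a few details the paper leaves implicit (the $\II$-$G$-CW structure on $S^V$ via compatibility, the $G$-connectedness of $S^V$, and the identification of structure maps), but the argument is the same.
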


\begin{proof}
	Let $V$ and $W$ be $G$-representations in $U$ so that $V^G\ne 0$. We must show that the adjoint structure map
	\[\tilde{\sigma}\colon Z(S^V)\to \Omega^WZ(S^{V\oplus W})=\Omega^WZ(S^V\land S^W)\]
	is a weak $G$-equivalence. Since representation spheres are $G$-connected as the one point compactification of an inner product $G$-space, $Z[S^V]$ is linear and $\II$-special by the structure theorem, Theorem \ref{thm:structure-theorem}. Setting $Z:=Z[S^V]$ and replacing $V$ with $W$ in the delooping theorem, Theorem \ref{thm:delooping-theorem}, shows that $\tilde{\sigma}$ is a weak $G$-equivalence. Hence $R_U^\II(Z)$ is a positive $\Omega$-$G$-spectrum.
\end{proof}

From the fact that the resulting spectrum of the Segal machine is a positive $\Omega$-$G$-spectrum by Theorem \ref{thm:special-CW-space->-omega-G-spectrum} and the connectivity lemma which we will show below (Lemma \ref{lem:connectivity-lemma}) it follows that all negative homotopy groups of $\sS_\II^{G,U}X$ for $\II$-special $X$ must vanish. Hence, the Segal machine produces by construction connective $G$-spectra.

It remains to show the group completion property.

\subsection{The group completion property.}\label{sec:group-completion}

Non-equivariantly, a Hopf space $Y$ is called \textit{group-like} if the monoid $\pi_0(Y)$ is a group. If $X$ is a special $\Gamma$-space, then $X_1$ is in general not group-like. But it is the case that the bottom structure map of the associated $\Omega$-spectrum obtained by prolonging to $CW$-complexes and then restricting to spheres is a \textit{group completion} (cf. \cite{Seg74}, \S 4). Group completions first came up and were studied extensively in the seventies. They were defined by \cite{DS76} but we will use the formulation given in \cite{MMO25}, Definition 1.6.

\begin{Def}
	A Hopf map $f\colon X\to Y$ is a \textit{group completion} if $Y$ is group-like, $\pi_0(f)\colon \pi_0(X)\to\pi_0(Y)$ is the Grothendieck group of the commutative monoid $\pi_0(X)$, and for every field of coefficients, $H_*(f)\colon H_*(X)\to H_*(Y)$ is the algebraic localization obtained by inverting the elements of the submonoid $\pi_0(X)$ of $H_*(X)$.
\end{Def}

As remarked in \cite{MMO25}, Remark 1.7, if $Y,Y'$ are group completions of $X$, then $Y$ and $Y'$ are weakly homotopy equivalent.

\begin{Def}[\cite{MMO25}, Definition 1.8]
	A Hopf $G$-space $X$ is \textit{group-like} if $X^H$ is group-like for any subgroup $H\subset G$. A Hopf $G$-map $f\colon X\to Y$ is a \textit{group completion} if for every subgroup $H\subset G$, the induced map $f^H\colon X^H\to Y^H$ on the underlying spaces is a group completion in the classical sense.
\end{Def}

The following remark allows us to reduce the group completion property of incomplete positive $\Omega$-$G$-spectra to the underlying naive $\Omega$-$G$-spectrum.

\begin{Rem}[\cite{MMO25}, Remark 1.23]\label{rem:group-completions-genuine-to-naive}
	If $V^G\ne 0$, we can write $V\cong\rr\oplus W$ and thus $S^V\cong S^1\land S^W$ and $\Omega^V\cong \Omega\Omega^W$. Then $\tilde{\sigma}\colon X_0\to \Omega^VX(V)$ factors as the composite
	\[X_0 \xrightarrow{\tilde{\sigma}}\Omega X_1 \xrightarrow{\Omega\tilde{\sigma}}\Omega\Omega^WX(R\oplus W)\cong \Omega^VX(V).\]
	If $X$ is a positive $\Omega$-$G$-spectrum, then the second arrow is a weak $G$-equivalence. Therefore, if $X_0\to \Omega X_1$ is a group completion, then so is $X_0\to \Omega^VX(S^V)$ for all $V$ such that $V^G\ne 0$. Hence, it is enough to compare the underlying classical positive $\Omega$-$G$-spectra obtained from our Segal machines.
\end{Rem}

\begin{Rem}\label{rem:prolongation-and-bar-construction}
	We need to compare bar constructions along the adjoint equivalences $(\pp_\II^\JJ,\uu_\II^\JJ)$. Using Lemma 3.7, \cite{MMO25} we have hat
	\[\pp_\II^\JJ B(\Gamma_\II,\Gamma_\II,X) = \Gamma_\JJ\otimes_{\Gamma_\II}B(\Gamma_\II,\Gamma_\II,X)\cong B(\Gamma_\JJ,\Gamma_\II,X).\]
	Therefore, the inclusion $\iota\colon \Gamma_\II\to\Gamma_\JJ$ induces a natural map of $\Gamma_\JJ$-$G$-spaces 
	\[\iota_*\colon \pp_\II^\JJ B(\Gamma_\II,\Gamma_\II,X)\to B(\Gamma_\JJ,\Gamma_\JJ,\pp_\II^\JJ X).\]
	We thus obtain commutative diagrams
	\begin{center}
		\begin{tikzcd}
			{\pp_\II^\JJ B(\Gamma_\II,\Gamma_\II,X)} \arrow[d, "\iota_*"'] \arrow[rr, "\pp_\II^\JJ\varepsilon"] &  & \pp_\II^\JJ X \\
			{B(\Gamma_\JJ,\Gamma_\JJ,\pp_\II^\JJ X)} \arrow[rru, "\varepsilon "']                                &  &              
		\end{tikzcd}
		\begin{tikzcd}
			{ B(\Gamma_\II,\Gamma_\II,X)} \arrow[d, "\uu_\II^\JJ\iota_*"'] \arrow[rr, "\varepsilon"] &  & X \\
			{\uu_\II^\JJ B(\Gamma_\JJ,\Gamma_\JJ,\pp_\II^\JJ X)} \arrow[rru, "\uu_\II^\JJ\varepsilon "']                                &  &              
		\end{tikzcd}
	\end{center}
	the second of which is obtained by applying $\uu_\II^\JJ$ to the first one and using that $\pp_\II^\JJ$ and $\uu_\II^\JJ$ form an equivalence of categories.
	
	In the first diagram, the diagonal arrow is a $\JJ$-level $G$-equivalence, but there is no reason to believe that $\iota$ or $\pp_\II^\JJ\varepsilon$ are also $\JJ$-level $G$-equivalences since that would imply that all three arrows in the second diagram are $\ff_\bullet^\JJ$-level equivalences which would contradict Warning 3.18 in \cite{MMO25}. In the second diagram $\uu_\II^\JJ\varepsilon$ is a $\ff_\bullet^\JJ$-level equivalence and the other two arrows are only $\ff_\bullet^\TT$-level equivalences.
\end{Rem}

We need the following result proven in \cite{MMO25}, \S 8.2.

\begin{Prop}[\cite{MMO25}, Proposition 2.30]\label{prop:MMO-prop.2.30}
	For $X$ is a $\TT$-special $\Gamma$-$G$-space $\smash{\sS_\TT^{G,\rr^\infty}X}$ is a positive $\Omega$-$G$-prespectrum whose bottom structure map is a group completion of $X_1$.
\end{Prop}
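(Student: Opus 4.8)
The positive-$\Omega$ part of the statement is already in hand. Since $X$ is $\TT$-special, Proposition~\ref{prop:bar-construction-approximation-of-X} shows $B(\Gamma,\Gamma,X)$ is $\TT$-special, so $b_\TT X$ is a positively linear, connectivity-preserving (hence $\TT$-special) $G\underline{\CW}_*^\TT$-$G$-space by the positive linearity theorem~\ref{thm:positive-linearity-theorem}, and Theorem~\ref{thm:special-CW-space->-omega-G-spectrum} applied with $U=\rr^\infty$ gives that $\sS_\TT^{G,\rr^\infty}X=R^\TT_{\rr^\infty}(b_\TT X)$ is a positive $\Omega$-$G$-spectrum, in particular a positive $\Omega$-$G$-prespectrum. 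So the remaining content is that the bottom structure map is a group completion of $X_1$.

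The plan is to reduce this to the classical non-equivariant case by passing to fixed points. A Hopf $G$-map $f\colon Y\to Z$ is a group completion precisely when $f^H\colon Y^H\to Z^H$ is one for every $H\subset G$, so it suffices to understand $(\sS_\TT^{G,\rr^\infty}X)^H$. Here the triviality of the universe is everything: the representation spheres $S^{\rr^n}=S^n$ carry the trivial $G$-action, so in $\sS_\TT^{G,\rr^\infty}X(S^n)=B((S^n)^\bullet,\Gamma,X)$ the only factor bearing a nontrivial $G$-action is $X$ itself, the hom-objects of $\Gamma$ and the represented functor $(S^n)^\bullet=F(-,S^n)$ being $G$-trivial. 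Since $H$-fixed points commute with wedges, with smashing against $G$-trivial based spaces, and with geometric realization of simplicial based $G$-spaces, one obtains a natural isomorphism
\[(\sS_\TT^{G,\rr^\infty}X(S^n))^H\;\cong\; B\bigl((S^n)^\bullet,\Gamma,X^H\bigr),\]
where $X^H\colon\Gamma\to\Top_*$ denotes the non-equivariant $\Gamma$-space $\textbf{m}_+\mapsto X_m^H$. This identification is compatible with the orthogonal spectrum structure maps, so $(\sS_\TT^{G,\rr^\infty}X)^H$ is exactly the output of the classical Segal machine applied to $X^H$; and since $S^0=\textbf{1}_+$, the natural map $\nu$ of the machine restricts on $H$-fixed points to the classical approximation map $B(\Gamma(-,\textbf{1}_+),\Gamma,X^H)\to X_1^H$, which is a weak equivalence by Proposition~\ref{prop:bar-construction-approximation-of-X}.

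Next I would observe that $\TT$-specialness of $X$ makes each $X^H$ a special $\Gamma$-space in Segal's sense: the condition is that the Segal maps $\delta\colon X_n\to X_1^n$ be weak $G$-equivalences (the relevant graphs being $\Lambda_{\varepsilon_n}=G$), and taking $H$-fixed points yields weak equivalences $X_n^H\to (X_1^H)^n$. Then I would invoke the classical Segal group completion theorem (\cite{Seg74}, \S4, in the form reproven in \cite{MMO25}, \S8.2): for a special $\Gamma$-space $Y$ the classical machine is a positive $\Omega$-prespectrum whose bottom structure map $B((S^0)^\bullet,\Gamma,Y)\to\Omega B((S^1)^\bullet,\Gamma,Y)$, composed with the approximation $Y_1\simeq B((S^0)^\bullet,\Gamma,Y)$, is a group completion of $Y_1$. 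Applying this with $Y=X^H$ and noting $(X^H)_1=X_1^H$, and combining with the compatibility of the previous paragraph together with the fact that $X_1$ and $\sS_\TT^{G,\rr^\infty}X(S^0)$ are Hopf $G$-spaces (via $\varphi$ precomposed with a $G$-homotopy section of $\delta\colon X_2\to X_1^2$, which exists by Remark~\ref{rem:edge-cases-special-and-homotopy-assumption}) and that $\nu$ is a Hopf $G$-map, we conclude that $X_1\xrightarrow{\nu}\sS_\TT^{G,\rr^\infty}X(S^0)\to\Omega\,\sS_\TT^{G,\rr^\infty}X(S^1)$ is a group completion on every $H$-fixed point set, i.e. a $G$-group completion of $X_1$.

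The genuinely new equivariant input is essentially none: everything equivariant becomes formal once the universe is trivial, because the bar construction then only sees the $G$-action through $X$. The hard part will be the classical non-equivariant theorem itself, in particular the homology-localization clause, which is where the real work of \cite{MMO25}, \S8.2 lies (analysing how the submonoid $\pi_0(X_1)$ acts invertibly on the homology of the bar construction). A secondary but unavoidable bookkeeping task is making the Hopf $G$-space structures and the map $\nu$ fit together across the fixed-point identifications, so that ``group completion on each $H$-fixed point set'' legitimately assembles to a $G$-group completion in the sense recalled above.
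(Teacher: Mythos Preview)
The paper does not give its own proof of this proposition; it is imported wholesale from \cite{MMO25}, \S 8.2 and used as a black box in Proposition~\ref{prop:group-completions}. Your argument is correct and is in fact the natural one. Invoking the paper's positive linearity and delooping theorems with $\II=\TT$ for the positive $\Omega$-property involves no circularity, since those results are proven independently of this proposition. Your fixed-point reduction for the group completion---using that every factor in the bar construction except $X$ itself carries trivial $G$-action when the universe is $\rr^\infty$, so that $(B((S^n)^\bullet,\Gamma,X))^H\cong B((S^n)^\bullet,\Gamma,X^H)$ and $\TT$-specialness of $X$ yields classical specialness of each $X^H$---is exactly right and reduces the statement to the nonequivariant Segal group completion theorem, which is precisely the content of the cited \cite{MMO25}, \S 8.2 (and of \cite{Seg74}, \S 4).
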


The following theorem for $\II=\CC$ is due to \cite{MMO25}, Proposition 3.23.

\begin{Prop}\label{prop:group-completions}
	Let $X$ be a $\TT$-special $\Gamma$-$G$-space. Then the positive naive $\Omega$-$G$-prespectra $\sS_\TT^{G,\rr^\infty}X$ or $\sS_\TT^{G,\rr^\infty}b_\TT X$ or $\sS_\TT^{G,\rr^\infty}\uu_\TT^\II b_\II \pp_\TT^\II X$ are level $G$-equivalent. Their bottom structural maps are compatible group completions of $G$-spaces equivalent to $X_1$.
\end{Prop}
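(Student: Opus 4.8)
The plan is to establish the two assertions of Proposition~\ref{prop:group-completions} separately: first that the three naive prespectra are level $G$-equivalent, and then that their bottom structure maps are compatible group completions. For the first assertion, the key input is Proposition~\ref{prop:bar-construction-approximation-of-X}, which provides an $\II$-level $G$-equivalence $\varepsilon\colon B(\Gamma_\II,\Gamma_\II,X)\to X$, and its specialisation to $\II=\TT$ giving a $\TT$-level $G$-equivalence $b_\TT X\to X$ after recalling $b_\II \cong \PP_\II\circ\bar b_\II$. First I would apply $R_U^\TT$ to the zig-zag of maps
\[
\sS_\TT^{G,\rr^\infty}X,\quad \sS_\TT^{G,\rr^\infty}b_\TT X,\quad \sS_\TT^{G,\rr^\infty}\uu_\TT^\II b_\II\pp_\TT^\II X
\]
and argue level by level. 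Since $\sS_\TT^{G,\rr^\infty} Y(V) = B((S^V)^\bullet,\Gamma,Y)$ and $\Gamma$-level $G$-equivalences are by definition levelwise weak $\Lambda_{\varepsilon_n}$-equivalences, i.e.\ weak $G$-equivalences on the spaces $Y_n$ with trivial action, one needs the bar construction $B((S^V)^\bullet,\Gamma,-)$ to send $\TT$-level $G$-equivalences between Reedy cofibrant $\Gamma$-$G$-spaces to weak $G$-equivalences; this follows from Theorem~\ref{thm:MMO-thm-1.12} together with the cofibrancy lemma (Lemma~\ref{lem:cofibrancy-lemma}) and the cofibrancy assumptions collected in Remark~\ref{rem:cofibrancy-assumptions-bar-construction}. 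The comparison between $\uu_\TT^\II b_\II \pp_\TT^\II X$ and $b_\TT X$ then uses the commutativity of the prolongation square, Proposition~\ref{prop:commutative-diagram-prolongation}, together with Remark~\ref{rem:prolongation-and-bar-construction}: applying $R_U^\TT$ to the second diagram there, $\uu_\II^\JJ\varepsilon$ (with $\JJ=\II$, $\II=\TT$) is an $\ff_\bullet^\II$-level equivalence hence certainly an $\ff_\bullet^\TT$-level equivalence, and the remaining two arrows are $\ff_\bullet^\TT$-level equivalences as already noted in that remark, so all three prespectra become level $G$-equivalent after applying $R_U^\TT$.

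For the second assertion I would invoke Proposition~\ref{prop:MMO-prop.2.30}: since $X$ is $\TT$-special, $\sS_\TT^{G,\rr^\infty}X$ is a positive $\Omega$-$G$-prespectrum whose bottom structure map $X_1 = \sS_\TT^{G,\rr^\infty}X(S^0)\to \Omega\,\sS_\TT^{G,\rr^\infty}X(S^1)$ is a group completion of $X_1$. The three prespectra being level $G$-equivalent via maps compatible with the identity on the $1$-level space (here I would observe that $\varepsilon$ and $\iota_*$ are the identity on the underlying $\Gamma$-$G$-space at $\textbf{1}_+$, or more precisely induce weak $G$-equivalences of the bottom spaces commuting with the structure maps), Remark~1.7 of \cite{MMO25} gives that the bottom structure maps of the other two are also group completions, and they are compatible in the sense that they fit into a commuting diagram with the level equivalences. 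The target $G$-spaces are all equivalent to $X_1$ because the $0$-level of each of these bar constructions evaluated at $S^0$ recovers (up to weak $G$-equivalence) the space at $\textbf{1}_+$ of the corresponding special $\Gamma$-$G$-space, which is weakly $G$-equivalent to $X_1$ by the level equivalences just established.

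The main obstacle I expect is bookkeeping the interaction between the prolongation equivalence $(\pp_\TT^\II,\uu_\TT^\II)$ and the bar constructions, precisely because—as Warning~3.18 of \cite{MMO25} and Remark~\ref{rem:prolongation-and-bar-construction} emphasise—one cannot upgrade the $\TT$-level (equivalently $\ff_\bullet^\TT$-level) equivalences to $\II$-level equivalences, so the whole argument must stay at the level of underlying naive prespectra and one must be careful never to accidentally claim a genuine or $\II$-indexed statement. Concretely the delicate point is checking that the comparison maps are compatible with the bottom structure maps and restrict to the identity (or to a weak $G$-equivalence) on the bottom space, which is what licenses transporting the group completion property along them via Remark~1.7 of \cite{MMO25}; once that diagram is in place the rest is a direct application of Proposition~\ref{prop:MMO-prop.2.30}.
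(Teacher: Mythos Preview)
Your proposal is correct and the overall strategy is sound, but the paper takes a slightly different route that avoids the direct bar-construction invariance argument you outline. Both approaches use the triangle from Remark~\ref{rem:prolongation-and-bar-construction} and Proposition~\ref{prop:MMO-prop.2.30}, but they deploy them in a different order.

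You first establish the level $G$-equivalences, by arguing that $A^\bullet\otimes_\Gamma(-)$ sends $\TT$-level $G$-equivalences between suitably cofibrant inputs to weak $G$-equivalences for each $A=S^n$; this is essentially the naive case of the invariance theorem (Theorem~\ref{thm:invariance-theorem}). One small correction: the Reedy cofibrancy you need is that of the simplicial bar construction itself, which is supplied by Remark~\ref{rem:cofibrancy-assumptions-bar-construction}, not by Lemma~\ref{lem:cofibrancy-lemma} (which concerns $b_\II X$ preserving Reedy cofibrancy of simplicial CW inputs, a different statement). You then transport the group completion along these equivalences.

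The paper instead first observes, via the comparison theorems and Proposition~\ref{prop:bar-construction-approximation-of-X}, that all three underlying $\Gamma$-$G$-spaces---namely $X$, $B(\Gamma,\Gamma,X)$, and $\uu_\TT^\II B(\Gamma_\II,\Gamma_\II,\pp_\TT^\II X)$---are $\TT$-special, and applies Proposition~\ref{prop:MMO-prop.2.30} to each separately: each resulting naive prespectrum is already a positive $\Omega$-$G$-prespectrum with bottom structure map a group completion. Evaluating the triangle at $A=S^0$ then gives compatible weak $G$-equivalences of the bottom spaces (all equivalent to $X_1$); compatibility with the group completions forces a weak $G$-equivalence at level $1$, and the positive $\Omega$-structure propagates this to all levels $n\ge 1$. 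So the paper deduces the level equivalences \emph{from} the $\Omega$-spectrum structure rather than proving them directly.

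What each approach buys: yours is more self-contained and makes the level equivalence explicit at every stage, at the cost of unpacking bar-construction invariance. The paper's argument is shorter once Proposition~\ref{prop:MMO-prop.2.30} is in hand, since it only needs the level-$0$ comparison and lets the $\Omega$-structure do the rest; it also makes compatibility of the group completions immediate, since each arises from a separate application of Proposition~\ref{prop:MMO-prop.2.30} rather than by transport along equivalences.
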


\begin{proof}
	We prolong from $\Gamma_\TT$-$G$-spaces ($=\Gamma$-$G$-spaces) to $G\underline{\CW}_*^\TT$-$G$-spaces ($=\underline{\CW}_*$-$G$-spaces) via the functor $\PP_\TT$. This maps a $\Gamma_\TT$-$G$-space $X$ to the $G\underline{\CW}_*^\TT$-$G$-space $\PP_\TT X$ which evaluated at some $G$-trivial CW complex $A$ is given by $\PP_\TT X(A):=A^\bullet\otimes_\Gamma X$\footnote{In \cite{MMO25}, $A$ is taken to be a general $G$-CW complex, but we don't need this here since we restrict to spheres $S^n$ anyways.}. Applying $\PP_\TT(-)(A)=A^\bullet\otimes_\Gamma(-)$ to the second diagram in Remark \ref{rem:prolongation-and-bar-construction} with $\II=\TT$ and $\JJ=\II$ this yields the following commutative diagram
	\begin{center}
		\begin{tikzcd}
{ B(A^\bullet,\Gamma_\II,X)} \arrow[rr, "\cong"] \arrow[d]   &  & { A^\bullet\otimes_{\Gamma} B(\Gamma,\Gamma,X)} \arrow[rr, "A^\bullet\otimes_\Gamma\varepsilon"] \arrow[d, "A^\bullet\otimes_\Gamma\uu_\TT^\II\iota_*"'] &  & A^\bullet\otimes_\Gamma X \\
{B(A^\bullet,\Gamma_\II,\pp_\TT^\II X)} \arrow[rr, "\cong"'] &  & {A^\bullet\otimes_{\Gamma} \uu_\II^\JJ B(\Gamma_\JJ,\Gamma_\JJ,\pp_\II^\JJ X)} \arrow[rru, "A^\bullet\otimes_\Gamma\uu_\TT^\II\varepsilon"']             &  &                          
\end{tikzcd}
	\end{center}
	We use that $\pp_\TT^\II\uu_\TT^\II\cong \Id$ and $\PP_\TT\uu_\TT^\II\cong \UU_\TT^\II\PP_\II$. Using the comparison theorems we find that all spaces in the diagram above are $\TT$-special. Restricting to spheres $A=S^n$ we can then apply Proposition \ref{prop:MMO-prop.2.30}. For $A=S^1$ we get the group completion properties and for $A=S^0$ we see compatible weak $G$-equivalences. This implies that we have weak $G$-equivalences at level $1$ and therefore also on all levels $n\ge 1$ since we are comparing naive $\Omega$-$G$-prespectra.
\end{proof}

We conclude the proof of Theorem A by showing the following theorem which generalises \cite{MMO25}, Theorem 3.34.

\begin{Thm}
	Let $X$ be an $\II$-special $\Gamma_\II$-$G$-space and let $U$ be a compatible $G$-universe. Then the associated orthogonal $G$-spectrum $\sS^{G,U}_\II X$ is a positive $\Omega$-$G$-spectrum. Moreover, if $V^G\ne 0$, then the composite
	\[X_1\to B(\Gamma_\II,\Gamma_\II,X)_1=(\sS_\II^{G,U}X)(S^0)\to \Omega^V(\sS_\II^{G,U}X)(S^V)\]
	of $\eta$ and the structure $G$-map is a group completion.
\end{Thm}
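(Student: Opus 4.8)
The plan is to combine the two parts proven in the previous subsections: the delooping theorem plus the positive linearity theorem handle the first claim, and Proposition~\ref{prop:group-completions} handles the group completion claim. First I would establish that $\sS^{G,U}_\II X$ is a positive $\Omega$-$G$-spectrum. By definition $\sS^{G,U}_\II X = R_U^\II(b_\II X)$, so the assertion is exactly Theorem~\ref{thm:special-CW-space->-omega-G-spectrum} applied to the $G\underline{\CW}_*^\II$-$G$-space $Z = b_\II X$. To invoke that theorem I need $b_\II X$ to be positively linear and $\II$-special. Positive linearity is the content of the positive linearity theorem, Theorem~\ref{thm:positive-linearity-theorem}, which holds because $X$ is $\II$-special. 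That $b_\II X$ is $\II$-special means it preserves connectivity (the connectivity lemma, Lemma~\ref{lem:connectivity-lemma}) and its restriction to $\Gamma_\II$ is $\II$-special; the latter follows from Proposition~\ref{prop:bar-construction-approximation-of-X}, since $\varepsilon\colon B(\Gamma_\II,\Gamma_\II,X)\to X$ is an $\II$-level $G$-equivalence and $X$ is $\II$-special by hypothesis. The connectivity of the output spectrum in negative degrees then follows as noted after Theorem~\ref{thm:special-CW-space->-omega-G-spectrum}, so $\sS^{G,U}_\II X$ is a connective positive $\Omega$-$G$-spectrum.

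For the group completion claim, by Remark~\ref{rem:group-completions-genuine-to-naive} it suffices to check that the bottom structure map $(\sS_\II^{G,U}X)(S^0)\to \Omega(\sS_\II^{G,U}X)(S^1)$ is a group completion of $X_1$, since then the same holds for all $V$ with $V^G\ne 0$ using the factorization through $\Omega X_1$ and the fact that the second map in that factorization is a weak $G$-equivalence for a positive $\Omega$-$G$-spectrum. Next I would observe that the underlying levels $(\sS_\II^{G,U}X)(S^n) = B((S^n)^\bullet,\Gamma_\II,X)$ for $n\ge 0$ only see the underlying trivial representation spheres, so they coincide with $\sS_\TT^{G,\rr^\infty}(\uu_\TT^\II b_\II X)(S^n)$ evaluated at $S^n$. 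Writing $X$ as $\pp_\TT^\II X_0$ for $X_0 := \uu_\TT^\II X$ a $\TT$-special $\Gamma$-$G$-space (this uses the first comparison theorem, Theorem~\ref{thm:1st-comparison}, and the second comparison theorem to know $X_0$ is $\TT$-special), Proposition~\ref{prop:group-completions} tells us that the naive positive $\Omega$-$G$-prespectra $\sS_\TT^{G,\rr^\infty}X_0$ and $\sS_\TT^{G,\rr^\infty}\uu_\TT^\II b_\II \pp_\TT^\II X_0$ are level $G$-equivalent and that their bottom structure maps are compatible group completions of $G$-spaces weakly equivalent to $(X_0)_1 = X_1$. Chasing the commutative diagram in that proposition, and identifying the composite $X_1 \to B(\Gamma_\II,\Gamma_\II,X)_1 \to \Omega^V(\sS_\II^{G,U}X)(S^V)$ with the corresponding composite in the naive setting (via Remark~\ref{rem:prolongation-and-bar-construction} and Remark~\ref{rem:group-completions-genuine-to-naive}), I conclude that this composite is a group completion.

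\textbf{Main obstacle.} The delicate point is the bookkeeping in the second part: one must be careful that the map $\eta\colon X_1 \to B(\Gamma_\II,\Gamma_\II,X)_1$ and the bottom structure map, when composed, really match up with the group completion furnished by Proposition~\ref{prop:group-completions}, rather than merely being abstractly equivalent to it. This requires tracking the natural transformations through the chain $\Gamma \hookrightarrow \Gamma_\II$, the prolongation $\pp_\TT^\II$, the bar construction, and the second prolongation $\PP_\TT$ to $\TT$-$G$-CW complexes, and invoking the compatibility of the diagrams in Remark~\ref{rem:prolongation-and-bar-construction}; the subtlety, flagged in that remark, is that several of the intervening maps are only $\ff_\bullet^\TT$-level equivalences and \emph{not} $\ff_\bullet^\JJ$- or $\JJ$-level equivalences, so one must be sure that only the $\TT$-level (equivalently, the underlying naive) information is used when asserting the group completion property. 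Everything else is a routine assembly of results already proven in the preceding sections.
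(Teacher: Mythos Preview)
Your proposal is correct and follows essentially the same approach as the paper's proof: for the first claim you assemble Proposition~\ref{prop:bar-construction-approximation-of-X}, the positive linearity theorem (Theorem~\ref{thm:positive-linearity-theorem}), and Theorem~\ref{thm:special-CW-space->-omega-G-spectrum} exactly as the paper does; for the group completion claim you reduce to the underlying naive spectrum via Remark~\ref{rem:group-completions-genuine-to-naive} and invoke Proposition~\ref{prop:group-completions}, which is again precisely the paper's argument. The paper cites the fourth comparison theorem rather than the second to justify that the underlying $\Gamma$-$G$-space is $\TT$-special, but either route (or simply observing that $\TT\subset\II$ directly) works, and your identification of the diagram-chasing through Remark~\ref{rem:prolongation-and-bar-construction} as the only delicate point is apt.
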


\begin{proof}
	Since $X$ is $\II$-special $B(\Gamma_\II,\Gamma_\II,X)$ is $\II$-special using Proposition \ref{prop:bar-construction-approximation-of-X}. By the positive linearity theorem (Theorem \ref{thm:positive-linearity-theorem}) $b_\II X$ is positive linear and preserves connectivity. Hence, Theorem \ref{thm:special-CW-space->-omega-G-spectrum} implies the first part of the theorem. 
	
	Since the underlying $\Gamma$-$G$-space of $X$ is $\TT$-special, using the fourth comparison theorem (Theorem \ref{thm:4th-comparison-theorem}), we may apply Remark \ref{rem:group-completions-genuine-to-naive} and Proposition \ref{prop:group-completions} to obtain the above group completion.
\end{proof}


\section{Suspension and Eilenberg-MacLane spectra}\label{sec:suspension-E-ML-spectra}

The first two applications of our Segal machines are models for suspension and Eilenberg-MacLane spectra. Suspension $G$-spectra are almost never $\Omega$-$G$-spectra so that the $\Gamma_\II$-$G$-spaces modelling these are almost never special. We can still apply the functor $\hat{\sS}_\II^{G,U}$ to obtain orthogonal spectra which we can then compare with suspension $G$-spectra. In the case of Eilenberg-MacLane $G$-spectra we will only be able to model genuine Eilenberg-MacLane $G$-spectra for abelian $G$-groups. Hence, our incomplete machine gives no additional examples of Eilenberg-MacLane spectra over the machines constructed in \cite{MMO25} and \cite{GMMO19b} and only recovers those.

\subsection{Incomplete suspension spectra}\label{sec:suspension}

The key idea of this section is due to Segal (\cite{Seg74}, \S 2 and Proposition 3.6) but we will follow the modernised and equivariantly generalised exposition given in \cite{GMMO19b}, \S 6.1.

\begin{Def}
	Let $\Sigma_U^\infty\colon G\Topu_{\textup{wp}} \to \Sp^{G,U}$ be the $G\Top_*$ functor which maps a based $G$-space $A$ to the \textit{suspension spectrum} $\Sigma_U^\infty A$ of $A$ which is given by $\Sigma_U^\infty A(V)=\smash{\Sigma^V} A$ with evident structure maps.
\end{Def}

\begin{Rem}
	This functor has a right adjoint $\Omega_U^\infty\colon \Sp^{G,U}\to G\Topu_{\textup{wp}}$ called the \textit{$0$-th space functor} which maps an orthogonal $G$-spectrum $E$ indexed on $U$ to its zero space $E(0)$ (cf. \cite{MM02}, \S.I.6).
\end{Rem}

The following two constructions mimic and use notations from \cite{GMMO19b}, \S 6.1.

\begin{Constr}\label{constr:suspension-gamma-space}
	Let $X$ be a based $G$-space. Let $^\bullet X\colon\Gamma_\II\to G\Topu_{\textup{wp}}$ be the covariant $G\Top_*$-functor given by $^\bullet X(\textbf{n}_+^\alpha)=\textbf{n}_+^\alpha \land X$ on objects with evident induced maps. Note that there is an identification $\textbf{n}_+^\alpha \land X = (X^{\lor n})^\alpha$ by identifying a pair $[i,x]$ in the smash product with the element $x$ in the $i$-th copy of $X$ and noting that by definition the $G$-actions match. Evidently, we obtain a functorial assignment
	\[^\bullet(-)\colon G\Topu_*\to \Gamma_\II[G\Topu_*].\]
	
	Composing with the Segal machine functor, given a choice of compatible $G$-universe $U$, we obtain a functor
	\[G\Topu_* \overset{^\bullet(-)}{\longrightarrow} \Gamma_\II[G\Topu_*] \overset{\hat\sS_\II^{G,U}}{\longrightarrow} \Sp^{G,U}\]
	which we wish to compare to the suspension spectrum functor.
\end{Constr}

\begin{Constr}
	Let $Y$ be a based $G$-space. Define a contravariant $G\Top_*$-functor $Y^\bullet\colon\Gamma_\II\to G\Topu_{\textup{wp}}$ by setting $Y^\bullet(\textbf{n}_+^\alpha)= F(\textbf{n}_+^\alpha,Y)$ on objects with induced maps given by precomposition. This clearly defines a $G\Top_*$-functor. Note that we may identify $F(\textbf{n}_+^\alpha,Y)$ with $(Y^n)^\alpha$.
\end{Constr}

The following Lemma generalises Lemma 6.8 in \cite{GMMO19b} and uses essentially the same proof.

\begin{Lem}\label{lem:GMMO-6.8}
	Let $X$ and $Y$ be based $G$-spaces. Then there is a $G$-homeomorphism
	\[Y^\bullet \otimes_{\Gamma_\II} \text{}^\bullet X \cong Y\land X.\]
\end{Lem}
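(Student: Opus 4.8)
The plan is to unwind the definition of the coend $Y^\bullet \otimes_{\Gamma_\II} {}^\bullet X$ and identify it with $Y \wedge X$ via a direct $G$-homeomorphism, mirroring the proof of \cite{GMMO19b}, Lemma 6.8. Recall from Definition \ref{def:tensor-product-of-functors} that
\[Y^\bullet \otimes_{\Gamma_\II} {}^\bullet X = \int^{\textbf{n}_+^\alpha \in \Gamma_\II} F(\textbf{n}_+^\alpha, Y) \wedge (\textbf{n}_+^\alpha \wedge X),\]
the coequalizer of the two evaluation maps out of $\bigvee_{\textbf{m}_+^\beta, \textbf{n}_+^\alpha} F(\textbf{n}_+^\alpha, Y) \wedge \Gamma_\II(\textbf{m}_+^\beta, \textbf{n}_+^\alpha) \wedge (\textbf{m}_+^\beta \wedge X)$. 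The main work is to define a candidate $G$-map in each direction and check they are mutually inverse, $G$-equivariant, and continuous.

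First I would define the map $\Psi \colon Y^\bullet \otimes_{\Gamma_\II} {}^\bullet X \to Y \wedge X$. On a representing element $(f, [i, x])$ with $f \in F(\textbf{n}_+^\alpha, Y)$, $i \in \textbf{n}$ and $x \in X$, set $\Psi(f, [i,x]) := f(i) \wedge x$. One checks this respects the coequalizer relation: given $\phi \colon \textbf{m}_+^\beta \to \textbf{n}_+^\alpha$ in $\Gamma_\II$, the element $(f \circ \phi, [j, x])$ on one side and $(f, [\phi(j), x])$ on the other both map to $f(\phi(j)) \wedge x$ (with the convention that $f(0)$ is the basepoint, so basepoint elements are killed on both sides, consistently with the smash products involved). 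Hence $\Psi$ descends to the coend. For the inverse, I would use the inclusion of $\textbf{1}_+ = S^0$ into $\Gamma_\II$ (which lies in $\Gamma_\II$ by indexing system axiom (I1)): define $\Phi \colon Y \wedge X \to Y^\bullet \otimes_{\Gamma_\II} {}^\bullet X$ by $\Phi(y \wedge x) := [\, (y \colon \textbf{1}_+ \to Y),\ [1, x]\,]$, where $y$ denotes the based map $S^0 \to Y$ hitting $y$. Here $F(\textbf{1}_+, Y) \cong Y$ and $\textbf{1}_+ \wedge X \cong X$ canonically.

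Then I would verify $\Psi \circ \Phi = \id$ directly: $\Psi(\Phi(y \wedge x)) = y(1) \wedge x = y \wedge x$. For $\Phi \circ \Psi = \id$, take a representative $(f, [i,x])$ with $f \in F(\textbf{n}_+^\alpha, Y)$; let $\delta_i \colon \textbf{1}_+ \to \textbf{n}_+^\alpha$ be the based map sending $1 \mapsto i$, which is a morphism in $\Gamma_\II$. Then in the coend $(f, [i, x]) = (f, \delta_i([1,x])) \sim (f \circ \delta_i, [1, x]) = ((f(i)\colon \textbf{1}_+ \to Y), [1,x]) = \Phi(f(i) \wedge x) = \Phi(\Psi(f, [i,x]))$, using the coequalizer relation along $\delta_i$. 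So $\Phi \circ \Psi = \id$. Equivariance of $\Psi$ is immediate from the conjugation $G$-actions: $g$ acts on $f$ by $g.f(i) = g f(g^{-1} i)$ and on $[i,x]$ via the $\alpha$-twisted action, and one checks $\Psi(g.f, g.[i,x]) = g f(i) \wedge g x = g(f(i) \wedge x)$; equivariance of $\Phi$ is similarly routine. Continuity in both directions follows since all maps are built from evaluation, composition in $\Gamma_\II$, and quotient maps of coproducts, all of which are continuous in $G\Topu_*$. The only mild subtlety—and the step I expect to need the most care—is the bookkeeping of basepoints and the $\alpha$-twisted $G$-action under the identification $\textbf{n}_+^\alpha \wedge X \cong (X^{\vee n})^\alpha$ from Construction \ref{constr:suspension-gamma-space}, ensuring the $G$-actions genuinely match on both sides; but this is exactly the content already recorded in that construction and presents no real obstacle.
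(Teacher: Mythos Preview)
Your proof is correct and follows essentially the same approach as the paper's: both use the inclusions $\delta_i\colon \textbf{1}_+\to \textbf{n}_+^\alpha$ (the paper calls them $\iota_j$) to collapse every summand of the coend to the $\textbf{1}_+$-term via the coequalizer relation. Your write-up is more explicit—defining both maps $\Psi,\Phi$, checking both composites, and verifying equivariance—whereas the paper simply observes that each wedge summand $F(\textbf{n}_+^\alpha,Y)\wedge(\textbf{n}_+^\alpha\wedge X)$ gets identified with $Y\wedge X$ in the quotient; but the underlying argument is identical.
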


\begin{proof}
	The tensor product of functors is given by the quotient of the wedge of $G$-spaces indexed on finite $G$-sets $\textbf{n}_+^\alpha$ in $\Gamma_\II$ of the form
	\[F(\textbf{n}_+^\alpha,Y)\land (\textbf{n}_+^\alpha\land X).\tag{$*$}\]
	Note that the inclusions $\iota_j\colon \textbf{1}_+\to \textbf{n}_+^\alpha$ induce $G$-maps $\iota_j^*\colon F(\textbf{n}_+^\alpha,Y)\to F(\textbf{1}_+,Y)\cong Y$ and $\iota_{j*}\colon X\cong \textbf{1}_+\land X\to \textbf{n}_+^\alpha\land X$. For $\phi\in F(\textbf{n}_+^\alpha,Y)$ and $x\in X$ we obtain an identification of $[\phi,\iota_{j*}(x)]$ and $[\iota_j^*(\phi),x]$ in the quotient of the coequalizer. Hence, each term $(*)$ gets identified with $Y\land X$ after passing to the quotient showing the claim.
\end{proof}

Again, the following Lemma generalises Theorem 6.10 in \cite{GMMO19b} to work with $\Gamma_\II$ and uses essentially the same proof idea.

\begin{Lem}\label{lem:GMMO-6.10}
	Let $X$ and $Y$ be based $G$-spaces. Then there is a weak $G$-equivalence
	\[Y\land X \simeq B(Y^\bullet,\Gamma_\II,\text{}^\bullet X).\] 
\end{Lem}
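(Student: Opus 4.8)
The statement asserts a weak $G$-equivalence $Y \land X \simeq B(Y^\bullet, \Gamma_\II, {}^\bullet X)$. The bar construction $B(Y^\bullet, \Gamma_\II, {}^\bullet X)$ is the geometric realization of the simplicial $G$-space $B_*(Y^\bullet, \Gamma_\II, {}^\bullet X)$, and my plan is to exhibit the constant simplicial object on $Y \land X$ as a deformation retract, or more precisely to exhibit the augmentation map $\varepsilon \colon B(Y^\bullet, \Gamma_\II, {}^\bullet X) \to Y^\bullet \otimes_{\Gamma_\II} {}^\bullet X$ as a weak $G$-equivalence and then invoke Lemma \ref{lem:GMMO-6.8} to identify the target with $Y \land X$. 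The standard tool here is that the bar construction comes with extra degeneracies: there is an augmentation together with a simplicial homotopy (an ``extra degeneracy'' $s_{-1}$) on one side that witnesses $\varepsilon$ as a simplicial homotopy equivalence after one of the factors is suitably split off. First I would recall from \cite{MMO25}, \S 3.1--3.2 the general fact that for a $\V$-category $\E$ with a unit and well-behaved cofibrancy (which $\Gamma_\II$ satisfies by Remark \ref{rem:cofibrancy-assumptions-bar-construction}), the bar construction $B_*(Y, \E, X)$ admits an augmentation to $Y \otimes_\E X$ which is a simplicial homotopy equivalence whenever either $Y$ or $X$ is representable; and more generally, since every object of $\Gamma_\II$ is a finite wedge of orbits and ${}^\bullet X$, $Y^\bullet$ are built from the representables $\Gamma_\II(\textbf{n}_+^\alpha, -)$ and their duals smashed with $X$, resp.\ $Y$, the augmentation is still a levelwise equivalence.

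The cleanest route, mirroring the proof of Theorem 6.10 in \cite{GMMO19b}, is as follows. Step one: observe that ${}^\bullet X(\textbf{n}_+^\alpha) = \textbf{n}_+^\alpha \land X = \Gamma_\II(\textbf{1}_+, \textbf{n}_+^\alpha)_{\text{no}} \land X$ is, up to the based-set structure, a free/representable functor smashed with $X$ — more precisely $\textbf{n}_+^\alpha \land X \cong \bigvee$ over the non-basepoint elements, and the key point is that the covariant functor ${}^\bullet X$ is the left Kan extension along $\Gamma_\II \hookrightarrow \Gamma_\II$ of the functor sending $\textbf{1}_+ \mapsto X$, hence ``corepresentable up to $X$''. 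Step two: invoke the general principle (\cite{MMO25}, Lemma 3.7 and the surrounding discussion, or \cite{GMMO19b}, \S 6.1) that for such functors the bar construction collapses: $B(Y^\bullet, \Gamma_\II, {}^\bullet X)$ is simplicially homotopy equivalent to $Y^\bullet \otimes_{\Gamma_\II} {}^\bullet X$ via the augmentation, with the homotopy provided by the extra degeneracy coming from the unit $\textbf{1}_+ \to \textbf{1}_+$ (or rather the structure of ${}^\bullet X$ as an extended/free object). Step three: apply Lemma \ref{lem:GMMO-6.8} to rewrite $Y^\bullet \otimes_{\Gamma_\II} {}^\bullet X \cong Y \land X$, and conclude. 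Because the simplicial $G$-space is Reedy cofibrant in $G\Top_{\textup{wp}}$ by the cofibrancy assumptions (Remark \ref{rem:cofibrancy-assumptions-bar-construction}) and the augmentation is a simplicial homotopy equivalence, Theorem \ref{thm:MMO-thm-1.12} upgrades it to a weak $G$-equivalence on realizations, giving the asserted $Y \land X \simeq B(Y^\bullet, \Gamma_\II, {}^\bullet X)$.

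\textbf{Main obstacle.} The delicate point is verifying that the augmentation $\varepsilon \colon B(Y^\bullet, \Gamma_\II, {}^\bullet X) \to Y^\bullet \otimes_{\Gamma_\II} {}^\bullet X$ is a (simplicial, and in particular a levelwise weak) $G$-equivalence, i.e.\ that ${}^\bullet X$ is ``free enough'' over $\Gamma_\II$ for the standard extra-degeneracy argument to apply. Non-equivariantly and in the genuine case ($\II = \CC$) this is exactly the content of \cite{GMMO19b}, Theorem 6.10, where one uses that ${}^\bullet X(\textbf{n}_+) = \textbf{n}_+ \land X$ corepresents evaluation-at-$\textbf{1}_+$ up to the functor $X$; the incomplete case works identically once one knows $\textbf{1}_+ \in \Gamma_\II$, which holds by axiom (I1), and that $\Gamma_\II$ is closed under the relevant operations (wedges, subobjects), which we have from Lemma \ref{lem:inclusions-of-Pi_I} and the discussion of $\Gamma_\II$. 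I expect no genuinely new difficulty here beyond bookkeeping: the only thing to check is that all the finite $G$-sets appearing in the extra-degeneracy homotopy (which involve only $\textbf{1}_+$, the given $\textbf{n}_+^\alpha$'s, and their wedges) stay inside $\Gamma_\II$, which is immediate. I would therefore present the proof as: ``$B(Y^\bullet, \Gamma_\II, {}^\bullet X)$ is the realization of a Reedy cofibrant simplicial $G$-space whose augmentation to $Y^\bullet \otimes_{\Gamma_\II} {}^\bullet X$ is a simplicial homotopy equivalence by the extra-degeneracy argument of \cite{GMMO19b}, \S 6.1, adapted verbatim; by Theorem \ref{thm:MMO-thm-1.12} this is a weak $G$-equivalence, and Lemma \ref{lem:GMMO-6.8} identifies the target with $Y \land X$,'' then close.
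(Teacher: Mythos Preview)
Your proposal is correct and follows essentially the same route as the paper: identify ${}^\bullet X(\textbf{n}_+^\alpha) \cong \Gamma_\II(\textbf{1}_+, \textbf{n}_+^\alpha) \land X$, use the extra degeneracy induced by $\id_{\textbf{1}_+}$ to exhibit the augmentation $B(Y^\bullet, \Gamma_\II, {}^\bullet X) \to Y^\bullet \otimes_{\Gamma_\II} {}^\bullet X$ as a simplicial homotopy equivalence, and then apply Lemma~\ref{lem:GMMO-6.8}. One small redundancy: your appeal to Theorem~\ref{thm:MMO-thm-1.12} is unnecessary, since a simplicial homotopy equivalence realizes to a genuine $G$-homotopy equivalence without any Reedy cofibrancy hypothesis.
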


\begin{proof}
	There is a natural inclusion of $Y\land X\to B_0(Y^\bullet,\Gamma_\II,\text{ }^\bullet X)=\bigvee_{\textbf{n}_+\in\Gamma_\II} F(\textbf{n}_+^\alpha, Y)\land \textbf{n}_+^\alpha \land X$ which after geometric realization induces a natural $G$-map 
	\[\eta\colon Y\land X \to B(Y^\bullet,\Gamma_\II,\text{}^\bullet X).\] Using Lemma 3.7 in \cite{MMO25} there is a natural $G$-map $B(Y^\bullet,\Gamma_\II,\text{ }^\bullet X) \to Y^\bullet \otimes_{\Gamma_\II} \text{}^\bullet X$ and after applying Lemma \ref{lem:GMMO-6.8} this yields a $G$-map
	\[\zeta\colon  B(Y^\bullet,\Gamma_\II,\text{}^\bullet X) \to Y\land X.\]
	Clearly, $\zeta\circ\eta=\id$ so it remains to check that $\eta\circ\zeta\simeq \id$. Considering $Y\land X$ as a constant simplicial $G$-space providing a homotopy $\eta_*\circ\zeta_*\simeq \id_{Z_*}$ with
	\[Z_*:=B_*(Y^\bullet,\Gamma_\II,\text{}^\bullet X)\cong B_*(Y^\bullet,\Gamma_\II,\Gamma_\II(\textbf{1}_+,-)\land X)\]
	is equivalent to providing an \textit{extra degeneracy} map on $Z_*$ (cf. \cite{Shu09}, proof of Lemma 9.9). The isomorphism above follows from the fact that $\textbf{n}_+^\alpha\land X \cong \Gamma(\textbf{1}_+,\textbf{n}_+^\alpha)\land X$. The extra degeneracy map is induced by the map $\id_{\textbf{1}_+}\colon \textbf{1}_+\to \textbf{1}_+$ in $\Gamma_\II$. Note that $Z_q$ is the wedge indexed over $q+1$-tuples $(a_q,\dots,a_0)$ of elements in $\Gamma_\II$ where in each wedge summand the elements are of the form $(f,\phi_q,\dots,\phi_1,\psi,x)$ for $f\colon a_q\to Y$, $\phi_i\colon a_{i-1}\to a_{i}$, $\psi\colon \textbf{1}_+\to a_0$ and $x\in X$. Define
	\[s_{-1}\colon Z_q\to Z_{q+1} \ , \ (f,\phi_q,\dots,\phi_0,\psi,x)\mapsto (f,\phi_q,\dots,\phi_1,\psi,\id_{\textbf{1}_+},x).\]
	One then needs to check that for all $q\ge 0$ it holds that $d_0s_{-1}=\id_{Z_q}$, $d_{i+1}s_{-1}=s_{-1}d_i$ and $s_{j+1}s_{-1}=s_{-1}s_j$, but this is a routine verification.
\end{proof}

Using this we can now compare suspension $G$-spectra with our Segal machine output. This works in parallel to \cite{GMMO19b}, Theorem 6.7.

\begin{Thm}
	For based $G$-spaces $X$ there is a weak equivalence of orthogonal $G$-spectra indexed on a $G$-universe $U$
	\[\mu\colon \Sigma_U^\infty X\to \sS_\II^{G,U}\text{}^\bullet X.\]
\end{Thm}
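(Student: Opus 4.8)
The plan is to recognise $\sS_\II^{G,U}\,{}^\bullet X(V) = B((S^V)^\bullet,\Gamma_\II,{}^\bullet X)$ and to compare this with $\Sigma_U^\infty X(V) = S^V \land X$ levelwise. By Lemma \ref{lem:GMMO-6.10} applied with $Y = S^V$, there is a natural weak $G$-equivalence $\eta_V \colon S^V \land X \to B((S^V)^\bullet,\Gamma_\II,{}^\bullet X)$ for every $G$-representation $V$ in $U$. So the only thing to verify is that the maps $\eta_V$ assemble into a map of orthogonal $G$-spectra, i.e.\ that they commute with the morphism $G$-maps $\I_U(V,W) \to G\Top_*(-,-)$ and with the structure maps.

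First I would spell out why $\eta = (\eta_V)_V$ is a morphism of $\I_U$-$G$-spaces. The map $\eta_V$ is induced by the natural inclusion $S^V \land X \to B_0((S^V)^\bullet,\Gamma_\II,{}^\bullet X)$, and both the source functor $V \mapsto S^V \land X$ and the target functor $V \mapsto B((S^V)^\bullet,\Gamma_\II,{}^\bullet X)$ are obtained from the $G\Top$-functor $Y \mapsto$ (respectively) $Y \land X$ and $Y \mapsto B(Y^\bullet,\Gamma_\II,{}^\bullet X)$ by precomposing with one-point compactification $S^{(-)}\colon \I_U \to G\underline{\CW}_*^\II$ (here compatibility of $U$ and $\II$ guarantees $S^V \in G\underline{\CW}_*^\II$, by the corollary at the end of Section \ref{sec:F-G-CW-cxs}). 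Since $\eta$ from Lemma \ref{lem:GMMO-6.10} is natural in $Y$ as a $G\Top_*$-transformation of functors on $G\underline{\CW}_*^\II$, precomposing with $S^{(-)}$ yields a $G\Top$-natural transformation of $\I_U$-$G$-spaces.

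Next I would check compatibility with structure maps. The structure map of $\Sigma_U^\infty X$ is the canonical homeomorphism $(S^V \land X) \land S^W \cong S^{V\oplus W}\land X$, and the structure map of $\sS_\II^{G,U}\,{}^\bullet X$ is the one coming from Construction \ref{constr:the-functor-RUI}, namely the adjoint of $S^W \to F(S^V, S^V\land S^W) = F(S^V,S^{V\oplus W})$ composed with $b_\II({}^\bullet X)$ applied to $S^V \land S^W \cong S^{V\oplus W}$. Both are induced by the smash-with-$S^W$ operation, so naturality of $\eta$ with respect to the $G$-map $S^V \to S^V \land S^W$ in $G\underline{\CW}_*^\II$ (again using Lemma \ref{lem:GMMO-6.10}'s naturality) gives the required square. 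This makes $\mu := \eta$ a well-defined map of orthogonal $G$-spectra, and it is a levelwise weak $G$-equivalence by Lemma \ref{lem:GMMO-6.10}, hence a weak equivalence of orthogonal $G$-spectra indexed on $U$.

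The main obstacle, such as it is, is bookkeeping rather than mathematics: one must be careful that all the identifications $\textbf{n}_+^\alpha \land X \cong (X^{\lor n})^\alpha$, $F(\textbf{n}_+^\alpha, Y) \cong (Y^n)^\alpha$, and $Y^\bullet \otimes_{\Gamma_\II}{}^\bullet X \cong Y \land X$ from Lemma \ref{lem:GMMO-6.8} are genuinely natural in $Y$ over the category $G\underline{\CW}_*^\II$ (not merely $\Gamma_\II$), since that is what licences precomposition with $S^{(-)}$; and one should confirm that Lemma \ref{lem:GMMO-6.10} really produces a $G\Top_*$-\emph{natural} weak equivalence and not just one that exists for each $Y$ separately. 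Given that its proof is via an explicit extra-degeneracy argument with formulas manifestly natural in $Y$, this naturality is immediate, so the theorem follows essentially formally from Lemma \ref{lem:GMMO-6.10} together with the compatibility of $U$ with $\II$.
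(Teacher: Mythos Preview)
Your argument is correct, and it reaches the same conclusion as the paper, but the paper takes a shorter route. Rather than assembling the levelwise maps $\eta_V$ and verifying compatibility with the $\I_U$-action and the structure maps by hand, the paper constructs only the single level-$0$ map
\[
\eta\colon X \cong S^0 \land X \longrightarrow B((S^0)^\bullet,\Gamma_\II,{}^\bullet X) = (\sS_\II^{G,U}{}^\bullet X)_0
\]
from Lemma~\ref{lem:GMMO-6.10} and then defines $\mu$ as its adjoint under $\Sigma_U^\infty \dashv \Omega_U^\infty$. That adjunction produces a map of orthogonal $G$-spectra automatically, so all of your bookkeeping in the second and third paragraphs is absorbed into the universal property. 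Lemma~\ref{lem:GMMO-6.10} applied at each $Y=S^V$ then shows $\mu$ is a levelwise equivalence. Your approach has the advantage of making the level-$V$ component of $\mu$ explicit, but it costs you the structure-map verification; one small imprecision there is the phrase ``naturality of $\eta$ with respect to the $G$-map $S^V \to S^V \land S^W$'', since no such map exists --- what you actually need (and what your explicit formula for $\eta$ delivers) is compatibility of $\eta$ with the operation of smashing by a fixed space $S^W$, i.e.\ commutativity of $\eta_{Y\land S^W}$ with $\eta_Y \land \id_{S^W}$ followed by the canonical map $B(Y^\bullet,\Gamma_\II,{}^\bullet X)\land S^W \to B((Y\land S^W)^\bullet,\Gamma_\II,{}^\bullet X)$.
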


Note that a weak equivalence of orthogonal $G$-spectra (as defined in \cite{MM02}, Definition III.3.2) is the same thing as a level $G$-equivalence (\cite{MM02}, Lemma III.3.3 and Theorem III.3.4).

\begin{proof}
	Setting $Y=S^0$ in Lemma \ref{lem:GMMO-6.10} yields a $G$-map
	\[\eta\colon X\cong S^0\land X \to B((S^0)^\bullet,\Gamma_\II,\text{}^\bullet X)=(\sS_\II^{G,U}\text{}^\bullet X)_0.\]
	Let $\mu$ be the adjoint under the adjunction between $\Sigma_U^\infty$ and $\Omega_U^\infty$. By Lemma \ref{lem:GMMO-6.10} this is a levelwise $G$-homotopy equivalence.
\end{proof}

\subsection{Eilenberg-MacLane spectra}\label{sec:Eilenberg-Mac-Lane}

\begin{Def}
	An \textit{abelian $G$-group} is an abelian group $(A,+,0)$ together with a left $G$-action on $A$ such that $g(a+b)=ga+gb$ and $g0=0$ for all $g\in G, a\in A$.
\end{Def}

\begin{Ex}\label{ex:EMLS-counterex-1}
	Any abelian $G$-group $M$ has all transfer maps. Let $H/K$ be some $H$-orbit with a choice of coset representatives $r_1,\dots,r_n$ with $n=[H:K]$. Define the internal transfer map $t_K^H\colon M^K\to M^H$ by setting
	\[t_K^H(m):=r_1m+\dots+r_nm.\]
	Let $\rho\colon H\to \Sigma_n$ be the permutation representation so that for $h\in H$ it holds that $hr_i=r_{\rho(h)(i)}k_i$ for some $k_i\in K$. For some $m\in M^K$ we thus have that $t_K^H(m)$ is indeed an $H$-fixed point of $M$ $h(r_1m+\dots+r_nm) = r_{\rho(h)(1)}k_1m+\dots +r_{\rho(h)(n)}k_nm = r_1m+\dots+r_nm$ using that the $G$-action is strictly associative, strictly commutative and that $m\in M^K$. In particular, $M$ induces a \textit{complete} Mackey functor $\underline{M}^{\textup{fix}}$ given by $\underline{M}^{\textup{fix}}(G/H)=M^H$ with restriction maps being inclusions of fixed points and above transfer maps.
\end{Ex}

\begin{Def}
	A complete Mackey functor $\underline{M}$ is called a \textit{fixed point Mackey functor} if there exists an abelian $G$-group $N$ in $G\Top_*$ such that $\underline{N}^{\textup{fix}}\cong \underline{M}$.
\end{Def}

For fixed point Mackey functors we can define associated $\Gamma$-$G$-spaces. This works completely analogously to \cite{Seg74}.

\begin{Constr}\label{constr:Eilenberg-Mac-Lane-Gamma}
	Let $M$ be an abelian $G$-group in $G\Top_{\textup{wp}}$. Define a $\Gamma$-$G$-space $X_M:\Gamma\to G\Top_*$ as follows: On objects set $X_M(\textbf{n}_+)=M^n$ with diagonal $G$-action. On morphisms send a map $\phi\colon\textbf{m}_+\to \textbf{n}_+$ in $\Gamma$ to the map $X_M(\phi)$ which is given by 
	\[M^n\ni(a_1,\dots,a_n) \mapsto (b_1,\dots,b_m)\in M^m \textup{ with } b_i=\sum_{j\in \phi^{-1}(i)}a_j.\]
	If $\phi^{-1}(i)=\emptyset$ then we set $b_i=0$. Since $M$ is well pointed the cofibration condition of $\Gamma$-$G$-spaces is easily verified. The $\Sigma_n$-action on $M^n$ for $\sigma\in \Sigma_n$ is given by the induced map $M^n\to M^n$ via $(x_1,\dots,x_n)\mapsto (x_{\sigma^{-1}(1)},\dots,x_{\sigma^{-1}(n)})$. The Segal maps $\delta\colon M^n\to M^n$ are induced by the maps $\delta_i\colon M^n\to M$ which map $(x_1,\dots,x_n)$ to $x_i$ and are thus identities on $M^n$. In particular, $X_M$ is an $\ff_\bullet^\CC$-special $\Gamma$-$G$-space and by prolongation $\pp_\TT^\CC X_M$ is an $\CC$-special $\Gamma_G$-$G$-space.
\end{Constr}

In the complete case the Segal machine $\sS_\CC^{G,U}$ is the same as the machine constructed in \cite{MMO25} which in turn is equivalent to the machine constructed in \cite{GMMO19b}. There it was shown that the $\Gamma$-$G$-space $X_M$ yields an Eilenberg-MacLane $G$-spectrum implying the following proposition.

\begin{Prop}
	The associated genuine $G$-spectrum $\sS^{G,U_c}_\CC\pp_\TT^\CC X_M$ is an Eilenberg-MacLane $G$-spectrum. 
\end{Prop}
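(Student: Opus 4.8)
The plan is to reduce the claim to known facts, since by construction the complete Segal machine $\sS_\CC^{G,U}$ agrees (up to the equivalences of input categories established in Section~\ref{sec:comparison}) with the machines of \cite{MMO25} and \cite{GMMO19b}. So the real work is to identify the orthogonal $G$-spectrum $\sS_\CC^{G,U_c}\pp_\TT^\CC X_M$ as an Eilenberg--MacLane $G$-spectrum, i.e.\ to show that its equivariant homotopy Mackey functor $\underline\pi_*$ is concentrated in degree $0$ and equals $\underline N^{\textup{fix}}$ there.

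First I would recall from Construction~\ref{constr:Eilenberg-Mac-Lane-Gamma} that $X_M$ is $\ff_\bullet^\CC$-special, hence by the fourth comparison theorem (Theorem~\ref{thm:4th-comparison-theorem}) and the third comparison theorem its prolongation $\pp_\TT^\CC X_M$ is a $\CC$-special $\Gamma_G$-$G$-space; thus Theorem~A applies and $\sS_\CC^{G,U_c}\pp_\TT^\CC X_M$ is a positive connective $\Omega$-$G$-spectrum. Next, by the group completion part of Theorem~A, the bottom space $(\sS_\CC^{G,U_c}\pp_\TT^\CC X_M)(S^0)$ receives a group completion from a $G$-space weakly equivalent to $(X_M)_1 = M$; but $M$ is already group-like, being an abelian $G$-group, so this group completion is a weak $G$-equivalence and the $0$-th space is weakly $G$-equivalent to $M$. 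Then I would invoke Proposition~\ref{prop:Mackey-functor-F_bullet^I-special} (or \ref{prop:Mackey-functor-I-special}): $\underline\pi_0$ of the spectrum is $\underline\pi_0(M) = \underline{\pi}_0(M^H)_{H} = \underline M^{\textup{fix}} = \underline N^{\textup{fix}}$ as a (complete) Mackey functor, the transfer maps being exactly the ones described in Example~\ref{ex:EMLS-counterex-1}, which match the Segal-machine transfers of Construction~\ref{constr:transfer-F_bullet-I-special-Gamma-J} by the argument of \cite{GMMO19b}.

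It remains to see that all higher homotopy Mackey functors vanish. Here I would use the delooping/linearity analysis: since $X_M$ sends a based set $\textbf{n}_+$ to $M^n$ and all Segal maps are literally identities, the associated $G\underline{\CW}_*^\CC$-$G$-space $b_\CC X_M$ evaluated on a $G$-CW complex $A$ is, up to weak $G$-equivalence, the reduced ``$M$-labelled configuration'' construction $A\land M$-type space; more precisely one compares $B(A^\bullet,\Gamma_G,{}^{\bullet}\text{-free part})$ with the standard bar model computing reduced equivariant homology with coefficients in $N$, exactly as in \cite{GMMO19b}. Concretely, for each subgroup $H$ the fixed points $(\sS_\CC^{G,U_c}\pp_\TT^\CC X_M)(S^V)^H$ compute the reduced homology of $(S^V)^H$ with coefficients in the Mackey functor $\underline N^{\textup{fix}}$, forcing $\pi_k = 0$ for $k\neq 0$. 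I expect the main obstacle to be precisely this last identification of the higher homotopy groups: one must carefully match the Segal-machine bar construction applied to $X_M$ with the classical Dold--Thom / Eilenberg--MacLane model, and ensure the comparison is equivariantly natural across all fixed-point categories. Since \cite{GMMO19b} carries this out in the genuine case and our $\sS_\CC^{G,U_c}$ coincides with their machine on the relevant (equivalent) input, the statement follows; the incomplete universe plays no role here because an Eilenberg--MacLane spectrum of a fixed-point Mackey functor is in particular defined on the complete universe and restricts compatibly.
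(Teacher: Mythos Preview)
Your proposal is correct and takes essentially the same approach as the paper: reduce to the complete case and invoke \cite{GMMO19b}. The paper in fact gives no formal proof at all; it simply remarks in the sentence preceding the proposition that the complete Segal machine $\sS_\CC^{G,U}$ agrees with the machines of \cite{MMO25} and \cite{GMMO19b}, and that the latter reference already establishes that $X_M$ yields an Eilenberg--MacLane $G$-spectrum. Your write-up supplies considerably more detail (specialness, group completion, identification of $\underline\pi_0$, vanishing of higher homotopy) than the paper does, but the logical content is the same citation.
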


\begin{Rem}
	There is no evident way to nuance a set of incomplete transfers into a space $M$. Strict commutativity already enforces the existence of all transfer maps. The problem is intrinsic to the category of $G$-spaces. A possible solution is to consider $G$-coefficient systems instead of $G$-spaces which are equivalent. In \cite{HH16} it is shown that the category of $G$-coefficient systems is able to host incomplete monoid structures. Hence one would need to define a Segal machine where the input are not $\Gamma$-$G$-spaces but instead \textit{$\Gamma$-$G$-coefficient systems}, that is, functors $\Gamma\to\textsf{Coef}(\Top_*)$ with an appropriate Segal condition. The output category would then be a category of incomplete spectral Mackey functors. We intend to return to this issue in a subsequent paper.
	
	 It should be noted that \cite{BO15} define a machine taking Mackey functors of permutative categories to spectral Mackey functors, hence allowing to construct genuine Eilenberg-MacLane spectra from arbitrary complete Mackey functors. 
\end{Rem}


\section{\texorpdfstring{$K$}{K}-theory of normed permutative categories}\label{sec:K-theory}

In this section we will generalise algebraic $K$-theory of permutative categories as originally defined by \cite{Seg74} and \cite{May78} to the equivariant setting. Given a $G$-indexing system $\II$ and an $\II$-normed permutative category $\A$ in the sense of \cite{Rub25} internal to $G\Top_*$ we will define a $\Gamma_\II$-$G$-category $\overline{\A}$ which induces an $\II$-special $\Gamma_\II$-$G$-space $B\overline{\A}$. The associated positive connected $\Omega$-$G$-spectrum $\sS_\II^{G,U}|\overline{\A}|$ then gives rise to the \textit{$K$-theory $G$-spectrum} of $\A$.

\subsection{Normed permutative categories.}\label{sec:I-NPCs}

We define $\II$-normed permutative $G$-categories in the sense of \cite{Rub25} in the internal setting. Let $\V$ be a category with all finite limits and colimits. We follow notations and conventions of \textit{$\V$-internal categories} or \textit{categories internal to $\V$} in \cite{GMMO18}. The following definition is an equivariant generalisation of \cite{May78}, Definition 1, stated for example in \cite{GM17}, \S 4.1.

\begin{Def}
	A \textit{$G\V$-internal permutative $G$-category} is a $G\V$-internal category $\A$ together with a $G\V$-internal functor $\oplus\colon\A\times\A\to\A$, an object $0\in \A$, and a $G\V$-internal natural isomorphism $\beta\colon \oplus\Rightarrow\oplus$ with $\beta_{A,A'}\colon A\oplus A'\to A'\oplus A$ such that for all $A,A',A''\in\ob\A$
	\begin{enumerate}[(P1)]
		\item $(A\oplus A')\oplus A''=A\oplus (A'\oplus A'')$ (strict associativity) and $A \oplus 0=A=0\oplus A$ (strict unitality) as well as 
		\item the following diagrams commute:
		\begin{center}
			\begin{tikzcd}[column sep = 2mm]
				A\oplus 0 \arrow[rr, "\beta"] \arrow[rd, Rightarrow, no head] &   & 0\oplus A \arrow[ld, Rightarrow, no head] \\
                                                             & A &                     
			\end{tikzcd}
			\hspace{1cm}
			\begin{tikzcd}[column sep = 0mm]
				A\oplus A' \arrow[rd, "\beta_{A,A'}"'] \arrow[rr, Rightarrow, no head] &                               & A\oplus A' \\ & A'\oplus A \arrow[ru, "\beta_{A',A}"'] &          
			\end{tikzcd}
		\end{center}
		\begin{center}
			\begin{tikzcd}[column sep = 0mm]
				A\oplus A'\oplus A'' \arrow[rd, "{\id_A\oplus \beta_{A',A''}}"'] \arrow[rr, "{\beta_{A\oplus A',A''}}"] &                                                            & A''\oplus A\oplus A' \\
				& A\oplus A''\oplus A' \arrow[ru, "{\beta_{A,A''}\oplus \id_{A'}}"'] &                  
			\end{tikzcd}
		\end{center}
	\end{enumerate}
	A map between $G\V$-internal permutative categories is a $G\V$-internal functor $F\colon\A\to\B$ which is strict symmetric monoidal, that is, $F(0_\A)=0_\B$, $F\circ\oplus = \oplus \circ(F\times F)$, and $F(\beta_{A,A'})=\beta_{F(A),F(A')}$. This makes $G\V$-internal permutative $G$-categories into a category $G\textsf{PC}(\V)$.
\end{Def}

\begin{Rem}
	Note that in the case of $\V=G\Cat$, the category of small categories, by \cite{GM17}, Proposition 4.2 the category $G\textsf{PC}(\Cat)$ is equivalent to the category of $\mathscr{P}$-algebras where $\mathscr{P}$ denotes the categorial Barratt-Eccles operad.
\end{Rem}

Define the \textit{standard $n$-fold sum} in a $G\V$-internal permutative $G$-category $\A$ inductively by setting $\bigoplus_0()=0$, $\bigoplus_1(A)=A$, and $\bigoplus_{n+1}(A_1,\dots,A_{n+1})=\bigoplus_n(A_1,\dots,A_n)\oplus A_{n+1}$ as in \cite{Rub25}, Definition 2.1. Let $T\cong \textbf{n}^\sigma$ be a finite $H$-set in some indexing system $\II$ with permutation representation $\sigma\colon H\to \Sigma_n$. Any $G\V$-internal category $\C$ naturally forgets to an $H\V$-internal category for any subgroup $H\subset G$.

\begin{Def}
	 Let $\C$ be a $G\V$-internal category. Define an $H\V$-internal category $\C^{\times T}$. This is the $n$-fold cartesian power of the $H\V$-internal category $\C$ with twisted action given on objects by
\[h(C_1,\dots,C_n)=(hC_{\sigma(h)^{-1}(1)},\dots,hC_{\sigma(h)^{-1}(n)})\]
and similarly on morphisms (cf. \cite{Rub25}, Definition 2.2). This is again an $H\V$-internal category with $\ob (\C^{\times T})=(\ob \C)^{\times T}$ and $\mor (\C^{\times T})=(\mor \C)^{\times T}$. We may use both the notations $\C^{\times T}$ and $(\C^{\times n})^\sigma$ interchangeably. Let $\C$ be a $G\V$-internal category. An \textit{external $T$-norm} is an $H\V$-internal functor
	\[\bar{t}_T\colon \C^{\times T}\to \C.\]
\end{Def}

\begin{Rem}
	Similarly to how external transfers of spaces give rise to internal transfers also external $H/K$-norms give rise to internal norms which are $\V$-functors $t_H^K\colon \C^K\to \C^H$. The construction is virtually the same as for $G$-spaces as done in Lemma \ref{lem:external->internal-transfer}.
\end{Rem}

\begin{Def}[\cite{Rub25}, Definition 2.3]
	A \textit{set of exponents} is an indexed set $\II=(T_i)_{i\in I}$ such that for each $i\in I$, the element $T_i$ is a finite ordered $H_i$-set for some subgroup $H_i\subset G$, depending on $i\in I$.
\end{Def}

This is the most general starting point for the following definition. The case of most interest is when $\II$ is an indexing system with a choice of ordering. We can now define the internal version of $\II$-normed permutative categories defined originally in \cite{Rub25}, Definition 2.3, remark after Example 7.13.

\begin{Def}\label{def:I-NPC}
	Let $\II$ be a set of exponents. An \textit{$G\V$-internal $\II$-normed permutative category} is a $G\V$-internal permutative category $\A$ together with a set of \textit{external $T$-norms} $\bigoplus_T\colon \A^{\times T}\to \A$ for every finite $H$-set $T$ in $\II$ and \textit{$T$-untwistors} which are (not necessarily $H$-equivariant) $\V$-internal natural isomorphisms 
	\[v_T\colon \textstyle\bigoplus_T(A_1,\dots,A_{|T|})\to \textstyle\bigoplus_{|T|}(A_1,\dots,A_{|T|})\] 
	for every finite $H$-set $T$ in $\II$ and tuples $(A_1,\dots,A_n)$ in $\A^{\times T}$ such that
	\begin{enumerate}[(P1)]\setcounter{enumi}{2}
		\vspace{-1mm}\item for every $H\subset G$ and finite $H$-set $T\cong\textbf{n}^\sigma$ with $\sigma\colon H\to\Sigma_n$ in $\II(H)$ and $h\in H$ the following \textit{twisted equivariance diagram} commutes
		\begin{center}
			\begin{tikzcd}
				{h\bigoplus_T(A_1,\dots,A_{n})} \arrow[dd, "hv_{T}"'] \arrow[rr, "\id"] &  & {\bigoplus_T(hA_{\sigma(h)^{-1}(1)},\dots,A_{\sigma(h)^{-1}(n)})} \arrow[d, "v_T"]                \\
				&  & {\bigoplus_{n}(hA_{\sigma(h)^{-1}(1)},\dots,A_{\sigma(h)^{-1}(n)})} \arrow[d, "\sigma(h)^{-1}"] \\
				{h\bigoplus_{n}(A_1,\dots,A_{n})} \arrow[rr, "\id"'] &  & {\bigoplus_{n}(hA_1,\dots,hA_{n})}                                                            
			\end{tikzcd}
		\end{center}
		where $\sigma(h)^{-1}$ denotes the canonical isomorphism for the $G\V$-internal permutative category $\A$ which permutes the factors of $\bigoplus_{|T|}$ by $\sigma(h)^{-1}$, and
		\item for any object $C$ in $\C$ and finite $H$-set $T$ the untwistor 
		\[v_T\colon \textstyle\bigoplus_T(0,\dots,C,0,\dots,0)\to \textstyle\bigoplus_{|T|}(0,\dots,C,0,\dots,0)=C\]
		with $C$ in some spot $1\le i\le |T|$ is the identity map and similarly the untwistior $v_T\colon \textstyle\bigoplus_T(0,\dots,0) \to \textstyle\bigoplus_{|T|}(0,\dots,0)=0$ is also the identity map. For $|T|=1$, the first map is understood to be $\id_C=v_T\colon \bigoplus_T(C)\to C$, and for $|T|=0$, the second map is understood to be $\id_*=v_T\colon \bigoplus_T()\to 0$.
	\end{enumerate}
	
	A map of $G\V$-internal $\II$-normed permutative $G$-categories is a map $F\colon \A\to \B$ of underlying $G\V$-internal permutative $G$-categories such that for any finite $G$-set $T$ in $\II$ and $(A_1,\dots,A_n)$ in $\A^{\times T}$
\[F(\textstyle\bigoplus_T^\A(A_1,\dots,A_n)) \cong  \textstyle\bigoplus_T^\B(F(A_1),\dots,F(A_n)).\]

	Denote the category of $G\V$-internal $\II$-normed permutative $G$-categories by $G\textsf{NPC}^\II(\V)$.
\end{Def}

\begin{Rem}
	Note that in the case of $\V=G\Cat$, the category of small categories, by \cite{Rub25}, Theorem 7.11 and Example 7.13 the category $G\textsf{NPC}^\II(\Cat)$ is equivalent to the category of $\mathscr{P_\II}$-algebras where $\mathscr{P}_\II$ denotes the $\II$-permutativity operad as defined in \cite{Rub21b}, Definition 7.3. Since small $G$-categories are just $\Set$-internal categories they are also $\Top$-internal with discrete topology. Hence, $\mathscr{P}_\II$-algebras deliver a large class of examples.
\end{Rem}

By \cite{GMMO18} Remark 1.6 all of the above concepts also work in the based case.

\begin{Rem}
	Note that if $\II=\TT$ is the trivial indexing system, then $G\textsf{NPC}^\TT(\V)=G\textsf{PC}(\V)$. As before, $G\textsf{NPC}^\II(\V)$ naturally admits the structure of a $2$-category with respective notions of $2$-morphisms similarly to \cite{Rub25}, Definition 2.6.
\end{Rem}

\begin{Ex}\label{ex:I-NPC-from-abelian-G-group}
	Let $A$ be an abelian $G$-group. Consider $A$ as a discrete $G\Top_*$-internal category with $\ob A=A$ with base point being the unit $0$ and $\mor A\cong \ob A$ with only identity morphisms. First of all, this is permutative with respect to addition in $A$ and with twist being the identity. Furthermore, this is a $\CC$-NPC with external norms defined as follows: Define $+_T\colon A^{\times T}\to A$ by setting $t_T(x_1,\dots,x_n)=x_1+\dots+x_n$ and set $v_T=\id$. Since $A$ is abelian this map is $H$-equivariant and in this case the untwistors are also $H$-equivariant.
\end{Ex}

\subsection{\texorpdfstring{$\Gamma_\II$}{GammaI}-\texorpdfstring{$G$}{G}-categories}\label{sec:Gamma_I-G-cats}

\begin{Rem}\label{rem:V-enrichment-of-Cat(V)}
	If $\V$ is a finite limit complete cartesian closed category then so is the category of $\V$-internal categories $\Cat(\V)$ by \cite{Joh02}, remark after Lemma B.2.3.15. Hence, there is a self-enrichment of $\Cat(\V)$. Using the adjunction $i\colon \V\leftrightarrows \Cat(\V):\ob$ from Remark 1.8 in \cite{GMMO18} we can forget from an $\Cat(\V)$-enrichment of $\Cat(\V)$ to an $\V$-enrichment.
\end{Rem}

Henceforward we can consider $\Cat(G\Top_*)$ as a $G\Top_*$-enriched category. Denote the space of $G\Top_*$-internal functors $F\colon \C\to \D$ with conjugation $G$-action on object and morphism maps $\ob F\colon \ob\C\to\ob\D$ and $\mor F\colon \mor\C\to\mor\D$ by $\Fun(\C,\D)$. In the $G\Top_*$-enrichment of $\Cat(G\Top_*)$ the enriched functor $G$-spaces are precisely described by $\Fun(\C,\D)$. Denote by $\Fun_G(\C,\D)$ the subspace of $G$-equivariant internal functors.

\begin{Def}
	A \textit{$\Gamma_\II$-$G$-category} is a $G\Top_*$-enriched functor $\mathscr{X}\colon\Gamma_\II\to \Cat(G\Top_*)$. Unravelling definitions, a morphism of $\Gamma_\II$-$G$-categories is a $G\Top_*$-enriched natural transformation. Denote the category of $\Gamma_\II$-$G$-categories by $\Gamma_\II[\Cat(G\Top_*)]$.
\end{Def}

Let $B\colon\Cat(G\Top_*)\to G\Top_*$ be the realization functor. Then any $\Gamma_\II$-$G$-category $\mathscr{X}$ gives rise to a functor $B\circ\mathscr{X}\colon\Gamma_\II\to G\Top_*$. A priori this needs not to fulfil the additional cofibrancy condition endowed on $\Gamma_\II$-$G$-spaces. There are two ways around this. Either cofibrantly replace via bearding of functors as in Proposition 2.12 in \cite{MMO25}. Or we assume that the condition after realization must be fulfilled. Via bearding this results in no loss of generality.

\begin{Def}
	A $\Gamma_\II$-$G$-category $\mathscr{X}$ is \textit{nice} if $B\mathscr{X}$ is a $\Gamma_\II$-$G$-space.
\end{Def}

We are interested in $\Gamma_\II$-$G$-categories fulfilling a \textit{`specialness'} condition similar to $\Gamma_\II$-$G$-spaces which passes via $B$ to the usual specialness condition on $\Gamma_\II$-$G$-spaces. To do so recall that the Segal maps were simply defined via passing through adjunctions starting from the Kronecker delta map $T_+\land T_+\to \textbf{1}_+$. Since $\Cat(G\Top_*)$ is cartesian closed (see Remark \ref{rem:V-enrichment-of-Cat(V)}) we can do the same thing to obtain Segal $G$-functors
\[\delta\colon \mathscr{X}(T_+)\to \mathscr{X}(\textbf{1}_+)^{\times T}\]
for any finite $G$-set $T$ in $\II$.

\begin{Def}
	Let $\JJ\subset \II$ be an indexing system. A $\Gamma_\II$-$G$-category $\mathscr{X}$ is called \textit{$\JJ$-special} if the Segal $G$-functor $\delta\colon \mathscr{X}(T_+)\to \mathscr{X}(\textbf{1}_+)^{\times T}$ is an equivalence of internal $G$-categories for all $T_+\in \Gamma_\II$
\end{Def}

\begin{Ex}
	Let $A$ be an abelian $G$-group. By Example \ref{ex:I-NPC-from-abelian-G-group} $A$ is an $\II$-NPC. Define a $\Gamma_\CC$-$G$-category by setting $A_n=A^n$ on objects and mapping a morphism $\phi\colon \textbf{n}_+^\alpha\to \textbf{m}_+^\beta$ to $\phi_*\colon A^n\to A^m$ which maps $(a_1,\dots,a_n)$ to $(b_1,\dots,b_m)$ with $b_i=\sum_{j\in \phi^{-1}(i)}a_j$. By Construction \ref{constr:Eilenberg-Mac-Lane-Gamma} this is $\CC$-special since the Segal maps are identity maps.
\end{Ex}

Of course we can analogously define an $\ff_\bullet^\II$-special condition on an $\Gamma$-$G$-category by requiring that the Segal $G$-functors $\delta$ are $\Lambda_\alpha$-equivalences of $G\times\Sigma_n$-categories. Here the $\Sigma_n$-action on $\mathscr{X}(\textbf{n}_+)$ is analogously induced by maps $\sigma\colon \textbf{n}_+\to\textbf{n}_+$ in $\Gamma$.

\begin{Rem}\label{rem:eequiv-of-cats->homotopy-equiv}
	Note that any $G\Top_*$-internal natural transformation $\tau\colon F\Rightarrow F'$ gives rise to a homotopy $BF\simeq BF'$. The proof works just as in the classical case (cf. \cite{Ric20}, Theorem 11.2.4). In particular, a $G\Top_*$-internal equivalence of $G\Top_*$-internal categories $\C\simeq \D$ gives rise to a $G$-homotopy equivalence of $G$-spaces $B\C \simeq B\D$.
\end{Rem}
 
 \begin{Prop}\label{prop:Gamma-cat->Gamma-space}
 	Realizing a nice $\II$-special $\Gamma_\II$-$G$-category $\mathscr X$ yields an $\II$-special $\Gamma_\II$-$G$-space $B\mathscr{X}$.
 \end{Prop}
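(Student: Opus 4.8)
The plan is to verify the two defining properties of an $\II$-special $\Gamma_\II$-$G$-space for $B\mathscr{X}$: first that $B\mathscr{X}$ is a genuine $\Gamma_\II$-$G$-space (i.e.\ a $G\Top_*$-functor $\Gamma_\II\to G\Topu_*$ whose underlying $\Gamma$-$G$-space satisfies the cofibrancy condition), and second that the Segal maps $\delta\colon (B\mathscr X)(T_+)\to (B\mathscr X)(\textbf{1}_+)^{\times T}$ are weak $G$-equivalences for all $T_+\in\Gamma_\II$. The first point is essentially immediate: $B\colon\Cat(G\Top_*)\to G\Top_*$ is a $G\Top_*$-enriched functor (it is built from realization, which preserves the self-enrichments), so the composite $B\circ\mathscr X\colon\Gamma_\II\to G\Top_*$ is a $G\Top_*$-functor; the cofibrancy condition holds precisely because $\mathscr X$ is assumed \emph{nice}, which by definition means $B\mathscr X$ is a $\Gamma_\II$-$G$-space. (Alternatively one invokes the bearding discussion to arrange niceness without loss of generality.)

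For the second point I would argue as follows. By Remark~\ref{rem:eequiv-of-cats->homotopy-equiv}, applying $B$ to a $G\Top_*$-internal equivalence of $G\Top_*$-internal categories yields a $G$-homotopy equivalence of $G$-spaces, and in particular a weak $G$-equivalence. Since $\mathscr X$ is $\II$-special, for every finite $G$-set $T$ in $\II$ the Segal $G$-functor $\delta\colon\mathscr X(T_+)\to\mathscr X(\textbf{1}_+)^{\times T}$ is an equivalence of internal $G$-categories, so $B\delta$ is a weak $G$-equivalence. It then remains to identify $B\delta$ with the Segal map of the $\Gamma_\II$-$G$-space $B\mathscr X$. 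This follows because the Segal maps on both the categorical and the topological side are defined by exactly the same formal recipe — passing through the adjunctions associated to the Kronecker delta map $T_+\wedge T_+\to\textbf{1}_+$, using cartesian closedness of $\Cat(G\Top_*)$ (Remark~\ref{rem:V-enrichment-of-Cat(V)}) and of $G\Topu_*$ respectively — and $B$ is a product-preserving enriched functor, so it commutes with all of these adjunction manipulations; concretely, the canonical comparison $B(\mathscr X(\textbf{1}_+)^{\times T})\to (B\mathscr X(\textbf{1}_+))^{\times T}$ is a $G$-homeomorphism since $B$ preserves finite products and $T$ is finite, and the triangle relating $B\delta$, this comparison, and the Segal map of $B\mathscr X$ commutes by naturality. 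Hence the Segal map of $B\mathscr X$ at $T_+$ is a weak $G$-equivalence for every $T_+\in\Gamma_\II$.

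The main obstacle — the only place that requires genuine care rather than formal bookkeeping — is the compatibility claim that $B$ carries the categorically-defined Segal $G$-functor to the topologically-defined Segal map, which hinges on $B$ being a strong symmetric monoidal (product-preserving) $G\Top_*$-enriched functor so that it commutes with the adjunction games defining $\delta$. Once this is pinned down (it is the categorical analog of the computation that realization preserves the relevant mapping-space adjunctions, cf.\ the use of cartesian closedness in Remark~\ref{rem:V-enrichment-of-Cat(V)}), the rest is an assembly of already-established facts: niceness gives the $\Gamma_\II$-$G$-space structure, $\II$-specialness of $\mathscr X$ gives equivalences of internal categories, and Remark~\ref{rem:eequiv-of-cats->homotopy-equiv} converts these into weak $G$-equivalences after applying $B$.
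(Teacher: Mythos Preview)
Your proposal is correct and follows essentially the same approach as the paper: use niceness to get the $\Gamma_\II$-$G$-space structure, then invoke Remark~\ref{rem:eequiv-of-cats->homotopy-equiv} to convert the categorical Segal equivalences into $G$-homotopy equivalences after applying $B$. You are in fact more explicit than the paper about the identification of $B\delta$ with the Segal map of $B\mathscr X$ via product-preservation of $B$; the paper's proof treats this as implicit.
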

 
 \begin{proof}
 	Since $\mathscr{X}$ is nice $B\circ \mathscr{X}$ is a $\Gamma_\II$-$G$-space. By Remark \ref{rem:eequiv-of-cats->homotopy-equiv} the realization of $G\Top_*$-internal equivalences of categories yields $G$-homotopy equivalences of $G$-spaces so that $B\mathscr{X}(\delta)$ is a $G$-equivalence of based $G$-spaces for all $\delta\colon \mathscr{X}(T_+)\to \mathscr{X}_1^{\times T}$ with admissible $T$. In particular, $B\circ\mathscr{X}$ is $\II$-special.
 \end{proof}
 
 \begin{Thm}[Theorem B]\label{thm:theorem-B}
	Let $\II$ be a disk-like indexing system and let $\A$ be a $G\Top_*$-internal $\II$-normed permutative category. Then there exists an $\II$-special $\Gamma_\II$-$G$-category $\overline{\A}\colon \Gamma_\II\to \Cat(G\Top_*)$. If $\overline{\A}$ is nice this gives rise to the \textit{equivariant $K$-theory spectrum of the $\II$-normed permutative category $\A$}
	\[K^{G,U}_\II \A = \sS_\II^{G,U}(B\overline{\A}).\]
\end{Thm}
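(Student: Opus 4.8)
The plan is to mimic the non-equivariant construction of Segal's $K$-theory $\Gamma$-category, as adapted to the normed setting, and then invoke the machinery already in place. The construction of $\overline{\A}$ will be essentially the internalisation of the functor $\textbf{n}_+^\alpha \mapsto$ (category of systems of objects indexed on subsets of $\textbf{n}^\alpha$ together with coherent gluing data), following May's $K$-theory functor described in \cite{May78} and the equivariant variant in \cite{GMMO18}. Concretely, I would first define, for a finite $G$-set $T$ in $\II$ (so $T \cong \textbf{n}^\alpha$ with $\textbf{n}^\alpha \in \II(G)$), the internal category $\overline{\A}(T_+)$ whose objects consist of a choice of object $A_S \in \A$ for every subset $S \subseteq T$ together with untwistor-compatible isomorphisms $A_{S \sqcup S'} \cong A_S \oplus A_{S'}$ for disjoint $S, S'$ (using that $\II$ is closed under subobjects (I5) and coproducts (I6), so that all the finite $G$-sets occurring are still admissible), and whose morphisms are the evident tuples of morphisms in $\A$ commuting with the structure isomorphisms. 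The external $T$-norms of $\A$ and the $T$-untwistors (P3)–(P4) are exactly what is needed to make the $G$-action on $\overline{\A}(T_+)$ well-defined and the assignment functorial in $\Gamma_\II$; here the disk-like hypothesis on $\II$ enters to guarantee that restriction along $H \subset G$ of the indexing data behaves as in Proposition \ref{prop:restriction-finite-G-sets}, so that $\overline{\A}$ is a genuine $G\Top_*$-enriched functor $\Gamma_\II \to \Cat(G\Top_*)$.

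Next I would verify that $\overline{\A}$ is $\II$-special, i.e. that the Segal $G$-functor $\delta \colon \overline{\A}(T_+) \to \overline{\A}(\textbf{1}_+)^{\times T}$ is an equivalence of internal $G$-categories for every $T_+ \in \Gamma_\II$. As in the non-equivariant case the inverse functor sends a tuple $(A_1,\dots,A_n)$ to the system of objects $S \mapsto \bigoplus_{i \in S}(\dots)$ built from the standard $n$-fold sum, with the structure isomorphisms supplied by the associativity and commutativity constraints of $\A$; the composite in one direction is the identity on the nose and in the other direction is naturally isomorphic to the identity via a coherence isomorphism assembled from $\beta$ and the untwistors. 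The point where I would be most careful — and which I expect to be the genuine work — is that this natural isomorphism must be $G$-equivariant (in the twisted sense), and this is precisely what the twisted equivariance diagram (P3) and the normalisation (P4) of the $T$-untwistors are designed to force; checking the coherence hexagons/pentagons here is the technical heart, exactly parallel to \cite{Rub25} and the categorical part of \cite{GMMO18}, but now threaded through the internal-category formalism.

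Finally, assuming $\overline{\A}$ is nice, I would conclude by assembling the pieces already proved. By Proposition \ref{prop:Gamma-cat->Gamma-space}, realizing the nice $\II$-special $\Gamma_\II$-$G$-category $\overline{\A}$ yields an $\II$-special $\Gamma_\II$-$G$-space $B\overline{\A}$; this is legitimate because niceness is exactly the hypothesis ensuring that $B\overline{\A}$ satisfies the cofibrancy condition on $\Gamma_\II$-$G$-spaces, and $\II$-specialness is inherited because $B$ carries internal equivalences of $G$-categories to $G$-homotopy equivalences (Remark \ref{rem:eequiv-of-cats->homotopy-equiv}). Then Theorem A applies: the Segal machine $\sS^{G,U}_\II$ sends the $\II$-special $\Gamma_\II$-$G$-space $B\overline{\A}$ to a positive connective orthogonal $\Omega$-$G$-spectrum indexed on the compatible universe $U$, with the group-completion property on the bottom structure map. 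Defining $K^{G,U}_\II\A := \sS^{G,U}_\II(B\overline{\A})$ we obtain the asserted $K$-theory $G$-spectrum; the fact that this deserves to be called the \emph{Segal $K$-theory} of $\A$ is then a matter of comparing, in the complete case $\II = \CC$, with the established equivariant $K$-theory of permutative categories of \cite{GMMO18,GM17}, which the construction reduces to by design.

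The main obstacle, as indicated, is the verification of $\II$-specialness of $\overline{\A}$ together with the $G$-equivariance of all the coherence data: one must check that the inverse Segal functors can be chosen compatibly across all restrictions $H \subset G$ and that the untwistors glue the external $T$-norms into a functor on $\Gamma_\II$ without ever leaving the indexing system, which is where closure of $\II$ under isomorphisms (I2), subobjects (I5), coproducts (I6), and the disk-like property are each used. Once that is in hand, the passage to spectra is purely an appeal to Theorem A and the preceding propositions.
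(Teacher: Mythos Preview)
Your overall strategy is correct and matches the paper's: construct $\overline{\A}$ via May's system-of-objects construction, prove that the Segal functors are equivalences of internal $G$-categories using the norms and untwistors, then invoke Proposition~\ref{prop:Gamma-cat->Gamma-space} and Theorem~A. The heart of the argument is exactly where you locate it, namely the $G$-equivariance of the inverse Segal functor and of the natural isomorphism $\Id \Rightarrow \nu\delta$, and the twisted equivariance diagram (P3) together with (P4) is indeed what makes this work.

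There are, however, a few organizational and technical differences worth flagging. First, the paper does \emph{not} build $\overline{\A}$ directly on $\Gamma_\II$. Instead it first constructs $\overline{\A}_n$ on $\Gamma$ exactly as in May/Elmendorf--Mandell (Constructions~\ref{constr:perm-G-cat-pt.1} and \ref{constr:perm-G-cat-pt.2}), and then prolongs to $\Gamma_\II$ by the twisting recipe $\overline{\A}(\textbf{n}_+^\sigma) := (\overline{\A}_n)^\sigma$, following Shimakawa. This is cleaner than a direct construction because the subsets $s\subset\textbf{n}$ indexing the systems are \emph{not} $G$-subsets in general --- the $G$-action permutes them --- so your remark that (I5) and (I6) are needed ``so that all the finite $G$-sets occurring are still admissible'' is slightly off: no admissibility question arises for the indexing subsets themselves. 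Second, the paper's inverse Segal functor $\nu$ sets $A_s = \bigoplus_T(A_{\kappa(s)(i)})$ using the \emph{twisted} sum $\bigoplus_T$ rather than the standard $n$-fold sum; the untwistors $v_T$ then appear in the definition of the structure isomorphisms $a_{s,t}$ and, crucially, in the construction of the natural isomorphism $\eta\colon \Id \Rightarrow \nu\delta$. Using $\bigoplus_T$ from the outset is what makes $\nu$ a $G$-functor on the nose. Third, the disk-like hypothesis is not actually used in this construction or in the specialness proof; it enters earlier, in making $\Gamma_\II$ well-behaved and in the comparison theorems. Finally, $\II$-specialness only requires checking the Segal equivalence for $T\in\II(G)$, not for all $H\subset G$ separately, so the compatibility across restrictions you worry about is not needed here.
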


By construction, if $G=*$, $U=\rr^\infty$, and $\II=\TT$ then we recover the non-equivariant $K$-theory of a permutative category originally defined by \cite{Seg74} and \cite{May78}, though they did not use orthogonal spectra.

\subsection{Construction of the associated \texorpdfstring{$\Gamma_\II$}{Gamma-I}-\texorpdfstring{$G$}{G}-category}\label{sec:construction-K-theory}

Given a $G\Top_*$-internal $\II$-normed permutative category $\A$ we will first construct a $G\Top_*$-functor $\overline{\A}\colon\Gamma\to \Cat(G\Top_*)$ and then prolong to $\Gamma_\II$. The first step is done analogously to the non-equivariant case described in \cite{May78}. For convenience, we will follow the slightly simplified construction in \cite{EM06} \S 4. The prolongation step follows an idea originally due to Shimakawa \cite{Shi89}.

\begin{Constr}\label{constr:perm-G-cat-pt.1}
	Let $\A$ be a $G\Top_*$-internal permutative category. For any finite based set $\textbf{n}_+$ define a $G\Top_*$-internal category $\overline{\A}_n$ as follows: The objects are systems $\langle A_s,a_{s,t}\rangle$ indexed on subsets $s\subset \textbf{n}$ and pairs of subsets $s,t\subset \textbf{n}$ such that $s\cap t=\emptyset$. The morphisms are systems $\langle \alpha_s\rangle$ indexed on  subsets $s\subset \textbf{n}$. The $A_s$ are objects of $\A$ and $\A_\emptyset=*$ is the base point. The $a_{s,t}$ are $G$-isomorphisms
	\[a_{s,t}\colon A_{s}\oplus A_{t}\to A_{s\cup t}\]
	subject to the condition that $a_{s,\emptyset}=\id_{A_s}$ and that
	\begin{center}
		\begin{tikzcd}
			{A_s\oplus A_t} \arrow[rr, "{a_{s,t}}"] \arrow[d, "\textup{tw}"'] &  & A_{s\cup t} \arrow[d, Rightarrow, no head] \\
			{A_t\oplus A_s} \arrow[rr, "{a_{t,s}}"']                &  & A_{t\cup s}                               
		\end{tikzcd}
		and 
		\begin{tikzcd}
			A_r\oplus A_s\oplus A_t \arrow[d, "{a_{r,s}\oplus\id_{A_t}}"'] \arrow[rr, "{\id_{A_r}\oplus a_{s,t}}"] &  & A_r\oplus A_{s\cup t} \arrow[d, "{a_{r,s\cup t}}"] \\
			A_{r\cup s}\oplus A_t \arrow[rr, "{a_{r\cup s,t}}"']                                                   &  & A_{r\cup s\cup t}                                 
		\end{tikzcd}
	\end{center}
	are commutative diagrams for all pairwise disjoint subsets $r,s,t\subset\textbf{n}$. A morphism $\langle\alpha_s\rangle$ between systems $\langle A_s,a_{s,t}\rangle$ and $\langle A_s',a_{s,t}'\rangle$ consists of morphisms $\alpha_s\colon A_s\to A'_s$ in $\A$ such that the following diagram commutes
	\begin{center}
		\begin{tikzcd}
			A_s\oplus A_t \arrow[d, "\alpha_s\oplus \alpha_t"'] \arrow[rr, "{a_{s,t}}"] &  & A_{s\cup t} \arrow[d, "\alpha_{s\cup t}"] \\
			A_s'\oplus A_t' \arrow[rr, "{a_{s,t}'}"']                                   &  & A_{s\cup t}'                             
		\end{tikzcd}
	\end{center}
	Composition and identity are inherited from $\A$ so that $\overline{\A}_n$ is indeed a category. It is indeed a $G\Top_*$-internal category with object and morphisms $G$-spaces given by the subspace topology of the $G$-spaces
	\[\prod_s \ob \A \times \prod_{s,t}\mor\A \text{ and }\prod_{s}\mor\A\]
	Hence, the $G$-action is given diagonally by
	\[g\langle A_s,a_{s,t}\rangle = \langle gA_s,ga_{s,t}\rangle \text{ and } g\langle \alpha_s\rangle=\langle g\alpha_s\rangle.\]
	Note that this is actually a $G$-subspace since $\oplus$ is a $G$-functor. More precisely $gA_s\oplus gA_t=g(A_s\oplus A_t)$, so that all of the above diagrams parse with a $g$ in front of everything to fulfil the condition for $\langle gA_s,ga_{s,t}\rangle$ to be an object of $\overline{\A}_n$. The same holds for morphisms.
	
	The identity morphism maps an object $\langle A_s,a_{s,t} \rangle$ to the morphism $\langle \alpha_s\rangle$ which is given by $\alpha_s=\id_{A_s}$ for all $s$. This gives a continuous $G$-map $I\colon \ob\overline\A_n\to \mor\overline\A_n$ since 
	\[gI(\langle A_s,a_{s,t} \rangle)=g\langle \id_{A_s}\rangle=\langle g\id_{A_s}\rangle = \langle \id_{gA_s}\rangle =I(\langle gA_s,ga_{s,t} \rangle)\]
	noting that $ga_{s,t}\colon gA_{s\cup t}\to gA_{s}\oplus gA_t = g(A_s\oplus A_t)$ and that $I$ in $\A$ is continuous.
	
	The source and target maps also give continuous $G$-maps $S,T\colon \mor\overline\A_n\to \ob\overline\A_n$ which assign a morphism $\langle \alpha_s\rangle$ given by $\alpha_s\colon A_s\to A_s'$ to the object $\langle A_s,a_{s,t}\rangle$ or $\langle A_s',a_{s,t}'\rangle$. By the same remark this gives indeed a continuous $G$-map.
	
	Composition is induced by the composition in $\A$, i.e. we define $\langle \alpha_s\rangle \circ \langle \alpha_s'\rangle =\langle \alpha_s\circ\alpha_s'\rangle$. Since composition in $\A$ is a continuous $G$-map so is the composition map
	\[C\colon \overline\A_n(\langle A_s,a_{s,t}\rangle,\langle A_s',a_{s,t}'\rangle)\times \overline\A_n(\langle A_s',a_{s,t}'\rangle,\langle A_s'',a_{s,t}''\rangle)\to \overline\A_n(\langle A_s,a_{s,t}\rangle,\langle A_s'',a_{s,t}''\rangle)\]
	in $\overline{\A}_n$. It is also evident from this that unitality and associativity conditions hold.
\end{Constr}

\begin{Constr}\label{constr:perm-G-cat-pt.2}
	As before, let $\A$ be a $G\Top_*$-internal permutative category. Extend the assignment $\textbf{n}_+\mapsto \overline{\A}_n$ from Construction \ref{constr:perm-G-cat-pt.1} to a functor $\overline{\A}\colon \Gamma\to G\Cat(\Top_*)$ by mapping on objects $\textbf{n}_+$ to $\overline{\A}_n$ and for any based map $\phi\colon \textbf{m}_+\to\textbf{n}_+$ in $\Gamma$ define a morphism $\overline{\A}(\phi)\colon \overline{\A}_m \to \overline{\A}_n$ by mapping an object $\langle A_s,a_{s,t}\rangle$ to the object $\langle B_u,b_{u,v}\rangle$, where 
	\[B_u=A_{\phi^{-1}(u)} \text{ and } b_{u,v}=a_{\phi^{-1}(u), \phi^{-1}(v)}.\]
	This is well defined since $\phi$ is based so that $\phi^{-1}(s)$ never contains the base point for $s\subset\textbf{n}\subset\textbf{n}_+$. Functoriality follows just as in the non-equivariant case, see \cite{May78}. Since $\Gamma$ is discrete with trivial $G$-action $\overline{\A}$ therefore defines a $G\Top_*$-functor.
\end{Constr}

\begin{Rem}
	Note that $\overline{\A}_1$ can be identified with $\A$. The objects of $\overline{\A}_1$ are systems $\langle A_s,a_{s,t} \rangle$ with $s$ running through subsets of $\textbf{1}=\{1\}$. Hence, either $A_\emptyset$ or $A_{\{1\}}$ are possible, but $A_\emptyset$ is just the unit of $\A$ so no additional data is given. The $a_{s,t}$ are always identities since either $s$ or $t$ are required to be empty by the assumption that $s\cap t=\emptyset$.
\end{Rem}

Any $\Gamma$-$G$-category $\mathscr{X}$ gives rise to an $\Sigma_n$-action on $\mathscr{X}_n$ using functoriality. More precisely, a permutation $\sigma\in\Sigma_n$ gives rise to a morphism $\sigma\colon\textbf{n}_+\to \textbf{n}_+$ in $\Gamma$ which induces a morphism $\mathscr{X}(\sigma)\colon \mathscr{X}_n\to \mathscr{X}_n$. For $\overline{\A}_n$ we have an explicit description of the $\Sigma_n$-action. The induced map $\mathscr{X}(\sigma)$ sends objects $\langle A_s,a_{s,t}\rangle$ to $\langle B_u,b_{u,v}\rangle$ where 
\[B_u=A_{\sigma^{-1}(u)} \text{ and } b_{u,v}=a_{\sigma^{-1}(u),\sigma^{-1}(v)}\] 
and which assigns morphisms $\langle \alpha_s\rangle$ to $\langle\beta_{u}\rangle$ with $\beta_u=\alpha_{\sigma^{-1}(u)}$. Hence, $\overline{\A}_n$ is a $G\times\Sigma_n$-category with $G$-action as described in Construction \ref{constr:perm-G-cat-pt.1} and $\Sigma_n$-action as described above.

Similarly to how we defined a twisted $G$-action on $G\times\Sigma_n$-spaces we can define twisted $G$-actions on $G\times\Sigma_n$-categories.

\begin{Def}
	Let $\C$ be a $G\times\Sigma_n$-category and let $\sigma\colon G\to\Sigma_n$ be a homomorphism. Denote by $\C^\sigma$ the $G$-category with same underlying objects and morphisms as in $\C$ but with $G$-action on objects $C$ and morphisms $f\colon C\to C'$ given by $g._\sigma C=(g,\sigma(h)).C$ and $g._\sigma f=(g,\sigma(g)).f$. We call $\C^\sigma$ the \textit{$\sigma$-twisted category of $\C$}.
\end{Def}

A $G\times\Sigma_n$-category $\C$ is in particular a $G$-category. To emphasise the action we will either denote objects and morphisms in $\C^\sigma$ by $C^\sigma$ and $f^\sigma$ and write the $G$-action as $gC^\sigma$ and $gf^\sigma$ or write objects as before with action $g._\sigma C$ and $g._\sigma C$.

\begin{Rem}
	Of course the above definition generalises for internal categories. If $\C$ is $\V$ internal, then $G\times\Sigma_n$-objects in $\V$ are $(G\times\Sigma_n)\V$-internal categories. Defining a new twisted action defines twisted $G$-objects internal to $\V$.
\end{Rem}

\begin{Constr}
	Unravelling this definition for $\overline\A_n$ we obtain a category $(\overline\A_n)^\sigma$ with same underlying objects and morphisms, but with $G$-action given by
	\[g._\sigma\langle A_s,a_{s,t}\rangle =\langle gA_{\sigma(g)^{-1}(s)},ga_{\sigma(g)^{-1}(s),\sigma(g)^{-1}(t)}\rangle \text{ and }g._\sigma \langle \alpha_s\rangle =\langle g\alpha_{\sigma(g)^{-1}(s)}\rangle.\]
	Note that this is well defined since if $s\cap t=\emptyset$ then $\sigma(h)^{-1}(s)\cap \sigma(h)^{-1}(t)=\emptyset$.
\end{Constr}

Using this we can prolong the previously define $\Gamma$-$G$-category $\overline{\A}$ to $\Gamma_\II$.

\begin{Def}
	Let $\A$ be a permutative $G$-category. Define 
	\[\overline{\A}\colon \Gamma_\II \to \Cat(G\Top_*)\] 
	by setting $\overline{\A}(\textbf{n}_+^\sigma):=(\overline{\A}_n)^\sigma$ for any finite based $G$-set $\textbf{n}_+^\sigma$ in $\Gamma_\II$ and by setting for any based map $\phi$ in $\Gamma_\II$ the induced map $\overline{\A}(\phi)$ to be the underling map $\overline{\A}_n\to \overline{\A}_m$ on objects and morphisms. Here, $G$-acts on $\phi$ and on $\overline{\A}(\phi)$ via conjugation. It is not hard to see that using this definition $\overline{\A}$ is indeed a $G\Top_*$-functor
\end{Def}

\subsection{Proving the Segal condition}\label{sec:proof-K-theory}

To prove that given a $G\Top_*$-internal $\II$-normed permutative $G$-category that the associated $\Gamma_\II$-$G$-category $\overline{\A}$ is $\II$-special we need to provide for any finite $G$-set $T=\textbf{n}^\sigma$ with $\sigma\colon H\to\Sigma_n$ in $\II(G)$ a $G\Top_*$-internal equivalence of categories
\[\delta\colon (\overline{\A}_n)^\sigma = \overline{\A}(T_+)\to \A^{\times T} = (\A^{\times n})^\sigma.\]
Unraveling the construction of the induced maps we see that the underlying untwisted Segal functors $\delta\colon \overline{\A}_n\to \A^{\times n}$ map on objects $\langle A_s,a_{s,t} \rangle$ to the tuple with $i$-th entry $A_i=A_{\{i\}}$.

\begin{Rem}
	Note that most of the machinery in Chapter \ref{sec:equivariant-Gamma} can be translated to the categorical case. We are basically showing that the $\Gamma$-$G$-category $\overline{\A}$ is $\ff_\bullet^\II$-special, but we have not defined this in the categorical case. Compare our definition of prolongation here with the discussion around the fourth comparison theorem, Theorem \ref{thm:4th-comparison-theorem}.
\end{Rem}

\begin{Thm}\label{thm:I-NPC->I-special-Gamma-cat}
	Let $\A$ be an $\II$-permutative category internal to $G\Top_*$. Then for any $\textbf{n}^\sigma\in\II(G)$ the Segal $G$-functor $\delta\colon (\overline{\A}_n)^\sigma\to (\A^{\times n})^\sigma$ specifies a $G\Top_*$-internal equivalence of categories. In particular, $\overline{\A}$ is an $\II$-special $\Gamma_\II$-$G$-space.
\end{Thm}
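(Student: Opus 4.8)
The plan is to exhibit a homotopy inverse to the Segal functor $\delta\colon (\overline{\A}_n)^\sigma\to (\A^{\times n})^\sigma$ and equivariant natural isomorphisms witnessing that the two composites are naturally isomorphic to the identities, using the untwistors $v_T$ supplied by the $\II$-normed permutative structure to make the inverse $G$-equivariant. First I would construct, at the level of the underlying untwisted categories, a functor $\gamma\colon \A^{\times n}\to \overline{\A}_n$ by sending a tuple $(A_1,\dots,A_n)$ to the canonical system $\langle A_s,a_{s,t}\rangle$ with $A_s:=\bigoplus_{i\in s}A_i$ (the standard $|s|$-fold sum in the ordering inherited from $\textbf{n}$, with $A_\emptyset=0$), and $a_{s,t}\colon A_s\oplus A_t\to A_{s\cup t}$ the evident reassociation/permutation isomorphism coming from the permutative structure of $\A$. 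This is the non-equivariant construction of \cite{May78}/\cite{EM06} and it is well known that $\delta\circ\gamma=\id$ on the nose, while $\gamma\circ\delta$ is naturally isomorphic to the identity on $\overline{\A}_n$ via the coherence isomorphisms $a_{s,t}$ themselves. This already shows $\delta$ is a (non-equivariant) equivalence of $\Top_*$-internal categories.

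The next step is to upgrade $\gamma$ to a $G$-equivariant functor $(\A^{\times n})^\sigma\to (\overline{\A}_n)^\sigma$. The functor $\gamma$ as defined above is \emph{not} strictly $G$-equivariant for the twisted actions because forming $\bigoplus_{i\in s}A_i$ in a fixed order does not commute with the permutation $\sigma(g)$ on indices; this is exactly the discrepancy that the $T$-untwistors $v_T$ are designed to correct. Concretely, I would instead define $\gamma$ on objects by $A_s:=\bigoplus_{T_s}(A_i)_{i\in s}$ using the $|s|$-element sub-$H_s$-set structure, then invoke axiom (P3) (the twisted equivariance diagram) to check that $g._\sigma\gamma(A_1,\dots,A_n)=\gamma(g._\sigma(A_1,\dots,A_n))$, and axiom (P4) (normalized untwistors) to ensure the base-point and the degenerate cases are handled correctly and that $\delta\circ\gamma=\id$ still holds since $\delta$ picks out the singleton spots where $v_T$ is the identity. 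The natural isomorphism $\gamma\circ\delta\cong\id_{(\overline{\A}_n)^\sigma}$ is then assembled from the $a_{s,t}$ together with the untwistors, and one checks it is a morphism of $G$-objects, i.e. a $G\Top_*$-internal natural isomorphism; continuity is automatic since everything is built from the continuous structure maps $\oplus$, composition, and the fixed isomorphisms of $\A$. Finally, the claim that $\overline{\A}$ is $\II$-special (and, by Proposition \ref{prop:Gamma-cat->Gamma-space}, that $B\overline{\A}$ is an $\II$-special $\Gamma_\II$-$G$-space when $\overline{\A}$ is nice) follows since an equivalence of $G\Top_*$-internal categories realizes to a $G$-homotopy equivalence by Remark \ref{rem:eequiv-of-cats->homotopy-equiv}.

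The main obstacle I expect is the bookkeeping in verifying $G$-equivariance of $\gamma$: one must pin down exactly which finite $H$-set structure to put on each subset $s\subset\textbf{n}$, confront the fact that $s$ need not be $H$-stable (so the relevant subgroup is the stabilizer $H_s$ of $s$ in $H$, and one only gets an $H_s$-set $T_s$ rather than an $H$-set), and then show the family of untwistors $(v_{T_s})_s$ is compatible with the structure isomorphisms $a_{s,t}$ in the way needed to make $\gamma(g._\sigma-)=g._\sigma\gamma(-)$ and to make $\gamma\circ\delta\cong\id$ $G$-natural. This is where the hypotheses that $\II$ is an indexing system — in particular closure under subobjects (I5), so that each $T_s$ remains admissible — and the coherence axioms (P3), (P4) on the $T$-norms and untwistors are used in an essential way; without closure under subobjects the sub-sums $A_s$ need not carry admissible norm data. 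I would organize this as a lemma isolating the equivariant comparison functor $\gamma$ and its properties, deferring the purely combinatorial coherence checks, which are formally identical to the non-equivariant coherence of \cite{May78} once the twisted actions are accounted for.
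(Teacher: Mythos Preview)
Your overall strategy matches the paper's: build an inverse functor $\nu\colon (\A^{\times n})^\sigma\to(\overline{\A}_n)^\sigma$, check $\delta\nu=\id$, and construct a $G\Top_*$-internal natural isomorphism $\Id\Rightarrow\nu\delta$. The gap is in your proposed definition of $A_s$. Using a separate norm $\bigoplus_{T_s}$ for each subset $s\subset\textbf{n}$, with $T_s$ viewed as an $H_s$-set for the stabiliser $H_s$, does \emph{not} produce a strictly $G$-equivariant $\gamma$: for $g\notin H_s$ you would need $g\cdot\bigoplus_{T_s}(A_i)_{i\in s}$ to agree on the nose with $\bigoplus_{T_{\sigma(g)(s)}}$ applied to the translated tuple, but the $\II$-NPC axioms give no direct comparison between the distinct norms $\bigoplus_{T_s}$ and $\bigoplus_{T_{\sigma(g)(s)}}$. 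Axiom~(P3) only governs how a \emph{single} $T$-norm interacts with its own group of equivariance; routing both through untwistors to the standard sum yields compatibility only up to isomorphism, so your $\gamma$ would at best be a pseudo-$G$-functor, which is not the framework in use.

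The idea you are missing, and which the paper uses, is to define every $A_s$ with the \emph{same} norm $\bigoplus_T$ for $T=\textbf{n}^\sigma$, padded by zeros: set
\[A_s:=\textstyle\bigoplus_T(A_{\kappa(s)(1)},\dots,A_{\kappa(s)(n)}),\qquad A_{\kappa(s)(i)}=\begin{cases}A_i & i\in s,\\ 0 & i\notin s.\end{cases}\]
Since $\bigoplus_T\colon\A^{\times T}\to\A$ is a genuine $G$-functor and $g\cdot 0=0$, strict $G$-equivariance of $\nu$ is immediate: zero-padding commutes with the twisted permutation action. The structure isomorphisms $a_{s,t}$ are then $v_T^{-1}\circ S\circ(v_T\oplus v_T)$ with $S$ the shuffle in the underlying permutative category, and (P3) is used to verify these are $G$-maps. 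Axiom~(P4) makes $\delta\nu=\id$ hold strictly (the untwistor on a tuple with a single nonzero entry is the identity), and the natural isomorphism $\Id\Rightarrow\nu\delta$ is assembled inductively from the $a_{s,t}$. This approach never invokes norms for proper subsets, so closure under subobjects (I5) plays no role here; the only admissible set needed is $T$ itself.
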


\begin{proof}
	Write $T=\textbf{n}^\sigma$. 
	
	\underline{\smash{Step 1:}} Define a $G\Top_*$-internal functor $\nu\colon (\A^{\times n})^\alpha \to (\overline{\A}_n)^\alpha$ on objects as follows: It maps twisted tuples $(A_1,\dots,A_n)$ in $(\A^{\times n})^\alpha$ to systems $\langle A_s,a_{s,t}\rangle$ where for any subset $s\subset\textbf{n}$ we define $A_s$ to be the twisted sum
	\[A_s=\textstyle\bigoplus_T(A_{\kappa(s)(1)},\dots, A_{\kappa(s)(n)})\]
	where
	\[\kappa(s)(i)=\begin{cases} \{i\} &, \text{ if } i\in s \\ \emptyset &,\text{ if } i\notin s. \end{cases}\]
	We will write $A_s=\bigoplus_T(A_{\kappa(s)(i)})$ for short. The maps $a_{s,t}$ are isomorphisms given by
	\begin{center}
		\begin{tikzcd}
			\bigoplus_T(A_{\kappa(s)(i)})\oplus \bigoplus_T(A_{\kappa(t)(i)}) \arrow[d, "v_T\oplus v_T"'] \arrow[rr, "{a_{s,t}}"] &  & \bigoplus_T(A_{\kappa(s\cup t)(i)})                        \\
			\bigoplus_{|T|}(A_{\kappa(s)(i)})\oplus \bigoplus_{|T|}(A_{\kappa(t)(i)}) \arrow[rr, "S"']                               &  & \bigoplus_{|T|}(A_{\kappa(s\cup t)(i)}) \arrow[u, "v_T^{-1}"']
		\end{tikzcd}
	\end{center}
	where the $v_T$ are the untwistors of $\A$. The map $S$ is the shuffle $G$-isomorphism induced by the $G$-permutative structure of $\A$. We need to show that $\langle A_s,a_{s,t}\rangle$ is indeed an element in $(\overline{\A}_n)^\sigma$. First of all, we need to show that the $a_{s,t}$ are actually $G$-isomorphisms. This follows from a diagram chase in the following diagram
	\begin{center}
        \begin{tikzcd}[column sep = 8mm, row sep = 10 mm]& g\bigoplus_{|T|}\oplus g\bigoplus_{|T|} \arrow[ldd] \arrow[rr, "S_1"]                                                  &  & g\bigoplus_{|T|} \arrow[rdd]                          &                                              \\& g\bigoplus_T\oplus g\bigoplus_T \arrow[rr, "{g.a_{s,t}}"] \arrow[dd] \arrow[u, "gv_T\oplus gv_T" description]                         &  & g\bigoplus_T \arrow[dd] \arrow[u, "gv_T" description] &                                              \\
		\bigoplus_{|T|}(g-)\oplus \bigoplus_{|T|}(g-) \arrow[rdd, "\sigma(g)"'] &                                                                                                                                       &  &                                                       & \bigoplus_{|T|}(g-) \arrow[ldd, "\sigma(g)"] \\& \bigoplus_T(g-)\oplus \bigoplus_T(g-) \arrow[rr, "{ga_{\sigma(g)^{-1}(s),\sigma(g)^{-1}(t)}}"] \arrow[d, "v_T\oplus v_T" description] &  & \bigoplus_T(g-) \arrow[d, "v_T" description]          &                                              \\& \bigoplus_{|T|}(g._\sigma-)\oplus \bigoplus_{|T|}(g._\sigma-) \arrow[rr, "S_2"']                                       &  & \bigoplus_{|T|}(g._\sigma-)                           &                                             
\end{tikzcd}
	\end{center}
	Note that the left and right most pentagons commute using the $\II$-permutative structure of $\A$, the upper and lower square are the defining commutative squares of the maps $a_{s,t}$. The outer hexagon commutes since if $s\cap t=\emptyset$ then $\sigma(g)^{-1}(s)\cap\sigma(g)^{-1}(t)=\emptyset$ and first shuffling and then acting is the same thing as first acting and then shuffling. Since the untwistors are isomorphisms the inner square commutes. Hence, the $a_{s,t}$ are actually isomorphisms in $\A$. The diagram also makes apparent the action
	\[g\langle A_s,a_{s,t}\rangle = \langle gA_{\sigma(h)^{-1}(s)},ga_{\sigma(h)^{-1}(s),\sigma(h)^{-1}(t)}\rangle\]
	so that $\langle A_s,a_{s,t}\rangle$ is indeed an element of $(\overline{\A}_n)^\sigma$ if the required axioms on the maps $a_{s,t}$ are fulfilled. Since $\A$ is $\II$-permutative it holds that $\bigoplus_T(*,\dots,*)=*$. Hence, $a_{s,\emptyset}=\id$. Secondly, consider the following diagram
	\begin{center}
		\begin{tikzcd}
			\bigoplus_{|T|}(A_{\kappa(s)(i)})\oplus \bigoplus_{|T|}(A_{\kappa(t)(i)}) \arrow[rr, "S_1"] \arrow[ddd, "\beta"', bend right, shift right=25] &  & \bigoplus_{|T|}(A_{\kappa(s\cup t)(i)}) \arrow[d, "v_T^{-1}"] \arrow[ddd, Rightarrow, no head, bend left, shift left=10] \\
			\bigoplus_T(A_{\kappa(s)(i)})\oplus \bigoplus_T(A_{\kappa(t)(i)}) \arrow[rr, "{a_{s,t}}"] \arrow[d, "\beta"'] \arrow[u, "v_T\oplus v_T"]     &  & \bigoplus_T(A_{\kappa(s\cup t)(i)}) \arrow[d, Rightarrow, no head]                                                      \\
			\bigoplus_T(A_{\kappa(t)(i)})\oplus \bigoplus_T(A_{\kappa(s)(i)}) \arrow[rr, "{a_{t,s}}"'] \arrow[d, "v_T\oplus v_T"']                       &  & \bigoplus_T(A_{\kappa(s\cup t)(i)})                                                                                     \\
			\bigoplus_{|T|}(A_{\kappa(t)(i)})\oplus \bigoplus_{|T|}(A_{\kappa(s)(i)}) \arrow[rr, "S_2"']                                                 &  & \bigoplus_{|T|}(A_{\kappa(s\cup t)(i)}) \arrow[u, "v_T^{-1}"']                                                         
		\end{tikzcd}
	\end{center}
	The upper and lower squares are the defining commuting squares of the maps $a_{s,t}$ and $a_{t,s}$ and the left and right most outer squares trivially commute. Choosing the shuffle isomorphisms so that $S_2\circ \beta=S_1$ implies that the outer part of the diagram commutes. Since the untwistors are isomorphisms also the inner square commutes. Hence, the maps $a_{s,t}$ fulfil the commutativity axiom.
	
	Similarly, one finds that the following associativity diagram commutes
	\begin{center}
		\begin{tikzcd}
			\bigoplus_T(A_{\kappa(s)(i)})\oplus \bigoplus_T(A_{\kappa(t)(i)})\oplus \bigoplus_T(A_{\kappa(r)(i)}) \arrow[d, "{\id\oplus a_{t,r}}"'] \arrow[rr, "{a_{s,t}\oplus \id}"] &  & \bigoplus_T(A_{\kappa(s\cup t)(i)})\oplus \bigoplus_T(A_{\kappa(r)(i)}) \arrow[d, "{a_{s\cup t,r}}"] \\
			\bigoplus_T(A_{\kappa(s)(i)})\oplus \bigoplus_T(A_{\kappa(t\cup r)(i)}) \arrow[rr, "{a_{s,t\cup r}}"']                                                                   &  & \bigoplus_T(A_{\kappa(s\cup t\cup r)(i)})                                                           
		\end{tikzcd}
	\end{center}
	To see this one pastes the defining commutative squares of the maps $a_{s,t}$ on each side of the diagram and then checks that on the non-twisted parts the associativity holds wich then implies the twisted case.
	
	To show that $\nu$ is $G\Top_*$-internal on objects we need to show that the assignment $(A_i)_{1\le i\le n}\mapsto\langle A_s,a_{s,t}\rangle$ is continuous and equivariant. Continuity is not hard to check and denoting objects of $\A^{\times T}$ by $(A_i)_{1\le i\le n}$ we see that $\nu$ is $G$-equivariant
	\begin{align*}
		\nu(g(A_i)_{1\le i\le n})
		=\nu((gA_{\sigma(g)^{-1}(i)})_{1\le i\le n})
		=\langle gA_{\sigma(g)^{-1}(s)},ga_{\sigma(g)^{-1}(s),\sigma(g)^{-1}(t)}\rangle 
		&=g._\sigma\langle A_s,a_{s,t}\rangle \\
		&=g._\sigma\nu((A_i))
	\end{align*}
	using the definition of the $G$-action in $\overline{\A}_n^\sigma$.
	
	\underline{\smash{Step 2:}} Define a $G\Top_*$-internal functor $\nu\colon (\A^{\times n})^\alpha \to (\overline{\A}_n)^\alpha$ on morphisms as follows: It maps a morphism $(f_i)_{1\le i\le n}\colon (A_i)_{1\le i\le n}\to (A_i')_{1\le i\le n}$ to the morphism $\langle \bigoplus_T(f_{\kappa(s)(i)}) \rangle $ where $f_\emptyset\colon A_\emptyset\to A'_\emptyset$ is the identity on $*$. It is not hard to see that this assignment is continuous and is is $G$-equivariant since
	\begin{align*}
		\nu(g(f_i)_{1\le i\le n})
		=\nu((gf_{\sigma(g)^{-1}(i)}))
		=\langle \textstyle\bigoplus_n^\sigma(gf_{\kappa(s)(\sigma(g)^{-1}(i)}))\rangle
		&= g._\sigma \langle \textstyle\bigoplus_n^\sigma(f_{\kappa(s)(i)})\rangle \\
		&= g._\sigma \nu((f_i)_{1\le i\le n})
	\end{align*}
	for any $g\in G$.
	
	We also have to show that for any $(f_i)_{1\le i\le n}\colon (A_i)_{1\le i\le n}\to (A_i')_{1\le i\le n}$ the maps $a_{s,t}$ as defined above are compatible with $\nu((f_i))$. This follows from the following diagram
	\begin{center}
		\begin{tikzcd}
			\bigoplus_{|T|}(A_{\kappa(s)(i)})\oplus \bigoplus_{|T|}(A_{\kappa(t)(i)}) \arrow[rr, "S_1"] \arrow[ddd, "|f_s|\oplus |f_t|"', bend right, shift right=25] &  & \bigoplus_{|T|}(A_{\kappa(s\cup t)(i)}) \arrow[d, "v_T^{-1}"] \arrow[ddd, "|f_{s\cup t}|", bend left, shift left=10] \\
			\bigoplus_T(A_{\kappa(s)(i)})\oplus \bigoplus_T(A_{\kappa(t)(i)}) \arrow[rr, "{a_{s,t}}"] \arrow[d, "f_s\oplus f_t"'] \arrow[u, "v_T\oplus v_T"]         &  & \bigoplus_T(A_{\kappa(s\cup t)(i)}) \arrow[d, "f_{s\cup t}"]                                                        \\
			\bigoplus_T(A'_{\kappa(s)(i)})\oplus \bigoplus_T(A'_{\kappa(t)(i)}) \arrow[rr, "{a_{s,t}'}"'] \arrow[d, "v_T\oplus v_T"']                                &  & \bigoplus_T(A'_{\kappa(s\cup t)(i)})                                                                                \\
			\bigoplus_{|T|}(A'_{\kappa(t)(i)})\oplus \bigoplus_{|T|}(A_{\kappa(s)(i)}) \arrow[rr, "S_2"']                                                            &  & \bigoplus_{|T|}(A'_{\kappa(s\cup t)(i)}) \arrow[u, "v_T^{-1}"']                                                    
		\end{tikzcd}
	\end{center}
	We need to show that the inner square commutes. The upper and lower one commute by definition. We denote by $|f_s|$ etc. the underlying induced maps on the non-twisted direct sum. The outer squares commute by definition of the untwistor maps. Again, since $s\cap t=\emptyset$ and thus $\kappa(s)(i)\ne \kappa(t)(j)$ for all $1\le i,j\le n$ the outer part commutes. Using that the untwistor maps are isomorphisms it follows that the center commutes.
	
	\underline{\smash{Step 3:}} To show that $\nu$ is a $G\Top_*$-internal functor it remains to show that it is compatible with source, target, identity and composition, but this is a straightforward check and follows from our construction. For instance, $\nu(S((f_i)))=\nu(A_1,\dots,A_n)=\langle A_s,a_{s,t}\rangle=S(\nu((f_i)))$ and similarly for $T$. The identity maps $\id_{(A_i)_{1\le i\le n}}$ to the identity $\langle \id_s\rangle$ with $\id_s\colon \bigoplus_T(A_\kappa(s)(i))\to \bigoplus_T(A_\kappa(s)(i))$. Composition is induced by composition in $\A$ so that it is not hard to see that $\nu$ is compatible with composition. Hence, $\nu$ is indeed a $G\Top_*$-internal functor.
	
	\underline{\smash{Step 4:}} Note that $\delta\nu$ is the identity functor on $\A^{\times T}$. To show that $\nu\delta$ is $G\Top_*$-naturally isomorphic to the identity on $(\overline{\A}_n)^\sigma$ we need to define a $G\Top_*$-internal natural transformation $\eta\colon\Id\Rightarrow \delta\nu$. Hence, we must define for each object $\langle A_s,a_{s,t}\rangle^\sigma$ a map $\langle \eta_s\rangle$ with components $\eta_s\colon A_s \to \bigoplus_T(A_{\kappa(s)(i)})$. This will be done inductively.
	
	For $|s|=0$, $\eta_\emptyset\colon A_\emptyset\to *$ is equal to the identity using axiom (P4) on $\II$-NPCs. For $|s|=1$ with $A_s=A_i$ for $s=\{i\}$, $\eta_s\colon A_i^\sigma \to \bigoplus_T(*,\dots,*,A_i,*,\dots,*)$ is also the identity using (P4). Hence we can identify $A_i^\sigma$ with $\bigoplus_T(*,\dots,*,A_i,*,\dots,*)$. Let us next consider the case $|s|=2$ with $s=\{i,j\}$ and $i<j$. Using the permutative structure of $\A$ we can rewrite $A_i^\sigma\oplus A_j^\sigma$ as the untwisted direct sum
	\[*\oplus \dots\oplus * \oplus A_i^\sigma\oplus *\oplus \dots\oplus *\oplus A_j^\sigma \oplus \dots\oplus * = \textstyle\bigoplus_n(*,\dots,*, A_i^\sigma,*,\dots,*,A_j^\sigma,*,\dots,*)\]
	with $A_i$ and $A_j$ in the $i$-th and $j$-th spot. There are two $\Sigma_n$-action on these $G$-spaces. There is a $\Sigma_n$-action by twisting factors and a diagonal action on each summand. In this case the two actions agree so that one can identify $A_i^\sigma\oplus A_j^\sigma$ with $\bigoplus_T(*,\dots,*, A_i,*,\dots,*,A_j,*,\dots,*)$ using the untwistor of $\A$. More precisely, the untwistor gives a not-necessarily $G$-equivariant natural isomorphism $\nu_T\colon \oplus_T\Rightarrow \oplus_n$. Recalling the twisted equivariance diagram in Definition \ref{def:I-NPC} the non-equivariance of $\nu_T$ stems from a discrepancy of $\Sigma_n$-actions on $\oplus_T$ and $\oplus_n$. In this case this discrepancy vanishes for objects $A_i^\sigma$ so that there is a natural $G$-isomorphism
	\[\textstyle\bigoplus_T(*,\dots,*, A_i,*,\dots,*,A_j,*,\dots,*) \to A_i^\sigma\oplus A_j^\sigma.\]
	Since $\nu_T$ is part of the data of $\A$ we can treat this as an identification. Define
	\[\eta_s\colon A_{\{i,j\}}^\sigma \overset{(a_{i,j}^\sigma)^{-1}}{\longrightarrow} A_i^\sigma\oplus A_j^\sigma =\bigoplus_T(0,\dots,0,A_i,0,\dots,0,A_j,0,\dots,0)\] 
	Inductively, for $|s|<n$ with $i\notin s$ and given map $\eta_s$, define
	\begin{center}
		\begin{tikzcd}
			A_{s\cup\{i\}}^\sigma \arrow[r, "{(a_{s,\{i\}}^\sigma)^{-1}}"] \arrow[rrd, "\eta_{s\cup\{i\}}"'] & A_s^\sigma \oplus A_i^\sigma \arrow[r, "\eta_s\oplus\id_{A_i^\sigma}"] & \bigoplus_T(A_{\kappa(s)(j)})\oplus A_i^\sigma \arrow[d, "\cong"] \\ & & \bigoplus_T(A_{\kappa(s\cup\{i\})(j)})                           
		\end{tikzcd}
	\end{center}
	The vertical isomorphism stems from identifying $A_i^\sigma$ with $\bigoplus_{T}(*,\dots,*,A_i,*,\dots,*)$ and, using that $i\notin s$, untwisting shuffling $A_i$ into the $i$-th spot replacing $*$ and then twisting again. As mentioned above, we will treat this as an identification. Using associativity and commutativity of the maps $a_{s,t}^\sigma$ the maps $\eta_s$ are determined uniquely by the data of $\langle A_s,a_{s,t}\rangle^\sigma$.
	
	It remains to show that this $\eta$ is a continuous $G$-map $\eta\colon \ob (\overline{\A}_n)^\sigma \to \mor (\overline{\A}_n)^\sigma$ fulfilling the two commutativity diagrams of Definition 1.9 in \cite{GMMO18}. For $g\in G$ and $\langle A_s,a_{s,t}\rangle^\sigma\in (\overline{\A}_n)^\sigma$ it holds that
	\[g\eta_{\langle A_s,a_{s,t}\rangle^\sigma}=g\langle \eta_s\rangle^\sigma = \langle g\eta_{\sigma(g)^{-1}(s)}\rangle^\sigma = \eta_{\langle gA_{\sigma(g)^{-1}(s)},a_{\sigma(g)^{-1}(s),\sigma(g)^{-1}(t)}\rangle^\sigma}=\eta_{g \langle A_s,a_{s,t}\rangle^\sigma}.\]
	Hence, $\eta$ is a $G$-map. Continuity of $\eta$ is not hard to see using the inductive definition of $\eta_s$, using the subspace topology of the product topology on $\ob(\A_n)^\sigma$ and $\mor(\A_n)^\sigma$ as defined in \ref{constr:perm-G-cat-pt.1}, and the $G\Top_*$-internal structure of $\A$. For example, $\eta_\emptyset$ and $\eta_{\{i\}}$ are induced by $I$ in $\A$, $\eta_{\{i,j\}}$ is a projection composed with an isomorphism and inductively one uses that the composition map $C$ of $\A$ is continuous.
	
	It remains to show naturality. First, note that
	\[(S,T)(\eta_{\langle A_s,a_{s,t}\rangle^\sigma})=(\langle A_s,a_{s,t}\rangle^\sigma,\langle \textstyle\bigoplus_T(A_{\kappa(s)(i)}),\hat a_{s,t}\rangle)=(\Id,\nu\delta)(\eta_{\langle A_s,a_{s,t}\rangle^\sigma})\]
	where $\hat a_{s,t}$ denote the maps $a_{s,t}$ defined in Step 1 of this proof. This implies commutativity of the first naturality diagram. The second naturality diagram boils down to showing that for any morphism $\langle\alpha_s\rangle^\sigma\colon \langle A_s,a_{s,t}\rangle^\sigma\to \langle A_s',a_{s,t}'\rangle^\sigma$ in $(\overline{\A}_n)^\sigma$ the following diagram commutes
	\begin{center}
		\begin{tikzcd}
			A_s^\sigma \arrow[d, "\eta_s"'] \arrow[rr, "\alpha_s"]         &  & A_s'^\sigma \arrow[d, "\eta_s'"] \\
		\bigoplus_T(A_{\kappa(s)(i)}) \arrow[rr, "\delta\nu\alpha_s"'] &  & \bigoplus_T(A_{\kappa(s)(i)}')  
		\end{tikzcd}
	\end{center}
	This follows from the fact that $\eta_s$ is built inductively from maps $a_{s,t}$ and the comparison isomorphisms using compatibility of $a_{s,t}$ with maps $\langle\alpha_s\rangle$. For $s=\emptyset$ and $s=\{i\}$ the diagram trivially commutes. For $s=\{i,j\}$ with $i< j$ this follows directly form Construction \ref{constr:perm-G-cat-pt.1} and the fact that $a_{\{i,j\}}$ is compatible with $\alpha_{\{i,j\}}$ and using the identification above. One proceeds inductively to show that the above diagram commutes utilising the defining diagram of $\eta_s$ above and compatibility of $a_{s,t}$ with $\alpha_{s}$.
	\begin{center}
		\begin{tikzcd}[scale = 0.8, column sep = 5mm]
A^\sigma_s\oplus A^\sigma_{\{i\}} \arrow[rrrr, "\alpha_{s}\oplus \alpha_{\{i\}}"] \arrow[ddd, "\eta_s\oplus \id_{{A_i^\sigma}}"']      &                                                                                                                                   &  &                                                                                 & A'^\sigma_s\oplus A'^\sigma_{\{i\}} \arrow[ddd, "\eta_s'\oplus\id_{A_i'^\sigma}"] \\& A_{s\cup\{i\}}^\sigma \arrow[d, "\eta_{s\cup\{i\}}"] \arrow[rr, "\alpha_{s\cup\{i\}}^\sigma"] \arrow[lu, "{a_{s,\{i\}}^\sigma}"'] &  & A_{s\cup\{i\}}'^\sigma \arrow[d, "\eta_s'"] \arrow[ru, "{a_{s,\{i\}}'^\sigma}"] &                                                                                 \\& \bigoplus_T(A_{\kappa(s\cup\{i\})(i)}) \arrow[rr, "\delta\nu\alpha_s"']                                                           &  & \bigoplus_T(A_{\kappa(s)(i)}') \arrow[rd, Rightarrow, no head,]                  &                                                                                 \\ \bigoplus_T(A_{\kappa(s)(i)})\oplus A_i^\sigma \arrow[ru, Rightarrow, no head] \arrow[rrrr, "\delta\nu\alpha_s\oplus\id_{A_i^\sigma}"] &                                                                                                                                   &  &                                                                                 & \bigoplus_T(A'_{\kappa(s)(i)})\oplus A_i'^\sigma                               
		\end{tikzcd}
	\end{center}
	The left and right squares are the defining squares of $\eta$. The top square commutes by Construction \ref{constr:perm-G-cat-pt.1} and the lower square commutes using the identification. Since the inner square is connected to the outer square through isomorphisms the inner square commutes. This concludes the proof that $\overline{\A}$ is an $\II$-special $\Gamma_\II$-$G$-category.
\end{proof}

\begin{proof}[Proof of Theorem \ref{thm:theorem-B}]
	By Theorem \ref{thm:I-NPC->I-special-Gamma-cat} the $\Gamma_\II$-$G$-category $\overline{\A}$ is $\II$-special. Since by assumption $\A$ is nice we obtain by Proposition \ref{prop:Gamma-cat->Gamma-space} an $\II$-special $\Gamma_\II$-$G$-space $B\overline{\A}$. Hence, $\sS_\II^{G,U}B\overline{\A}$ is a positive connective $\Omega$-$G$-spectrum indexed on a choice of compatible $G$-universe $U$ with group completion $\smash{B\overline{\A}_1=B\A \to \sS_\II^{G,U}B\overline{\A}_1}$ and thus the is Segal $K$-theory of the $\II$-NPC $\A$.
\end{proof}


\begin{appendices}

\section{Proofs of technical results}\label{appendix}

\subsection{The incomplete invariance theorem}\label{app:invariance-theorem}

To prove the wedge lemmawe need a preliminary result, which in the case $\II=\CC$ is due to \cite{GMMO19b}, Theorem 2.5/ 2.6\footnote{Implicitly, \cite{MMO25} also use a version of the invariance for $\II=\TT$, but I could not find a statement or proof anywhere.}. Let $\II\subset\JJ$. Note that the property of a $G\Top_*$-functor $\Gamma_\II\to G\Topu_*$ or $G\underline{\CW}_*^\II\to G\Top_*$ to be a $\Gamma_\II$ or $G\underline{\CW}_*^\II$-$G$-space solely relies on the underlying $\Gamma$-$G$-space. Hence, Proposition \ref{prop:commutative-diagram-prolongation} specialises to give a commutative diagram:
\begin{center}
	\begin{tikzcd}
		{\Gamma_\JJ[G\Topu_*]} \arrow[rr, "\PP_\JJ"]                          &  & {\underline{\CW}_*^\JJ[G\Topu_*]}                          \\
		{\Gamma_\II[G\Topu_*]} \arrow[u, "\pp_\II^\JJ"] \arrow[rr, "\PP_\II"'] &  & {\underline{\CW}_*^\II[G\Topu_*]} \arrow[u, "\PP_\II^\JJ"']
	\end{tikzcd}
\end{center}

The invariance theorem basically states that under suitable cofibrancy conditions any $\II$-level $G$-equivalence of $\Gamma_\II$-$G$-spaces induces under prolongation a level equivalence of $G\underline{\CW}_*^\II$-$G$-spaces. We will follow the nomenclature in \cite{GMMO19b}, Definition 2.3. The original notion of this cofibrancy condition was defined for $\Gamma$ and $\Gamma_G$-$G$-spaces with genuine structures in mind. In the incomplete setting we cannot always expect cofibrancy for finite $G$-sets outside of $\II$.

\begin{Def}
	Let $\II\subset\JJ$ be indexing systems. A $\Gamma_\II$-$G$-space $X$ is called \textit{$\JJ$-proper} if for any based simplicial $G$-set $A_*$ with isotropy in $\FF_\JJ$ the simplicial $G$-space $\PP_\JJ\pp_\II^\JJ X(A_*)$ is Reedy cofibrant.
\end{Def}

We defined $\PP_\JJ\pp_\II^\JJ X$ to be a $G\Top_*$-functor $G\underline{\CW}_*^\II\to G\Topu_*$ but we can extend this to a simplicial based $G$-spaces by setting on $q$-simplices $(\PP_\JJ\pp_\II^\JJ X(A))_q=\PP_\JJ\pp_\II^\JJ X(A_q)$.

\begin{Rem}
	For $\II\subset\JJ\subset\KK$ a $\Gamma_\II$-$G$-space $X$ is $\KK$-proper if and only if $\pp_\II^\JJ X$ is $\KK$-proper. We recover the two notions of properness in  \cite{GMMO19b}, Definition 2.3 by considering $\CC$-proper $\Gamma_\TT$-$G$-spaces and $\CC$-proper $\Gamma_\CC$-$G$-spaces. 
	
	Note that a $\TT$-proper $\Gamma_\II$-$G$-space $X$ is proper if and only if $\PP_\II(A_*)$ is Reedy cofibrant for any simplicial set with trivial $G$-action.
\end{Rem}

\begin{Ex}\label{ex:bar-constructions-are-proper}
	The $\Gamma_\II$-$G$-space $\bar{b}_\II X$ is proper for any $\Gamma_\II$-$G$-space $X$. To prove this, we need to show for any based simplicial set $A_*$ with isotropy in $\FF_\II$ that $\PP_\II \bar{b}_\II X(A_*)$ is Reedy cofibrant. As we have defined $b_\II :=\PP_\II\circ \bar{b}_\II$, it remains to show that $b_\II X(A_*)$ is Reedy cofibrant. In Lemma \ref{lem:cofibrancy-lemma} we will show that $b_\II$ preserves Reedy cofibrancy, that is, if $A_*$ is Reedy cofibrant, so is $b_\II(A_*)$. Recall that any simplicial $G$-set $A_*$ is Reedy cofibrant (cf. \cite{GMMO19b}, remark after Definition 2.3). Hence, $\bar{b}_\II$ is $\II$-proper. The same argument shows that $\bar{b}_\II^\times$ is $\II$-proper.
\end{Ex}

\begin{Thm}[Invariance theorem]\label{thm:invariance-theorem}
	Let $f\colon X\to Y$ be an $\II$-level equivalence of $\II$-proper $\Gamma_\II$-$G$-spaces. Then the induced map
	\[\PP_\II(f)(A)\colon \PP_\II(X)(A)\to \PP_\II(Y)(A)\]
	is a weak $G$-equivalence for all based $\II$-$G$-CW complexes $A$.
\end{Thm}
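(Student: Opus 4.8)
The statement is an incomplete-universe analog of the invariance theorem of \cite{GMMO19b}, Theorem 2.6, so the plan is to adapt that proof, replacing general $G$-CW complexes by $\II$-$G$-CW complexes and general finite $G$-sets by finite $G$-sets with isotropy in $\FF_\II$ throughout, and checking at each step that the relevant constructions stay inside $G\CW_*^\II$ and $G\Fin_*^\FF$. First I would reduce to the case where $A$ is a based simplicial finite $\II$-$G$-set: by the cellular filtration of $A$ and an induction on skeleta (using that each skeleton $A_n$ is obtained from $A_{n-1}$ by attaching $\FF_\II$-compatible cells, Proposition \ref{prop:isotropy-G-CW-attaching}), together with the realization lemma (Lemma \ref{lem:realization-lemma}) applied to $\PP_\II X = b_\II(\cdots)$ — more precisely, using that $\PP_\II$ commutes with realization and that the mapping cones occurring are again $\II$-$G$-CW by Remark \ref{rem:mapping-cone-is-I-G-CW} — it suffices to prove the statement levelwise for each simplicial degree, where $A_q$ is a finite based $G$-set with isotropy in $\FF_\II$, i.e.\ $A_q \in G\Fin_*^\FF$, hence (by $\Phi(S_+) = \Phi(S)\cup\{G\}$ and disk-likeness) isomorphic to some $\textbf{n}_+^\alpha$ with $\textbf{n}^\alpha \in \II(G)$.

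The heart of the matter is then the claim that $\PP_\II(f)(\textbf{n}_+^\alpha)\colon \PP_\II(X)(\textbf{n}_+^\alpha) \to \PP_\II(Y)(\textbf{n}_+^\alpha)$ is a weak $G$-equivalence whenever $f$ is an $\II$-level $G$-equivalence. By the description of the prolongation coend $\PP_\II X(\textbf{n}_+^\alpha) = (\textbf{n}_+^\alpha)^\bullet \otimes_{\Gamma_\II} X = F(-,\textbf{n}_+^\alpha)\otimes_{\Gamma_\II} X$ and the Yoneda-type identification (as in the proof of the second comparison theorem, invoking \cite{Lor21}, Proposition 2.2.1), there is a natural $G$-isomorphism $\PP_\II X(\textbf{n}_+^\alpha) \cong X(\textbf{n}_+^\alpha)$ when $\textbf{n}_+^\alpha \in \Gamma_\II$, since $F(-,\textbf{n}_+^\alpha)$ restricted to $\Gamma_\II$ is co-representable there. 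Under this identification $\PP_\II(f)(\textbf{n}_+^\alpha)$ becomes $f(\textbf{n}_+^\alpha)\colon X(\textbf{n}_+^\alpha)\to Y(\textbf{n}_+^\alpha)$, which is a weak $G$-equivalence precisely by the hypothesis that $f$ is an $\II$-level $G$-equivalence. The $\II$-properness hypothesis is what lets us run the skeletal induction: it guarantees that $\PP_\II X(A_*)$ and $\PP_\II Y(A_*)$ are Reedy cofibrant simplicial $G$-spaces, so that Theorem \ref{thm:MMO-thm-1.12} applies and a levelwise weak $G$-equivalence realizes to a weak $G$-equivalence, and similarly Theorem \ref{thm:MMO-thm-1.13} controls the cofibrations appearing in the cellular filtration.

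The main obstacle I anticipate is bookkeeping around the cellular (as opposed to simplicial) induction: a general finite based $\II$-$G$-CW complex $A$ is built by attaching cells $(G/H_j)_+ \land D^n$, not as a simplicial object, so to invoke the realization and properness machinery one must first replace $A$ by (or relate it to) a $G$-homotopy equivalent based simplicial $\II$-$G$-set, or else argue directly by induction on cells using the cofibre sequences $S^{n-1}\to D^n \to S^n$ together with positive linearity — but $\PP_\II X$ need not be linear unless $X$ is special, so for the general invariance statement one must instead use that $\PP_\II$ preserves the relevant homotopy pushouts/cofibre sequences up to weak equivalence on $\II$-proper inputs. This is the technical core and the place where the disk-like hypothesis on $\II$ is genuinely used (to keep $\FF_\II$ closed under the operations producing attaching-cell orbit types, via Proposition \ref{prop:constr-F-G-CW} and Corollary \ref{cor:properties-finite-F-sets}). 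Once the reduction to $\Gamma_\II$-levels is in place, the remaining verifications — that mapping cones and wedges of $\II$-$G$-CW complexes are again $\II$-$G$-CW, that $\PP_\II$ commutes with the realizations involved, and that the Reedy cofibrancy from $\II$-properness propagates through the induction — are routine and parallel to \cite{GMMO19b}, \S 2, so I would state them and refer to that source for the diagram chases.
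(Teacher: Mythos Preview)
Your proposal is essentially correct and follows the same three-step scheme as the paper: reduce to simplicial $\II$-$G$-sets, identify $\PP_\II X(\textbf{n}_+^\alpha)\cong X(\textbf{n}_+^\alpha)$ via co-Yoneda when $\textbf{n}_+^\alpha\in\Gamma_\II$, then use $\II$-properness and Theorem~\ref{thm:MMO-thm-1.12} to pass to the realization. The ``main obstacle'' you worry about is a phantom: the paper never runs a cellular induction or invokes linearity. Instead it replaces $A$ directly by the realization of a simplicial $G$-set $K_*$ with isotropy in $\FF_\II$ (via $\Sing$, cf.\ Remark~\ref{rem:I-G-CW-cx-vs-simplicial-I-sets}) and uses that the prolongation coend commutes with the realization coend by Fubini, giving $|K_*|^\bullet\otimes_{\Gamma_\II}X\cong |K_*^\bullet\otimes_{\Gamma_\II}X|$; this reduces everything to the levelwise statement in one step, with no need for cofibre sequences, skeletal filtrations, or the realization lemma for $b_\II$.
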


\begin{Rem}\label{rem:I-G-CW-cx-vs-simplicial-I-sets}
	Note that any finite based $G$-CW complex is weakly $G$-homotopy equivalent to the geometric realization of a finite simplicial $G$-set. A full proof this statement can be found in \cite{Sch18}, Proposition B.46. Observe that if $A$ is a finite $G$-CW-complex and $K_*$ is a finite simplicial $G$-set so that $|K_*|\simeq A$, then $A$ has a cell of type $G/H\land D^n$ for each non-degenerate $n$-simplex with isotropy group $H$ (cf. \cite{GMMO19b}, \S2.2). This implies that $A$ is a based $\II$-$G$-CW complex if and only if $K_*$ is a simplicial $G$-set with isotropy type in $\FF_\II$.
\end{Rem}

Following \cite{GMMO19b}, \S2.2, consider the adjunction $|-| \vdash\Sing$. Let $A$ be a based $G$-space. Then $\Sing(A)$ is a based simplicial $G$-space with $G$ acting on a simplex $\Delta^n\to A$ via the trivial action on $\Delta^n$ and standard action on $A$. Clearly, $\Sing(A)^H=\Sing(A^H)$ since a simplex $f\colon \Delta^n\to A$ is $H$-fixed if and only if it takes values in $A^H$. Similarly, for any finite $G$-set $K_*$, $|K_*^H|=|K_*|^H$ using that fixed points commute with realization. From this it follows that the natural $G$-map $\varepsilon\colon |\Sing(A)|\to A$ restricts on $H$-fixed points to weak equivalences $\varepsilon^H\colon |\Sing(A^H)|\to A^H$. Therefore, if $A$ is a based $\II$-$G$-CW complex then $\varepsilon$ is a weak $G$-equivalence. This implies the following lemma.

\begin{Lem}\label{lem:replacing-G-CW-with-GR}
	For any $\Gamma_\II$-$G$-space $X$ and weak $G$-equivalence $\varepsilon\colon |\Sing(A)|\to A$ as above the induced map
	\[\PP_\II X(\varepsilon)\colon |\Sing(A)|^\bullet\otimes_{\Gamma_\II}X\to A^\bullet \otimes_{\Gamma_\II} X\]
	is a weak $G$-homotopy equivalence.
\end{Lem}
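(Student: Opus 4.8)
\textbf{Proof plan for Lemma \ref{lem:replacing-G-CW-with-GR}.}

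The plan is to reduce the statement to the Invariance Theorem \ref{thm:invariance-theorem} by a suitable change of viewpoint, treating the weak $G$-equivalence $\varepsilon \colon |\Sing(A)| \to A$ not as a map of spaces but as a morphism of represented contravariant $\Gamma_\II$-$G$-spaces. First I would record that for a based $\II$-$G$-CW complex $A$, both $A$ and $|\Sing(A)|$ are based $\II$-$G$-CW complexes: the target $A$ is so by hypothesis, and $|\Sing(A)|$ is so because, as explained in the paragraph preceding the lemma together with Remark \ref{rem:I-G-CW-cx-vs-simplicial-I-sets}, the simplicial $G$-set $\Sing(A)$ has isotropy contained in $\Phi(A) \subset \FF_\II$ (a simplex is $H$-fixed only if it lands in $A^H$, so its isotropy group lies in $\FF_\II$), and realization of a simplicial $G$-set with isotropy in $\FF_\II$ is an $\II$-$G$-CW complex. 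Thus $\varepsilon$ is a $G$-map between objects of $G\underline{\CW}_*^\II$, and it is a weak $G$-equivalence by the discussion in the text (on $H$-fixed points it is the classical weak equivalence $|\Sing(A^H)| \to A^H$).

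Next I would pass to representable functors. The assignment $B \mapsto B^\bullet = F(-,B)$ is contravariantly functorial from $G\underline{\CW}_*^\II$ to contravariant $G\Top_*$-functors $\Gamma_\II \to G\Topu_*$, so $\varepsilon$ induces a natural transformation $\varepsilon^\bullet \colon |\Sing(A)|^\bullet \to A^\bullet$, and $\PP_\II X(\varepsilon)$ is exactly $\varepsilon^\bullet \otimes_{\Gamma_\II} X$. The key point is that $\varepsilon^\bullet$ is an $\II$-level equivalence in the sense relevant to Theorem \ref{thm:invariance-theorem}: for each finite based $G$-set $\textbf{n}_+^\alpha \in \Gamma_\II$ the induced map $F(\textbf{n}_+^\alpha, |\Sing(A)|) \to F(\textbf{n}_+^\alpha, A)$ is a weak $G$-equivalence. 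This follows because $F(\textbf{n}_+^\alpha, -) \cong ((-)^n)^\alpha$ is a finite twisted product, and a weak $G$-equivalence of well-pointed $G$-spaces induces a weak $G$-equivalence on finite (twisted) products; on $H$-fixed points, $(((-)^n)^\alpha)^H = (((-)^n)^{\Lambda_\alpha}$ is a finite product of fixed-point spaces of $\varepsilon$, each of which is a weak equivalence. Since the represented functors $B^\bullet$ are level-wise well-pointed for $B$ an $\II$-$G$-CW complex (finite smash/product and mapping-space constructions out of $\FF$-$G$-CW complexes stay well-pointed by Proposition \ref{prop:constr-F-G-CW} and the well-pointedness of $\II$-$G$-CW complexes), the invariance theorem applies once we check properness.

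The step I expect to require the most care is verifying the $\II$-properness hypothesis of Theorem \ref{thm:invariance-theorem}, i.e.\ that the represented functors $A^\bullet$ and $|\Sing(A)|^\bullet$ are $\II$-proper $\Gamma_\II$-$G$-spaces. The cleanest route is to reduce to the simplicial-set case handled in Example \ref{ex:bar-constructions-are-proper}: one expresses $\PP_\II(B^\bullet)$ in terms of the bar-type constructions whose Reedy cofibrancy is already established there, using the Reedy cofibrancy preservation of $b_\II$ (Lemma \ref{lem:cofibrancy-lemma}) and the fact that every simplicial $G$-set is Reedy cofibrant, together with Theorems \ref{thm:MMO-thm-1.12} and \ref{thm:MMO-thm-1.13} to pass Reedy cofibrancy and level equivalences through geometric realization. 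Once properness is in hand, Theorem \ref{thm:invariance-theorem} gives directly that $\PP_\II(\varepsilon^\bullet)$ evaluated at any based $\II$-$G$-CW complex is a weak $G$-equivalence; evaluating the represented-functor version at the object $S^0 = \textbf{1}_+$ (or simply recalling that for the represented case the prolongation $\PP_\II(B^\bullet)$ is itself the relevant $G\underline{\CW}_*^\II$-$G$-space $b_\II(-)(B)$ up to the identifications above) yields the statement that
\[
\PP_\II X(\varepsilon)\colon |\Sing(A)|^\bullet \otimes_{\Gamma_\II} X \to A^\bullet \otimes_{\Gamma_\II} X
\]
is a weak $G$-homotopy equivalence, as claimed. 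The potential subtlety to watch is that Theorem \ref{thm:invariance-theorem} is stated for covariant $\Gamma_\II$-$G$-spaces whereas $B^\bullet$ is contravariant, so the application is really to the prolongation functor $\PP_\II(-)(B) = B^\bullet \otimes_{\Gamma_\II} (-)$ viewed as acting on the covariant argument $X$; the variance bookkeeping must be done explicitly, but it is routine once set up.
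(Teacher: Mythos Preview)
Your proposal has a structural problem: Lemma \ref{lem:replacing-G-CW-with-GR} is one of the ingredients in the \emph{proof} of the Invariance Theorem \ref{thm:invariance-theorem} (see the proof of the Proposition immediately following Lemma \ref{lem:prolongation-restricted-to-finite-sets}, which begins ``By Lemma \ref{lem:replacing-G-CW-with-GR} we can replace $A$ \ldots''). So invoking Theorem \ref{thm:invariance-theorem} to prove this lemma would be circular.

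There is also a genuine variance issue that your last paragraph does not actually resolve. The Invariance Theorem is a statement about a map $f\colon X\to Y$ in the \emph{covariant} slot: it says $B^\bullet\otimes_{\Gamma_\II}(-)$ sends $\II$-level equivalences to weak $G$-equivalences. Here $X$ is fixed and the map $\varepsilon$ lives in the \emph{contravariant} slot $B\mapsto B^\bullet$. Your attempted fix (``view it as acting on the covariant argument $X$'') does not help, because there is no morphism of covariant $\Gamma_\II$-$G$-spaces in sight: $X$ does not move.

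The paper's argument is much more elementary and does not use the Invariance Theorem at all. Both $A$ and $|\Sing(A)|$ are $G$-CW complexes with isotropy in $\FF_\II$, and $\varepsilon$ is a weak $G$-equivalence, hence by the equivariant Whitehead theorem a $G$-homotopy equivalence. The assignment $B\mapsto B^\bullet\otimes_{\Gamma_\II}X$ is a $G\Top_*$-enriched functor in $B$ (each $F(T_+,-)$ is, and the coend is a colimit of such), so it sends $G$-homotopy equivalences to $G$-homotopy equivalences. That is the whole proof; the phrase ``This implies the following lemma'' in the paper is meant literally.
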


\begin{Lem}\label{lem:prolongation-restricted-to-finite-sets}
	Let $f\colon X\to Y$ be an $\II$-level $G$-equivalence of $\Gamma_\II$-$G$-spaces. Then 
	\[\PP_\II X(f)(A)\colon A^\bullet\otimes_{\Gamma_\II} X\to A^\bullet\otimes_{\Gamma_\II} Y\]
	is a weak $G$-equivalence for any finite based $G$-set $A$ in $G\Fin_*^\II$.
\end{Lem}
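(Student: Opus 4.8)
The statement concerns $\PP_\II X(f)(A) \colon A^\bullet \otimes_{\Gamma_\II} X \to A^\bullet \otimes_{\Gamma_\II} Y$ for $A \in G\Fin_*^\II$, which (using the equivalence $G\Fin_*^\II \simeq \II_\FF(G)_+$) means $A \cong S_+$ for some $S \in \II(G)$, hence $A$ is one of the generating objects $\textbf{n}_+^\alpha$ of $\Gamma_\II$ itself. The plan is to exploit the Yoneda lemma for coends. Since $A = \textbf{n}_+^\alpha$ is an object of $\Gamma_\II$, the representable contravariant functor $A^\bullet = F(-, \textbf{n}_+^\alpha)$ restricted to $\Gamma_\II$ is, up to the canonical $G$-homeomorphism $F(\textbf{m}_+^\beta, \textbf{n}_+^\alpha) = \Gamma_\II(\textbf{m}_+^\beta, \textbf{n}_+^\alpha)$ (both are the same discrete $G$-space of based maps with conjugation action), the representable $\Gamma_\II(-, \textbf{n}_+^\alpha)$. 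Therefore the coend Yoneda lemma (as cited in the proof of the second comparison theorem, via \cite{Lor21}, Proposition 2.2.1 / Remark 4.3.5) gives a natural $G$-homeomorphism
\[
A^\bullet \otimes_{\Gamma_\II} X = \int^{\textbf{m}_+^\beta \in \Gamma_\II} \Gamma_\II(\textbf{m}_+^\beta, \textbf{n}_+^\alpha) \land X(\textbf{m}_+^\beta) \cong X(\textbf{n}_+^\alpha),
\]
and similarly $A^\bullet \otimes_{\Gamma_\II} Y \cong Y(\textbf{n}_+^\alpha)$, and these identifications are natural in the $\Gamma_\II$-$G$-space, so they intertwine $\PP_\II X(f)(A)$ with $f(\textbf{n}_+^\alpha) \colon X(\textbf{n}_+^\alpha) \to Y(\textbf{n}_+^\alpha)$.

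First I would make precise that every $A \in G\Fin_*^\II$ is isomorphic (as a based $G$-set) to some $\textbf{n}_+^\alpha$ with $\textbf{n}^\alpha \in \II(G)$, using the normal form from Proposition \ref{prop:properties-isotropy} and the identification $G\Fin_*^\II \simeq \II_\FF(G)_+$; since $\PP_\II X(f)(A)$ depends on $A$ only up to $G$-homeomorphism, it suffices to treat $A = \textbf{n}_+^\alpha$. Next I would record that $\textbf{n}_+^\alpha \in \Gamma_\II$ by definition of $\Gamma_\II$, and that the represented contravariant functor $A^\bullet|_{\Gamma_\II} = F(-,\textbf{n}_+^\alpha)|_{\Gamma_\II}$ agrees with the $\Gamma_\II$-representable $\Gamma_\II(-, \textbf{n}_+^\alpha)$ — this is immediate since $\Gamma_\II$ is a full subcategory of $\Gamma_G \subset G\underline{\Topu}_*$ and its hom $G$-spaces are precisely the based mapping $G$-spaces with conjugation action. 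Then I would invoke the coend Yoneda lemma to get the natural isomorphism $A^\bullet \otimes_{\Gamma_\II} (-) \cong \ev_{\textbf{n}_+^\alpha}(-)$ of functors $\Gamma_\II[G\Topu_*] \to G\Topu_*$ (or on all of $\Fun(\Gamma_\II, G\Topu_*)$). Finally, naturality of this isomorphism in the functor variable means the square comparing $\PP_\II X(f)(A)$ with $f(\textbf{n}_+^\alpha)$ commutes, and since $f$ is an $\II$-level $G$-equivalence and $\textbf{n}^\alpha \in \II(G)$, the map $f(\textbf{n}_+^\alpha)$ is a weak $G$-equivalence by definition; hence so is $\PP_\II X(f)(A)$.

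The only subtlety — and the main thing to be careful about rather than a genuine obstacle — is that $\PP_\II$ was defined on $\Gamma_\II$-$G$-spaces as $\PP_\II X(B) = B^\bullet \otimes_{\Gamma_\II} X$ for $B$ a based $\II$-$G$-CW complex, so one must check that when $B = A$ is a finite $\II$-$G$-set (a $0$-skeletal $\II$-$G$-CW complex, via the inclusion $\Gamma_\II \to G\underline{\CW}_*^\II$), the coend defining $\PP_\II X(A)$ is literally the same coend that appears in the Yoneda computation above; this is so because the represented functor $F(-, A)$ is the same whether we view $A$ in $G\underline{\CW}_*^\II$ or in $\Gamma_\II$, and restricting the coend over $G\underline{\CW}_*^\II$ to the coend over $\Gamma_\II$ is exactly the point of $\PP_\II = \PP_\II^{G\CW} \circ \bar{b}$-type bookkeeping already handled in Remark \ref{rem:prolongation-existence} and Proposition \ref{prop:commutative-diagram-prolongation}. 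No specialness or properness hypothesis on $X$ or $Y$ is needed here — this is a purely formal consequence of the Yoneda lemma and the definition of $\II$-level $G$-equivalence, in contrast to the full invariance theorem, which does require properness. This lemma is then the base case for the inductive proof of Theorem \ref{thm:invariance-theorem} over the cells of a general $\II$-$G$-CW complex.
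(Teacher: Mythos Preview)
Your proposal is correct and follows essentially the same approach as the paper: reduce to $A\cong T_+\in\Gamma_\II$, identify $A^\bullet|_{\Gamma_\II}=F(-,T_+)=\Gamma_\II(-,T_+)$, apply the coend Yoneda lemma (the paper cites the same source \cite{Lor21}) to get $\PP_\II X(T_+)\cong X(T_+)$ naturally, and conclude that $\PP_\II X(f)(A)$ is identified with $f(T_+)$, which is a weak $G$-equivalence by the definition of $\II$-level $G$-equivalence. Your write-up is somewhat more explicit about the bookkeeping (e.g.\ the identification of represented functors and the naturality square), but the argument is the same.
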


\begin{proof}
	Since $A$ is a finite based $G$-set with isotropy in $\FF_\II$ it is $G$-homeomorphic to some $T_+$. The proof then follows from the Yoneda lemma for coends (cf. \cite{Lor21}, Lemma 2.2.1, Remark 4.3.5) since
	\[\PP_\II X(T_+) =\int^{S_+\in \Gamma_\II}F(S_+,T_+)\land X(S_+)=\int^{S_+\in \Gamma_\II}\Gamma_\II(S_+,T_+)\land X(S_+)\cong X(T_+).\]
	Hence, $\PP_\II X(A)^H\cong \PP_\II X(T_+)^H\cong  X(T_+)^H$ and therefore $f(T_+)$ is a weak $G$-equivalence if and only if the map $\PP_\II X(f)(A)$ is one.
\end{proof}

The following proposition will then imply the invariance theorem. The proof idea follows directly \cite{GMMO19b}, Proposition 2.10.

\begin{Prop}
	Let $f\colon X\to Y$ be an $\II$-level equivalence of $\II$-proper $\Gamma_\II$-$G$-spaces. Then for any finite based $\II$-$G$-CW complex $A$, the induced map 
	\[\PP_\II(f)(A)\colon A^\bullet\otimes_{\Gamma_\II}X\to A^\bullet\otimes_{\Gamma_\II} Y\] 
	is a weak $G$-equivalence.
\end{Prop}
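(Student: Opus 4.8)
The strategy is to reduce the general statement about finite based $\II$-$G$-CW complexes $A$ to the already-established case of finite based $G$-sets (Lemma \ref{lem:prolongation-restricted-to-finite-sets}) by an induction on the cells of $A$, using the skeletal filtration. First I would replace $A$ by the geometric realization of a finite simplicial $G$-set $K_*$ with isotropy in $\FF_\II$ using Remark \ref{rem:I-G-CW-cx-vs-simplicial-I-sets} and Lemma \ref{lem:replacing-G-CW-with-GR}; since $\PP_\II X(\varepsilon)$ and $\PP_\II Y(\varepsilon)$ are weak $G$-equivalences for $\varepsilon\colon |\Sing(A)|\to A$ and these are compatible with $\PP_\II(f)$, it suffices to prove the claim for $A=|K_*|$. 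Now $K_*$ is a simplicial object in $G\Fin_*^\II$ and, being a simplicial $G$-set, is Reedy cofibrant. Because $X$ and $Y$ are $\II$-proper, the simplicial $G$-spaces $\PP_\II X(K_*)$ and $\PP_\II Y(K_*)$ are Reedy cofibrant, and since $\PP_\II$ commutes with geometric realization (the coend defining the tensor product of functors commutes with colimits in the variable $A$), we have $\PP_\II X(|K_*|)\cong |\PP_\II X(K_*)|$ and likewise for $Y$.

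Next, by Lemma \ref{lem:prolongation-restricted-to-finite-sets}, for each $q\ge 0$ the map $\PP_\II(f)(K_q)\colon \PP_\II X(K_q)\to \PP_\II Y(K_q)$ is a weak $G$-equivalence, since $K_q$ is a finite based $G$-set in $G\Fin_*^\II$. Thus $\PP_\II(f)(K_*)$ is a map of Reedy cofibrant simplicial $G$-spaces that is a levelwise weak $G$-equivalence, so by Theorem \ref{thm:MMO-thm-1.12} its realization $|\PP_\II(f)(K_*)|\colon |\PP_\II X(K_*)|\to |\PP_\II Y(K_*)|$ is a weak $G$-equivalence. Under the natural isomorphisms identifying these realizations with $\PP_\II X(|K_*|)$ and $\PP_\II Y(|K_*|)$, this is exactly $\PP_\II(f)(|K_*|)$, which completes the argument. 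Finally, transporting back along the weak $G$-equivalence $\varepsilon\colon |K_*|\simeq |\Sing(A)|\to A$ via Lemma \ref{lem:replacing-G-CW-with-GR} and the evident commutative square, we conclude that $\PP_\II(f)(A)$ is a weak $G$-equivalence for every finite based $\II$-$G$-CW complex $A$.

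\textbf{Main obstacle.} The one step that requires genuine care is verifying that $\PP_\II$ commutes with geometric realization on the relevant diagrams and that the resulting identifications are natural in $f$ — one must check that $A^\bullet\otimes_{\Gamma_\II}X$ as a functor of $A$ preserves the colimit presenting $|K_*|$, which holds because smash product and coends commute with colimits, but bookkeeping the based-space subtleties and the interaction with the simplicial structure (degeneracies landing in $G\Fin_*^\II$, closure of $\FF_\II$ under the operations in Corollary \ref{cor:properties-finite-F-sets}) is where the incomplete setting differs from the genuine case of \cite{GMMO19b}, Proposition 2.10. The properness hypothesis is exactly what feeds Theorem \ref{thm:MMO-thm-1.12}, so no additional homotopical input beyond what is in Section \ref{sec:preliminaries} is needed once that identification is in hand.
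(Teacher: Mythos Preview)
Your proposal is correct and follows essentially the same route as the paper: replace $A$ by the realization of a simplicial $G$-set $K_*$ with isotropy in $\FF_\II$ via Lemma~\ref{lem:replacing-G-CW-with-GR}, use that the coend defining $\PP_\II$ commutes with realization, apply Lemma~\ref{lem:prolongation-restricted-to-finite-sets} levelwise, and invoke $\II$-properness plus Theorem~\ref{thm:MMO-thm-1.12} to conclude. The only cosmetic wrinkle is that your opening sentence mentions ``induction on the cells'' and the ``skeletal filtration'', which is not what you actually do---you (and the paper) use the simplicial replacement instead---so you should drop that phrase to avoid confusion.
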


\begin{proof}
	By Lemma \ref{lem:replacing-G-CW-with-GR} we can replace $A$ in the proposition with $|K_*|$ so that $K_n$ is a based $G$-set with isotropy in $\FF_\II$ for all $n\ge 0$. Therefore, $K_*$ can be seen to be a simplicial object $G\Fin_*^\II$. Since products commute with realization and since the coends that define prolongation commute with the coends defining geometric realization (cf. proof of \cite{GMMO19b}, Proposition 2.10), we see that
	\[|K_*|^\bullet \otimes_{\Gamma_\II} X \cong |K_*\otimes_{\Gamma_\II}  X|\]
	and similarly for $Y$. Hence, $\PP_\II(f)(A)$ is the realization of the induced simplicial $G$-map $f_*'\colon K_*\otimes_{\Gamma_\II}X\to K_*\otimes_{\Gamma_\II} Y$. On $q$-simplices we have that $K_q$ is a finite based $G$-set with isotropy in $\FF_\II$ and thus by Lemma \ref{lem:prolongation-restricted-to-finite-sets} we know that $f_n'$ is a weak $G$-equivalence. Hence, $f'_*$ is a level weak $G$-equivalence. Since $X$ and $Y$ are $\II$-proper we know that $f_*'$ is a map between Reedy cofibrant $G$-spaces. Theorem \ref{thm:MMO-thm-1.12} then implies that $|f_*'|=\PP_\II f(A)$ is a weak $G$-equivalence.
\end{proof}


\subsection{Proof of the wedge lemma}\label{app:wedge-lemma}

The following proposition expands on a passing remark in \cite{MMO25}, proof of Proposition 8.14/ beginning of \S 8.4. It uses Lemma \ref{lem:MMO-lem.1.17} where one can interchange $G\Top_*$-functors $\Gamma_\II\to G\Topu_*$ with reduced $G\Top$-functors $\Gamma_\II\to G\Topu$, i.e. functors $X$ such that $X_0=*$. The main problem with $b_\II^\times X$ is that the condition $X_0=*$ does not in general hold (cf. \cite{MMO25}, Remark 3.11).

\begin{Prop}
	For any finite based $\II$-$G$-CW complex $A$, there is a weak $G$-equivalence
	\[b_\II^\times X(A) \xrightarrow{\simeq} b_\II X(A).\]
\end{Prop}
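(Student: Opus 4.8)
The obstruction, as flagged in the preceding remark, is that $b_\II^\times X$ is built from the cartesian-product bar construction $B^\times(-,\Gamma_\II,-)$, so the underlying functor $\Gamma_\II \to G\Topu$ need not be reduced: $B^\times(\Gamma_\II(-,\textbf{0}_+),\Gamma_\II,X)$ is the bar construction of a point-valued functor against $X$, which is $B^\times(*,\Gamma_\II,X)$ and only vanishes up to homotopy, not on the nose. The based version $b_\II X(A) = B(A^\bullet,\Gamma_\II,X)$ uses the smash product and is reduced by Lemma \ref{lem:MMO-lem.1.17}. So the first move is to compare the two bar constructions levelwise: in simplicial degree $q$, $B^\times_q(A^\bullet,\Gamma_\II,X)$ is a $q$-fold \emph{cartesian} product of hom-spaces and one copy of $A^\bullet$ and one copy of $X$-value, whereas $B_q(A^\bullet,\Gamma_\II,X)$ is the corresponding $q$-fold \emph{smash}. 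There is a natural quotient map $\times \to \wedge$ in each degree, compatible with faces and degeneracies, giving a simplicial map $B^\times_*(A^\bullet,\Gamma_\II,X) \to B_*(A^\bullet,\Gamma_\II,X)$ whose realization is the asserted map $b_\II^\times X(A) \to b_\II X(A)$.

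The strategy is then to show this simplicial map is a levelwise weak $G$-equivalence and then invoke Theorem \ref{thm:MMO-thm-1.12} after checking Reedy cofibrancy. For the levelwise statement, fix $q$ and expand: $B^\times_q$ is a wedge-indexed-by-$(q{+}1)$-tuples of products of the form $A^\bullet(a_q) \times \Gamma_\II(a_{q-1},a_q) \times \cdots \times \Gamma_\II(a_0,a_1) \times X(a_0)$, while $B_q$ is the same but with all $\times$ replaced by $\wedge$, summed over the same index tuples. The point is that since $X$ is a $\Gamma_\II$-$G$-space it is levelwise well-pointed (Remark \ref{rem:Gamma_I-lands-in-well-pointed}), each $\Gamma_\II(a,b)$ is discrete and well-pointed, and $A^\bullet(a) = F(a,A)$ is well-pointed; under these cofibrancy hypotheses the natural map from the product to the smash of well-pointed spaces is a based $G$-homotopy equivalence after collapsing the fat wedge, and in fact the quotient map $X_1 \times \cdots \to X_1 \wedge \cdots$ realizing the basepoint identifications is precisely the standard comparison that is a weak $G$-equivalence when the spaces involved are well-pointed and one collapses along the factors that map to basepoints — this is exactly the mechanism by which reducedness is imposed homotopically, and it is the content of the ``passing remark'' in \cite{MMO25}, \S 8.4. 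Concretely, one uses that for well-pointed based $G$-spaces the inclusion of the fat wedge into the product is a $G$-cofibration, so the quotient map (product) $\to$ (smash) is a based $G$-homotopy equivalence up to that cofibration, hence a weak $G$-equivalence on each orbit's fixed points. Summing over the (discrete) indexing wedge preserves weak $G$-equivalences, so $B^\times_q \to B_q$ is a weak $G$-equivalence for every $q$.

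Finally, to apply Theorem \ref{thm:MMO-thm-1.12} I need both simplicial $G$-spaces to be Reedy cofibrant. For $B_*(A^\bullet,\Gamma_\II,X)$ this is exactly the cofibrancy lemma (Lemma \ref{lem:cofibrancy-lemma}) together with Remark \ref{rem:cofibrancy-assumptions-bar-construction}, which verifies that $\Gamma_\II$ and the relevant functors satisfy Assumption 3.10 of \cite{MMO25}. For $B^\times_*$ one runs the same argument: the degeneracy operators are inclusions of wedge summands indexed by tuples with a repeated adjacent index, and Lemma \ref{lem:MMO-lem-1.11} reduces Reedy cofibrancy to checking the $s_i$ are $G$-cofibrations, which follows since $\Gamma_\II$ has well-pointed discrete hom-spaces and a cofibrant identity inclusion and $A^\bullet$, $X$ are levelwise well-pointed — note Example \ref{ex:bar-constructions-are-proper} already records that $\bar b_\II^\times$ is $\II$-proper, which is the relevant cofibrancy input. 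The main obstacle is really the levelwise comparison in the middle paragraph: one must be careful that the ``product-to-smash'' map is genuinely a weak $G$-equivalence and not merely a $\pi_0$-surjection, which forces the well-pointedness hypotheses on $X$, on the hom-objects of $\Gamma_\II$, and on $A^\bullet$ to all be used simultaneously; once that is in hand the realization step is a formality via Theorem \ref{thm:MMO-thm-1.12}.
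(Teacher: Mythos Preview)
Your central claim is false: the levelwise simplicial map $B^\times_q(A^\bullet,\Gamma_\II,X)\to B_q(A^\bullet,\Gamma_\II,X)$ is \emph{not} a weak $G$-equivalence. In degree $q$ the source is a \emph{disjoint union} over $(q{+}1)$-tuples of cartesian products, while the target is a \emph{wedge} over the same tuples of smash products. Neither the passage $\coprod\to\bigvee$ nor the passage $\times\to\wedge$ is a weak equivalence in general: for instance already at $q=0$ with $A=S^0$ the two sides differ on $\pi_0$, since collapsing the fat wedge and identifying all basepoints in the coproduct destroys components. Well-pointedness of the factors guarantees that the inclusion of the fat wedge into the product is a $G$-cofibration, hence that the quotient map $\times\to\wedge$ is a \emph{cofibration followed by collapse of a cofibrant subspace}, but it says nothing about that quotient map being a weak equivalence. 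So Theorem~\ref{thm:MMO-thm-1.12} cannot be invoked on this map.

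The paper's argument is genuinely different and avoids this trap. One works first at the $\Gamma_\II$-level (not yet prolonged to $A$) and factors the comparison as
\[
\bar b_\II^\times X \xrightarrow{\;q\;} \bar b_\II^\times X/\bar b_\II^\times X(\textbf{0}_+) \xrightarrow{\;p\;} \bar b_\II X,
\]
together with the augmentation maps $\varepsilon^\times$, $\varepsilon'$, $\varepsilon$ down to $X$. The map $q$ is a level $G$-equivalence by the well-pointedness of $X$ (this is what collapsing $B^\times(*,\Gamma_\II,X)$ buys, cf.\ \cite{MMO25}, Remark~3.11); both $\varepsilon^\times$ and $\varepsilon$ are level $G$-equivalences by Proposition~\ref{prop:bar-construction-approximation-of-X}; two-out-of-three then forces $p$ to be a level $G$-equivalence. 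Thus $p\circ q$ is an $\II$-level $G$-equivalence between $\II$-proper $\Gamma_\II$-$G$-spaces, and the invariance theorem (Theorem~\ref{thm:invariance-theorem}) transports this to a weak $G$-equivalence after prolonging to any $\II$-$G$-CW complex $A$. The key structural point you missed is that the equivalence holds only \emph{after} realization of the bar direction (equivalently, after evaluation on $\Gamma_\II$ and then prolongation), not simplicially level by level; the $\varepsilon$-maps to $X$ provide the bridge that your direct levelwise comparison lacks.
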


\begin{proof} Note that we have the following commutative diagram $G\Top$-functors $\Gamma_\II\to G\Topu$, akin to the diagram labelled (3.20) in \cite{GMMO19b}	
	\begin{center}
		\begin{tikzcd}
			b_\II^\times X \arrow[rrd, "\varepsilon^\times"'] \arrow[rr, "q"] &  & b_\II^\times X/b_\II^\times X(*) \arrow[rr, "p"] \arrow[d, "\varepsilon'" description] &  & b_\II X \arrow[lld, "\varepsilon"] \\                                                                  &  & X                                                                                      &  &                                      
		\end{tikzcd}
	\end{center}
	The maps $\varepsilon^\times$ and $\varepsilon$ come from Proposition 3.13 in \cite{MMO25}, the maps $q$ and $p$ are quotient maps and $\varepsilon'$ is the induced map of $\varepsilon^\times$ on the quotient. By Proposition \ref{prop:bar-construction-approximation-of-X} we find that $\varepsilon^\times$ is a level $G$-equivalence of $G\Top$-functors and that $\varepsilon$ is a level $G$-equivalence of $\Gamma_\II$-$G$-spaces. Under the assumption of non-degenerate base points of $X$ we can use \cite{MMO25}, Remark 3.11 to see that the map $q$ is a level $G$-equivalence. Therefore, also $\varepsilon'$ is a level $G$-equivalence which implies that $p$ is a level $G$-equivalence. We then use the fact that
	\[(\PP_\II b_\II X)(A)=A^\bullet\otimes_{\Gamma_\II}B(\Gamma_\II,\Gamma_\II,X)\cong B(A^\bullet,\Gamma_\II, X)=b_\II X(A)\]
	follwoing Remark 3.8 in \cite{MMO25}. By Lemma \ref{lem:realization-lemma} we know that $b_\II X$ and $b_\II^\times X$ preserve Reedy cofibrancy. This fact together with Example \ref{ex:bar-constructions-are-proper} implies that $p\circ q$ on the underlying map $\bar{b}_\II^\times X\to \bar{b}_\II X$ is an $\II$-level $G$-equivalence between $\II$-proper $G\Top$-functors\footnote{We have defined this notion only for $G\Top_*$-functors but the definition is the same for $G\Top$-functors.}. As pointed out in \cite{MMO25}, Footnote 27 the non-reducedness of $b_\II X$ is irrelevant here. In particular the invariance theorem (Theorem \ref{thm:invariance-theorem}) holds which then implies that $p\circ q (A)$ is a weak $G$-equivalence.
\end{proof}

Being able to use $b^\times_\II X$ we can now use its description as the realization of the nerve of an \textit{Grothendieck category of elements} as in \cite{MMO25}, Remark 3.2. We want to construct a weak $G$-equivalence
\[b_\II^\times X(A\lor B)\to b_\II^\times X(A)\times b_\II^\times X(B).\]
To do so we use \cite{MMO25}, Remark 3.2 to construct an equivalence of categories whose realization yields the desired weak homotopy equivalence.

\begin{Constr} Let $A,B$ be finite based $\II$-$G$-CW complexes and let $X$ be an $\II$-special $\Gamma_\II$-$G$-space. Analogously to \cite{MMO25}, Remark 3.2 and \S 8.4 define the following categories:
	\begin{enumerate}
		\item Let $\D(A,X)$ be the  $G\Top_*$-internal category whose object $G$-space is
		\[\coprod_{a}A^a\times X(a)\]
		and whose morphism $G$-space is
		\[\coprod_{a,c} A^c\times\Gamma_\II(a,c)\times X(a)\]
		where $a,b$ are finite $G$-sets in $\Gamma_\II$. The source and target maps are induced from the evaluation maps of the contravariant $G\Top_*$-functor $A^\bullet\colon\Gamma_\II\to G\Topu_*$. The identity map is induced by identity morphisms and composition is induced by composition in $\Gamma_\II$. Repeat the same construction for $B$ to obtain $\D(B,X)$. By Remark 3.2 \cite{MMO25} we then have that
		\[|\D(A,X)|\cong b_\II^\times X(A) \text{ and } |\D(B,X)|\cong b_\II^\times X(B).\]
		\item Note that at this point it is crucial to work with a nice category of topological $G$-spaces (i.e. compactly generated weak Hausdorff $G$-spaces) so that the product commutes with geometric realization and we can therefore write
		\[|\D(A,X)\times \D(B,X)|\cong |\D(A,X)|\times |\D(B,X)|\cong b_\II^\times X(A) \times b_\II^\times X(B).\]
		\item Define a $G\Top_*$-internal category $\D(A,B;X)$ with object $G$-space
	\[\coprod_{a,b}(A^a\times B^b)\times X(a\lor b)\]
	where $a,b$ range over finite $G$-sets in $\Gamma_\II$. Note that since $\II$ is an indexing system and is thus closed under coproducts (I6) the expressions $X(a\lor b)$ are well-defined. We define the morphism $G$-space by
	\[\coprod_{a,b,c,d}(A^c\times B^d)\times \Gamma_\II(a,c)\times\Gamma_\II(b,d)\times X(a\lor b)\]
	where the source and target maps $S,T$ are induced from the evaluation maps of $A^\bullet\times B^\star$ and the composite of $\lor\colon \Gamma_\II\times\Gamma_\II\to\Gamma_\II$ with the evaluation maps of $X$. More precisely, $T(\mu,\phi,\psi,x)=(\mu,X(\phi\lor\psi)(x))$ and $S(\mu,\phi,\psi,x)=(\mu\circ(\phi\lor \psi),x)$. Again, the identity map is induced by identity morphisms, where $(\mu,x)$ is sent to $(\mu,\id_a,\id_b,x)$, and composition is induced from composition in $\Gamma_\II$. 
	\end{enumerate}	
\end{Constr}

The proof of the wedge lemma then reduces to the following proposition. The statement is the same as in \cite{MMO25}, \S 8.4  and proof works almost exactly the same. But we need to be careful that the construction is contained within $\II$.

\begin{Rem}
	Note that the category $\D(A,B;X)$ is also internal to $G\Top_*$ and realizes to give a bar construction of the form
	\[B_*^\times (A^\bullet\times B^\star, \Gamma_\II\times\Gamma_\II,X(\bullet\lor \star))\]
	as in \cite{MMO25}, \S 8.4.
\end{Rem}

\begin{Prop}\label{prop:lemma-to-wedge-lemma}
	There are functors
	\begin{center}
		\begin{tikzcd}
             & {\D(A,B;X)} \arrow[ld, "F"'] \arrow[rd, "Q"] &                        \\
			\D(A\lor B;X) \arrow[rr, "P"'] &                                               & \D(A;X)\times\D(B;X)
		\end{tikzcd}
	\end{center}
	so that the diagram commutes up to natural transformations. Moreover, the functor $F$ is part of an equivalence of categories.
\end{Prop}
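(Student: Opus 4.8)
The plan is to follow the genuine proof in \cite{MMO25}, \S 8.4, carefully checking that every finite $G$-set that appears stays inside $\II$, which is where the disk-like and closure axioms of indexing systems enter. First I would define the three functors explicitly. The functor $P\colon \D(A\lor B;X)\to \D(A;X)\times \D(B;X)$ is induced by the projections $\pi_A\colon A\lor B\to A$ and $\pi_B\colon A\lor B\to B$, sending an object $(\mu,x)$ with $\mu\in (A\lor B)^a$ and $x\in X(a)$ to the pair $\bigl((\pi_A\circ\mu, x),(\pi_B\circ\mu,x)\bigr)$, and similarly on morphisms. The functor $Q\colon \D(A,B;X)\to \D(A;X)\times \D(B;X)$ sends an object $(\mu,\nu,x)$ with $\mu\in A^a$, $\nu\in B^b$, $x\in X(a\lor b)$ to $\bigl((\mu, X(\iota_a)(x)),(\nu, X(\iota_b)(x))\bigr)$, where $\iota_a\colon a\to a\lor b$ and $\iota_b\colon b\to a\lor b$ are the canonical inclusions in $\Gamma_\II$ (these are morphisms in $\Gamma_\II$ precisely because $a\lor b\in\II$ by axiom (I6) and $a,b\subset a\lor b$, so this also uses (I5); in fact we only need that the inclusions are maps of based $G$-sets). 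The functor $F\colon \D(A,B;X)\to \D(A\lor B;X)$ sends $(\mu,\nu,x)$ with $\mu\in A^a$, $\nu\in B^b$, $x\in X(a\lor b)$ to $(\mu\lor\nu, x)$ where $\mu\lor\nu\in (A\lor B)^{a\lor b}$ is the evident combined map, using that $a\lor b\in\Gamma_\II$. The commutativity $P\circ F\cong Q$ up to natural transformation follows from the identities $\pi_A\circ(\mu\lor\nu)=\mu\circ\pi_a$ and the fact that $\pi_a\colon a\lor b\to a$ composed with $\iota_a$ is the identity on the $a$-part; the discrepancy is exactly the natural isomorphism witnessed by the retraction structure, and one spells this out the same way as in \cite{MMO25}.

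The heart of the matter is showing $F$ is part of an equivalence of categories. I would construct a functor $F'\colon \D(A\lor B;X)\to \D(A,B;X)$ and natural isomorphisms $F'F\cong \mathrm{id}$ and $FF'\cong\mathrm{id}$. Given an object $(\mu,x)$ of $\D(A\lor B;X)$ with $\mu\in(A\lor B)^c$ and $x\in X(c)$, one wants to split $c$ into the ``$A$-part'' and ``$B$-part''. Concretely, let $c_A=\mu^{-1}(A\setminus\{*\})\cup\{*\}$ and $c_B=\mu^{-1}(B\setminus\{*\})\cup\{*\}$, which are based $G$-subsets of $c$ with $c=c_A\cup c_B$ and $c_A\cap c_B=\{*\}$, so $c\cong c_A\lor c_B$. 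Here is where I use closure under subobjects (I5): since $c\in\II(G)$, also $c_A,c_B\in\II(G)$, so they are legitimate objects of $\Gamma_\II$; and since $\II$ is closed under coproducts, $c_A\lor c_B\in\II(G)$ as well, consistent with the isomorphism $c\cong c_A\lor c_B$ (using (I2)). Define $F'(\mu,x)=(\mu|_{c_A}, \mu|_{c_B}, X(\theta)(x))$ where $\theta\colon c\to c_A\lor c_B$ is the canonical isomorphism. One then checks functoriality on morphisms and that $F'F$ and $FF'$ are naturally isomorphic to the respective identities; the natural isomorphisms come from the chosen splitting isomorphisms $c\cong c_A\lor c_B$, exactly paralleling \cite{MMO25}. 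Finally, realizing via the nerve and using that geometric realization of an equivalence of (topologically enriched, internal) categories is a $G$-homotopy equivalence (Remark \ref{rem:eequiv-of-cats->homotopy-equiv}, or its non-equivariant ancestor), we get that $|F|$ is a $G$-homotopy equivalence, and combined with the identification $|\D(A\lor B;X)|\cong b_\II^\times X(A\lor B)$ and $|\D(A;X)\times\D(B;X)|\cong b_\II^\times X(A)\times b_\II^\times X(B)$ this yields the wedge axiom for $b_\II^\times X$, hence (via the previous proposition comparing $b_\II^\times X$ and $b_\II X$ and the invariance theorem) for $b_\II X$.

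The step I expect to be the main obstacle is verifying that $F'$ is well-defined as a $G$-equivariant internal functor — in particular that the decomposition $c\mapsto (c_A,c_B)$ is natural in a way compatible with the topology and the $G$-action, and that it assembles into a continuous functor on the whole coproduct $\coprod_{a,b}(A^a\times B^b)\times X(a\lor b)$. The subtlety is that the splitting $c_A,c_B$ depends on $\mu$, not just on $c$, so on each component indexed by $c$ the target component (indexed by the pair $(c_A,c_B)$) varies with $\mu$; one has to check this is a continuous, $G$-equivariant assignment on the disjoint-union topology, and that it is compatible with the source/target, identity and composition structure maps. In the genuine case \cite{MMO25} handle this, and the only genuinely new input here is the indexing-system bookkeeping: every subset and coproduct of admissible $G$-sets produced by the splitting is again admissible, so the construction never leaves $\Gamma_\II$. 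I would state this bookkeeping cleanly up front (invoking Corollary \ref{cor:properties-finite-F-sets} and Proposition \ref{prop:properties-isotropy}) and then refer to \cite{MMO25} for the routine continuity and functoriality checks.
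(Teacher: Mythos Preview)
Your definition of $Q$ has the maps going the wrong way: since $X$ is covariant, the inclusions $\iota_a\colon a\to a\lor b$ induce $X(a)\to X(a\lor b)$, not the other direction. You need the based \emph{projections} $\pi_a\colon a\lor b\to a$, which give $X(\pi_a)\colon X(a\lor b)\to X(a)$; this is what the paper uses.

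More seriously, your construction of the quasi-inverse $F'$ does not work, and the obstacle you flag is not just bookkeeping. First, the claim $c=c_A\cup c_B$ is false: any $j\ne *$ with $\mu(j)=*$ lies in neither $c_A$ nor $c_B$, so $\theta\colon c\to c_A\lor c_B$ is not an isomorphism. Second, even after repairing this, the assignment $\mu\mapsto(c_A,c_B)$ lands in a \emph{discrete} indexing set while $\mu$ ranges over the non-discrete space $(A\lor B)^c$; the target component of $F'$ genuinely jumps as $\mu(j)$ crosses the basepoint, so $F'$ cannot be made into a continuous internal functor this way. The indexing-system axioms do not help here.

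The paper's construction is different and avoids this. Instead of splitting $f$ into variable pieces, it sends every object $(\omega,x)$ with $\omega\colon f\to A\lor B$ to the \emph{fixed} component indexed by $(f,f)$, via
\[
F^{-1}(\omega,x)=(\pi_A\circ\omega,\ \pi_B\circ\omega,\ X(\sigma_\omega)(x)),
\]
where the \emph{splitting map} $\sigma_\omega\colon f\to f\lor f$ sends $j$ to $j$ in the first copy if $\omega(j)\in A\setminus *$, to $j$ in the second copy if $\omega(j)\in B\setminus *$, and to $*$ if $\omega(j)=*$. The only indexing-system input is that $f\lor f\in\Gamma_\II$, which is (I6). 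With this $F^{-1}$ one only obtains $G\Top_*$-internal natural \emph{transformations} $\Id\Rightarrow F F^{-1}$ and $\Id\Rightarrow F^{-1}F$ (given by the morphisms $(\omega_A\lor\omega_B,\sigma_\omega,x)$ and $(\mu\circ\pi_a,\nu\circ\pi_b,\tilde\sigma_\mu,\tilde\sigma_\nu,x)$ respectively), \emph{not} natural isomorphisms, since $\sigma_\omega$ is not invertible in $\Gamma_\II$. That is enough: after nerve and realization these natural transformations become $G$-homotopies, so $|F|$ is a $G$-homotopy equivalence. Your plan to produce natural \emph{isomorphisms} via an honest splitting of $c$ is therefore aiming for more than is available or needed.
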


\begin{proof}
	We follow the proof in \cite{MMO25}, \S 8.4 and modify where needed. Write elements in $A^a$ etc. as based maps $\mu\colon a\to A$ with $G$ acting by conjugation. The functor $F$ is given on objects by $F(\mu,\nu,x)=(\mu\lor \nu,x)$ and on morphisms by $F(\mu,\nu,\phi,\psi,x)=(\mu\lor \nu,\phi\lor\psi,x)$. This is all well defined using that indexing systems are closed under coproducts (I6) and isomorphisms (I2). Similarly, we define on objects $Q(\mu,\nu,x)=(\mu,x_a)\times (\nu,x_b)$, where $x_a$ and $x_b$ are obtained from $X(a\lor b)$ via the projection maps $\pi_a\colon X(a\lor b)\to X(a)$ induced from the projection maps $a\lor b\to a$. On morphisms we define $Q(\mu,\nu,\phi,\psi,x)=(\mu,\phi,x_a)\times (\nu,\psi,x_b)$. Lastly, $P$ is induced by the projection maps $\pr_A\colon A\lor B\to A$ and $\pr_B$. More precisely, define on objects $P(\mu,x)=(\mu_A,x)\times (\mu_B,x)$. Here, $\mu_A=\pr_A\circ\mu\colon a\to A\lor B\to A$ and similarly for $\mu_B$. On morphisms we define $P(\mu,\phi,x)=(\mu_A,\phi,x)\times (\mu_B,\phi,x)$.
	
	We have to show that $F,P$ and $Q$ are $G\Top_*$-internal functors and we need to construct a $G\Top_*$-internal functor $F^{-1}\colon \D(A\lor B;X)\to \D(A,B;X)$ together with $G\Top_*$-internal natural transformations $\Id\Rightarrow F^{-1}F$ and $FF^{-1}\Rightarrow\Id$. This part of the proof is exactly the same as in \cite{MMO25}, \S 8.4 repeatedly noting that $\II$ is closed under coproducts, subobjects and isomorphisms. For self-containment we will repeat the argument here. It is easy to see that $P$ and $Q$ are $G\Top_*$-internal functors.
	
	For $\omega\colon f\to A\lor B$ with $f$ a finite $G$-set with isotropy in $\FF_\II$, define $\omega_A=\pi_A\circ \omega$ and $\omega_B=\pi_B\circ\omega$. Define $\sigma_\omega\colon f\to f\lor f$, called the \textit{splitting} of $\omega$, by
	\[\sigma_\omega(j)=\begin{cases}
		j &\text{in the first copy of $f$ if }\omega(j)\in A\backslash * \\
		j &\text{in the second copy of $f$ if }\omega(j)\in B\backslash * \\
		* &\textit{if }\omega(j)=*.
	\end{cases}\]
	Note that $f\lor f$ also has isotopy in $\FF_\II$ using closure under coproducts (I6) so that $\sigma_\omega$ is a map in $\Gamma_\II$. Observe that $\omega$ factors as the composite
	\[f\overset{\sigma_\omega}{\longrightarrow}f\lor f \overset{\omega_A\lor \omega_B}{\longrightarrow} A\lor B.\]
	Define $F^{-1}$ on objects by
	\[F^{-1}(\omega,x)=(\omega_A,\omega_B,X(\sigma_\omega)(x))\]
	where $X(\sigma_\omega)\colon X(f)\to X(f\lor f)$ is well defined by the above remark. For a morphism $(\omega,\phi,x)$, $\omega\colon f\to A\lor B$, $\phi\colon e\to f$, and $x\in X(e)$, observe that
	\[(\omega\circ\phi)_A=\omega_A\circ \phi \text{ and } (\omega\circ\phi)_B=\omega_B\circ \phi.\] 
	Define $F^{-1}$ on morphisms by
	\[F^{-1}(\omega,\phi,x)=(\omega_A,\omega_B,\phi,\phi,X(\sigma_{\omega\circ\phi})(x)).\]
	A check of definitions using the commutative diagram
	\begin{center}
		\begin{tikzcd}
			e \arrow[d, "\sigma_{\omega\circ\phi}"'] \arrow[r, "\phi"] & f \arrow[d, "\sigma_\omega"'] \arrow[r, "\omega"] & A\lor B \\
			e\lor e \arrow[r, "\phi\lor\phi"']                         & f\lor f \arrow[ru, "\omega_A\lor \omega_B"']      &        
		\end{tikzcd}
	\end{center}
	shows that $S\circ F^{-1}=F^{-1}\circ S$ and $T\circ F^{-1}=F^{-1}\circ T$, for $S,T$ the source and target maps of the respective categories. Note that $F^{-1}$ is clearly compatible with composition and identities. It is easily checked that $F$ is continuous on object and morphism $G$-spaces. Equivariance is not immediate but is shown analogously to the genuine case. We defer to \cite{MMO25}, \S 8.4.
	
	\underline{Claim:} There is a natural transformation $\Id\Rightarrow F\circ F^{-1}$.
	
	The composition $F\circ F^{-1}$ maps an object $(\omega,x)$ in $\D(A\lor B;X)$ to $(\omega_A\lor \omega_B,X(\sigma_\omega)(x))$. Here, $(\omega_A\lor \omega_B,\sigma_\omega,x)$ is a morphism from $(\omega,x)$ to $(\omega_A\lor \omega_B,X(\sigma_\omega)(x))$. We claim that this morphism is the component at $(\omega,x)$ of a natural transformation $\Id\Rightarrow F\circ F^{-1}$. These morphisms clearly give a continuous map from the object $G$-space of $\D(A\lor B;X)$ to the morphism $G$-space, and it is not hard to see naturality using the diagram above. To show equivariance, for $g\in G$ note that
	\[g.(\omega_A\lor \omega_B,\sigma_\omega,x)=(g.(\omega_A\lor \omega_B),g.\sigma_\omega,gx)=((g.\omega)_A\lor (g.\omega)_B),\sigma_{g.\omega},gx),\]
	which is precisely the component at $g.(\omega,x)=(g.\omega,gx)$.
	
	\underline{Claim:} There is a natural transformation $\Id\Rightarrow F^{-1}\circ F$.
	
	The composite $F^{-1}\circ F$ sends an object $(\mu,\nu,x)$ in $\D(A,B;X)$ to the object
	\[(\pi_A\circ(\mu\lor \nu),\pi_B\circ (\mu\lor \nu),X(\sigma_{\mu\lor \nu})(x))=(\mu\circ\pi_a,\nu\circ\pi_b,X(\sigma_{\mu\lor \nu})(x)).\]
	Note that $\sigma_{\nu\lor\mu}$ is given by $\tilde\sigma_\nu\lor\tilde\sigma_\mu$, where $\tilde\sigma_\nu\colon a\to a\lor b$ is the inclusion, except that if $\mu(j)=*$, then $\tilde\sigma_\mu(j)=*$, and similarly for $\tilde\sigma_\mu\colon b\to a\lor b$. Here, $(\pi_A\circ(\mu\lor \nu),\pi_B\circ (\mu\lor \nu),\tilde\sigma_\nu,\tilde\sigma_\mu,x)$ is a morphism in $\D(A,B;X)$ from $(\mu,\nu,x)$ to $(\pi_A\circ(\mu\lor \nu),\pi_B\circ (\mu\lor \nu),X(\sigma_{\mu\lor \nu})(x))$. To see that the source of this morphism is as claimed, observe that $\mu\circ\pi_a\circ \tilde\sigma_\mu=\mu$ since $\pi_a\circ\tilde\sigma_\mu=\id$ except on those $j$ such that $\mu(j)=*$, and similarly for $\nu$. The naturality follows from the equations
	\[\tilde\sigma_\mu\circ\phi=(\phi\lor\psi)\circ\tilde\sigma_{\mu\circ\phi} \text{ and } \tilde\sigma_\nu\circ\psi=(\phi\lor\psi)\circ\tilde\sigma_{\nu\circ\psi},\]
	which are easily checked. The continuity of the assignment is also easily verified. For $g\in G$, a verification similar to the previous claim shows that $g$ acting on the component of our natural transformation at $(\mu,\nu,x)$ is the component of the natural transformation at $g.(\mu,\nu,x)=(g.\mu,g.\nu,gx)$. This shows the claim and the proposition.
\end{proof}

\begin{proof}[Proof of Lemma \ref{lem:wedge-lemma}]
	The diagram in Proposition \ref{prop:lemma-to-wedge-lemma} passes after nerve and realization to a diagram 
	\begin{center}
		\begin{tikzcd}
                                         & {|\D(A,B;X)|} \arrow[ld, "f"'] \arrow[rd, "q"] &                                           \\
b_\II^\times X(A\lor B) \arrow[rr, "p"'] &                                             & b_\II^\times X(A)\times b_\II^\times X(B)
\end{tikzcd}
	\end{center}
	which commutes up to homotopy and so that $f$ is a weak $G$-equivalence. The tricky part is to show that $q$ is a weak $G$-equivalence. As noted in \cite{MMO25}, \S 8.4, there is no reason to believe that $Q$ is an equivalence of categories. Instead, we show that the nerve of $q_*=N(Q)$ gives a level $G$-equivalence of simplicial $G$-spaces.
	
	On $0$-simplices, the map $q_0=\ob(Q)$
	\[\bigvee_{(a,b)\in\Gamma_\II^2}(A^a\lor B^b) \land X(a\lor b) \to \Bigg(\bigvee_{a\in\Gamma_\II}A^a\land X(a) \Bigg)\times \Bigg( \bigvee_{b\in\Gamma_\II} B^b\land X(b)\Bigg)\]
	maps $(\mu,\nu,x)$ to $(\mu,x_a)\times (\mu,x_b)$, where $x_a=\pi_a(x)$. Note that since $X$ is an $\II$ -special $\Gamma_\II$-$G$-space, we have a weak equivalence $X(a\lor b)\to X(a)\times X(b)$. Since homotopy groups and fixed points commute with wedges and since 
	\[(A^a\lor B^b)\land X(a\lor b)\cong (A^a\land X(a\lor b)) \lor (B^b\land X(a\lor b))\]
	we see that passage to $\pi_n^H$ gives an isomorphism on homotopy groups for all subgroups $H\subset G$. Hence $q_0$ is a weak $G$-homotopy equivalence.
	
	On $n$-simplices $q_n$ is the same as $q_0$ but with inserted discrete mapping spaces. Using that $(\Gamma_\II\times\Gamma_\II)((a,a'),(b,b'))\cong \Gamma_\II(a,b)\times\Gamma_\II(a',b')$ we find by the same argument above that $q_n$ is a weak $G$-homotopy equivalence. Hence, $q_*$ is a level weak $G$-homotopy equivalence of simplicial $G$-spaces. Using that bar constructions are Reedy cofibrant we get upon passage to realization a $G$-homotopy equivalence of classifying $G$-spaces.
\end{proof}


\subsection{Proof of the delooping theorem}\label{app:proof-delooping-theorem}

We introduce some preliminary notations and constructions in line with \cite{MMO25} and \cite{Shi89}. Recall that a $G$-representation is a finite dimensional real inner product space with linear and isometric $G$-action. Denote by $\langle-,-\rangle$ the inner product in $V$ which induces a $G$-invariant norm denoted by $|-|$.

\begin{Constr}[\cite{MMO25}, \S 8.5]\label{constr:deloop-1}
	Fix a $G$-representation $V$ lying in a fixed $G$-universe $U$ compatible with $\II$. Let $M\subset V$ be a subspace. 
	\begin{enumerate}
		\item Let $\varepsilon>0$. Denote by $M_\varepsilon$ the open $\varepsilon$-neighbourhood of $M$ in $V$, that is,
		\[M_\varepsilon:=\{v\in V\mid \text{there exists $m\in M$ such that }|v-m|<\varepsilon\}.\]
		Note that if $M$ is a $G$-subspace, then $M_\varepsilon$ is also a $G$-subspace.
		\item Denote by $(-)^c$ the one point compactification. This is functorial on proper maps between locally compact spaces. Denote the point at infinity by $\infty$. Recall that $(-)^c$ converts cartesian products into smash products and that $A^c=A_+$ if $A$ is already compact.
		\item Let $\varepsilon>0$. Denote by $0_\varepsilon$ the open $\varepsilon$-neighbourhood of the origin in $V$, i.e. $0_\varepsilon=\{0\}_\varepsilon$. Note that $0_\varepsilon^c$ is a sub $G$-space and there is a $G$-homeomorphism $0_\varepsilon^c \cong S^V$. As in \cite{MMO25}, \S 8.5 we will treat this $G$-homeomorphism as an identification from now on.
		\item For any compact (unbased) $G$-subspace $M$ of $V$ we have a $G$-map $M\to F(0_\varepsilon,M_\varepsilon)$ given by $\ x\mapsto[v\mapsto x+v]$. Applying a Pontryagin-Thom construction gives a based $G$-map
		\[\mu\colon M_+\to F(M_\varepsilon^c,0_\varepsilon^c)\]
		which for $x\in M$ and $y\in M_\varepsilon$, is given by $\mu(x)(y):=y-x$ if $|x-y|<\varepsilon$ and $\mu(x)(y)=\infty$ otherwise. This has an adjoint map
		\[\hat\mu_M\colon Z(M_\varepsilon^c)\to F(M_+,Z(S^V)).\]
		\item For $M=\{0\}$, the map $\mu\colon S^0=M_+\to F(0_\varepsilon^c,0_\varepsilon^c)$ sends $0$ to the identity map and thus $\hat\mu_{\{0\}}$ can be identified with the identity map of $Z(S^V)$.
	\end{enumerate}
\end{Constr}

As usual, denote by $S(V)$ the unit sphere in $V$ and by $D(V)$ the unit disk, that is $v\in S(V)$ if and only if $|v|=1$ and $v\in D(V)$ if and only if $|v|\le 1$. Using $G$-invariance and linearity of the inner product on $V$ these are sub $G$-spaces of $V$.

We continue to introduce notation.

\begin{Constr}[\cite{MMO25}, \S 8.5]\label{constr:deloop-2}
	Let $\pi\colon S(V)_\varepsilon \to S(V)$ be the radial projection given by $\pi(v)=v/|v|$. For a subset $X$ of $S(V)$ let
	\[\widehat{X}:=\pi^{-1}(X)=\{v\in V\mid 1-\varepsilon<|v|<1+\varepsilon \text{ and }\pi(v)\in X\}\subset S_\varepsilon.\]
	Thus $\widehat{X}$ is the subset $X$ on the sphere, thickened by $\varepsilon$ in the radial direction. Note that $\widehat{X}\subset X_\varepsilon$, but these are not equal in general. But they are equal in the case $X=S(V)$. We also define
	\[\widetilde{X}:=X-(S(V)-X)_{2\varepsilon}=\{x\in X\mid |x-v|\ge 2\varepsilon \text{ for all }v\in S(V)-X\} \subset X\subset S(V).\]
	Thus $\widetilde{X}$ is the subset $X$ on the sphere, with a $2\varepsilon$-neighbourhood cropped out of in the tangent direction.
\end{Constr}

\begin{Constr}[\cite{MMO25}, \S 8.5]
	Using the definitions from Construction \ref{constr:deloop-1} and \ref{constr:deloop-2}, for any subset $X$ of $S(V)$ the $G$-map of Construction \ref{constr:deloop-1} (4) with $M=S(V)$ yields a map $S(V)\to F(0_\varepsilon,S(V)_\varepsilon)$. One can show that this restricts to a $G$-map
	\[\alpha_X\colon \widetilde{X}\to F(0_\varepsilon,\widehat{X}).\]
	Applying the Pontryagin-Thom construction yields a based $G$-map
	\[\mu_X\colon \widetilde{X}_+\to F(\widehat{X}^c,0_\varepsilon^c)=F(\widehat{X}^c_+,S^V),\]
	which again gives rise to a based $G$-map
	\[\hat{\mu}_X\colon Z(\widetilde{X}_+)\to F(\widehat{X}^c_+,Z(S^V))\]
	for any subset $X$ of $S(V)$. When $X=S(V)$, this is the map $\hat{\mu}_{S(V)}$ that we constructed before by setting $M=S(V)$ in Construction \ref{constr:deloop-1} (4).
\end{Constr}

To proof the delooping theorem we need to study triangulations of $S(V)$. Since $S(V)$ is a smooth compact $G$-manifold there exists a finite triangulation $K$ together with a $G$-homeomorphism $h\colon |K|\to S(V)$. In particular, $S(V)$ admits the structure of a finite based $G$-CW complex. Since $h$ is a $G$-homeomorphism it preserves isotropy by Proposition \ref{prop:isotropy-homeomorphism} so that $S(V)$ is a finite based $\II$-$G$-CW complex. Recall the following general terminology which in the equivariant setting is due to \cite{Ill78}.

\begin{Def}
	Given some simplicial $G$-complex $K$ and some simplex $s\in K$, the \textit{star} of $s$ in $K$, denoted $\textup{st}(s,K)$, is the union of simplices $t\in K$ with face $s$. The \textit{open star} of $s$ in $K$ is the interior of $\textup{st}(s,K)$.
\end{Def}

The following observation and remark are due to \cite{MMO25}, proof of Theorem 8.32.

\begin{Obs}\label{obs:disjoint-open-stars}
	Note that the set of all open stars in $K$ forms an open covering of $K$. As we have chosen $K$ to be finite, the covering is also finite. As remarked in \cite{MMO25}, proof of Theorem 8.32, after possibly refining $K$ to some $K'$, we can assume that for any simplex $s\in K'$ either 
	\[g\textup{st}^\circ(s,K)=\textup{st}^\circ(s,K) \text{ or }g\textup{st}^\circ(s,K)\cap \textup{st}^\circ(s,K)=\emptyset.\]
	We will from now on assume that $K$ fulfils this property.
\end{Obs}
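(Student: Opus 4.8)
The statement to be proved (Observation \ref{obs:disjoint-open-stars}) is the assertion that after a suitable equivariant subdivision, every open star in the triangulation $K$ of $S(V)$ is either $g$-invariant or moved off itself by each $g\in G$. The plan is to obtain this from the standard equivariant triangulation theory of Illman together with the elementary topology of open stars. First I would recall that by Illman's theorem the compact smooth $G$-manifold $S(V)$ admits a finite equivariant triangulation, i.e.\ a finite simplicial $G$-complex $K$ and a $G$-homeomorphism $h\colon |K|\to S(V)$; here \emph{simplicial $G$-complex} means $G$ acts by simplicial automorphisms permuting the simplices. The key refinement step is the \emph{equivariant barycentric subdivision}. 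If one passes from $K$ to its barycentric subdivision $\mathrm{sd}\,K$, the $G$-action is still simplicial, and — this is the crucial classical fact — for a simplicial action on a simplicial complex, after one barycentric subdivision the action becomes \emph{regular} in the sense that whenever $g$ fixes a simplex $s$ setwise it fixes $s$ pointwise (because the vertices of $\mathrm{sd}\,K$ are barycentres of simplices of $K$, and a group element either permutes such a simplex to a different one of the same dimension or fixes its barycentre). Iterating once more ensures that the star of any simplex behaves well under the action.

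The main work is then the following: for a simplex $s$ of a simplicial $G$-complex on which the action is regular, and for $g\in G$, I would show
\[
g\,\mathrm{st}^\circ(s,K)=\mathrm{st}^\circ(gs,K),
\]
which is immediate from the definitions since $g$ is a simplicial automorphism: $g$ carries simplices having $s$ as a face bijectively to simplices having $gs$ as a face, and carries interiors to interiors. Consequently, if $gs=s$ then $g\,\mathrm{st}^\circ(s,K)=\mathrm{st}^\circ(s,K)$, while if $gs\neq s$ then $\mathrm{st}^\circ(s,K)$ and $\mathrm{st}^\circ(gs,K)$ are the open stars of two \emph{distinct} simplices. So it remains to check that the open stars of two distinct simplices of the same complex are disjoint when... they are not — open stars of distinct simplices can overlap in general. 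The correct observation is subtler: one should take $s$ to range over a chosen set of \emph{vertices} (or better, use that $\{\mathrm{st}^\circ(v,K)\mid v \text{ a vertex}\}$ is an open cover, and the open stars of two distinct vertices meet iff the two vertices span an edge). Here I would instead argue directly: after passing to $\mathrm{sd}^2 K$, if $gs\ne s$ then no simplex of $K$ can have \emph{both} $s$ and $gs$ as faces — this is exactly the property that fails before subdivision but holds after, since a common coface would be fixed setwise, hence pointwise by regularity, forcing $gs=s$. Therefore $\mathrm{st}^\circ(s)\cap\mathrm{st}^\circ(gs)=\emptyset$, giving the dichotomy.

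I expect the \textbf{main obstacle} to be pinning down exactly which combinatorial statement about open stars one needs and how many barycentric subdivisions guarantee it; the naive claim ``open stars of distinct simplices are disjoint'' is false, so one must phrase the property in terms of \emph{no common coface} and verify that this is achieved and then stable under the regularity of the subdivided action. Once that combinatorial lemma is isolated, the equivariance bookkeeping is routine: $h$ is a $G$-homeomorphism so transporting the refined triangulation of $|K|$ along $h$ preserves both the simplicial structure and, by Proposition \ref{prop:isotropy-homeomorphism}, the isotropy, so the refined $K$ is still a finite based $\II$-$G$-CW complex with the desired star property. A clean alternative, which I would mention as a remark, is to invoke that Illman's construction can be taken to output a triangulation for which the action is already regular, so that a single reference suffices and the explicit subdivision argument can be compressed.
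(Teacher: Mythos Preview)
The paper does not actually prove this observation; it records it as a remark and defers to \cite{MMO25}, proof of Theorem 8.32. So there is no ``paper's proof'' to compare against---your proposal is supplying content the paper deliberately omits.

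Your overall plan (pass to a barycentric subdivision and use $g\,\mathrm{st}^\circ(s)=\mathrm{st}^\circ(gs)$) is correct, and you correctly isolate the crux: showing that $gs\neq s$ forces $\mathrm{st}^\circ(s)\cap\mathrm{st}^\circ(gs)=\emptyset$. However, your justification of that step is wrong. You assert that a common coface $t$ of $s$ and $gs$ would be fixed setwise by $g$, hence pointwise by regularity, forcing $gs=s$. But there is no reason $t$ should be $g$-invariant: $gt$ has $gs$ and $g^2s$ as faces, not $s$ and $gs$, so nothing prevents $gt\neq t$. Regularity of the action does not help here.

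The correct argument is simpler and needs only one barycentric subdivision. In $\mathrm{sd}\,K$ a simplex $s$ is a chain $\sigma_0<\sigma_1<\cdots<\sigma_k$ of simplices of $K$, and a simplex $t$ of $\mathrm{sd}\,K$ has $s$ as a face precisely when $t$ is a chain extending this one. Thus $\mathrm{st}^\circ(s)\cap\mathrm{st}^\circ(gs)\neq\emptyset$ iff some single chain in $K$ contains every $\sigma_i$ and every $g\sigma_j$. Any two members of a chain are comparable, so in particular each pair $\sigma_i$, $g\sigma_i$ is comparable; since $\dim\sigma_i=\dim g\sigma_i$, comparability forces $\sigma_i=g\sigma_i$ for all $i$, i.e.\ $gs=s$. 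This yields the dichotomy directly, with no need to argue that any coface is $g$-fixed and no second subdivision required.
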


\begin{Rem}\label{rem:indexing-on-barycenters}
	Each simplex $s$ has a barycentre denoted $\lambda_s$. The collection of open stars can therefore be indexed on barycentres $\lambda_s$ of simplices in $s$ in $K$. In particular, we can index the open covering of open stars on these $\lambda_s$. From now on write $\lambda$ for the indexing elements and $T$ for the indexing set and following \cite{MMO25}, Theorem 8.32, we write $C_\lambda$ for the open star of the simplex with barycentre $\lambda$.
\end{Rem}

\begin{Lem}\label{lem:MMO-Thm.8.32}
	Let $Z$ be an $\II$-special $\underline{\CW}^\II_*$-$G$-space. Then the map
	\[\hat{\mu}_S\colon Z(S(V)^c_\varepsilon)\to F(S(V)_+,Z(S^V))\]
	is a weak $G$-equivalence.
\end{Lem}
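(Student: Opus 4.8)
The plan is to imitate the genuine argument of \cite{MMO25}, proof of Theorem 8.32, keeping careful track that every $G$-CW complex that appears has isotropy in $\FF_\II$. First I would fix the finite equivariant triangulation $K$ of $S(V)$ satisfying the star condition of Observation \ref{obs:disjoint-open-stars}, with open cover $\{C_\lambda\}_{\lambda\in T}$ indexed on barycentres. The key point is that $T$ is a finite $G$-set whose isotropy lies in $\FF_\II$: the stabilizer of $\lambda$ fixes the barycentre of the corresponding simplex, hence fixes a point of $S(V)$, so $G_\lambda\in\Phi(S(V))=\FF_V\subset\FF_\II$ by the compatibility of $U$ with $\II$ (here the choice of $U$ compatible with the indexing data is essential, as emphasised in the sketch of the delooping theorem). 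Thus all the based $G$-sets and $G$-CW complexes built from the nerve of the cover $\{C_\lambda\}$ live in $G\CW_*^\II$, and the constructions stay inside $\Gamma_\II$ and $G\underline{\CW}_*^\II$.

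Next I would set up, exactly as in \cite{MMO25}, the \emph{Mayer--Vietoris / cover} filtration: for a subset $\sigma\subset T$ write $C_\sigma=\bigcap_{\lambda\in\sigma}C_\lambda$ (a $G$-subspace when $\sigma$ is $G$-fixed, and in general an $H$-subspace for $H=G_\sigma$), and $\widehat{C_\sigma}$, $\widetilde{C_\sigma}$ for the radial thickening and tangential cropping from Construction \ref{constr:deloop-2}. One forms the bar-type simplicial $G$-space whose $q$-simplices are indexed on chains $\lambda_0,\dots,\lambda_q$ and whose values are $Z(\widehat{C}^c_{\{\lambda_0,\dots,\lambda_q\}})$, together with the compatible simplicial $G$-space $F(\widetilde{C}_{+},Z(S^V))$ obtained from the nerve of the cover of $S(V)$ by the $\widetilde{C_\lambda}$. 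The maps $\hat\mu_{C_\sigma}$ assemble into a map of simplicial $G$-spaces, whose realization is $\hat\mu_S$ up to the identifications already made (the cover of $S(V)$ by open stars, suitably cropped, is a good cover and $|{\rm nerve}|\simeq S(V)$, all $G$-equivariantly and with isotropy in $\FF_\II$). To conclude it suffices to check the statement level-wise on this simplicial $G$-space and then apply Theorem \ref{thm:MMO-thm-1.12}; Reedy cofibrancy of the relevant simplicial $G$-spaces follows from Lemma \ref{lem:cofibrancy-lemma} (the cofibrancy lemma) together with the $G$-cofibration statements Theorem \ref{thm:MMO-thm-1.13}, and the fact (Proposition \ref{prop:constr-F-G-CW}) that finite intersections, smash products and wedges of $\II$-$G$-CW complexes are again $\II$-$G$-CW.

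The level-wise check reduces, by an induction on the number of elements of a chain (equivalently on the dimension of $C_\sigma$, exactly as in the genuine case), to the following two inputs. (i) For a single $\lambda$, $\widehat{C}_\lambda^c$ is $G_\lambda$-homeomorphic to a representation sphere and $\widetilde{C}_\lambda$ is $G_\lambda$-equivariantly contractible, so by the delooping statements already at our disposal — the structure theorem Theorem \ref{thm:structure-theorem}, the delooping theorem Theorem \ref{thm:delooping-theorem} applied to $Z[\,\cdot\,]$, and the incomplete invariance theorem Theorem \ref{thm:invariance-theorem} — the map $\hat\mu_{C_\lambda}$ is a weak $G_\lambda$-equivalence; inducing up (which is legitimate because all the $G$-sets in sight are already in $\II$ by the disk-like/compatibility discussion, cf. Proposition \ref{prop:restriction-finite-G-sets}) gives the $G$-statement. (ii) The wedge axiom for $b_\II X$ (Lemma \ref{lem:wedge-lemma}), i.e. the fact that $Z$ sends $A\lor B\mapsto Z(A)\times Z(B)$ up to weak $G$-equivalence, is used to pass from the individual pieces to the intersections along the cover. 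Splicing these with the positive linearity of $Z$ (Theorem \ref{thm:positive-linearity-theorem}), which turns the cofibre sequences associated with the filtration by skeleta of $|{\rm nerve}|$ into fibre sequences, one propagates the equivalence through all simplicial levels.

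The main obstacle I expect is \emph{not} the homotopical bookkeeping — that is essentially \cite{MMO25} verbatim — but rather verifying that the cover $\{C_\lambda\}$, the thickened pieces $\widehat{C}_\sigma$, $\widetilde{C}_\sigma$, and all the auxiliary $G$-CW complexes produced by the Pontryagin--Thom constructions genuinely have isotropy contained in $\FF_\II$, so that $Z$ may legitimately be evaluated on them and the ambient category remains $G\underline{\CW}_*^\II$. This is where the compatibility hypothesis $\FF_U=\FF_\II$ is doing real work: it guarantees $\FF_V\subset\FF_\II$ for $V$ in $U$, hence $\Phi(S(V))\subset\FF_\II$, and by Proposition \ref{prop:properties-isotropy-families} this isotropy bound is inherited by subspaces, finite intersections, one-point compactifications (Proposition \ref{prop:properties-isotropy-families}(4),(5)) and smash products (Proposition \ref{prop:properties-isotropy-families}(6), using that $\FF_\II$ is transfer-like). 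Once this is pinned down, the remaining subtlety is purely organisational: making sure that when one refines $K$ to achieve the star condition of Observation \ref{obs:disjoint-open-stars} the refinement is still an \emph{equivariant} triangulation in the sense of Illman, so that all of the above isotropy statements survive the refinement. I would therefore structure the write-up so that the isotropy claims are isolated as a short lemma at the start, after which the body of the proof is a faithful transcription of the genuine argument with $G\CW_*$ replaced by $G\CW_*^\II$ throughout.
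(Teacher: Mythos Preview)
Your proposal has a genuine circularity: in step (i) you invoke Theorem \ref{thm:delooping-theorem} (the delooping theorem) to handle a single open star, but Lemma \ref{lem:MMO-Thm.8.32} is precisely the key ingredient in the proof of Theorem \ref{thm:delooping-theorem}. You cannot use the delooping theorem here. The correct local input is much cheaper: for a $G$-stable union $X$ of open stars indexed by a single orbit $T=G/H\in\II(G)$, the inclusion $T\hookrightarrow X$ gives $G$-equivalences $T_+\simeq\widetilde{X}_+$ and $T_+\land S^V\simeq\widehat{X}^c$, under which $\hat\mu_X$ is identified with the Segal map
\[
Z(T_+\land S^V)\longrightarrow F\bigl(T_+,Z(S^V)\bigr)
\]
of $Z[S^V]$. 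This is a weak $G$-equivalence because $Z[S^V]$ is $\II$-special by the structure theorem (Theorem \ref{thm:structure-theorem}); no delooping is needed. In particular you should work with an orbit of stars at once, not a single $\lambda$ followed by ``inducing up.''

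Beyond this, the paper's argument is organised differently and more simply than your \v{C}ech-nerve proposal. Rather than building a simplicial $G$-space indexed on chains of barycentres, the paper runs a two-set Mayer--Vietoris induction on the number of orbits in the indexing $G$-set $T$: assuming $\hat\mu_X$, $\hat\mu_Y$, $\hat\mu_{X\cap Y}$ are weak $G$-equivalences for $G$-stable unions of stars, one shows $\hat\mu_{X\cup Y}$ is too by exhibiting the back and front faces of the evident cube as homotopy cartesian. The front face is cartesian because the cropped pieces $\widetilde{(-)}$ form a homotopy pushout; the back face because linearity of $Z$ turns the cofibre sequences $(\widehat{X\cup Y}\setminus\widehat{X})^c\to(\widehat{X\cup Y})^c\to\widehat{X}^c$ (and its $Y$-analogue) into fibre sequences after applying $Z$. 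Your isotropy bookkeeping is exactly right and is indeed the only genuinely incomplete-specific content --- closure of $\FF_\II$ under subobjects and coproducts guarantees everything stays in $G\underline{\CW}_*^\II$ --- but the homotopical skeleton should be this direct induction, and the wedge axiom plays no explicit role here (it is used upstream to obtain positive linearity, which you then use via linearity of $Z$).
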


\begin{proof}
	Using Observation \ref{obs:disjoint-open-stars} and Remark \ref{rem:indexing-on-barycenters} we may choose a finite triangulation $K$ of $S(V)$ such that for any $s\in K$ either $g\textup{st}^\circ(s,K)=\textup{st}^\circ(s,K)$ or $g\textup{st}^\circ(s,K)\cap \textup{st}^\circ(s,K)=\emptyset$. Let $T$ be an indexing set of open stars in $K$. Let $X$ be a $G$-stable union of open stars $C_\lambda$, that is, $gX=X$. Let $\Phi(X)=\{H\subset G\mid X^H\ne \emptyset\}$ be the isotropy of $X$. Since $X\subset S(V)$ also $\Phi(X)\subset \Phi(S(V))$. Then for each $\lambda\in T$ we know that $gC_\lambda=C_\lambda$ or they are disjoint. Hence, $g\in H$ for some $H\in \Phi(X)$. Since $X$ is $G$-stable, we know that $G/H\subset T$. Since $T$ is finite $T\cong G/H_1\sqcup \dots G/H_n$ with $H_i\in\Phi(X)$. Since $\Phi(X)\subset\Phi(S(V))\subset\Phi(V)$, $T$ is a finite $G$-set with isotropy in $\FF_\II$.
	
	Let $2\varepsilon>0$ small enough so that it is smaller than all of the radii of all simplices in the triangulation. For any $G$-stable union $X$ of $C_\lambda$'s this implies that $X$ deformation retracts onto $\widetilde{X}$ (recall that $\widetilde{X}=X-(S(V)-X)_{2\varepsilon}$). We claim that for any such $X$ the map $\hat\mu_X$ is a weak $G$-equivalence. In particular for $X=S(V)$. One proceeds by induction on the number of orbits contained in the indexing $G$-set $T$ with isotropy in $\FF_\II$.
	
	\underline{Base case:} Let $X=\bigcup_{\lambda\in T}C_\lambda$, where $T=G/H\in \II(G)$. By assumption, the inclusion $T\to X$ induces $G$-equivalences $\smash{T_+\to \widetilde{X}_+}$ and $\smash{T_+\land S^V\to \widehat{X}^c}$. Via these equivalences, $\hat{\mu}_X$ can be identified with
	\[Z(T_+\land S^V)\to F(T_+,Z(S^V)),\]
	which is a $G$-equivalence since $Z[S^V]$ is an $\II$-special $G\underline{\CW}_*^\II$-$G$-space using the structure theorem (Theorem \ref{thm:structure-theorem}).
	
	\underline{\smash{Inductive Step}}: Assume that the conclusion holds for $X$, $Y$ and $X\cap Y$ and prove that the conclusion holds for $X\cup Y$. Since the open stars of a triangulation form a covering in the sense that the intersection of any two open stars is a star, so that $X\cap Y$ is indeed also a $G$-stable union of stars, possibly empty. Note that if $T_X$ and $T_Y$ are indexing $\II$-sets of $X$ and $Y$, then $T_{X\cap Y}\subset T_X$ and $T_Y$. Since $\II$ is closed under subobjects (I5) this is also a finite $G$-set with isotropy in $\FF_\II$ and the union $T_{X\cup Y}\subset T_X\sqcup T_Y$ is also a finite $G$-set with isotropy in $\FF_\II$ using (I6) and (I5). The rest of the proof is the same as in \cite{MMO25}, Theorem 8.32. By assumption $\hat{\mu}_X,\hat{\mu}_Y$ and $\hat{\mu}_{X\cap Y}$ are weak $G$-equivalences. If we show that the front and back squares of the following diagram are homotopy cartesian then $\mu_{X\cup Y}$ is also a weak equivalence
	\begin{center}
		\begin{tikzcd}
			Z((\widehat{X\cup Y})^c) \arrow[dd] \arrow[rd, "\hat\mu_{X\cup Y}"] \arrow[rrr] &                                                          &  & Z(\widehat{X}^c) \arrow[rd, "\hat\mu_X"] \arrow[dd]      &                                        \\& {F(\widetilde{X\cup Y}_+,Z(S^V))} \arrow[dd] \arrow[rrr] &  &                                                          & {F(\widetilde{X}_+,Z(S^V))} \arrow[dd] \\
			Z(\widehat{Y}^c) \arrow[rd, "\hat\mu_Y"] \arrow[rrr]                            &                                                          &  & Z((\widehat{X\cap Y})^c) \arrow[rd, "\hat\mu_{X\cap Y}"] &                                        \\& {F(\widetilde{Y}_+,Z(S^V))} \arrow[rrr]                  &  &                                                          & {F(\widetilde{X\cap Y}_+,Z(S^V))}     
		\end{tikzcd}
	\end{center}
	The proposition then follows inductively for $S(V)$.
	
	The front square is homotopy cartesian since the square
	\begin{center}
		\begin{tikzcd}
			\widetilde{X\cap Y} \arrow[d] \arrow[r] & \widetilde{X} \arrow[d] \\
			\widetilde{Y} \arrow[r]                 & \widetilde{X\cup Y}    
		\end{tikzcd}
	\end{center}
	is homotopy cocartesian, since $X$, $Y$ are open in $X\cup Y$, and $\varepsilon$ was chosen small enough so that there is a $G$-deformation retract $\widetilde{W}\simeq W$ for all $G$-stable unions of open stars $W$ in the triangulation. Hence, application with $F(-,Z(S^V))$ yields a cartesian square.
	
	To see that the back square is homotopy cocartesian note that
	\[(\widehat{X\cup Y}-\widehat{X})^c\to \widehat{X\cup Y})^c \text{ and } (\widehat{Y}-\widehat{X\cap Y})^c \to \widehat{Y}^c\]
	are $G$-cofibrations, just as in \cite{MMO25}. Moreover, we have that $\widehat{X\cup Y}-\widehat{X} = \widehat{Y}-\widehat{X\cap Y}$ since both sets consist of $v\in S(V)_\varepsilon$ such that $\pi(v)$ is in $Y$ but not in $X$. After identifying the cofibres, we then get a diagram of homotopy cofibre sequences
	\begin{center}
		\begin{tikzcd}
			(\widehat{X\cup Y}-\widehat{X})^c \arrow[r] \arrow[d, Rightarrow, no head] & (\widehat{X\cup Y})^c \arrow[r] \arrow[d] & \widehat{X}^c \arrow[d] \\
			(\widehat{Y}-\widehat{X\cap Y})^c \arrow[r]                                & \widehat{Y}^c \arrow[r]                   & (\widehat{X\cap Y})^c  
		\end{tikzcd}
	\end{center}
	Note that alls of these spaces in the diagram have isotropy type in $\FF_\II$ since before compactifying they are bounded subsets of $V$. Therefore the compactifications admit structures based of $\II$-$G$-CW complexes so that we may apply $Z$ to obtain a diagram of fibre sequences
	\begin{center}
		\begin{tikzcd}
			Z((\widehat{X\cup Y}-\widehat{X})^c) \arrow[r] \arrow[d,  Rightarrow, no head] & Z((\widehat{X\cup Y})^c) \arrow[r] \arrow[d] & Z(\widehat{X}^c) \arrow[d] \\
			Z((\widehat{Y}-\widehat{X\cap Y})^c) \arrow[r]                                & Z(\widehat{Y}^c) \arrow[r]                   & Z((\widehat{X\cap Y})^c)  
		\end{tikzcd}\qedhere
	\end{center}
\end{proof}

\begin{proof}[Proof of Theorem \ref{thm:delooping-theorem}]
	Let $S(V)$ and $D(V)$ be the unit sphere and unit disk in $V$, and let $i\colon S(V)\to D(V)$ be the inclusion. Define $p\colon D(V)_\varepsilon^c\to S(V)_\varepsilon^c$ by $p(x)=x$ if $x\in S(V)_\varepsilon$ and $p(x)=\infty$ otherwise. The rest of the proof follows \cite{MMO25}, Theorem 8.27 verbatim adding some remarks on isotropy. The following diagram commutes
	\begin{center}
		\begin{tikzcd}
			S(V)_+ \arrow[d, "i"'] \arrow[r, "\mu"] & {F(S(V)_\varepsilon^c,0_\varepsilon^c)} \arrow[d, "p^*"] \\
			D(V)_+ \arrow[r, "\mu"']                & {F(D(V)_\varepsilon^c,0_\varepsilon^c)}                 
		\end{tikzcd}
	\end{center}
	Note that $D(V)_\varepsilon\backslash S(V)_\varepsilon \to D(V)_\varepsilon$ is proper. Let $j\colon (D(V)_\varepsilon\backslash S(V)_\varepsilon)^c\to D(V)_\varepsilon^c$ be the inclusion and observe that the quotient of $j$ is the $G$-space $S_\varepsilon^c$ with quotient map $p$. Further define
	\[\nu\colon S^V\cong D(V)/S(V) \to F((D(V)_\varepsilon\backslash S(V)_\varepsilon)^c,0_\varepsilon^c)\]
	by applying a relative Pontryagin-Thom construction. Explicitly, $\nu$ is the based $G$-map such that if $x\in D(V)\backslash S(V)$ and $y\in D(V)_\varepsilon\backslash S(V)_\varepsilon$, then $\nu(x)(y)=y-x$ if $|y-x|<\varepsilon$ and $v(x)(y)=\infty$ otherwise. Then the following diagram commutes, where $q$ is the quotient map.
	\begin{center}
		\begin{tikzcd}
			D(V)_+ \arrow[r, "\mu"] \arrow[d, "q"'] & {F(D(V)_\varepsilon^c,0_\varepsilon^c)} \arrow[d, "j^*"]             \\
			S^V\cong D(V)/S(V) \arrow[r, "\nu"]     & {F((D(V)_\varepsilon\backslash S(V)_\varepsilon)^c,0_\varepsilon^c)}
		\end{tikzcd}
	\end{center}
	Just as $\mu$ gives rise to a map $\hat{\mu}$, $\nu$ gives rise to a map $\hat{\nu}$ in the following diagram. By easy diagram chases, writing $S^V$ for $0_\varepsilon^c$, the squares above imply that the squares in the diagram commute
	\begin{center}
		\begin{tikzcd}
			Z((D(V)_\varepsilon\backslash S(V)_\varepsilon)^c) \arrow[d, "Z(i)"'] \arrow[r, "\hat\nu"] & {F(D(V)/S(V),Z(S^V))} \arrow[d, "q^*"] \\
			Z(D(V)_\varepsilon^c) \arrow[d, "Z(p)"'] \arrow[r, "\hat\mu"]                              & {F(D(V)_+,Z(S^V))} \arrow[d, "j^*"]    \\
			Z(S(V)^c_\varepsilon) \arrow[r, "\hat\mu"]                                                 & {F(S(V)_+,Z(S^V))}                    
		\end{tikzcd}
	\end{center}
	The right column is clearly a fibre sequence. We claim that
	\[(D(V)_\varepsilon\backslash S(V)_\varepsilon)^c \xrightarrow{j}D(V)_\varepsilon^c \xrightarrow{p}S(V)^c_\varepsilon \tag{$*$}\]
	is a cofibre sequence so that we can use the assumption that $Z$ is a positive linear $G\underline{\CW}_*^\II$-$G$-space to conclude the proof. For this we again remark that before compactifying all spaces are subsets of $V$ so that isotropy is preserved and applying compactification actually yield based $\II$-G-CW complexes. Since $Z$ is $\II$-linear, under this assumption, this would imply that the left column is also a fibre sequence.
	
	Observe that $D(V)_\varepsilon\backslash S(V)_\varepsilon$ is the closed disk $D(1-\varepsilon)$ of radius $1-\varepsilon$ in $V$. Applying compactification adds a disjoint base point. Therefore the inclusion $S^0=\{0\}\sqcup\{\infty\}\to (D(V)_\varepsilon\backslash S(V)_\varepsilon)^c\cong D(1-\varepsilon)_+$ is a $G$-homotopy equivalence. The inclusion $j\colon D(1-\varepsilon)_+\to D_\varepsilon^c\cong S^V$ which sends the disjoint base point to the north pole (the point at $\infty$) and sends the disk of radius $1-\varepsilon$ to its image around the south pole (the point $0\in V$), is a $G$-cofibration. We conclude that the sequence ($*$) is $G$-equivalent to the cofibre sequence
	\[S^0\to S^V\to S^V/S^0.\]
	
	Hence the two vertical sequences in the diagram above are fibre sequences. The bottom horizontal map $\hat{\mu}$ is a weak $G$-equivalence by Lemma \ref{lem:MMO-Thm.8.32}. The middle horizontal map $\hat{\mu}$ is easily seen to be a weak equivalence since $D(V)_\varepsilon^c\cong S^V$ and $D(V)_+\simeq S^0$. Hence, $\hat{\nu}$ is also a weak $G$-equivalence. Its domain is weakly $G$-equivalent to $Z(S^0)$ and its target is weakly $G$-equivalent to $\Omega^V Z(S^V)$. Under these equivalences $\hat{\nu}$ agrees with $\tilde{\sigma}$ at least up to sign as demonstrated in \cite{MMO25}\footnote{As remarked in the source, the standard choice of identification $S^V\cong D(V)/S(V)$ would introduce a sign, but this is irrelevant for the conclusion that $\tilde{\sigma}$ is a weak $G$-equivalence if $\hat{\nu}$ is a weak $G$-equivalence.}.
\end{proof}

\end{appendices}


\bibliographystyle{alpha}
\bibliography{references}

\end{document}